\numberwithin{equation}{section}
\newtheorem{theorem}{Theorem}[section]
\newtheorem{proposition}[theorem]{Proposition}
\newtheorem{lemma}[theorem]{Lemma}
\newtheorem{corollary}[theorem]{Corollary}
\newtheorem{remark}[theorem]{Remark}
\newtheorem{example}[theorem]{Example}
\theoremstyle{definition}
\newtheorem{definition}[theorem]{Definition}
\title{An analytic approach to Lefschetz and Morse theory on stratified pseudomanifolds}
\author{Gayana Jayasinghe}
\date{}
\begin{document}
\maketitle
%Formating
\justify

\begin{abstract}  
We develop an analytic framework for Lefschetz fixed point theory and Morse theory for Hilbert complexes on stratified pseudomanifolds. We develop formulas for both global and local Lefschetz numbers and Morse, Poincar\'e polynomials as (polynomial) supertraces over cohomology groups of Hilbert complexes, developing techniques for relating local and global quantities using heat kernel and Witten deformation based methods.

We focus on the case where the metric is wedge and the Hilbert complex is associated to a Dirac-type operator and satisfies the Witt condition, constructing Lefschetz versions of Bismut-Cheeger $\mathcal{J}$ forms for local Lefschetz numbers of Dirac operators, with specialized formulas for twisted de Rham, Dolbeault and spin$^{\mathbb{C}}$ Dirac complexes as supertraces of geometric endomorphisms on cohomology groups of local Hilbert complexes.
We construct geometric endomorphisms to define de Rham Lefschetz numbers for some self-maps for which the pullback does not induce a bounded operator on $L^2$ forms.

A de Rham Witten instanton complex is constructed for Witt spaces with stratified Morse functions, proving Morse inequalities related to other results in the literature including Goresky and MacPherson's in intersection cohomology. We also prove a Lefschetz-Morse inequality for geometric endomorphisms on the instanton complex that is new even on smooth manifolds.

We derive $L^2$ Lefschetz-Riemann-Roch formulas, which we compare and contrast with algebraic versions of Baum-Fulton-Quart. In the complex setting, we derive Lefschetz 
formulas for spin Dirac complexes and Hirzebruch $\chi_y$ genera which we relate to signature, self-dual and anti-self-dual Lefschetz numbers, studying their properties and applications including instanton counting. We compute these invariants in various examples with different features, comparing with versions in other cohomology theories.
\end{abstract}

\tableofcontents

\section{Introduction}

Atiyah and Bott \cite{AtiyahBott1,AtiyahBott2} generalized the fixed point theorem of Lefschetz \cite{lefschetz1937fixed} to elliptic complexes of differential operators on smooth manifolds. Witten gave a groundbreaking take on the Morse inequalities in \cite{witten1982supersymmetry}, relating it to ideas in physics (see for instance \cite{witten1982constraints}). In particular he used methods inspired by physics to prove character valued formulas for indices of spin Dirac, Rarita Schwinger and other operators in \cite{witten1983fermion}, and formulated an instanton complex for the Dolbeault complex yielding holomorphic Morse inequalities in \cite{witten1984holomorphic}.
Extensions and applications of these results have been widespread in various settings, including some on singular spaces, and we shall discuss these in Subsection \ref{subsection_History_motivation}.

\textbf{This article broadly describes a framework to extend the Atiyah-Bott-Lefschetz fixed point theorem and Morse theory to stratified spaces, for elliptic complexes of differential operators.} The normal fibers to fixed point sets in the smooth setting are always Euclidean which is why the theory is much simpler in the smooth setting. In this article we study the case of isolated fixed points (the morally generic case according to Bott in \cite[\S 4]{bott1988fixed}) but non-isolated singularities, and we treat smooth and singular fixed points on a similar footing.
These results are highly interwoven and our unified study gives rise to various new formulae, including versions of Lefschetz-Morse inequalities that have not been observed previously even in the smooth setting.
The richness of these results in the smooth setting are well established and there are many applications in both mathematics and physics, some of which we generalize to singular spaces in this article. We study the holomorphic Witten instanton complexes in the follow up article \cite{jayasinghe2024holomorphicwitteninstantoncomplexes} where we also study rigidity of invariants and applications.

More specifically, in this article we generalize the Atiyah-Bott-Lefschetz fixed point theorem to elliptic complexes associated to Dirac-type operators on oriented stratified pseudomanifolds equipped with wedge metrics, which can be described roughly as \textit{asymptotically iterated conic metrics}. 
We develop a theory of Lefschetz numbers for abstract Hilbert complexes satisfying certain conditions, building technical machinery to express \textit{global} and \textit{local} (near fixed points) Lefschetz numbers as well as Morse and Poincar\'e polynomials, as (polynomial) \textit{supertraces} of endomorphisms of cohomology groups as well as relating them to heat supertraces. We do this with sufficient generality to capture many interesting complexes that are widely studied in geometry and for geometric endomorphisms associated to \textit{nice} self-maps on stratified pseudomanifolds.

For the case of the $L^2$ de Rham complex, our results are analogues of the Lefschetz fixed point theorem and the Morse inequalities of Goresky and MacPherson in intersection homology (see \cite{Goresky_1985_Lefschetz} and \cite[\S 6.12]{goresky1988stratified}), and generalize previous work in the case of isolated singularities in \cite{Bei_2012_L2atiyahbottlefschetz} and of \cite{ludwig2013analytic,Jesus2018Wittensgeneral}. While the Morse inequalities have been worked out in \cite{Jesus2018Wittens,Jesus2018Wittensgeneral,ludwig2013analytic}, our construction of the Witten instanton complex extends techniques in \cite{Zhanglectures} to the singular setting which we use to construct holomorphic Witten instanton complexes in \cite{jayasinghe2024holomorphicwitteninstantoncomplexes}.
Our definitions of local cohomology groups are analytic. We also construct a generalized endomorphism for the de Rham complex using the \textit{Hilsum-Skandalis} replacement, enabling us to prove the Lefschetz fixed point theorem for a class of self-maps $f$ for which the pullback $f^*$ is not a bounded operator on $L^2$ forms.

In the cases of de Rham, Dolbeault and spin$^{\mathbb{C}}$ Dirac complexes with coefficients in a bundle satisfying suitable flatness conditions, the Lefschetz fixed point theorem can be roughly stated as; \textbf{\textit{supertraces of induced maps on global cohomology are equal to sums of supertraces of induced maps on local cohomology at fixed points}}. Thus both the global and local Lefschetz numbers are counting something, in a generalized sense. The topological twisting by flat bundles allows us to study self-dual, anti-self-dual, spin Dirac and other complexes with these results.

The local cohomology of the twisted Dolbeault and spin$^{\mathbb{C}}$ Dirac complexes are infinite dimensional, and we prove a renormalized version of the \textit{McKean-Singer theorem} (see \cite{mckean1967curvature}) for \textit{Lefschetz heat supertraces} of Laplace-type operators on neighbourhoods of fixed points with $\overline{\partial}$-Neumann boundary condition and modifications studied by \cite{EpsteinSubellipticSpinc3_2007,epstein2006subelliptic} defining renormalized trace formulas for local Lefschetz numbers. 
Our formulation and framework is new, even in the case of smooth complex manifolds, and our formulas are new in the case of isolated singularities. The closest analogs are those in the algebraic setting such as the Lefschetz-Riemann-Roch formulas in \cite{baum1979lefschetz,Baumformula81,MaximJorgCharacteristicsingulartoric2015,MaximSaitoJorgHodgemodules2011}
and we investigate similarities and differences with these in many examples, exploring various aspects of formulas on algebraic varieties. We compute many other equivariant indices of complexes on stratified pseudomanifolds with wedge complex (and some almost complex) structures.

We now give an overview of our results, referring the reader to the text for precise definitions and details. This also serves as a more detailed description of the content in each section.

\subsection{Overview of results.}

In Section \ref{section_stratified_space_basics}, we review the basic objects of our study. In particular that there is a functorial equivalence between Thom-Mather stratified spaces and manifolds with corners with iterated fibration structures that resolve them. Thus we define objects on pseudomanifolds $\widehat{X}$ by defining them on their resolution $X$. This includes self-adjoint Dirac-type operators satisfying a \textit{Witt} condition with the \textit{VAPS domain}.
We then introduce Hilbert complexes and an abstract Lefschetz fixed point theorem in that setting in Section \ref{section_Hilbert_complexes}.
Given a Hilbert complex $\mathcal{P}=(H, P)$ on $X$ of the form
\begin{equation}
    0 \rightarrow H_0 \xrightharpoonup{P_0} H_1 \xrightharpoonup{P_1} H_2 \xrightharpoonup{P_2} ... \xrightharpoonup{P_{n-1}} H_n \rightarrow 0,
\end{equation}
where $\xrightharpoonup{}$ indicates a partially defined map, we study Lefschetz numbers of endomorphisms $T$ of the complex. We denote the cohomology groups of these complexes in degree $k$ by $\mathcal{H}^k(\mathcal{P})$ and there are induced endomorphisms in cohomology which we shall also represent by a tuple $T$ with some abuse of notation.
The \textit{\textbf{global Lefschetz number}} is defined as the supertrace of the induced endomorphism on cohomology,
\begin{equation} 
L(X,\mathcal{P},T) := \sum_{k=0}^n (-1)^k Tr(T_k|_{\mathcal{H}^k(\mathcal{P})}) 
\end{equation}
where the word \textit{super} denotes that it is an alternating sum of traces. The term supertrace evokes the relation to supersymmetry in physics (see for instance \cite{witten1982supersymmetry}), which is now understood to have significant connections to index theory and topological invariants in general.

In the development of the theory in Subsection \ref{subsection_abstract_lefschetz_supertraces}, we use an enrichment of these traces where instead of using the $\mathbb{Z}_2$ grading in the supertrace, we keep track of all degrees in what we call a \textit{\textbf{polynomial Lefschetz supertrace}}, replacing the factor of $(-1)^k$ with $b^k$ to get
\begin{equation} 
\label{blug_2987}
L(\mathcal{P}(X),T)(b) := \sum_{k=0}^n b^k Tr(T_k|_{\mathcal{H}^k(\mathcal{P}(X))}).
\end{equation}
Such decorated traces are natural in framing generalized Morse inequalities for complexes, and have been used for instance in \cite{mathai1997equivariant} to prove Witten's holomorphic Morse inequalities in polynomial form given in \cite{witten1984holomorphic} for K\"ahler actions.
This approach enables us to give a uniform treatment of dualities and other techniques which are used in common in proofs of Lefschetz fixed point theorems and Morse inequalities.
There are equivalent approaches with signed traces in \cite{bismut1986witten,bismut1987demailly}, \cite[\S 1.7]{MaMarinescu2007HolMorseBook}.
With this setup, we prove the following generalization of the Atiyah-Bott-Lefschetz fixed point theorem.

\begin{theorem}
\label{Lefschetz_supertrace_intro_version}
Let $\mathcal{P}=(H,P)$ be an elliptic complex where the associated Laplace-type operators in each degree have discrete spectrum and trace class heat kernels. Let $T$ be an endomorphism of the complex. For all $t \in \mathbb{R}^+$
\begin{equation} 
    \mathcal{L}(\mathcal{P},T)(b,t)=L(\mathcal{P},T)(b)+ (1+b) \sum_{k=0}^{n-1} b^k S_k(t)
\end{equation}
where
\begin{equation} 
    S_k(t)=\sum_{\lambda_i \in Spec(\Delta_k)} e^{-t \lambda_i}  \langle T_k v_{\lambda_{i}}, v_{\lambda_{i}} \rangle
\end{equation}
where $\{v_{\lambda_i}\}_{i \in \mathbb{N}}$ are an orthonormal basis of co-exact eigensections of $\Delta_k$. In particular
\begin{equation}
    L(\mathcal{P},T)= \mathcal{L}(\mathcal{P},T)(-1,t)
\end{equation}
and the Lefschetz heat supertrace is independent of $t$.
\end{theorem}
This is Theorem \ref{Lefschetz_supertrace}, proven using heat kernel methods for Fredholm complexes with discrete spectrum by techniques which are well understood at this point. 
In Section \ref{section_Geometric_endomorphism} 
we define geometric endomorphisms associated to self maps $\widehat{f}:\widehat{X} \rightarrow \widehat{X}$ that lift to self maps $f:X \rightarrow X$, $T_f=T=(T_0,T_1,..,T_n)$, with $T_k=\phi_k \circ f^*$, where $\phi_k: f^*E_k \rightarrow E_k$ is a bundle morphism. This is for complexes with Hilbert spaces $H_k$ which are $L^2$ bounded sections on stratified pseudomanifolds with values in the bundles $E_k$. We assume that the self maps $\widehat{f}$ are stratum preserving homeomorphisms which lift to diffeomorphisms $f$ of the resolved space. This ensures that the $T_k$ are endomorphisms of the Hilbert spaces $H_k$. The condition that $[T,P]=0$ is natural for most complexes. For the Dolbeault complex, it amounts to requiring that $f$ is a local biholomorphism, and the condition is automatically satisfied for the de Rham complex. 
In this article, we focus on self maps $\widehat{f}$ which have isolated fixed points, and in this case we say that $f$ has isolated fixed points. Indeed, the connected components of fixed point sets of such maps $f$ are metrically identified.

In Section \ref{section_formulas}, we use heat kernels on stratified pseudomanifolds constructed in \cite{Albin_2017_index} to show that the Lefschetz numbers of the Hilbert complexes on stratified pseudomanifolds can be computed as sums of \textit{local Lefschetz numbers} $L(\mathcal{P}(U_p),T_f,p)$ at fixed points $p$ which are determined by local data of the self map $\widehat{f}$ near the fixed point (see Theorem \ref{localization_of_simple_fixed_points} and the subsequent discussion).
In Theorem \ref{theorem_localization_model_metric_cone}, we show that the local Lefschetz numbers for \textit{simple} isolated fixed points depend only on the induced map $\mathfrak{T}_pf$ on the tangent cone $\mathfrak{T}_pX$ at the fixed point, with a formula in terms of a supertrace on the tangent cone which is a Lefschetz variant of the Bismut-Cheeger $\mathcal{J}$ form constructed in \cite{Albin_2017_index}.

\begin{theorem}
\label{theorem_localization_model_metric_cone_intro_version}
Let $f$ be a self map on a resolved pseudomanifold $X$ which is associated to a geometric endomorphism $T_f=\varphi \circ f$ of an elliptic complex $\mathcal{P}(X)$. For an isolated simple fixed point at $p=y_0$, which lies on a stratum $Y$, with a fundamental neighbourhood $U_p=\mathbb{D}^k \times C(Z)$ where we assume that the elliptic complex decomposes as a product, the local Lefschetz number $L(\mathcal{P}(U_p),T_f,p)$ can be expressed as 
\begin{equation}
\label{equation_finale_tangent_cone_intro}
    \Bigg( \sum_{k=0}^n \frac{(-1)^k Tr(\varphi_{k,p} \circ D_pC)}{|det(Id-D_pC)|} \Bigg) \int_{\tau=0}^{\infty} \int_{Z}  str \Big(\varphi \circ \widetilde{K}_Z(\widetilde{A}(z),\widetilde{B}(z),1,z,\tau) dvol_Z \Big) \frac{1}{2\tau} d\tau.
\end{equation}
In particular, the local Lefschetz number only depends on the induced map on the tangent cone.
\end{theorem}

The notation and details are explained in Theorem \ref{theorem_localization_model_metric_cone}. Subsection \ref{subsection_heat_kernel_localization} uses techniques developed using functional calculus on infinite model cones while in Subsection \ref{subsection_cohomological_formulae} we develop machinery on truncated cones, including Hilbert complexes on \textit{fundamental neighbourhoods} $\widehat{U_p}$ of isolated fixed points $p$, that have resolutions $U_p$ that we refer to as fundamental neighbourhoods by abuse of notation (indeed they are metrically the same). We pick \textit{good} domains for the operators of complexes restricted to $U_p$, which we denote as $\mathcal{P}(U_p)$. This corresponds to picking boundary conditions for the associated Laplace-type operator on the \textit{smooth boundary} $\partial{U_p}$, that is the preimage under the resolving map of the boundary of $\widehat{U_p}$, and we distinguish between local and non-local boundary conditions. The Lefschetz heat supertrace for the heat kernel with boundary conditions will in general have boundary contributions at the boundary in addition to contributions in the interior.
We define 
\begin{equation}
    L(\mathcal{P}(U_p),T_f)=\sum_{k=0}^n (-1)^k Tr[f^*|_{\mathcal{H}^k(\mathcal{P}(U_p))}]
\end{equation}
to be the \textit{\textbf{global Lefschetz number of the geometric endomorphism $T_f$, on the fundamental neighbourhood $U_p$}}. 

We study self-maps $f$ for which each isolated fixed point $p$ has a fundamental neighbourhood $U_p$ such that there is a product decomposition $U_p=U_1\times U_2$ where $f$ is a \textit{non-expanding} map $f_1$ on $U_1$, and a \textit{non-attracting} map $f_2$ on $U_2$ (see Proposition \ref{proposition_local_product_Lefschetz_heat}).
In this case we have product formulas for Lefschetz numbers of the form
\begin{equation}
\label{intro_product_equation_refering}
    L(\mathcal{P}_B(U_p), T_f,p)=L(\mathcal{P}_N(U_1), T_{f_1},p) \cdot (-1)^m L(\mathcal{P}^*_N(U_2), T^{P^*}_{f_2^{-1}},p)
\end{equation}
where $\mathcal{P}_N,\mathcal{P}^*_N$ are dual complexes, and $T^{P^*}_{f_2^{-1}}$ denotes the geometric endomorphism induced on the dual complex. This interplay between dual domains and adjoint geometric endomorphisms plays a major role in obtaining computable local formulas, and we refer the reader to Subsection \ref{subsubsection_Local_to_global_formulae} for more details, including the applicability of this hypothesis. Such intertwining of dual cohomology groups and dual endomorphisms was studied in \cite{Goresky_1985_Lefschetz,Goresky_1993_local_Lefschetz,Bei_2012_L2atiyahbottlefschetz}, restricted to the de Rham complex.

We define the \textit{\textbf{Lefschetz boundary contribution}}
\begin{equation}
\label{equation_Lefschetz_boundary_contribution}
    L(\partial U_p, \mathcal{P}(U_p), T_f)= \lim_{t \rightarrow 0} \sum_{k=0}^n \int_{W_p} (-1)^k tr(\phi_k \circ \mathcal{K}_{U_p}^k(t,f(q),q) dvol)
\end{equation}
where $W_p \subset U_p$ is an open neighbourhood of $\partial U_p$ that does not contain the fixed point $p$, and $\mathcal{K}_{U_p}^k(t,\cdot, \cdot)$ is the heat kernel of the Laplace-type operator $\Delta_k$ on $U_p$ corresponding to boundary conditions for $\mathcal{P}_B(U_p)$. This boundary contribution vanishes for the local \textit{generalized Neumann} boundary conditions we study for the de Rham and Dolbeault complexes, while for the Atiyah-Patodi-Singer conditions it corresponds to a generalized eta invariant. 
We have the following theorem, which is a simplified version of Proposition \ref{Local_Lefschetz_numbers_definition}.
\begin{theorem}[Local Lefschetz numbers]
\label{theorem3}
In the same setting as Theorem \ref{theorem_localization_model_metric_cone_intro_version}, let $p$ be an isolated fixed point of $f$ which has a fundamental neighbourhood $U_p$, with a choice of domain, yielding an elliptic complex $\mathcal{P}(U_p)$.
Then we have that
\begin{equation}
    L(\mathcal{P}(U_p),T_f)=L(\mathcal{P}(U_p),T_f,p)+L(\partial U_p, \mathcal{P}(U_p), T_f).
\end{equation}
\end{theorem}
For the de Rham and Dolbeault complexes we have Theorem \ref{Corollary_Polynomial_Lefschetz_fixed_point_theorem} which we summarize here.

\begin{theorem}
\label{Corollary_Polynomial_Lefschetz_fixed_point_theorem_introduction_version}
Let $\mathcal{P}(X)$ be the de Rham or Dolbeault complex (possibly with twisted coefficients) satisfying the Witt condition, on a pseudomanifold $X$. Let $f$ be a self map on $X$ with simple isolated fixed points, associated to a geometric endomorphism $T_f$ on the complex. Then
\begin{equation}
    \sum_{p \in Fix(f)}  L(\mathcal{P}_B(U_p), T_f)= L(\mathcal{P}(X), T_f).
\end{equation}
where the expression on the left hand side is the sum of the local Lefschetz numbers at the fixed points, and that on the right the global Lefschetz number.
\end{theorem}
This is a local to global formula. In the de Rham case, the generalized-Neumann boundary conditions give a Fredholm complex while for the Dolbeault complex, they give a non-Fredholm one. We introduce renormalized Lefschetz heat supertraces for the Dolbeault complex in Subsection \ref{subsubsection_renormalized_Lefschetz}, and prove a renormalized version of Theorem \ref{Lefschetz_supertrace_intro_version} for $b=-1$. The renormalization is similar \textit{in spirit} to that used in generalized eta invariants \cite{weiping1990note} and formulas in \cite{witten1984holomorphic,Baumformula81}.

In Section \ref{section_de_Rham} we focus on the $L^2$ de Rham complex of \textit{Witt spaces}, which are stratified pseudomanifolds where the middle dimensional $L^2$ cohomology of links at strata vanish.
We briefly recall Goresky and MacPherson's Lefschetz formulas in intersection homology in Subsection \ref{subsection_intersection_Lefschetz_number} for comparison. Their formulas are for placid self-maps which satisfy weaker conditions than the general self-maps we study. Since $L^2$ de Rham cohomology only depends on the metric on the regular part of the Witt space, we define \textit{Hilsum-Skandalis} geometric endomorphisms for \textit{proper} self maps on $X^{reg}$. We observe that this gives general geometric endomorphisms for self-maps on $X^{reg}$ that preserve the singular set, but we must impose more constraints for studying local Lefschetz numbers near fixed points at the singularities.
In Subsection \ref{subsection_induced_map_in_cohomology}, we show the stratified homotopy invariance of de Rham Lefschetz numbers, and how the Hilsum-Skandalis replacement can be used away from fundamental neighbourhoods of fixed points, to apply our results to a wide class of self maps.
In Subsection \ref{subsection_analytic_derivation_intersection} we derive local Lefschetz numbers corresponding to those of Goresky and MacPherson.

We then construct the Witten instanton complex for the de Rham complex corresponding to stratified Morse functions, proving that there is an \textit{instanton complex}, or a \textit{small eigenvalue complex} which is a subcomplex of the Witten deformed complex. The Witten deformed complex $\mathcal{P}_{\varepsilon}(X)$ is a co-chain complex that is isomorphic as a co-chain complex to the de Rham complex $\mathcal{P}_{\varepsilon=0}(X)$, and the instanton complex is a finite dimensional subcomplex of the latter that is quasi-isomorphic to the de Rham complex that can be described in terms of local information at the critical points of the stratified Morse function. The following is a version of Theorem \ref{theorem_small_eig_complex}.

\begin{theorem}[Witten instanton complex (de Rham)]
\label{theorem_small_eig_complex_de_Rham_intro}
Let $\widehat{X}$ be a Witt space of dimension $n$ with a wedge metric and a stratified Morse function $h$. Let $E$ be a flat vector bundle on $X$. Let $\mathcal{P}(X)$ be the de Rham complex $(L^2\Omega(X;E),d^E)$.  
For any integer $0 \leq k \leq n$, let $
\mathrm{F}_{\varepsilon, k}^{[0, c]} \subset L^2\Omega^k(X;E)$ denote the vector space generated by the eigenspaces of $\Delta_{\varepsilon,k}$ associated with eigenvalues in $[0, c]$. 

For any $c>0$, there exists $\varepsilon_0>0$ such that when $\varepsilon>\varepsilon_0$, the dimension of the eigenspaces are the same as $\sum_{a \in Cr(h)} \mathcal{H}^k(\mathcal{P}_{\varepsilon,B}(U_a))$, and together form a finite dimensional subcomplex of $\mathcal{P}_{\varepsilon}(X)$ :
\begin{equation}
    \label{small_eigenvalue_complex_intro_de_Rham}
\left(\mathrm{F}_{\varepsilon, k}^{[0, c]}, P_{\varepsilon}\right): 0 \longrightarrow \mathrm{F}_{\varepsilon, 0}^{[0, c]} \stackrel{P_{\varepsilon}}{\longrightarrow} \mathrm{F}_{\varepsilon, 1}^{[0, c]} \stackrel{P_{\varepsilon}}{\longrightarrow} \cdots \stackrel{P_{\varepsilon}}{\longrightarrow} \mathrm{F}_{\varepsilon, n}^{[0, c]} \longrightarrow 0.
\end{equation}
\end{theorem}

This categorifies a version of the Morse inequalities of Goresky and MacPherson in \cite[\S 6.12]{goresky1988stratified} (c.f. \cite{Jesus2018Wittensgeneral,ludwig2017comparison}), %We show that the Witten instanton complex, corresponding to the Witten deformed Laplace-type operator can be expressed in terms of the cohomology of local complexes at critical points of Morse functions, a fact which is used in important applications such as Cheeger-M\"uller theorems (see for instance \cite{ludwig2020extension} where this is done for the case of isolated singularities). This is the content of Theorem \ref{theorem_small_eig_complex_de_Rham_intro}.
which follows as a corollary, and we present it as follows.

\begin{theorem}
\label{Theorem_strong_Morse_de_Rham_intro}
[Strong polynomial Morse inequalities for the de Rham complex]
Let $\widehat{X}$ be a Witt space of dimension $n$ with a wedge metric and a stratified Morse function $h$. Let $E$ be a flat vector bundle on $X$. Let $\mathcal{P}(X)=(L^2\Omega(X;E),d^E)$ be the de Rham complex. 
Then there exist non-negative integers $Q_0,..., Q_{n-1}$ such that
\begin{equation}
    \Big( \sum_{a \in Crit(h)}  \sum_{k=0}^n b^k dim(\mathcal{H}^{k}(\mathcal{P}_B(U_a)) \Big) - \sum_{k=0}^n b^k dim(\mathcal{H}^{k}(\mathcal{P}(X))) = (1+b) \sum_{k=0}^{n-1} Q_k b^k
\end{equation}
where $\mathcal{P}_B(U_a)$ is the product complex in Definition \ref{definition_product_decomposition_critical_point}.
\end{theorem}
The \textit{inequalities} advertised by the theorem's label are those imposed on the coefficients of the polynomial on the left hand side by the fact that $Q_j \in \mathbb{N}_{0}$.
This is Theorem \ref{Theorem_strong_Morse_de_Rham} which generalizes the classical polynomial Morse inequalities.

We prove the following result, which seems to be new even in the smooth setting.
\begin{theorem}[Lefschetz-Morse inequalities]
\label{theorem_Lefschetz_Morse_inequalities_de_Rham_intro}
Let $X$ be a resolution of a stratified pseudomanifold equipped with a resolution of a stratified Morse function $h$. Let $f$ be a self map of $X$ such that $f^*h=h$ and the fixed points of $f$ are a subset of the critical points of $h$ that lifts to a geometric endomorphism of the de Rham complex on $X$. Then we have the Lefschetz inequalities
\begin{equation}
\label{Lefschetz_inequalities_intro}
    \mathcal{L}(\mathcal{P},T_f)(b)=L(\mathcal{P},T_f)(b)+ (1+b) \sum_{k=0}^{n-1} b^k Q_k
\end{equation}
where $Q_k=0$ if $h$ is perfect at degrees $k, k+1$. 
\end{theorem}

This is a restatement of Theorem \ref{theorem_Lefschetz_Morse_inequalities_de_Rham} and we work out a simple example in Example \ref{Example_Lefschetz_Morse_1}. It is a natural generalization of taking the trace of the identity map on the Witten instanton complex which categorifies the Morse inequalities.
We study examples and applications, related to other invariants in complex geometry as well as a generalized Arnold-Floer type theorem in Section \ref{Dolbeault_section}. We study the holomorphic Witten instanton complex and applications in \cite{jayasinghe2024holomorphicwitteninstantoncomplexes} where we explore this and other instanton complexes in greater depth.

In Section \ref{Dolbeault_section}, we focus on holomorphic Lefschetz numbers, and more generally, equivariant indices of Hilbert complexes on stratified pseudomanifolds with wedge complex (and almost complex) structures. We start by developing a good understanding of the $L^2$ Dolbeault complex and its cohomology on complex cones in Subsection \ref{subsection_L2_Dolbeault_cohomology}, in particular establishing important dualities using Hodge theory.
In Subsection \ref{subsection_Local_Lefschetz_Dolbeault_main} we study local holomorphic Lefschetz numbers. In Subsection \ref{subsubsection_Baum_Fulton_Quart_method_compute} we review the formulas of \cite{baum1979lefschetz,Baumformula81} for holomorphic actions on coherent sheaves on quasi-projective varieties. 
We then consider our $L^2$ versions on fundamental neighbourhoods. 
In Subsection \ref{subsection_classes_of_singularities}, we discuss properties of our local cohomology groups for various classes of singularities studied in algebraic and complex geometry. We compare our local Lefschetz numbers with those of Baum-Fulton-Quart and discuss relations to work in \cite{baumfultonmacKtheoryRiemannRoch,AnalyticToddBeiPiazza,LottHilbertconplex} where corresponding algebraic and analytic Todd classes have been studied.

In Subsection \ref{subsection_almost_complex_spin_c_Dirac}, we prove a local to global formula (similar to Theorem \ref{Corollary_Polynomial_Lefschetz_fixed_point_theorem_introduction_version}) for spin$^\mathbb{C}$ Dirac operators when the spin$^\mathbb{C}$ structures induced on the tangent cones of fixed points have compatible complex structures.
We emphasize that many equivariant invariants in the complex setting can be generalized to this setting.

We show how in the complex setting, the holomorphic Lefschetz numbers can be used to compute Lefschetz versions of spin numbers, Hirzebruch's $\chi_y$ genera and signature numbers.
We study important dualities for these invariants in the $L^2$ setting, and compare with versions in other cohomology theories \cite{donten2018equivariant,MaximSaitoJorgHodgemodules2011,Banaglbook07}, for some of which we have not seen local Lefschetz formulas in the literature. We also study equivariant self-dual and anti self-dual indices on four dimensional spaces, which are key to understanding invariants that arise in gauge theory and certain supersymmetric quantum field theories such as in \cite{pestun2012localization,festuccia2020twisting}. 
In the smooth setting, there is a relation between the equivariant self-dual and anti self-dual indices of a torus action $(\lambda,\mu) \in (\mathbb{C}^*)^2$ at a fixed point $a$, given by
\begin{equation}
\label{equation_duality_SD_ASD_intro}
    ind_{ASD}(U_a,\lambda,\mu)=ind_{SD}(U_a,\lambda,\overline{\mu})=ind_{SD}(U_a,\overline{\lambda},\mu),
\end{equation}
and we refer to Subsection \ref{subsection_asd_sd_complexes_localization} for more details. We identify this as the source of the relation
\begin{equation}
\label{equation_duality_partition_function_SD_ASD_intro}
    Z^{\text{anti-inst}}_{\epsilon_1,\epsilon_2}(a,\overline{q})=Z^{\text{inst}}_{\epsilon_1,-\epsilon_2}(a,\overline{q})=Z^{\text{inst}}_{-\epsilon_1,\epsilon_2}(a,\overline{q})
\end{equation}
between the instanton and anti-instanton partition functions of certain supersymmetric Yang Mills theories (see equation (109) of \cite{festuccia2020twisting}), related to physical interpretations of the instanton partition functions as quantum states.
We study this relation in equation \ref{equation_duality_SD_ASD_intro} on singular spaces, showing that it continues to hold for certain spaces while failing to go through on others, with explicit examples.

In Subsection \ref{subsection_Morse_Hirzebruch_cohomological}, we study how some well known properties of spaces and invariants that follow from the existence of group actions in the smooth case, generalize to our singular setting. We see that local cohomology plays an important role in vanishing theorems and counting formulas including $L^2$ versions of the Arnold-Floer theorem in restricted settings, subtleties which are hidden in the smooth setting.

We devote Subsection \ref{subsection_Computations_for_singular_examples} to explicit computations of the equivariant invariants we defined on singular spaces, primarily in the case of toric varieties where we explore how various intricacies of these spaces lead to different features in the global and local Lefschetz numbers of these spaces, both for our $L^2$ versions as well as the algebraic versions corresponding to those of \cite{baum1979lefschetz}.

\subsection{History, motivations and related work.}
\label{subsection_History_motivation}

In 1926, Solomon Lefschetz used methods of topology to establish his famous fixed point theorem in \cite{lefschetz1926intersections}, which under suitable circumstances expresses the number of fixed points of a self map from a closed \textit{topological} manifold $X$ to itself in terms of the transformation induced by the map in the cohomology of $X$. It associates to each such self map what is now called a Lefschetz number, whose non-vanishing implies the existence of a fixed point.
An algebraic version of the theorem was first conjectured by Shimura and proven by Eichler for algebraic curves.

While Verdier proved his now famous version in \'etale cohomology \cite{verdier1967lefschetz},
Atiyah and Bott generalized the result to arbitrary elliptic complexes on smooth manifolds \cite{AtiyahBottBull66,AtiyahBott1} using analytic methods (see \cite{tu2015genesis} for historical anecdotes).
When applied to the de Rham complex, their theorem not only recovers Lefschetz's original result on \textit{smooth} manifolds but also provides more general formulas for local Lefschetz numbers at smooth fixed points.
A number of specialized local Lefschetz formulas were derived such as for Spin Dirac and Signature operators in \cite{AtiyahBott2} and were used to study interactions of number theory, group theory, geometry and topology. This heralded the origin of equivariant index theory and was followed by significant advances of \textit{localization formulas} including the Atiyah-Bott-Berline-Vergne localization theorem \cite[\S 7]{berline2003heat} that have become essential in various areas of mathematics and physics, gauge theory, enumerative geometry and integrable quantum field theory (see for instance \cite{pestun2017localization}) to name a few.

While index theory and intersection theory give formulas for various invariants important in geometry and topology, equivariant versions of these quantities are typically more easily computable in the presence of symmetries. 
In \cite[\S 5]{bott1967vector}, Bott explains how the holomorphic Lefschetz fixed point theorem can be used to prove the \textit{Bott residue formula} and describes how this observation gave the impetus to prove the more general result and explains the underlying principle as to why more general invariants can be expressed in terms of holomorphic Lefschetz numbers. This principle based on the topological twisting is widely used by experts in mathematical physics (see, e.g. \cite{witten1983fermion,pestun2012localization,pestun2017localization}).

Stratified pseudomanifolds are a natural generalization of smooth manifolds that can be loosely described as the metric completion of a smooth open manifold $\widehat{X}^{reg}$ called the regular part, with a metric that is degenerate on the completion $\widehat{X}$. The structure of Thom-Mather stratifications is a strengthening of this notion, that arises in many natural spaces such as vanishing sets of polynomials, orbits and quotient spaces of group actions, and moduli spaces.

Goresky and MacPherson introduced intersection (co)homology on compact stratified pseudomanifolds and extended intersection theory on these singular spaces \cite{goresky1980intersection}, for choices of \textit{perversity functions} that dictated how cycles of singular homology were allowed to intersect at singularities. For stratified pseudomanifolds satisfying the Witt condition, intersection homology with \textit{middle perversity} satisfies Poincar\'e duality and they generalized the Lefschetz fixed point theorem in \cite{Goresky_1985_Lefschetz}.
Their intersection Lefschetz numbers are equal to the Lefschetz numbers of Atiyah and Bott in the case of the de Rham complex on smooth compact manifolds. Crucially, they defined local Lefschetz numbers as alternating traces of induced maps on local cohomology groups, studying this further in \cite{goresky1993local}. They also generalized the Morse inequalities using similar ideas in \cite[\S 6.12]{goresky1988stratified}.

Simultaneously, Cheeger started studying $L^2$ cohomology and calculus on spaces with degenerate metrics such as conic and horn metrics \cite{cheeger1980hodge,cheeger1983spectral}, laying the foundations for the study of index theory on such spaces.
Together Cheeger, Goresky and MacPherson showed that for iterated conic metrics (which we call wedge metrics in this article) $L^2$ de Rham cohomology is isomorphic to middle perversity intersection homology \cite{cheeger1982l2}, and they conjectured that the $L^2$ cohomology of the Fubini-Study metric of a projective variety is isomorphic to intersection homology with middle perversity.
One aim was to find Hodge structures on intersection cohomology using the Fubini-Study metric. Since it is known that the $L^2$ de Rham cohomology of wedge metrics is isomorphic to the middle perversity intersection homology, this is equivalent to the statement that the $L^2$ de Rham cohomology of the Fubini Study metric is the same as that of a wedge metric. This was proven for the case of complex curves and surfaces and for the case of isolated singularities (see \cite[\S 2.4]{cruz2020examples} for history, and progress on the conjecture). However, evidence against the underlying philosophy has also been presented (see \cite{Kollarlinkssingularities}). 

The groundbreaking work of Cheeger using calculus on singular spaces was followed by further work on index theory on singular spaces including Cheeger's joint work with Bismut. We refer to the introduction of \cite{Albin_2017_index} for a review of the relevant literature. In that article, a families index theorem for Dirac-type operators on stratified pseudomanifolds with wedge metrics is proven. Many results including heat kernel constructions presented in that work are utilized in our research.

There are versions of Lefschetz fixed point theorems in $L^2$ cohomology for spaces with isolated singularities in the literature such as \cite{nazaikinskii2005elliptic,Bei_2012_L2atiyahbottlefschetz}. In the latter article, more easily computable formulas for the case of the $L^2$ de Rham complex with conic metrics were proven by appealing to the isomorphism with intersection homology and using local intersection homology groups.
The same isomorphism was used in \cite{ludwig2013analytic,ludwig2017comparison,ludwig2020extension} to prove Morse inequalities and Cheeger-M\"uller theorems on spaces with isolated conic singularities using Witten deformation. In studies of such operators the absolute and relative cohomology groups of the de Rham operator have been used earlier to describe cohomology groups.
While the Morse inequalities for $L^2$ de Rham cohomology were proven in \cite{Jesus2018Wittens,Jesus2018Wittensgeneral} for more general metrics and choices of domains, the categorification of the Morse polynomial leads to more results, which have not been studied even in the smooth setting. An instanton complex was constructed by Ludwig in \cite{ludwig2017comparison} for slightly restricted notions of Morse functions where the singular critical points are either completely attracting or completely expanding.
The stratified Morse functions we consider are equivalent to those in \cite{Jesus2018Wittensgeneral} a natural generalization of the notion of Morse functions on smooth manifolds, and we refer the reader to Definition \ref{definition_stratified_Morse_function} for a precise definition. 
We work out the details of the B\"ochner formulas, which show that properties of the gradient flow near the strata need to be studied carefully to generalize the construction of the complex in the non-essentially self-adjoint situation. We extend these techniques to the Dolbeault complex in \cite{jayasinghe2024holomorphicwitteninstantoncomplexes} where the results are new.

In \cite{Bei_2012_L2atiyahbottlefschetz}, Bei framed global Lefschetz numbers for elliptic complexes as supertraces in global cohomology groups of Hilbert complexes. Our results in Section \ref{subsection_abstract_lefschetz_supertraces} are generalizations of those of \cite{Bei_2012_L2atiyahbottlefschetz} for Hilbert complexes, and in Subsection \ref{subsection_cohomological_formulae} we study Hilbert complexes associated to isolated fixed points of self maps on geometric spaces. While Cheeger used functional calculus on the infinite model cone in his proofs \cite{cheeger1983spectral}, and to compute the contributions to the index of operators from cone points using eta invariants, we use functional calculus on the truncated tangent cone, associated to Hilbert complexes on stratified pseudomanifolds with (stratified) boundary to compute our local Lefschetz numbers, defining them as (renormalized) supertraces over the cohomology of these \textit{local complexes} in the case of the de Rham and Dolbeault complexes. 
While clearly related to the \textit{spectral asymmetry} captured by (generalized) eta invariants, this formulation puts local and global Lefschetz numbers in a somewhat equal footing and allows the use of powerful tools such as dualities to better understand richer structures of local Lefschetz numbers.
In the case where there is only a spin$^{\mathbb{C}}$ structure, where near the boundary of a fundamental neighbourhood of a fixed point it is defined by an almost complex structure, the local cohomology can no longer be expressed as an element in the kernel of an operator $P$ up to elements in the image of $P$ where $D=P+P^*$ as in the de Rham and Dolbeault cases. In fact the spin$^{\mathbb{C}}$ Dirac operator and its square only preserves odd and even degrees in this case.

This approach gives analytic replacements for the local intersection homology groups we mentioned above in the setting of $L^2$ de Rham cohomology, and we recover versions of Goresky and MacPherson's Lefschetz fixed point theorem as well as their Morse inequalities.
Goresky and MacPherson prove their result in middle perversity intersection cohomology theory in order to use Poincar\'e duality. As we discussed in the earlier subsection, we use the Hilsum-Skandalis replacement to study de Rham Lefschetz numbers on a very general class of self maps on stratified spaces, which restricted to Thom-Mather stratified spaces are in general different from the placid maps studied by Goresky and MacPherson. This is essentially because $L^2$ cohomology only depends on the metric on $X^{reg}$ and we only require \textit{good} properties of maps restricted to the regular part of $X$. Moreover we prove the Morse inequalities for stratified Morse functions, the gradient flow of which do not necessarily preserve strata.

In \cite[\S 8]{AtiyahBott1}, Atiyah and Bott state the following.
\textit{``It is well known in various homology theories in topology and algebraic geometry that a Lefschetz fixed point theory is a formal consequence of three things. K\"unneth formula (giving the cohomology of a product), Poincar\'e duality (the isomorphism between homology and cohomology), compatibility between intersection of cycles and cup product of cohomology classes."}
A similar maxim was proposed by Verdier in \cite[\S 3]{verdier1967lefschetz}, whose framework for Lefschetz trace formulas in certain categories with six functor formalisms is now widely used (see, e.g., 
\cite{Zheng2015VerdierLefschetzDeligneMumfordSixfunctor}).

In our analytic framework, the role of homology is played by the cohomology of the adjoint complex of the Hilbert complex, and the duality is between the cohomology of the complex and that of the adjoint complex. Since we restrict our study to complexes associated to self-adjoint Dirac operators, the compatibility is built in by the definition of the adjoint complex, both for local and global complexes. K\"unneth formulas continue to play a key role in global as well as local cohomology, in particular in the proof of formulas such as that in equation \ref{intro_product_equation_refering} which show how to piece together boundary conditions for attracting and expanding \textit{factors} of self maps.

These ideas generalize to stratified spaces with other degenerate metrics such as horn metrics and warped product metrics of the form studied in \cite{cruz2020examples,Jesus2018Wittensgeneral}, and this is the subject of upcoming work. 

In the case of the Dolbeault complex, the local cohomology groups are infinite dimensional and the supertraces need to be renormalized for general geometric endomorphisms. After developing the basics of the analytic theory, we were able compute the local Lefschetz numbers in many examples, which led to the discovery of close ties to the work of \cite{Baumformula81}. In the formulas in Section 3 of that article, Baum explains how the Lefschetz-Riemann-Roch formulas in \cite{baum1979lefschetz} for complex automorphisms $f$ (such that $f^n=Id$) acting on the structure sheaf of projective varieties can be realized as sums of regularized traces at the local ring of regular functions at isolated fixed points. In this light, our local cohomology groups are analytic replacements for the local ring, at least in the context of localization that we study here.

There are different versions of renormalized traces using heat kernel methods used in \cite{donnelly1986fixed,kytmanov2004holomorphic} to define global Lefschetz numbers on certain smooth non-compact complex manifolds and compact complex manifolds with boundary. We study connections between these and our formalism in Subsection \ref{subsection_Lefschetz_L2_cohomology_development}.
The local cohomology groups correspond to the null spaces of certain $\overline{\partial}$-Neumann problems for the Dolbeault Laplacian on model cones, and thus relate the holomorphic Lefschetz fixed point theorem to studies of the $\overline{\partial}$-Neumann problem (see the introductions in \cite{RuppenthalL2dbarsingular2019,RuppenthalSerre2018,AnalyticToddBeiPiazza} for related work). 

The Lefschetz-Riemann-Roch theorem of \cite{baum1979lefschetz} for coherent sheaves on schemes has been generalized to localization formulas for equivariant Chow groups (\cite{equivariant_intersection_edidin,localization_algebraic_Graham_Edidin}) and on stacks and quotient spaces.
A common theme in those results is to embed the singular space in a smooth one to find suitable sheaves and complexes to prove localization theorems. While many important stacks are stratified pseudomanifolds, the nature of stacks is that they are studied together with embeddings in smooth structures and with natural group actions, a setting which Thom-Mather stratifications were initially developed to capture.
The abundance of such localization formulas for singular algebraic varities using algebraic methods is in sharp contrast to the lack of analytic versions. There is currently a fairly good understanding of domains of Dirac-type operators on certain singular spaces, as well as associated characteristic classes and K-theory, and there are many ideas that can be borrowed from the algebraic versions in order to develop the theory on the analytic side. By using equivalences between the theories, one can use generalizations such as the Bott residue formula for singular algebraic varieties in \cite{localization_algebraic_Graham_Edidin} (see Remark \ref{remark_Bott_residue_quadric}) to compute integrals of interesting quantities.

Our methods do not need embeddings of the singular spaces into smooth spaces and in the case where such embeddings exist, the differences of our formulas as opposed to those of Baum in \cite{Baumformula81} reflect the differences of the intrinsic vs. extrinsic approaches. For instance, the local ring at a singular point on a non-normal affine variety does not include some of the holomorphic functions on the singular space, just those that extend to holomorphic functions on the smooth space it is embedded in. Baum shows that the local holomorphic Lefschetz numbers for the structure sheaf are renormalized traces over the local ring, thus giving a different result from our renormalized traces in local cohomology, since we include all $L^2$ bounded meromorphic functions that are in the VAPS domain.
We discuss such themes in Subsection \ref{subsection_classes_of_singularities} in more detail and show the difference of Lefschetz numbers in explicit examples in Subsection \ref{subsection_Computations_for_singular_examples}. 

While our results hold for spaces that are not algebraic, algebraic geometry provides many tools that are useful to investigate the various features of $L^2$ cohomology of Hilbert complexes on different types of spaces before generalizing beyond. This was very much the modus operandi of Atiyah, Bott, Hirzebruch and other pioneers of index theory who built on ideas of Grothendieck.
In Subsection \ref{subsection_Computations_for_singular_examples}, we compute holomorphic Lefschetz numbers in different spaces including non-normal algebraic varieties, in cases where they are normal pseudomanifolds and non-normal pseudomanifolds (see Definition \ref{definition_normal_pseudomanifold}).
In particular we demonstrate in an explicit example (see Example \ref{subsection_nonnormal_examples}) that our framework can be used to compute local Lefschetz numbers for the Dolbeault complex, corresponding to the Lefschetz fixed point theorem of \cite{Bei_2012_L2atiyahbottlefschetz} with the minimal and maximal domains (as opposed to the VAPS domain that we focus on) in the case of isolated singularities. Our work in \cite{jayasinghe2024holomorphicwitteninstantoncomplexes} treats such domains directly.

Using the holomorphic Lefschetz numbers we can compute Lefschetz versions of spin numbers, Hirzebruch $\chi_y$ genera, signature numbers, and self-dual and anti-self-dual indices. 
In the smooth setting, it is well known that on complex manifolds, the Lefschetz Hirzebruch $\chi_y$ invariant for the trivial bundle evaluated at $y=-1$ is the Lefschetz number of the de Rham complex, at $y=0$ is the holomorphic Lefschetz number and at $y=1$ is the Lefschetz signature, and there are many dualities of the $\chi_y$ invariant which arise as consequences of Poincar\'e duality and in the K\"ahler case with Serre duality.
We show that these properties go through in our singular setting with $L^2$ cohomology. This is in contrast to the Baum-Fulton-MacPherson version of these invariants for singular projective algebraic varieties (see Remark \ref{Remark_our_dualities_are_better}). Our proofs of such properties follow from dualities of local cohomology groups, giving a more conceptual understanding of such properties.
We briefly explain some of the connections here, postponing a deeper study to Subsection \ref{subsection_Local_Lefschetz_Dolbeault_main}. 

Analytic Todd classes corresponding to more general domains have been studied in  \cite{AnalyticToddBeiPiazza}, and we discuss how our results are Lefschetz versions of these, similar to how the results in \cite{baum1979lefschetz} are Lefschetz versions of \cite{baumfultonmacKtheoryRiemannRoch}.
Algebraic versions of Hirzebruch's $\chi_y$ genera and their Lefschetz versions when evaluated at $y=0$ give the Baum-Fulton-Quart formulas on singular varieties and have been studied for instance in \cite{donten2018equivariant}. For $y=0$, these invariants are related to Chern-Schwarz-MacPherson classes and in the case of $y=1$, to versions of the signature and $L$ classes of Goresky and MacPherson, and Thom-Milnor $L$ classes, under various assumptions on the types of singularities. We refer the reader to \cite{cappell2023equivariant,GeneralatticeCappellShaneson94,Brasselet2010,MaximJorgCharacteristicsingulartoric2015,Albin_signature,Banaglbook07}. 
Lefschetz Hirzebruch $\chi_y$ genera are related to Molien series and equivariant eta invariants \cite{donten2018equivariant,Degeratuthesis}, and are key in defining elliptic genera (see \cite{hirzebruch1992manifolds,borisov2003elliptic,donten2018equivariant}) and the dualities we prove for the $L^2$ versions are crucial for many important properties of these more general invariants. We provide the basic tools to start the study of $L^2$ versions of these invariants.

Under certain cohomological conditions on smooth manifolds, Hirzebruch's $\chi_y$ genus is related to Morse polynomials and Poincar\'e polynomials and we show how some of these relationships continue to hold in our singular setting in Subsection \ref{subsection_Morse_Hirzebruch_cohomological}.
Many of the easiest corollaries of Morse inequalities and Lefschetz fixed point formulas are for spaces where the global $L^2$ Betti numbers vanish in odd degree, and local cohomology groups (being simple in the smooth setting) do not show up in statements of such theorems. In Subsection \ref{subsection_Morse_Hirzebruch_cohomological} we investigate how such results extend to the singular setting with additional conditions on the local cohomology groups. In particular, we observe how the statement of the Arnold-Floer theorem needs to be modified to account for multiplicities of fixed points, for autonomous Hamiltonian systems, or under the assumption that odd dimensional $L^2$ Betti-numbers are zero.

Self-dual and anti-self-dual complexes and their local equivariant indices play an important role in understanding the Nekrasov partition function, Donaldson-Witten theory and Seiberg Witten theory. Nekrasov and Okounkov proved that the Seiberg Witten prepotential can be obtained as a limit of the Nekrasov partition function (see \cite{nekrasov2003seiberg,NekrasovABCDinsta,NekrasovOkounkovSW2006}).
Generalized versions of this cohomologically twisted field theory are now widely studied, for instance in \cite{pestun2012localization,festuccia2020twisting,festuccia2020transversally} and contact 5 manifolds studied in \cite{baraglia2016moduli,hekmati2023equivariant}. We show how the Lefschetz Hirzebruch $\chi_y$ invariant can be used to compute $L^2$ versions of the self-dual and anti-self-dual indices, exploring how the duality in equation \eqref{equation_duality_SD_ASD_intro} which is important for generalized instanton counting formulae, fails to hold on certain singular spaces. In particular we show that some algorithms used to compute such instanton partition functions and equivariant indices on smooth spaces fail to go through in the singular case, but that the equivariant $\chi_y$ invariants can be used to compute them.

A common theme in the articles on cohomologically twisted field theories that we cited above, is to study the local Dolbeault complex using the theory of transversally elliptic operators (see \cite{Atiyahellipticopsandcompactgroups74}), since the setting comes with actions of compact connected Lie groups on spaces. This is widely used in various settings of mathematics and physics (c.f., Remark \ref{remark_yau_nekrasov_conifold}). It is natural to think of the holomorphic Lefschetz numbers as character formulas in that setting, and certain choices of Laurent expansions of local Lefschetz numbers become important (see Proposition 6.2 of \cite{Atiyahellipticopsandcompactgroups74}).
We explore how these choices are related to choices of local cohomology with different boundary conditions (see Remark \ref{remark_dual_complexes_and_laurent_expansions}), in particular showing how the adjoint complexes play a key role in the singular setting.

This relation of Laurent expansions with choices of local cohomology groups is implicit in the work of \cite{witten1984holomorphic} in the smooth setting, and corresponds to different choices of local holomorphic Morse polynomials corresponding to action chambers of a Lie group action in \cite{wu1996equivariant}. In his celebrated article \cite{witten1982supersymmetry} Witten explored the relationship of supersymmetry and Morse theory (c.f. \cite{witten1982constraints} and ``Lessons from Raoul Bott" in \cite{Yaubookfoundersindex}), exploring the possibility of Morse inequalities for other elliptic complexes such as the spin Dirac, Rarita-Schwinger and signature complexes, working out details in \cite{witten1983fermion}. In \cite{witten1984holomorphic} he formulated his \textit{equivariant} holomorphic Morse inequalities for effective Hamiltonian circle actions on compact K\"ahler manifolds (to be distinguished from Demailly's \textit{asymptotic} holomorphic Morse inequalities \cite{demailly1991holomorphic}).
A heat kernel proof was given for this theorem, for effective K\"ahler Hamiltonian group actions of compact connected Lie groups in the first two of the series of papers  \cite{mathai1997equivariant,wu1996equivariant,wu1998equivariant,wu2003instanton} by Wu and his collaborators. 
The third paper with Zhang uses methods similar to those in \cite{Zhanglectures} which we have generalized in this article for the case of the de Rham complex (see Remark 3 \cite[\S 5.7]{Zhanglectures}).

Witten's starting point is the holomorphic Lefschetz fixed point theorem, and a close comparison of his methods and ours shows how to formulate equivariant holomorphic Morse inequalities in our singular setting, in many aspects similar to the de Rham version in Theorem \ref{Theorem_strong_Morse_de_Rham_intro} and we have proven these in  \cite{jayasinghe2024holomorphicwitteninstantoncomplexes}, using it to prove results such as rigidity of various invariants.
Our framework of analytic local cohomology groups allows easy conjectural formulations of more results connected to Witten deformation such as those for the stratified Morse-Bott, Morse-Novikov inequalities and counting formulae for the Kervaire semi-characteristic and the mod 2 Signature on certain singular spaces. In the smooth setting, the above mentioned results have been proven using methods of Witten deformation (see \cite{ShubinNovikov,Zhanglectures}) and it is easy to see that most of the techniques generalize. We will study some of these aspects in upcoming work.

These methods have also been widely used to study the equivariant index of spin$^{\mathbb{C}}$ Dirac operator on symplectic manifolds in the presence of Hamiltonian actions to study quantization and its relations with symplectic reduction (see, e.g. \cite{ZhanganalyticQR98}, and \cite{meinrenken1998symplectic,meinrenken1999singular} for some work in singular settings), hence the importance of generalizing the techniques beyond the K\"ahler case. In the smooth setting (and even orbifolds after taking lifts), the fibers of the normal bundle to any fixed point set of a group action copies of $\mathbb{R}^n$ and have K\"ahler structures, but in the singular setting this differs even for isolated fixed points.
Since our Lefschetz fixed point theorem covers the case of twisted spin$^{\mathbb{C}}$ Dirac operators it can be used to investigate such questions on stratified pseudomanifolds.

\begin{justify}
 \textbf{Acknowledgements:} 
Many thanks are due to my advisor Pierre Albin for suggesting this problem and his patient guidance and support with ideas, directions and technicalities.
A reading group on the Cheeger-M\"uller theorem organized by him, as well as conversations on Witten deformation with Kesav Krishnan proved to be useful. Another reading group organized by Jesse Huang and James Pascaleff on perverse sheaves was helpful. I am grateful for discussions with Gabriele La Nave and Hadrian Quan on symplectic and almost complex structures on singular spaces, and other aspects of this work. 
I thank Nachiketa Adhikari for many useful conversations on this work, including but not limited to algebraic, toric and symplectic geometry.
I thank interesting discussions with, and comments on this work from Katrina Morgan, Jacob Shapiro, Karthik Vasu, Mengxuan Yang and Xinran Yu.
I thank Joey Palmer and Susan Tolman for fostering my interest on connections to toric geometry.
I was partially supported by Pierre Albin's NSF grant DMS-1711325. 
\end{justify}

\section{Stratified pseudomanifolds and Dirac operators}
\label{section_stratified_space_basics}

In this section we introduce manifolds with corners with iterated fibration structures which resolve Thom Mather stratified pseudomanifolds.
Stratified pseudomanifolds of dimension $n$ are topological spaces $\widehat{X}$ which have a dense open set called the \textit{\textbf{regular set}} $\widehat{X}^{reg}$ which is a smooth open manifold of dimension $n$. The \textit{\textbf{singular set}} $\widehat{X}^{sing}:=\widehat{X} \setminus \widehat{X}^{reg}$ is a union of smooth manifolds of different dimensions, all of which have dimension less than $n-2$. 
In this article, we look at \textit{wedge metrics}, which can be described roughly as iterated conic metrics, which are bundle metrics on the tangent space of $\widehat{X}^{reg}$ which degenerate near the singularities in a specific manner which we shall describe in the first subsection.

We will then define wedge Clifford modules and wedge Dirac operators on these spaces. These are formally self-adjoint operators and we will describe how to choose self adjoint extensions for these operators. In the last subsection we will define certain local structures and fix some conventions.

We follow the exposition in the first 2 sections of \cite{Albin_2017_index} for the most part. In that article, families of Dirac-type operators on families of stratified pseudomanifolds which fiber over a base $B$ are introduced. Our setting corresponds to the case where $B$ is a point, and we simplify the notation accordingly.

\subsection{Stratified pseudomanifolds}

All topological spaces we consider will be Hausdorff, locally compact topological spaces with a countable basis for its topology. Recall that a subset $W$ of a topological space $V$ is locally closed if every point $a \in W$ has a neighborhood $\mathcal{U}$ in $V$ such that $W \cap \mathcal{U}$ is closed in $\mathcal{U}$. A collection ${S}$ of subsets of $V$ is locally finite if every point $v \in V$ has a neighborhood that intersects only finitely many sets in ${S}$.

A Thom-Mather stratified pseudomanifold is a topologically stratified space (see Definition 4.11 of \cite{Kirwan&woolf_2006_book}) with additional conditions. The following is Definition 2.1 of \cite{Albin_signature}.

\begin{definition}
    \label{Thom-Mather stratified space}    
A Thom-Mather stratified space $\widehat{X}$ is a metrizable, locally compact, second countable space which admits a locally finite decomposition into a union of locally closed strata $\mathcal{S}(\widehat{X})=\left\{Y_{\alpha}\right\}$, where each $Y_{\alpha}$ is a smooth manifold, with dimension depending on the index $\alpha$. We assume the following:
\begin{enumerate}
    \item If $Y_{\alpha}, Y_{\beta} \in \mathcal{S}(X)$ and $Y_{\alpha} \cap \overline{Y_{\beta}} \neq \emptyset$, then $Y_{\alpha} \subset \overline{Y_{\beta}}$.

    \item Each stratum $Y$ is endowed with a set of `control data' $T_{Y}, \pi_{Y}$ and $\rho_{Y}$; here $T_{Y}$ is a neighbourhood of $Y$ in $X$ which retracts onto $Y, \pi_{Y}: T_{Y} \longrightarrow Y$ is a fixed continuous retraction and $\rho_{Y}: T_{Y} \rightarrow[0,2)$ is a proper `radial function' in this tubular neighbourhood such that $\rho_{Y}^{-1}(0)=Y$. Furthermore, we require that if $\widetilde{Y} \in \mathcal{S}(X)$ and $\widetilde{Y} \cap T_{Y} \neq \emptyset$, then
\begin{equation}    
    \left(\pi_{Y}, \rho_{Y}\right): T_{Y} \cap \widetilde{Y} \longrightarrow Y \times[0,2)
\end{equation}
is a proper differentiable submersion.
\item If $W, Y, \widetilde{Y} \in \mathcal{S}(X)$, and if $p \in T_{Y} \cap T_{\widetilde{Y}} \cap W$ and $\pi_{\widetilde{Y}}(p) \in T_{Y} \cap \widetilde{Y}$, then $\pi_{Y}\left(\pi_{\widetilde{Y}}(p)\right)=\pi_{Y}(p)$ and $\rho_{Y}\left(\pi_{\widetilde{Y}}(p)\right)=\rho_{Y}(p)$.

\item If $Y, \widetilde{Y} \in \mathcal{S}(X)$, then
$$
\begin{aligned}
Y \cap \overline{\widetilde{Y}} \neq \emptyset & \Leftrightarrow T_{Y} \cap \widetilde{Y} \neq \emptyset, \\
T_{Y} \cap T_{\widetilde{Y}} \neq \emptyset & \Leftrightarrow Y \subset \overline{\widetilde{Y}}, Y=\widetilde{Y} \quad \text {or } \widetilde{Y} \subset \overline{Y} .
\end{aligned}
$$

\item For each $Y \in \mathcal{S}(X)$, the restriction $\pi_{Y}: T_{Y} \rightarrow Y$ is a locally trivial fibration with fibre the cone $C\left(Z_{Y}\right)$ over some other stratified space $Z_{Y}$ (called the \textbf{\textit{link}} over $Y)$, with atlas $\mathcal{U}_{Y}=\{(\phi, \mathcal{U})\}$ where each $\phi$ is a trivialization 
\begin{equation}
\label{equation_chartlike_map}
\pi_{Y}^{-1}(\mathcal{U}) \rightarrow \mathcal{U} \times C\left(Z_{Y}\right),    
\end{equation}
and the transition functions are stratified isomorphisms (in the sense of Definition \ref{smoothly stratified} below) of $C\left(Z_{Y}\right)$ which preserve the rays of each conic fibre as well as the radial variable $\rho_{Y}$ itself, hence are suspensions of isomorphisms of each link $Z_{Y}$ which vary smoothly with the variable $y \in \mathcal{U}$.

If in addition we let $\widehat{X}_{j}$ be the union of all strata of dimensions less than or equal to $j$, and require that 

\item $\widehat{X}_{n-1}=\widehat{X}_{n-2}$ and $\widehat{X} \backslash \widehat{X}_{n-2}$ is dense in $\widehat{X}$, 
then we say that $\widehat{X}$ is a stratified pseudomanifold.
\end{enumerate}
\end{definition}

We remark that this ensures that $\widehat{X}_j \backslash \widehat{X}_{j-1}$ is a smooth manifold of dimension $j$, and the connected components of this are called the strata of \textit{\textbf{depth}} $j$. 
This definition shows that it is natural to study the topology of these spaces using iterated conic metrics, which we shall call wedge metrics and introduce more formally in this section.
The following definition is based on the first proposition of \cite[\S 4.1]{goresky1980intersection}.
\begin{definition}
\label{definition_normal_pseudomanifold}
    Given a stratified pseudomanifold $\widehat{X}$ of dimension $n$, if the link at each $x \in \widehat{X}_{n-2}$ is connected, it is called a \textit{\textbf{normal pseudomanifold}}.
\end{definition}
Goresky and MacPherson explore this definition in \cite[\S 4]{goresky1980intersection} to which we refer the reader. The metric completion of $\widehat{X}^{reg}$ with respect to a wedge metric corresponds to the topological normalization of non-normal pseudomanifolds. This topological normalization is unique for a given pseudomanifold.
There is a functorial equivalence between Thom-Mather stratified spaces and manifolds with corners and iterated fibration structures (see Proposition 2.5 of \cite{Albin_signature}, Theorem 6.3 of \cite{Albin_hodge_theory_cheeger_spaces}). 

%++++++++++++++++++++++++++++++++++++++++++++++++++++++++++++++++++++++
%iterated fibration structure on a manifold with corners

\subsubsection{Manifolds with corners with iterated fibration structures}

In section 1 of \cite{Albin_2017_index}, there is a detailed description of iterated fibration structures on manifolds with corners. We explain some of these structures from said article which we will use in technical arguments and refer the reader to the source for more details. In \cite{kottke2022products}, these are referred to as manifolds with fibered corners that are interior maximal.

An n-dimensional manifold with corners $X$ is an $n$-dimensional topological manifold with boundary, with a smooth atlas modeled on $(\mathbb{R}^+)^n$ whose boundary hypersurfaces are embedded. We denote the set of boundary hypersurfaces of $X$ by $\mathcal{M}_1(X)$. A collective boundary hypersurface refers to a finite union of non-intersecting boundary hypersurfaces.

%++++++++++++++++++++++++++++++++++++++++++++++++++++++++++++++++++++++
% Iterated fibration structures
\begin{definition}
\label{iterated_fibration_structure}
An \textit{\textbf{ iterated fibration structure}} on a manifold with corners $X$ consists of a collection of fiber bundles
\begin{center}
    $Z_Y -\mathfrak{B}_Y \xrightarrow{\phi_Y} Y$
\end{center}
where $\mathfrak{B}_Y$ is a collective boundary hypersurface of $X$ with base and fiber manifolds with corners such that:

\begin{enumerate}
    \item Each boundary hypersurface of $X$ occurs in exactly one collective boundary hypersurface $\mathfrak{B}_Y$.
    \item If $\mathfrak{B}_Y$ and $\mathfrak{B}_{\widetilde{Y}}$ intersect, then dim $Y \neq$ dim $\widetilde{Y}$, and we write $Y < \widetilde{Y}$ if dim $Y < \text{dim} \widetilde{Y}$.
    \item If $Y < \widetilde{Y}$, then $\widetilde{Y}$ has a collective boundary hypersurface $\mathfrak{B}_{Y\widetilde{Y}}$ participating in a fiber bundle $\phi_{Y\widetilde{Y}} : \mathfrak{B}_{Y\widetilde{Y}} \rightarrow Y$ such that the diagram 
    
\[\begin{tikzcd}
	{\mathfrak{B}_Y \cap \mathfrak{B}_{\widetilde{Y}}} && {\mathfrak{B}_{Y\widetilde{Y}} \subseteq \widetilde{Y}} \\
	& Y
	\arrow["{\phi_{\widetilde{Y}}}", from=1-1, to=1-3]
	\arrow["{\phi_Y}"', from=1-1, to=2-2]
	\arrow["{\phi_{Y\widetilde{Y}}}", from=1-3, to=2-2]
\end{tikzcd}\]
commutes.
\end{enumerate}
\end{definition}

%+++++++++++++++++++++++++++++++++++++++++++++++++++++++++++++++++++++++++
The base can be assumed to be connected but the fibers are in general disconnected.
As we mentioned above, there is an equivalence between Thom-Mather stratified spaces and manifolds with corners with iterated fibration structures. 

If we view a cone over a link $Z$ as the quotient space of $[0,1]_x \times Z$ under the identification where the points of the link at $\{x=0\} \times Z$ are identified, then the quotient map is a blow-down map.
More generally, there is an inductive desingularization procedure which replaces the Thom-Mather stratified space with a manifold with corners with iterated fibration structures.
This corresponds to a \textit{\textbf{blow-down}} map $\beta : X \rightarrow \widehat{X}$, which satisfies properties given in Proposition 2.5 of \cite{Albin_signature}.
See Remark 3.3 of \cite{Albin_2017_index} for an instructive toy example.

Under this equivalence, the bases of the boundary fibrations correspond to the different strata, which we shall denote by
\begin{equation}
    \mathcal{S}(X) = \{Y: Y \textit{ is the base of a boundary fibration of X} \}.
\end{equation}
The bases and fibers of the boundary fiber bundles are manifolds with corners with iterated fibration structures (see for instance Lemma 3.4 of \cite{albin2010resolution}). The condition dim $Z_Y > 0$ for all $Y$ corresponds to the category of pseudomanifolds within the larger category of stratified spaces. The partial order on $\mathcal{S}(X)$ gives us a notion of depth
\begin{center}
$ depth_X(Y) = max \{ n \in \mathbb{N}_0 : \exists Y_i \in \mathcal{S}(X)$ s.t. $Y=Y_0 <Y_1 < ... <Y_n \}$.
\end{center}
The depth of $X$ is then the maximum of the integers $depth_X(Y)$ over $Y \in \mathcal{S}(X)$. This corresponds to the depth of the associated stratified space from Definition  \ref{Thom-Mather stratified space}.

We now introduce some auxiliary structures associated to manifolds with corners with iterated fibration structures.
If $H$ is a boundary hypersurface of $X$, then because it is assumed to be embedded, there is a smooth non-negative function $\rho_H$ such that 
$\rho_{H}^{-1}(0)=H$ and $d\rho_H$  does not vanish at any point on $H$. We call any such function a \textbf{\textit{boundary defining function for $H$}}.
For each $Y \in \mathcal{S}(X)$, we denote a \textbf{\textit{collective boundary defining function}} by
\begin{center}
    $\rho_Y = \prod_{H \in \mathfrak{B}_Y} \rho_H$,
    and by
    $\rho_X = \prod_{H \in \mathcal{M}_1(X)} \rho_H$
\end{center}
 a \textbf{\textit{total boundary defining function}}, where $\mathcal{M}_1(X)$ denotes the set of boundary hypersurfaces of $X$.
%++++++++++++++++++++++++++++++++++++++++++++++++++++++++++++++++++++++
%collared iterated fibration structure.

\begin{remark}
\label{remark_normal_boundary_hypersurfaces}
    In the resolution, the collective boundary hypersurfaces keep track of information that distinguish between a non-normal pseudomanifold and its normalization (see Definition \ref{definition_normal_pseudomanifold}). For instance, in the case of the torus with a pinched meridian $\widehat{X}$ (see page 152 of \cite{goresky1980intersection}, and the example on page 55 of\cite{Kirwan&woolf_2006_book}), the link of the isolated singularity is the disjoint union of two circles. The topological normalization is a topological sphere where the pre-image of the singular point corresponding to the blow-down map consists of two points, each of which has as its link a single circle. The boundary hypersurfaces are also circles.
\end{remark}

%++++++++++++++++++++++++++++++++++++++++++++++++++++++++++++++++++++++
When describing the natural analogues of objects in differential geometry on singular spaces, the iterated fibration structure comes into play. For example,
\begin{equation}
\label{equation_smooth_functions_on_stratified_spaces}
\mathcal{C}_{\Phi}^{\infty}(X)= \{ f \in \mathcal{C}^{\infty}(X) : f \big|_{\mathfrak{B}_Y} \in 
\phi_{Y}^*\mathcal{C}^{\infty}(Y) \text{ for all } Y \in \mathcal{S}(X) \}
\end{equation}
corresponds to the smooth functions on X that are continuous on the underlying
stratified space. 

\subsubsection{Wedge metrics and related structures}
\label{subsubsection_wedge_metrics_related_structures}

On Thom-Mather stratified pseudomanifolds we can define metrics which are \textit{locally conic}. For instance, consider the model space $\widehat{X}=\mathbb{R}^k \times {Z}^+$, where ${Z}^+$ is a cone over a smooth manifold $Z$. The resolved manifold with corners that corresponds to the blowup of this space is $X=\mathbb{R}^k \times [0,\infty)_x \times Z$. We have the model wedge metric
\begin{equation}
\label{equation_definition_homogeneous_metric_cone}
    g_{w} = g_{\mathbb{R}^k} + dx^2 + x^2 g_Z,
\end{equation}
which is a product metric on the product space $X^{reg}$ and which we call a  \textbf{\textit{product type wedge metric}}, degenerates as it approaches the stratum at $x=0$. These metrics are degenerate as bundle metrics on the tangent bundle but we can introduce a rescaled bundle on which they are non-degenerate. 

Formally, we proceed as follows. Let $X$ be a manifold with corners and iterated fibration structure. Consider the ‘wedge one-forms’
\begin{center}
    $\mathcal{V}^{*}_{w}= \{ \omega \in \mathcal{C}^{\infty}(X; T^*X) : \text{ for each } Y \in \mathcal{S}(X), \text{  }i^*_{\mathfrak{B}_Y} \omega (V) =0 \text{ for all } V \in \text{ker } D\phi_Y \}$.
\end{center}
We can identify $\mathcal{V}^{*}_{w}$ with the space of sections of a vector bundle which we call $\prescript{{w}}{}{T}^*X$, the wedge cotangent bundle of X, together with a map
\begin{equation}
    i_{w} : \prescript{{w}}{}{T}^*X \rightarrow T^*X
\end{equation}
that is an isomorphism over $X^{reg}$ such that,
\begin{center}
    $(i_{w})_*\mathcal{C}^{\infty}(X;\prescript{{w}}{}{T}^*X) =  \mathcal{V}^{*}_{w} \subseteq \mathcal{C}^{\infty}(X; T^*X)$.
\end{center}
In local coordinates near the collective boundary hypersurface, the wedge cotangent bundle is spanned by
\begin{center}
    $ dx$, $xdz$, $dy$
\end{center}
where $x$ is a boundary defining function for $\mathfrak{B}_Y$, $dz$ represents covectors along the fibers and $dy$ covectors along the base.
The dual bundle to the wedge cotangent bundle is known as the wedge tangent bundle, $\prescript{{w}}{}{T}X$. It is locally spanned by
\begin{center}
    $\partial_x$, $\frac{1}{x}\partial_z$, $\partial_y$
\end{center}
A \textit{\textbf{wedge metric}} is simply a bundle metric on the wedge tangent bundle.

The notion of a \textit{\textbf{wedge differential operator}} 
$P$ of order $k$ acting on sections of a vector bundle $E$, taking them to sections of a vector bundle $F$ is described on page 11 of \cite{Albin_2017_index}. We first define the \textit{\textbf{edge vector fields on X}} by
\begin{center}
    $\mathcal{V}_e = \{ V \in  \mathcal{C}^{\infty}(X;TX): V \big|_{\mathfrak{B}_Y}$ is tangent to the fibers of $\phi_Y$ for all $Y \in \mathcal{S}(X) \}$.
\end{center}
There is a rescaled vector bundle that is called the \textit{\textbf{edge tangent bundle}} $\prescript{e}{}TX$ together with a natural vector bundle map $i_e : \prescript{e}{}TX \rightarrow TX$ that is an isomorphism over the interior and satisfies
\begin{center}
    $(i_e)_* \mathcal{C}^{\infty}(X;\prescript{e}{}TX)=\mathcal{V}_e$.
\end{center}
In local coordinates near a point in $\mathfrak{B}_Y, (x, y, z)$,
a local frame for $\prescript{e}{}TX$ is given by 
\begin{center}
    $ x\partial_x, x\partial_y, \partial_z$
\end{center}
Note that the vector fields $x\partial_x$ and $x\partial_y$ are degenerate as sections of $TX$, but not as sections of $\prescript{e}{}TX$. % If $Y = \{pt\}$ then the edge tangent bundle coincides with the b-tangent bundle.

The universal enveloping algebra of $\mathcal{V}_e$ is the ring $\text{Diff}^*_e(X)$ of edge differential operators. That is, these are the differential operators on $X$ that can be expressed locally as finite sums of products of elements of $\mathcal{V}_e$. They have the usual notion of degree and extension to sections of vector bundles, as well as an edge symbol map defined on the edge cotangent bundle (see \cite{Mazzeo_Edge_Elliptic_1,Albin_signature,Albin_hodge_theory_cheeger_spaces}).
Similarly, $\text{Diff}^*_e(X; E, F)$ denotes the edge differential operators acting on sections of a vector bundle $E$ and taking them to a sections on a vector bundle $F$.
The edge symbol (which can be defined on the space of edge pseudo-differential operators in general) is used to define the notion of ellipticity used in this article. We follow \cite{Albin_2017_index} and define the map
\begin{equation}
    \overline{\sigma_k} : \text{Diff}^k_e(X;E,F) \rightarrow \rho_{RC}^{-k}\mathcal{C}^{\infty}(RC(\prescript{e}{}T^*X), \pi^*hom(E,F))
\end{equation}
which is the usual symbol map where $RC(\prescript{e}{}T^*X)$ denotes the radial compactification of the edge cotangent bundle and $\pi: \prescript{e}{}T^*X \rightarrow X$ is the projection map. We denote by $\rho_{RC}$ a boundary defining function for the
boundary at radial infinity. Multiplying $\overline{\sigma_k}$ by $\rho_{RC}^k$ 
the resulting map $\sigma_k$ is called the \textbf{\textit{edge symbol}} (see Section 3.3 of \cite{Albin_2017_index}).
This fits into the following short exact sequence
\begin{equation}
\label{edge symbol sequence}
    0 \rightarrow \text{Diff}^{k-1}_e(X;E,F) \rightarrow \text{Diff}^{k}_e(X;E,F) \xrightarrow{\sigma} \mathcal{C}^{\infty}(^e\mathbb{S}^*X,\pi^*hom(E,F)) \rightarrow 0.
\end{equation}
If the edge symbol of a differential operator is invertible away from the zero section we call such an operator  \textbf{\textit{edge elliptic}}.
We define \textit{\textbf{wedge differential operators}} by 
\begin{equation*}
    \text{Diff}^k_w(X;E,F) = \rho^{-k}_X \text{Diff}^k_e(X;E,F)
\end{equation*}
following, e.g., \cite{Albin_2017_index}, where $\rho_X$ is a total boundary defining function for $X$. 
If a wedge differential operator $D_k$ of order $k$ can be written as $\rho_X^{-k}A_k$ where $A_k$ is an edge elliptic differential operator of order $k$, then $D_k$ is said to be a  \textbf{\textit{wedge elliptic}} operator.

\begin{remark}
\label{simplification_of setting}
In \cite{Albin_2017_index}, the families index theorem is proven for a smooth family of manifolds with corners and iterated fibration structures, 
\begin{center}
    $X - M \xrightarrow{\psi} B$
\end{center}
where $X, M$, and $B$ are manifolds with corners. We only study the case where $B$ is a point which means we can simplify the language greatly. 
In Definition 1.3 of \cite{Albin_2017_index}, the set of boundary hypersurfaces transverse to $\psi$ happens to be $\mathcal{S}(X)$ in our case since the transversality condition is vacuous. For every collective boundary hypersurface, we have only the fibration given in \ref{iterated_fibration_structure}. 
\end{remark}

%+++++++++++++++++++++++++++++++++++++++++++++++++++++++++++++

A totally geodesic wedge metric is defined inductively. On depth zero spaces, which are just smooth manifolds, a wedge metric is a Riemannian metric. Assuming we have defined totally geodesic wedge metrics at spaces of depth less than $k$, let us assume $X$ has depth $k$.
A wedge metric $g_w$ on $X$ is a \textit{\textbf{totally geodesic wedge metric}} if, for every $Y \in \mathcal{S}(X)$ of depth $k$ there is a collar neighbourhood $\mathscr{C}(\mathfrak{B}_Y) \cong [0,1)_x \times \mathfrak{B}_Y$ of $\mathfrak{B}_Y$ in $X$, a metric $g_{w, pt}$ of the form
\begin{equation}
    g_{w, pt} = dx^2 +x^2 g_{\mathfrak{B}_Y/Y} +\phi^*_Yg_Y
\end{equation}
where $g_Y$ is a totally geodesic wedge metric on Y, $g_{\mathfrak{B}_Y/Y}+\phi^*_Yg_Y$ is a submersion metric for $\mathfrak{B}_Y \xrightarrow{\phi_Y} Y$ and $g_{\mathfrak{B}_Y/Y}$ restricts to each fiber of $\phi_Y$ to be a totally geodesic wedge metric on $Z_Y$ and
\begin{equation}
\label{equation_structure of the metric}
    g_{w} - g_{w, pt} \in x^2 \mathcal{C}^{\infty} (\mathscr{C}(\mathfrak{B}_Y); S^2(^{w}T^*X))).
\end{equation}
If at each step  $g_{w} = g_{w, pt}$, we say $g_w$ is a \textit{\textbf{rigid}} or \textit{\textbf{product-type wedge metric}}. If at every step, $g_{w} - g_{w, pt} = \mathcal{O}(x)$ as a symmetric two-tensor on the wedge tangent bundle, we say $g_w$ is an \textit{\textbf{exact wedge metric}}. 
We work with exact wedge metrics in this article.

\subsection{Wedge Dirac-type operators}
We now introduce wedge Dirac-type operators briefly. We refer the reader to Section 1.3 of \cite{Albin_2017_index} for more details. In that article families of wedge Dirac-type operators are studied where there is a fibration of manifolds with corners on a base $B$ in the notation of that article. We study the case where $B$ is just a point, and we shall simplify the notation used in that article based on this fact.
%+++++++++++++++++++++++++++++++++++++++++++++++++++++++++++++
\begin{definition}
\label{wedge_Clifford_module}
Let $g_{w}$ be a totally geodesic wedge metric on $X$. A \textbf{\textit{wedge Clifford module}} consists of
\begin{enumerate}
    \item a complex vector bundle $E \rightarrow X$
    \item a Hermitian bundle metric $g_E$
    \item a connection $\nabla^E$ on $E$, compatible with $g_E$
    \item  a bundle homomorphism from the ‘wedge Clifford algebra’ into the endomorphism bundle of $E$,
    \begin{equation*}
        {cl} : \mathbb{C}l_{w}(X)=\mathbb{C} \otimes Cl(\prescript{{w}}{}{T}^*X, g_{X}) \rightarrow End(E)
    \end{equation*}
    compatible with the metric and connection in that, for all $\theta \in \mathcal{C}^{\infty}(X;\prescript{{w}}{}{T}^*X)$,
    \begin{itemize}
        \item $g_E({cl}(\theta)\cdot, \cdot) = -g_E(\cdot, {cl}(\theta) \cdot)$
        \item $\nabla^E_{W} {cl}(\theta)= {cl}(\theta) \nabla^E_W + {cl}(\nabla^{X}_W \theta)$ as endomorphisms of $E$, for all $W \in TX$.
    \end{itemize}
\end{enumerate}

This information determines a smooth \textit{\textbf{wedge Dirac-type operator}} by
\begin{equation}
\label{equation_wedge_Dirac_operator_11}
D_{X}:\mathcal{C}^{\infty}_c(\mathring{X};E) \xrightarrow{\nabla^E} \mathcal{C}^{\infty}_c(\mathring{X};T^*X \otimes E) \xrightarrow{{cl}} \mathcal{C}^{\infty}_c(\mathring{X};E) 
\end{equation}
where we have used that $T^*X$ and $\prescript{{w}}{}{T}^*X$ are canonically isomorphic over $\widehat{X}^{reg}=\mathring{X}$, the interior of $X$.
\end{definition}

%++++++++++++++++++++++++++++++++++++++++++++++++++++++++++++++++++++++++++++++
%++++++++++++++++++++++++++++++++++++++++++++++++++++++++++++++++++++++++++++++
If $\widehat{X}^{reg}$ is even dimensional we demand that $E$ admits a $\mathbb{Z}_2$ grading $E=E^+ \oplus E^-$, compatible in that it is orthogonal with respect to $g_E$, parallel with respect to $\nabla^E$, and odd with respect to ${cl}$. 
We can equivalently express this in terms of an involution on $E$.
\begin{definition}
\label{Grading operator}
A $\mathbb{Z}_2$ \textbf{\textit{grading operator}} on a Dirac bundle $E$ is an involution
\begin{equation*}
    \gamma \in C^{\infty}(\mathring{X}; End(E))
\end{equation*}
satisfying
\begin{equation*}
    \gamma^2=Id, \hspace{3mm} \gamma {cl}(v)= -{cl}(v) \gamma, \hspace{3mm} \nabla_w^E\gamma = 0, \hspace{3mm} \gamma^*=\gamma,
\end{equation*}
which gives a splitting of the bundle $E$ into the direct sum of the spaces $E^{\pm}=\{v \in E : \gamma(v)=\pm v \}$.
\end{definition}

We can construct such a grading/chirality operator on even dimensional spaces as follows.
Let $e_j$, $1 \leq j \leq m$ be an oriented orthonormal frame for $E$.
Then the operator 
\begin{equation}
\label{equation_even_dimensional_spaces_chirality_operator}
    \gamma= i^p cl(e_1)...cl(e_m) \in C^{\infty}(\mathring{X}; End(E))
\end{equation}
where $p=m/2$ is a grading operator. This is a well known construction for Dirac bundles associated to Clifford modules on smooth manifolds (see \cite[Lemma 3.17]{berline2003heat}) and it is easy to see it extends to the wedge case.

In local coordinates, we can express the Dirac operator constructed in equation \eqref{equation_wedge_Dirac_operator_11} as,
\begin{equation}
    D_{X}= \sum_{i=0}^n {cl}(\theta^i)\nabla^E_{(\theta^i)^{\#}}
\end{equation}
where $\theta^i$ runs over a $g_{w}$-orthonormal frame of $^{w}T^*X$. If we restrict to a collar neighborhood of $\mathfrak{B}_Y, Y\in \mathcal{S}(X)$, this takes the form
\begin{equation}
\label{temp_lah_di_dah}
cl(dx)\nabla^E_{\partial_x} + {cl}(xdz^i)\nabla^E_{\frac{1}{x}\partial_{z_i}} +{cl}(dy^j)\nabla^E_{\partial_{y_j}}\\ 
= {cl}(dx)\nabla^E_{\partial_x} + {cl}(xdz^i)\frac{1}{x}\nabla^E_{\partial_{z_i}} +{cl}(dy^j)\nabla^E_{\partial_{y_j}}
\end{equation}
up to a differential operator in $\text{Diff}^1_e(X;E)$. Here $x$ is a boundary defining function for $\mathfrak{B}_Y$, and we recognize \eqref{temp_lah_di_dah} as a wedge differential operator of order one. It is easy to check that the symbol of this operator is given by Clifford multiplication and as a result it is wedge elliptic. It is easy to check that given a grading operator $\gamma$ as above, we have that $D_X \gamma = -\gamma D_X$.

We are interested in this operator acting on the natural $L^2$ space associated to the wedge metric $g_w$ and the Hermitian metric $g_E$, which we denote by $L^2(X;E)$. 

\begin{remark}
\label{Remark_conjugation_multiweights_notation_changes}
The Hilbert space that we denote as $L^2(X;E)$ is denoted as $L^2_w(X;E)$ in \cite[\S 1.3]{Albin_2017_index} and the notation $L^2(X;E)$ is reserved for the Hilbert space corresponding to a non-degenerate density in that article.
which we notate as $L^2_e(X;E):=\rho_X^{\mathfrak{b}}L^2(X;E)$ for the choice of \textit{multiweight}
\begin{equation}
\label{equation_conjugation_multiweight}
    \mathfrak{b}(H) = \frac{1}{2}(\dim \mathfrak{B}_Y /Y) \text{  for all  } H \subset \mathfrak{B}_Y \text{  and  } Y \in \mathcal{S}(X), 
\end{equation}
corresponding to $1/2$ the dimension of the links at the boundary hypersurfaces.
Furthermore, the authors of that article define the unitarily equivalent family of operators $\eth_{X}$ by
\begin{equation}
\label{conjugate_relation_for_Dirac}
    \eth_{X} = \rho^{\mathfrak{b}}_XD_{X}\rho^{-\mathfrak{b}}_X = D_{X} - \sum_{Y \in {S}(X)} \frac{\dim \mathfrak{B}_Y /Y}{2\rho_Y}{cl}(d\rho_Y). 
\end{equation}
Then $\eth_{X}$ is also a wedge differential operator of order one, and some results are phrased in terms of this operator. We will phrase those results after unraveling the conjugation where we need them.

\end{remark}

We briefly introduce some objects from \cite[\S 2.1]{Albin_2017_index} which we use to describe the domains of the Dirac operators that we study, referring the reader to that article for a more detailed exposition.
If $D_X$ is the Dirac-type operator corresponding to a wedge Clifford module $(E, g_E, \nabla^E, {cl})$ on $X$, 
to each $Y \in \mathcal{S}(X)$, we can associate a family of \textit{$\phi_Y$-vertical operators} $\rho_Y D_X \big|_{\mathfrak{B}_Y}$. 
This in turn defines family of vertical Dirac-type operators $D_{\mathfrak{B}_Y/Y}$ which we call the \textit{boundary family of $D_X$ at $Y$}, defined by the equation
\begin{equation}
\label{equation_773435}
    \rho_Y D_X \big|_{\mathfrak{B}_Y} = D_{\mathfrak{B}_Y/Y} + \frac{l}{2} {cl}(dx)
\end{equation}
where $l=\dim \mathfrak{B}_Y/Y$ is the dimension of the link.
In particular, the restriction of the operator $D_{\mathfrak{B}_Y/Y}$ 
to the fiber $Z_y=\phi_Y^{-1}(y)$ over $y \in Y$ is denoted by
\begin{equation}
\label{operator_at_strata}
    D_{\mathfrak{B}_Y/Y} \big|_{Z_y} = D_{Z_y},
\end{equation}
and we call this the \textit{vertical Dirac-type operator on the link at $y \in Y$}.
This family of operators is key to understanding the choices of self adjoint domains for Dirac-type operators.

\subsubsection{Witt assumption and vertical APS domain}

This subsection builds on \cite[\S 2.2]{Albin_2017_index}.
As an unbounded operator on $L^2(X; E)$, the Dirac-type operators constructed above are formally self adjoint, but in general are not essentially self adjoint. There are different choices of closed extensions from smooth sections of compact support on $X^{reg}$ to closed operators on $L^2(X; E)$. We emphasize that we use a different notation for the $L^2$ spaces from that in \cite{Albin_2017_index}, as clarified in Remark \ref{Remark_conjugation_multiweights_notation_changes}.

There are two canonical domains which are the minimal domain,
\begin{equation}
    \mathcal{D}_{min}(D_X)= \{ u \in L^2(X;E) : \exists (u_n) \subseteq {C}^{\infty}_c(\mathring{X};E) \text{ s.t. }
    u_n \rightarrow u \text{ and } (D_Xu_n) \text{ is } L^2-\text{Cauchy} \},
\end{equation}
and the maximal domain,
\begin{equation*}
    \mathcal{D}_{max}(D_X)= \{ u \in L^2(X;E) : (D_Xu) \in L^2(X;E) \},
\end{equation*}
wherein $D_Xu$ is computed distributionally.
These domains satisfy the inclusions
\begin{equation*}
    \rho_XH^1_e(X;E) \subseteq \mathcal{D}_{min}(D_X) \subseteq \mathcal{D}_{max}(D_X) \subseteq H^1_e(X;E)
\end{equation*}
where
\begin{equation*}
    H^1_e(X;E) = \{ u \in L^2(X;E) : Vu \in L^2(X;E) \text{ for all } V \in  \mathcal{C}^{\infty}(X;\prescript{e}{}TX) \}
\end{equation*}
is the edge Sobolev space introduced in \cite{Mazzeo_Edge_Elliptic_1}.
The main domain of interest to us is the following which lies between the minimal and maximal domains.

\begin{definition}
\label{VAPS_Domain}
The \textbf{\textit{vertical APS (VAPS) domain}} of the operator ${D_X}$ is defined to be the graph closure of ${\rho^{1/2}_X H^1_e(X;E) \cap \mathcal{D}_{max} (D_X) }$. That is
\begin{multline}
\label{VAPS_domain}
\mathcal{D}_{VAPS}(D_X) = \{ u \in L^2(X;E) : \exists (u_n) \subset \rho^{1/2}_X H^1_e(X;E) \cap \mathcal{D}_{max} (D_X)\\ 
\text{such that } u_n \rightarrow u \; \text{and} \;  (D_X u_n) \; \text{is} \; L^2- \text{Cauchy} \}
\end{multline}
\begin{comment}
\begin{multline*}
\mathcal{D}_{VAPS}(\eth_X) = \{ u \in L^2(X;E) : \exists (u_n) \subset \rho^{1/2}_X H^1_e(X;E) \cap \mathcal{D}_{max} (\eth_X)\\ 
\text{such that } u_n \rightarrow u \; \text{and} \;  (\eth_X u_n) \; \text{is} \; L^2 - \text{Cauchy} \}
\end{multline*}
\end{comment}
\end{definition}
The following proposition gives a characterization of these domains using a lower regularity space. We can take this to be an equivalent definition for the VAPS domain.
\begin{proposition}
\label{equivalent_definition_VAPS_domain}
If $D_X$ is a wedge Dirac-type operator on a stratified pseudomanifold $X$, then 
\begin{equation*}
    \mathcal{D}_{VAPS}(D_X) = \text{graph closure of } \{\rho^{1/2}_X L^2(X;E) \cap \mathcal{D}_{max} (D_X) \}
\end{equation*}
\end{proposition}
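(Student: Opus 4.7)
The forward inclusion, $\mathcal{D}_{VAPS}(D_X) \subseteq \overline{\rho_X^{1/2} L^2(X;E) \cap \mathcal{D}_{max}(D_X)}^{\,\mathrm{graph}}$, is immediate because $\rho_X^{1/2} H^1_e \subseteq \rho_X^{1/2} L^2$ and graph closure is monotone in the underlying set. For the reverse inclusion, a standard diagonal argument reduces the proof to showing that every $u \in \rho_X^{1/2} L^2(X;E) \cap \mathcal{D}_{max}(D_X)$ already lies in $\mathcal{D}_{VAPS}(D_X)$; concretely, we must exhibit a sequence $u_n \in \rho_X^{1/2} H^1_e(X;E) \cap \mathcal{D}_{max}(D_X)$ with $u_n \to u$ and $D_X u_n \to D_X u$ in $L^2$.

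The strategy is a Friedrichs-type mollification adapted to the edge calculus. Fix a family $\{J_\epsilon\}_{\epsilon>0}$ of smoothing operators from Mazzeo's edge pseudodifferential calculus that is uniformly bounded on $L^2(X;E)$, strongly convergent to the identity, maps $L^2$ continuously into $H^1_e$, and whose commutator with wedge Dirac-type operators enjoys the Friedrichs property $[D_X, J_\epsilon]\,w \to 0$ in $L^2$ for every $w \in \mathcal{D}_{max}(D_X)$. Writing $u = \rho_X^{1/2} v$ with $v \in L^2$ and setting $\widetilde{J}_\epsilon := \rho_X^{1/2} J_\epsilon \rho_X^{-1/2}$ and $u_\epsilon := \widetilde{J}_\epsilon u = \rho_X^{1/2}(J_\epsilon v)$, one has $u_\epsilon \in \rho_X^{1/2} H^1_e$ with $u_\epsilon \to u$ in $L^2$, and graph convergence reduces to showing that both summands on the right-hand side of
\begin{equation*}
D_X u_\epsilon - D_X u = [D_X, \widetilde{J}_\epsilon]\,u + (\widetilde{J}_\epsilon - I)\, D_X u
\end{equation*}
tend to zero in $L^2$; the second term does so as soon as $\widetilde{J}_\epsilon$ is verified to be a bounded approximate identity on $L^2$.

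The main obstacle is the commutator $[D_X, \widetilde{J}_\epsilon]\,u$. Expanding,
\begin{equation*}
[D_X, \widetilde{J}_\epsilon] = \rho_X^{1/2}\,[D_X, J_\epsilon]\,\rho_X^{-1/2} + [D_X, \rho_X^{1/2}]\,J_\epsilon\,\rho_X^{-1/2} + \rho_X^{1/2}\,J_\epsilon\,[D_X, \rho_X^{-1/2}],
\end{equation*}
where the commutators $[D_X, \rho_X^{\pm 1/2}] = \pm \tfrac{1}{2}\,\rho_X^{\pm 1/2 - 1}\, cl(d\rho_X)$ carry singular factors of $\rho_X^{-1/2}$ near the collective boundary hypersurfaces. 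These are absorbed exactly by the weight $\rho_X^{1/2}$ carried by $u$ (equivalently, by $v = \rho_X^{-1/2} u \in L^2$), so each summand extends to a uniformly $L^2$-bounded operator applied to $v$; strong convergence to zero then follows from $J_\epsilon \to I$ strongly on the relevant weighted $L^2$ spaces combined with the Friedrichs property invoked via the inclusion $\mathcal{D}_{max}(D_X) \subseteq H^1_e(X;E)$ recalled in the excerpt. The technical core of the proof is thus the careful bookkeeping of weighted $L^2$ mapping properties of $J_\epsilon$ and $[D_X, J_\epsilon]$ in the edge calculus, ensuring the $\rho_X^{-1/2}$-singularities introduced by conjugation are consistently compensated by the $\rho_X^{1/2}$-decay of $u$.
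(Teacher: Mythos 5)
The paper's proof has a much simpler structure that you seem to have missed: it shows the two sets whose graph closures appear on each side are literally \emph{equal} as sets, namely
\begin{equation*}
\rho_X^{1/2}L^2(X;E)\cap\mathcal{D}_{\max}(D_X)=\rho_X^{1/2}H^1_e(X;E)\cap\mathcal{D}_{\max}(D_X),
\end{equation*}
so no approximation or mollification is needed at all. The one nontrivial inclusion is verified via the identity $H^1_e(X;E)=\mathcal{D}_{\max}(\rho_X D_X)$ (edge elliptic regularity for the first-order edge-elliptic operator $\rho_X D_X$) together with exactly the commutator computation you write down, namely $\rho_X D_X(\rho_X^{-1/2}v)=-\tfrac12\rho_X^{-1/2}\sigma(D_X)(d\rho_X)v+\rho_X^{1/2}D_Xv$, both terms of which are visibly in $L^2$. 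In other words, you had the correct algebraic ingredient in hand but embedded it inside a Friedrichs-mollifier argument that is unnecessary.

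Beyond being overcomplicated, the mollifier route as you have written it has genuine gaps. First, you posit a family $J_\epsilon$ of edge $\Psi$DOs with the ``Friedrichs property'' $[D_X,J_\epsilon]w\to 0$ in $L^2$ for all $w\in\mathcal{D}_{\max}(D_X)$; this is a strong claim about the edge calculus that you neither cite nor prove, and it is far from automatic for wedge Dirac operators (were such a family available with $J_\epsilon$ ranging into something like $\dot{C}^\infty$, one would be dangerously close to proving $\mathcal{D}_{\min}=\mathcal{D}_{\max}$, which is false in general). Second, the decomposition $D_Xu_\epsilon-D_Xu=[D_X,\widetilde J_\epsilon]u+(\widetilde J_\epsilon-I)D_Xu$ has a weight mismatch in the second term: $\widetilde J_\epsilon=\rho_X^{1/2}J_\epsilon\rho_X^{-1/2}$ is designed to act on $\rho_X^{1/2}L^2$, but $D_Xu$ is only known to lie in $L^2$, so $\rho_X^{-1/2}D_Xu$ need not be in $L^2$ and $(\widetilde J_\epsilon-I)D_Xu$ is not obviously well defined, let alone small. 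This is precisely the kind of weighted bookkeeping you defer as ``the technical core'' without resolving it. The paper's observation that the underlying sets already coincide sidesteps all of this.
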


\begin{proof}
We will show that $\rho^{1/2}_X L^2(X;E)\cap \mathcal{D}_{max} (D_X) =\rho^{1/2}_X H^1_e(X;E) \cap \mathcal{D}_{max} (D_X)$. Note that since $\rho_X D_X$ is an elliptic 1st order edge operator, we have $H^1_e(X;E)=\mathcal{D}_{max} (\rho_X D_X)$, and so given $v \in \rho^{1/2}_X L^2(X;E)\cap \mathcal{D}_{max} (D_X)$, we need to show that $\rho_X D_X(\rho_X^{-1/2}v) \in L^2(X;E)$. 

Since $X$ is compact, $\sigma (D_X)(d \rho_X)$ is a smooth bounded map on $L^2(X;E)$, where $\sigma (D_X)=\rho_X^{-1} \sigma_1 (\rho_X D_X)$ wherein $\sigma_1$ is the edge symbol given in \eqref{edge symbol sequence}. If $v \in \rho^{1/2}_X L^2(X;E) \cap \mathcal{D}_{max} (D_X)$ then 
\begin{equation*}
   \rho_X D_X(\rho_X^{-1/2}v)=\rho_X \left[D_X,\rho_X^{-1/2}\right] v + \rho_X^{1/2} D_Xv
\end{equation*}
The last term on the right hand side is in $L^2(X;E)$ since $v \in \mathcal{D}_{max} (D_X)$. The first term can be written as
\begin{equation*}
   \rho_X \left[D_X,\rho_X^{-1/2}\right] v= \rho_X \sigma (D_X)\left(-\frac{d \rho_X}{2\rho_X^{3/2}}\right)v=\frac{-1}{2}\rho_X^{-1/2} \sigma (D_X)(d \rho_X)v
\end{equation*}
\begin{comment}    
\begin{equation*}
   =\frac{-1}{2}\rho_X^{-1/2} \sigma (D_X)(d \rho_X)v
\end{equation*}
\end{comment}
which is in $L^2(X;E)$ since $\sigma (D_X)(d \rho_X)$ is a smooth bounded map. This completes the proof.
%This shows that $v \in \rho_X^{1/2}H^1_e(X;E)$, proving the proposition.
\end{proof}

%+++++++++++++++++++++++++++++++++++++++++++++++++++++++++++++++++++++++++++++++++++++
%Definition geometric Witt condition
\begin{definition}
\label{Witt_assumption}
The operator $(D_X, \mathcal{D}_{VAPS})$ is said to satisfy the \textbf{\textit{geometric Witt condition}} if 
\begin{equation}
    Y \in \mathcal{S}(X), y \in Y \implies \text{Spec}(D_{Z_y}) \cap (-\frac{1}{2},\frac{1}{2}) = \emptyset
\end{equation}
If instead, we only require
\begin{equation}
    Y \in \mathcal{S}(X), y \in Y \implies \text{Spec}(D_{Z_y}) \cap \{ 0 \} = \emptyset
\end{equation}
then we say that $(D_X, \mathcal{D}_{VAPS})$ satisfies the \textit{\textbf{Witt condition}}.
\end{definition}

\begin{remark}
\label{measures_and_critical_interval}
Here the domain of the operators $D_{Z_y}$ from \eqref{operator_at_strata} at the links are taken to be the VAPS domains. One thinks of the domains on the fibers as induced from the domain on the space, but this definition can be stated referring only to the spectrum of $\text{Spec}(D_{Z_y}, \mathcal{D}_{VAPS}(D_{Z_y}))$.
\end{remark}

Remark 2.5 of \cite{Albin_2017_index} justifies the nomenclature `vertical APS domain'. The different local ideal boundary conditions for $D_X$ involve the spectrum of $D_{Z_y}$ in the interval (-1/2,1/2) and the VAPS domain corresponds to projecting off of the negative half of this interval, analogously to the Atiyah-Patodi-Singer boundary conditions.

In Remark 4.10 of the same article, it is shown that the metrics on the links $Z$ can be rescaled to make the operators essentially self adjoint, but special structures (such as K\"ahler structures) may be lost. 

We now present a key theorem from \cite{Albin_2017_index} which tells us that we have a trace-class heat kernel for the square of the Dirac-type operators with the VAPS domain. This is Theorem 1 of \cite{Albin_2017_index}, rephrased in the notation we have introduced.
\begin{theorem}
\label{Trace_class_theorem_from_Pierre}
Let $X$ be a manifold with corners and iterated fibration structure endowed with a totally geodesic wedge metric $g_w$. Let $(E,g_E, \nabla^E, {cl})$ be a wedge Clifford module with associated Dirac-type operator $D_X$ and satisfying the Witt assumption \ref{Witt_assumption}. Then $D_X$ with its vertical APS domain
\begin{equation*}
    \mathcal{D}_{VAPS}(D_X) = \text{graph closure of } \{u \in \rho^{1/2}_X H^1_e(X;E):  D_Xu \in L^2(X;E) \}
\end{equation*}
is a closed operator on $L^2(X;E)$, that is self-adjoint and Fredholm with compact resolvent. The heat kernel of $D_X^2$ with the induced domain is trace-class.
\end{theorem}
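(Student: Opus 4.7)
The plan is to reduce the theorem, via the unitary equivalence of Remark \ref{Remark_conjugation_multiweights_notation_changes}, to the corresponding statement for the conjugated operator $\eth_X = \rho_X^{\mathfrak{b}} D_X \rho_X^{-\mathfrak{b}}$ acting on the weighted space $L^2_e(X;E)$, where standard edge calculus tools apply. The proof then proceeds by induction on the depth of $X$. In the base case of depth zero the wedge metric is a Riemannian metric on a compact manifold and all claims are classical. For the inductive step, I would use Proposition \ref{equivalent_definition_VAPS_domain} to work with the equivalent characterization $\mathcal{D}_{VAPS}(D_X) = \mathrm{graph\ closure}\{\rho_X^{1/2}L^2 \cap \mathcal{D}_{max}(D_X)\}$, which is the domain best adapted to edge calculus.

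Next I would analyze the normal operator of $\rho_X D_X$ at each collective boundary hypersurface $\mathfrak{B}_Y$. By \eqref{equation_773435}, the normal operator decomposes on the link $Z_y$ as a model edge Dirac operator whose Mellin-transformed indicial family is governed by the spectrum of the vertical operator $D_{Z_y}$. The Witt condition $\mathrm{Spec}(D_{Z_y}) \cap \{0\} = \emptyset$, combined with the inductive hypothesis applied to the fibers $Z_y$ (which are themselves manifolds with corners with iterated fibration structures of strictly smaller depth), ensures that these vertical Dirac operators are self-adjoint with compact resolvent on $L^2(Z_y)$, and hence that the indicial roots of $D_X$ avoid the critical weight line corresponding to $\rho_X^{1/2}$. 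This is the core input that lets us construct a parametrix for $D_X$ in the (small) edge calculus, yielding an inverse modulo residual operators that are compact on the weighted spaces.

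From the parametrix I would conclude: (a) \emph{Self-adjointness} by showing that on $\mathcal{D}_{VAPS}$ the boundary pairing $\langle D_X u, v\rangle - \langle u, D_X v\rangle$ vanishes, using that the Witt condition eliminates the boundary contributions coming from indicial roots in the critical strip; (b) \emph{Fredholmness with compact resolvent} by combining invertibility of the parametrix modulo compact operators with the Rellich-type compact embedding $\rho_X^{1/2} H^1_e(X;E) \hookrightarrow L^2(X;E)$, which in turn follows inductively from the corresponding statement on the fibers together with standard edge Sobolev embedding theorems.

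For the heat kernel, the plan is to construct $e^{-tD_X^2}$ as a polyhomogeneous conormal distribution on an appropriate iterated heat blow-up of $X \times X \times [0,\infty)_t$, obtained by blowing up the temporal and diagonal fibered boundary faces in the proper order. At each new face the model problem is the heat kernel of the normal operator, which exists and has Gaussian decay thanks to the Witt condition and the inductive hypothesis on the links. Trace class then follows by examining the restriction to the lifted diagonal: the Witt condition forces strict positivity of the relevant indicial roots of $D_X^2$, giving enough decay in the normal directions at each $\mathfrak{B}_Y$ to ensure that $\mathrm{tr}_E K_{D_X^2}(t,p,p)$ is integrable against the wedge volume form on $X$. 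The main obstacle is this parametrix/heat-kernel construction: verifying that the Witt condition exactly matches the indicial-root condition needed for the edge calculus to close up, and checking the polyhomogeneous expansions at each face of the heat blow-up are bookkeeping-heavy but follow the template of \cite{Albin_2017_index}.
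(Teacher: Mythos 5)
The paper does not prove this theorem: it is stated explicitly as ``Theorem 1 of \cite{Albin_2017_index}, rephrased in the notation we have introduced,'' and no proof is given here. So there is no proof in the paper to compare against; your proposal is really a sketch of the argument in \cite{Albin_2017_index}, and on that basis the overall scaffolding (conjugation to $\eth_X$, induction on depth, normal operator analysis at each $\mathfrak{B}_Y$, edge parametrix, iterated heat-space blow-up, trace-class via diagonal decay) is the right template.

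There is, however, a genuine imprecision in the step that carries most of the analytic weight. You write that the Witt condition ``ensures that the indicial roots of $D_X$ avoid the critical weight line corresponding to $\rho_X^{1/2}$'' and later that it ``eliminates the boundary contributions coming from indicial roots in the critical strip.'' Neither of these is true under the Witt condition $\mathrm{Spec}(D_{Z_y}) \cap \{0\} = \emptyset$ of Definition~\ref{Witt_assumption}; they would be true under the stronger geometric Witt condition $\mathrm{Spec}(D_{Z_y}) \cap (-\tfrac12,\tfrac12)=\emptyset$. Under the weaker hypothesis, $D_{Z_y}$ may well have eigenvalues in $(-\tfrac12,\tfrac12)\setminus\{0\}$, so the critical strip does contain indicial roots and the operator $D_X$ is \emph{not} essentially self-adjoint. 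The entire point of the VAPS domain --- as the paper itself flags after Definition~\ref{Witt_assumption}, quoting Remark~2.5 of \cite{Albin_2017_index} --- is to select a self-adjoint extension by projecting off the negative half of the critical spectral interval, in the spirit of APS boundary conditions. Your argument for self-adjointness (vanishing of the boundary pairing on $\mathcal{D}_{VAPS}$) must therefore come from the Lagrangian property of this projection, not from the absence of critical indicial roots; and the parametrix cannot be built in the small edge calculus alone, since the small calculus is blind to exactly this boundary data. One needs the full edge calculus with normal operators, and the de~Rham/APS-style splitting of the boundary fibre into small-eigenvalue and large-eigenvalue pieces, as in \cite{Albin_2017_index}. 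As written, your sketch silently upgrades the Witt condition to the geometric Witt condition, which trivializes the hardest part of the theorem.
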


We shall study the heat kernel in more detail in Section \ref{section_formulas},

\subsection{Local neighbourhoods of singular points}
\label{subsection_local_structures}

In this article we study self maps with isolated fixed points and we will be focusing a lot on local neighbourhoods and local structures.
Given any point $p$ of a stratified pseudomanifold $\widehat{X}$, we can find a neighbourhood $\widehat{U_p}$ which has a homeomorphism as in equation \eqref{equation_chartlike_map}
\begin{equation}
\label{fundamental neighbourhood}
   \widehat{\phi}:\widehat{U_p} \longrightarrow  \widehat{\phi}(\widehat{U_p}) \subset \mathbb{R}^k_{y} \times \widehat{Z}^+_{z}
\end{equation}
where the image is bounded.
We call such a neighbourhood $\widehat{U_p}$ a \textit{\textbf{fundamental neighbourhood of $p$}}. Here $\widehat{Z}_{z}$ is another stratified space, and $\widehat{Z}^+$ is the infinite cone over this link. We can choose $\widehat{U_p}$ and $\phi$ such that $\phi(\widehat{U_p})=\mathbb{D}^{k} \times C(\widehat{Z})$, where by $C(\widehat{Z})$ we denote the truncated cone $[0,1]_x \times \widehat{Z}_z / _\sim $ where the points at $\{x=0\}$ are identified. The restriction of an exact wedge metric to such a neighbourhood is as we discussed in Subsection \ref{subsubsection_wedge_metrics_related_structures}.

By the equivalence of Thom-Mather stratified spaces and manifolds with corners with iterated fibration structures, there exists a lift of this map $\phi: U_p \rightarrow \phi(U_p)$, where $U_p$ is a manifold with corners with iterated fibration structures, such that $\widehat{\phi} \circ \beta=\beta \circ \phi$ where $\beta$ is the blow down map, and where $\phi$ is a diffeomorphism of manifolds with corners. We will refer to $U_p$ as a fundamental neighbourhood of $p$ as well, denoting the difference as needed by our notation. Indeed, these are metrically the same spaces.

We refer to $\widehat{\phi}$ as a \textit{\textbf{diffeomorphism of the stratified pseudomanifold $\widehat{U_p}$}}. It is clear that this definition extends naturally to diffeomorphisms of Thom-Mather stratified spaces for which such lifts exist.

If $p$ is a singular point of $\widehat{X}$ contained on the stratum $Y$, and $\widehat{Z}$ is the link of $p$ in $\widehat{X}$, then the tangent cone of $\widehat{X}$ at $p$ is $T_pY \times C(\widehat{Z})$. Given a wedge metric on $\widehat{X}$, there is a canonical metric on the tangent cone of a point obtained by freezing coefficients and extending homogeneously.

We will denote the resolved \textbf{\textit{tangent cone}} at $p$ by $\mathfrak{T}_pX$, the metric product of the unit ball in $T_pY$ and the truncated cone $C_{[0,1]}(Z)=[0,1]\times Z$ which is topologically a manifold with boundary, but is metrically singular. Indeed $\mathfrak{T}_pX$ is a Riemannian space with the restriction of the homogeneous induced metric.

Given a fundamental neighbourhood $\widehat{U_p}$ with boundary $\partial \widehat{U_p}$, the resolution $U_p$ has a boundary component given by the (possibly resolved) boundary $\partial \widehat{U_p}$ as well as the resolved singularities. However, the resolved singularities are still metrically singular and we will refer to them as the singularities of $U_p$, while referring to the (possibly resolved) boundary $\partial \widehat{U_p}$ as the \textit{smooth boundary} of $U_p$, and we will denote it by $\partial U_p$. Indeed, the smooth boundary has an open dense set which is smooth.

%+++++++++++++++++++++++++++++++++++++++++++++++++++++++++%
%+++++++++++++++++++++++++++++++++++++++++++++++++++++++++%

\section{Hilbert Complexes}
\label{section_Hilbert_complexes}

In this section we will review some basic theory of Hilbert complexes that we shall use, including specializations to elliptic complexes and two-term Dirac complexes.

%+++++++++++++++++++++++++++++++++++++++++++++++++++++++++++++++++++++++++++++%
\subsection{Abstract Hilbert complexes}
\label{subsection_abstract_hilbert_complexes}
We begin by defining Hilbert complexes following \cite{bru1992hilbert}. We adopt the notation $A \xrightharpoonup{} B$ to denote partial functions between $A$ and $B$, i.e., a function defined from some subset of $A$ into $B$. We use this notation when the differentials we study are only defined from their domain to their range.

\begin{definition}
A \textbf{\textit{Hilbert complex}} is a complex, $\mathcal{P}=(H,P)$, of the form:
\begin{equation}
    0 \rightarrow H_0 \xrightharpoonup{P_0} H_1 \xrightharpoonup{P_1} H_2 \xrightharpoonup{P_2} ... \xrightharpoonup{P_{n-1}} H_n \rightarrow 0.
\end{equation}
Here each $H_k$ is a separable Hilbert space and each map $P_k$ is a closed first order operator which is called the differential, such that:
\begin{itemize}
    \item the domain of $P_k$, $\mathcal{D}(P_k)$, is dense in $H_k$,
    \item the range of $P_k$ satisfies $ran(P_k) \subset \mathcal{D}(P_{k+1})$,
    \item $P_{k+1} \circ P_k = 0$ for all $k$.
\end{itemize}
\end{definition}
%++++++++++++++++++++++++++++++++++++++++++++++++++

We shall sometimes notate the complex as $\mathcal{P}(X)$ when the Hilbert spaces are sections of a vector bundle on the resolution of a stratified pseudomanifold $X$, and we say that the \textit{\textbf{Hilbert complex $\mathcal{P}(X)$ is associated to the space $X$}}.
The \textit{\textbf{cohomology groups}} of a Hilbert complex are defined to be $\mathcal{H}^k(H,P):= ker(P_k)/ran(P_{k-1})$. We shall often use the notation $\mathcal{H}^k(\mathcal{P})$, or even just $\mathcal{H}^k$, where the complex used is clear from the context and $\mathcal{H}^k(\mathcal{P}(X))$ when the space needs to be specified (including spaces with boundary when they come up later on). If these groups are finite dimensional in each degree, we say that it is a \textit{\textbf{Fredholm complex}}.
In this article, unless otherwise specified, we work with Hilbert complexes associated to wedge operators as introduced in the previous section.

\begin{definition}[(wedge) elliptic complex]
Let $\mathcal{P}(X)=(H, P)$ be a Hilbert complex where each $P_j$ is a  first order wedge operator acting on Hilbert spaces $H_k=L^2(X;E_k)$ of sections of a vector bundle $E_k$ on $X$, with leading wedge symbol (see the discussion following equation \eqref{edge symbol sequence}) $p_k= \sigma_1 (P_k)$. In particular, $p_k \circ p_{k-1}=0$. We say $\mathcal{P}$ is a \textbf{\textit{(wedge) elliptic complex}} if $ker(p_k(\xi))= ran(p_{k-1}(\xi))$ for $\xi \neq 0$.
\end{definition}

We will refer to these as elliptic complexes when it is clear from context that we are working with wedge operators. Elliptic complexes on smooth manifolds are also wedge elliptic complexes. 

For every Hilbert complex $\mathcal{P}$ there is a \textit{\textbf{dual Hilbert complex}} $\mathcal{P}^*$, given by
\begin{equation}
\label{dual_complex}
    0 \rightarrow H_n \xrightharpoonup{P_{n-1}^*} H_{n-1} \xrightharpoonup{P_{n-2}^*} H_{n-2} \xrightharpoonup{P_{n-3}^*} .... \xrightharpoonup{P_{0}^*} H_0 \rightarrow 0
\end{equation}
where the differentials are $P_k^*: Dom(P^*_k) \subset H_{k+1} \rightarrow H_k$, the Hilbert space adjoints of the differentials of $\mathcal{P}$. That is, the Hilbert space in degree $k$ of the dual complex $\mathcal{P}^*$ is the Hilbert space in degree $n-k$ of the complex $\mathcal{P}$, and the operator in degree $k$ of $\mathcal{P}^*$ is the adjoint of the operator in degree $(n-1-k)$ of $\mathcal{P}$. The corresponding cohomology groups of $\mathcal{P}^*(H,P^*)$ are $\mathcal{H}^k(H, (P)^*) := ker(P^*_{n-k-1})/ran(P^*_{n-k})$.
For instance, in the case of the de Rham complex $(L^2\Omega^k(X),d_{\max})$, the dual complex $\mathcal{P}^*$ is the complex $\mathcal{Q}=(L^2\Omega^{n-k}(X),\delta_{\min})$, since the operators $d$ and $\delta$ are formal adjoints of each other. 

Similarly, the dual of $\mathcal{P}=(L^2\Omega^{p,q}(X),\overline{\partial}_{\max})$ for a stratified pseudomanifold with a wedge complex structure can be identified with $\mathcal{Q}=(L^2\Omega^{n-p,n-q}(X),\overline{\partial}^*_{\min})$. The Hilbert spaces of these complexes are different, as are the operators and adjoints but it is well known that the Hodge star gives an isomorphism between these complexes. This is an instance of Serre duality which we shall explore more in Subsection \ref{subsubsection_Serre_duality}.

We introduce a special type of elliptic complex as follows.
\begin{definition}
\label{equation_two_term_complex}
A \textit{\textbf{(wedge) Dirac complex}} $\mathcal{R}=(H, D)$ is a two term wedge elliptic complex of the form
\begin{equation}
    0 \rightarrow H^{+} \xrightharpoonup{D^+} H^- \rightarrow 0
\end{equation}
where $D^+$ is a Dirac-type operator.
\end{definition}
Given a wedge elliptic complex, we can associate a two term complex as in Definition \ref{equation_two_term_complex} where $H^+ =\bigoplus_{k=even} H_k$, and $H^- = \bigoplus_{k=odd} H_k$. We shall refer to such a two term complex as a \textit{\textbf{(wedge) Dirac complex $\mathcal{D_P}$ associated to the elliptic complex $\mathcal{P}$}}. Here we take the domain for the operator $D=P+P^*$ to be
\begin{equation}
    \label{Domain_Dirac_first}
    \mathcal{D}(D)=\mathcal{D}(P) \cap \mathcal{D}(P^*).
\end{equation}

For an elliptic complex $\mathcal{P}=(H,P)$, there is an associated \textbf{\textit{Laplace-type operator}} $\Delta_k = P_{k}^*P_k+P_{k-1}P_{k-1}^*$ in each degree, which is a self adjoint operator with domain
\begin{equation}
\label{Laplacian_P_type}
\mathcal{D}(\Delta_k) = \{ v \in \mathcal{D}(P_k) \cap \mathcal{D}(P_{k-1}^*) : P_k v \in \mathcal{D}(P_k^*), P^*_{k-1} v \in \mathcal{D}(P_{k-1}) \},
\end{equation}
and with nullspace
\begin{equation}
   \widehat{\mathcal{H}}^k(\mathcal{P}):= ker(\Delta_k) = ker(P_k) \cap ker(P_{k-1}^*).
\end{equation}
The Kodaira decomposition which we present below in Proposition \ref{Kodaira_decomposition} identifies this with the cohomology of the complex $\mathcal{H}^k(\mathcal{P})$.
We observe that this Laplace-type operator can be written as the square of the associated Dirac-type operator $D=(P+P^*)$, restricted to each degree to obtain $\Delta_k$, and that the domain can be written equivalently as
\begin{equation}
\label{Laplacian_D_type}
\mathcal{D}(\Delta_k) = \{ v \in \mathcal{D}(D) : D v \in \mathcal{D}(D) \}.
\end{equation}
The null space is isomorphic to the cohomology for Fredholm complexes.

\begin{definition}
The \textit{\textbf{reduced cohomology groups}} of a complex are defined as
\begin{center}
    $\overline{\mathcal{H}}^k(H, P) := Ker(P_k)/ \overline{(ran(P_{k-1}))}$
\end{center}
where the overline indicates the closure of the range of the operator $P_{k-1}$.
\end{definition}

We state the following generalization of the Hodge decomposition for the de Rham complex, known as the \textbf{\textit{weak Kodaira decomposition}}.
%Hodge decomposition.
\begin{proposition}
\label{Kodaira_decomposition}
For any Hilbert complex $\mathcal{P}=(H, P)$ we have
\begin{center}
    $H_k=\widehat{\mathcal{H}}^k(H, P) \bigoplus \overline{ran(P_{k-1})} \bigoplus \overline{ran(P_k^*)}$
\end{center}
\end{proposition}
This is Lemma 2.1 of \cite{bru1992hilbert}. 

% Proposition 2 of Francesco
\begin{proposition}
\label{Fredholm_is_closed_range}
If the cohomology of an elliptic complex $\mathcal{P}=(H_*, P_*)$ is finite dimensional then, for all $k$, $ran(P_{k-1})$ is closed and therefore $\mathcal{H}^k(\mathcal{P}) \cong \widehat{\mathcal{H}}^k(\mathcal{P})$. 
\end{proposition}
This is corollary 2.5 of \cite{bru1992hilbert}. The next result justifies the use of the term \textit{Fredholm complex}.
%++++++++++++++++++++++++++++++++++++++++++++++++++
% Proposition 4 of Francesco
\begin{proposition}
A Hilbert complex $(H_k, P_k)$, $k=0,...,n$ is a Fredholm complex if and only if, for each $k$, the Laplace-type operator $\Delta_k$ with the domain defined in \eqref{Laplacian_P_type} is a Fredholm operator.
\end{proposition}
This is Lemma 1 on page 203 of \cite{schulze1986elliptic}. 
Due to these results, we can identify the space of \textit{harmonic elements}, or the elements of the Hilbert space which are in the null space of the Laplace-type operator, with the cohomology of the complex in the corresponding degree. We shall use the same terminology for non-Fredholm complexes which we study as well. 

For Fredholm complexes, the null space of the Laplacian is isomorphic to the cohomology of the complex since the operator has closed range.

% Proposition 3 of Francesco
\begin{proposition}
\label{Kernel_equals_cohomology}
A Hilbert complex $\mathcal{P}=(H, P)$, is a Fredholm complex if and only if its dual complex, $(\mathcal{P}^*)$ is Fredholm.
If it is Fredholm, then 
\begin{equation}
    \mathcal{H}^k(\mathcal{P}) \cong \overline{\mathcal{H}}^k(\mathcal{P}) \cong \overline{\mathcal{H}}^{n-k}(\mathcal{P}^*) \cong \mathcal{H}^{n-k}(\mathcal{P}^*).
\end{equation}
\end{proposition}

In particular, for operators with closed range, the reduced cohomology groups are the same as the cohomology groups and are isomorphic to the null space of the Laplace-type operator, in which case the above decomposition is called the \textbf{\textit{(strong) Kodaira decomposition}}.

\begin{remark}
\label{remark_only_normal_pseudomanifolds}
    Since the stratum $\widehat{X}_{n-2}$ has measure $0$ with respect to a conic metric, the $L^2$ functions on $X^{reg}$ are the same for the space and its normalization and the Hilbert complexes on $\widehat{X}$ and its normalization can be canonically identified. \textbf{From now on we will study topologically normal pseudomanifolds unless otherwise specified}. We will indicate how to compute Lefschetz numbers in the non-normal case in examples in Subsection \ref{subsection_nonnormal_examples}.
\end{remark}

\subsection{Lefschetz heat supertraces}
\label{subsection_abstract_lefschetz_supertraces}

In this subsection we prove results for Lefschetz numbers of wedge elliptic complexes and their endomorphisms.
\begin{definition}
\label{geometric_endo}
An \textbf{\textit{endomorphism}} $T$  \textbf{\textit{of a wedge elliptic complex $\mathcal{P}=(H, P)$}} is given by an n-tuple of maps $T=(T_0, T_1,...,T_n)$, where $T_k:H_k \rightarrow H_k$ are bounded maps of Hilbert spaces, that satisfy the following properties.
\begin{enumerate}
    \item $T_k(\mathcal{D}(P_k)) \subseteq \mathcal{D}(P_k)$
    \item $P_k \circ T_k= T_{k+1} \circ P_k$ on $\mathcal{D}(P_k)$
\end{enumerate}
Each endomorphism $T_k$ has an adjoint $T_k^*$. If each $T_k^*$ preserves the domain $\mathcal{D}(P_k^*)$, then we call $T^*=(T^*_0, T^*_1,...,T^*_n)$ the \textit{\textbf{adjoint endomorphism}} of the dual complex.
\end{definition}

The commutation condition (condition 2) ensures that the endomorphisms will induce a map on the kernel of the Dirac-type operators that preserves the grading. In the next section, we will study endomorphisms of complexes in greater detail.

\begin{definition}
\label{L2_Lefschetz}
Let $X$ be a pseudomanifold with a Fredholm complex $\mathcal{P}=(H,P)$ with an endomorphism $T$, we define the associated \textit{\textbf{Lefschetz polynomial}} to be
\begin{equation} 
L(\mathcal{P},T)(b):= \sum_{k=0}^n b^k tr(T_k|_{\mathcal{H}^k(\mathcal{P})}) \in \mathbb{C}[b]
\end{equation}
and the associated \textit{\textbf{Lefschetz number}} to be
\begin{equation} 
\begin{split}
L(\mathcal{P},T) &:= L(\mathcal{P},T)(-1) \\
 & = Tr(T|_{\mathcal{H}^{+}({\mathcal{D_P}})})- Tr(T|_{\mathcal{H}^{-}({\mathcal{D_P}})})
\end{split}
\end{equation}
where $T_k|_{\mathcal{H}^k(\mathcal{P})} :\mathcal{H}^k(\mathcal{P}) \rightarrow \mathcal{H}^k(\mathcal{P})$ is the map induced by the endomorphism $T$ on the $k$-th degree cohomology group $\mathcal{H}^k(\mathcal{P})$ and $T|_{\mathcal{H}^{\pm}({\mathcal{D_P}})}$ is the map induced on the cohomology of the associated Dirac complex $\mathcal{D_P}$.
\end{definition}
The following is a generalization of the Definition \ref{L2_Lefschetz} which shows up in the Morse inequalities which we shall study later.
\begin{definition}
\label{definition_polynomial_Lefschetz_supertrace}
Let $\mathcal{P}=(H,P)$ be a wedge elliptic complex where the associated Laplace-type operators in each degree have discrete spectrum and trace class heat kernels. Let $T$ be an endomorphism of the complex. For all $t \in \mathbb{R}^+$, we define the \textbf{\textit{polynomial Lefschetz heat supertrace}} as
\begin{equation}
    \mathcal{L}(\mathcal{P},T)(b,t)=\mathcal{L}(\mathcal{P}(X),T)(b,t):=\sum_{k=0}^n  Tr( b^k T_k e^{-t \Delta_k})
\end{equation}
and we call $\mathcal{L}(\mathcal{P},T)(-1,t)$ the \textit{\textbf{Lefschetz heat supertrace}} associated to the complex. Here, we use the notation $\mathcal{L}(\mathcal{P}(X),T)(b,t)$ when the complex $\mathcal{P}=\mathcal{P}(X)$ is associated to a pseudomanifold $X$.
\end{definition}

The Kodaira decomposition of the complex $\mathcal{P}$ is
\begin{center}
    $H_k=\mathcal{H}^k(\mathcal{P}) \bigoplus ran(P_{k-1}) \bigoplus ran(P_{k}^*)$.
\end{center}
We shall call the elements in $ran(P_{k-1})$ the \textit{\textbf{exact elements}}, and the elements in $ran(P_{k}^*)$ the \textit{\textbf{co-exact elements}}, generalizing the nomenclature from the case of the de Rham operator. We call them exact sections and co-exact sections when the elements of the Hilbert space are $L^2$ sections of a vector bundle. In light of the identity $P_{k-1}^* P_{k}^*=0$, the Laplace-type operator $\Delta_k$ restricted to the co-exact elements is just $P_{k}^* P_k$, while on exact elements it is simply $P_{k-1} P_{k-1}^*$. 
Given an orthonormal basis of elements of $H_k$ which are eigensections of $\Delta_k$, for all $k$, this next Lemma shows that we can find two canonically isomorphic bases for the exact and co-exact eigensections. This relation is sometimes called supersymmetry.

\begin{lemma}
\label{Lemma_super_symmetry}
For a wedge elliptic complex $\mathcal{P}$ associated to wedge Dirac operators on a stratified pseudomanifold $X$, given $\{v_{\lambda_{i}}\}_{i \in \mathbb{N}}$, an orthonormal basis of eigenvectors for the co-exact elements of degree $k$, the set $\{\frac{1}{\sqrt{\lambda_{i}}}P_k v_{\lambda_{i}}\}_{i \in \mathbb{N}}$ is an orthonormal basis of eigenvectors (of the operator $\Delta_{k+1}$) for the exact elements of degree $k+1$.
\end{lemma}

\begin{proof}
Since $\Delta_k v_{\lambda_{i}} = (P_{k}^* P_k) v_{\lambda_{i}}=  \lambda_{i} v_{\lambda_{i}}$, to show that $P_k v_{\lambda_{i}}$ is an eigenvector, we first observe that
\begin{equation*}
\begin{split}
\Delta_{k+1} (P_k v_{\lambda_{i}}) & =( P_{k+1}^*P_{k+1}+P_{k}P_{k}^*) (P_k v_{\lambda_{i}})\\
                                 & = P_{k}P_{k}^* (P_k v_{\lambda_{i}})\\
                                 & = P_{k}(P_{k}^* P_k v_{\lambda_{i}})\\
                                 & = P_{k}(\Delta_k v_{\lambda_{i}})\\
                                 & = \lambda_{i} P_k v_{\lambda_{i}}
\end{split}
\end{equation*}
where the second and the fourth inequalities follow from the identities $P_{k+1}P_k=0$ and $P_kP_{k-1}=0$ respectively. 
The computation 
\begin{equation*}
\langle P_k v_{\lambda_{i}}, P_k v_{\mu_{i}} \rangle = \langle v_{\lambda_{i}}, (P_{k}^*P_k) v_{\mu_{i}} \rangle = \langle v_{\lambda_{i}}, \Delta_k v_{\mu_{i}} \rangle = \langle v_{\lambda_{i}}, \mu_{i} v_{\mu_{i}} \rangle
\end{equation*}
where the inner products are those on $H_{k+1}$
shows that $\frac{1}{\sqrt{\lambda_{i}}}P_kv_{\lambda_{i}}$ are orthonormal, since the last expression is equal to $\mu_i=\lambda_i$ if the eigenvectors are the same, and zero otherwise.
\end{proof}

\begin{theorem}
\label{Lefschetz_supertrace}
Let $\mathcal{P}=(H,P)$ be an elliptic complex where the associated Laplace-type operators in each degree have discrete spectrum and trace class heat kernels. Let $T$ be an endomorphism of the complex. For all $t \in \mathbb{R}^+$
\begin{equation} 
\label{equation_with_the_b}
    \mathcal{L}(\mathcal{P},T)(b,t)=L(\mathcal{P},T)(b)+ (1+b) \sum_{k=0}^{n-1} b^k S_k(t)
\end{equation}
where
\begin{equation} 
    S_k(t)=\sum_{\lambda_i \in Spec(\Delta_k)} e^{-t \lambda_i}  \langle T_k v_{\lambda_{i}}, v_{\lambda_{i}} \rangle
\end{equation}
where $\{v_{\lambda_i}\}_{i \in \mathbb{N}}$ are an orthonormal basis of co-exact eigensections of $\Delta_k$. In particular
\begin{equation}
\label{Heat_formula_all_t_P}
    L(\mathcal{P},T)= \mathcal{L}(\mathcal{P},T)(-1,t),
\end{equation}
and the Lefschetz heat supertrace is independent of $t$.
\end{theorem}

\begin{remark}
We emphasize the fact that this theorem only uses the structure of an abstract Hilbert complex with discrete spectrum and a trace class heat kernel, and abstract endomorphisms of a complex, using an old argument going back to at least \cite{mckean1967curvature} which is sometimes also refered to as supersymmetric cancellations. We will generalize this to a renormalized version in the non-Fredholm case in Theorem \ref{theorem_renormalized_McKean_Singer}.
\end{remark}

\begin{proof}
We can expand the polynomial Lefschetz heat supertrace using the Kodaira decomposition and an orthonormal basis of eigenvectors constructed as in the previous lemma to get

\begin{multline}
\label{equation_Mahinda_Hora}
\sum_{k=0}^n b^k Tr(T_k e^{-t \Delta_k}) =  \sum_{k=0}^n b^k tr(T_k|_{\mathcal{H}^k(\mathcal{P})})\\ 
+\sum_{k=0}^{n-1} b^k \sum_{\lambda_i \in Spec(\Delta_k)} e^{-t \lambda_i}  \langle T_k v_{\lambda_{i}}, v_{\lambda_{i}} \rangle \\ 
+\sum_{k=0}^{n-1}  b^{k+1} \sum_{\lambda_i \in Spec(\Delta_k)} e^{-t \lambda_i}  \langle T_{k+1} P_k \frac{v_{\lambda_{i}}}{\sqrt{\lambda_i}}, P_k \frac{v_{\lambda_{i}}}{\sqrt{\lambda_i}} \rangle
\end{multline}
where we have used the fact that there are no exact elements in degree 0, and that there are no co-exact elements in degree $n$. To show that we have a $1+b$ factor and the expression for $S_k$, we use the supersymmetry given by the previous lemma as follows:
\begin{equation}
\label{argument_Carrickfergus}
\begin{split}
\frac{1}{\lambda_i}\langle T_{k+1} P_k v_{\lambda_{i}}, P_k v_{\lambda_{i}} \rangle &= \frac{1}{\lambda_i}\langle P_k T_k v_{\lambda_{i}}, P_k v_{\lambda_{i}} \rangle\\
=\frac{1}{\lambda_i}\langle T_k v_{\lambda_{i}}, P_{k}^* P_k v_{\lambda_{i}} \rangle
&=\langle T_k v_{\lambda_{i}}, \frac{1}{\lambda_i} \Delta_k v_{\lambda_{i}} \rangle\\
&=\langle T_k v_{\lambda_{i}}, v_{\lambda_{i}} \rangle.
\end{split}
\end{equation}
It is now easy to see that we have the expression for $S_k(t)$ and that equation \eqref{equation_with_the_b} holds for $t>0$. It is clear that for $b=-1$, this yields
\begin{equation*}
\label{equation_Namal_Hora}
    \sum_{k=0}^n (-1)^k Tr(T_k e^{-t \Delta_k}) = \sum_{i=0}^n (-1)^k tr(T_k|_{\mathcal{H}^k(\mathcal{P})})
\end{equation*}
which is independent of $t$, proving the theorem.
\end{proof}

The following proposition relates the Lefschetz number of an endomorphism of a complex to the Lefschetz number on the adjoint endomorphism of the dual complex. This is a generalization of equation (83) of \cite{Bei_2012_L2atiyahbottlefschetz} coupled with 
the standard duality given by $b \mapsto 1/b$ and Poincar\'e duality in Morse theory (see the discussion following equation 1 of \cite{witten1984holomorphic}) and captures the duality of the Lefschetz polynomials. 

\begin{proposition}[Duality]
\label{proposition_Lefschetz_on_adjoint}
Let $\mathcal{P}=(H,P)$ be an elliptic complex of maximal non-trivial degree $n$ and let $T$ be an endomorphism. Let $\mathcal{P^*}$ be the dual complex and let $T^*$ denote the endomorphism induced on the dual complex. Then
\begin{equation}
\label{Kalman_filter}
   b^n \mathcal{L}(\mathcal{P}^*,T^*)(b^{-1},t)=\mathcal{L}(\mathcal{P},T)(b,t).
\end{equation}
In particular, we have the equality
\begin{equation}
    L(\mathcal{P},T)= (-1)^n L(\mathcal{P^*},T^*).
\end{equation}
\end{proposition}

\begin{proof}
We can express $\mathcal{S}_k(t)$ in the expression \eqref{equation_with_the_b} for the right hand side of equation \eqref{Kalman_filter} in terms of the exact sections of the operator $P$ using Lemma \ref{Lemma_super_symmetry}. This is similar to the argument used to reduce the expression in equation \ref{equation_Mahinda_Hora} to one in terms of the co-exact eigenvectors. Using the identity
\begin{equation}
    e^{-t \lambda_i}  \langle T_k v_{\lambda_{i}}, v_{\lambda_{i}} \rangle=e^{-t \lambda_i} \frac{1}{\lambda_i} \langle  P_{k}v_{\lambda_{i}}, T_{k+1}^* P_kv_{\lambda_{i}} \rangle
\end{equation}
which follows from equation \eqref{argument_Carrickfergus} and the definition of the adjoint, the summation in equation \eqref{equation_with_the_b} can be written as
\begin{equation} 
    (1+b) \sum_{k=0}^{n-1} b^k S_k(t)=(b^{-1}+1) \sum_{k=1}^{n} b^k S_k(t)=(b^{-1}+1) \sum_{k=1}^{n} b^k \widetilde{S}_k(t)
\end{equation}
where 
\begin{equation} 
    \widetilde{S}_k(t)=\sum_{\lambda_i \in Spec(\Delta_k) \setminus \{0 \} } e^{-t \lambda_i} \frac{1}{\lambda_i} \langle  P_kv_{\lambda_{i}}, T_{k+1}^* P_kv_{\lambda_{i}} \rangle.
\end{equation}
The definition of the dual complex shows us that the co-exact (exact) elements of the complex $\mathcal{P}$ of degree $k$ are the same as the exact (co-exact) elements of the dual complex $\mathcal{P}^*$ of degree $n-k$. This allows us to write
\begin{equation}
\label{hypasist_1}
   \mathcal{L}(\mathcal{P},T)(b,t)=L(\mathcal{P},T)(b)+ (b^{-1}+1) \sum_{k=1}^{n} b^k \widetilde{S}_k(t)
\end{equation}
using equation \eqref{equation_with_the_b}.
Moreover the cohomologies are isomorphic (in dual degrees), as expressed in Proposition \ref{Kernel_equals_cohomology}. This shows that \begin{equation*}
    L(\mathcal{P},T)(b)= (b^n)L(\mathcal{P}^*,T^*)(b^{-1})
\end{equation*}
Using this in equation \eqref{hypasist_1}, we see that it is precisely equal to the expansion of $b^n \mathcal{L}(\mathcal{P}^*,T^*)(b^{-1},t)$ using Definition \ref{definition_polynomial_Lefschetz_supertrace} for the dual complex.
\begin{comment}
Here one has to use that 
\begin{equation}
    \langle T^{P^*}_{f^{-1}} v_{\lambda_{i}}, v_{\lambda_{i}} \rangle=\langle v_{\lambda_{i}}, T_f^* v_{\lambda_{i}} \rangle=\langle T_f v_{\lambda_{i}}, v_{\lambda_{i}} \rangle
\end{equation}
as in equation \eqref{equation_traces_of_adjoints}, and that the trace on the exact and coexact sections are the same as in the proof of Lemma \ref{lemma_polynomial_Lefschetz_supertrace}.
\end{comment}
The duality for Lefschetz numbers is obtained by setting $b=-1$.
\end{proof}

This will be particularly important when we study local Lefschetz polynomials in Section \ref{section_formulas}. The important point here is that the Laplace-type operator is the same for the elliptic complex and its dual complex, and instead of using the endomorphism in the Lefschetz heat trace, one can use the adjoint of the endomorphism.

\subsection{Hilbert complexes on stratified pseudomanifolds}
\label{subsection_Hilbert_complexes_stratified_pseudomanifolds}

In this article we study wedge elliptic complexes on pseudomanifolds with associated Dirac complexes, where the Dirac-type operators are those on stratified pseudomanifolds. If the Dirac-type operator satisfies the Witt/geometric Witt condition of Definition \ref{Witt_assumption}, we say that the \textbf{\textit{complex satisfies the Witt/geometric Witt condition}}, in which case the operators are essentially self-adjoint. If not we will choose the VAPS domain.

Given an elliptic complex $\mathcal{P}(X)=(L^2(X;E),P)$ on a pseudomanifold $\widehat{X}$, which is associated to a Dirac operator $D=P+P^*$ acting on sections in $L^2(X;E)$, we define the VAPS domain $\mathcal{D}_{VAPS}(P)$ as the graph closure of $P$ with respect to the VAPS domain for the Dirac operator $\mathcal{D}_{VAPS}(D)$. That is
\begin{multline}
\label{VAPS_domain_for_complex}
\mathcal{D}_{VAPS}(P) = \{ u \in L^2(X;E) : \exists (u_n) \subset \mathcal{D}_{VAPS}(D)\\ 
\text{such that } u_n \rightarrow u \; \text{and} \;  (P u_n) \; \text{is} \; L^2- \text{Cauchy} \}
\end{multline}
and the domain for the adjoint is the graph closure of $P^*$ with respect to the VAPS domain for the Dirac operator $\mathcal{D}_{VAPS}(D)$. Taking the domain for the Dirac-type operator as in equation \eqref{Domain_Dirac_first}, one can see that the corresponding domain for the Dirac-type operator $\mathcal{D}_{VAPS}(P+P^*)=\mathcal{D}_{VAPS}(P) \cap \mathcal{D}_{VAPS}(P^*)$ is the same as $\mathcal{D}_{VAPS}(D)$ given in Proposition \ref{equivalent_definition_VAPS_domain}.
This follows from an argument similar to that in Proposition 5.4 of \cite{Albin_hodge_theory_cheeger_spaces}.

\begin{remark}
We will always work with the VAPS domains $\mathcal{D}_{VAPS}(P)$ for elliptic complexes on pseudomanifolds, unless otherwise specified. We will explore some examples with different domains in Section \ref{Dolbeault_section} to indicate how the choices of domains see different aspects of the geometry and topology of the underlying spaces.
\end{remark}

For any wedge Dirac-type operator on a stratified pseudomanifold with a $\mathbb{Z}_2$ grading operator as in Definition \ref{Grading operator} yielding a splitting $E=E^+ \oplus E^-$, there is a corresponding splitting of the Hilbert space $L^2(X,E)=L^2(X,E^+) \oplus L^2(X,E^-)$. 
Since any such grading operator $\gamma$ preserves $\rho_X^{1/2}L^2(X;E)$ and $\mathcal{D}_{\max}(D)$, it preserves the VAPS domain, 
and we can decompose the VAPS domain as 
\begin{equation}
\label{Dirac_domain_bluh}
    \mathcal{D}_{VAPS}(D)=\mathcal{D}_{VAPS}(D)^+ \oplus \mathcal{D}_{VAPS}(D)^-
\end{equation}
where $\mathcal{D}_{VAPS}(D)^\pm$ are the sections with values in $E^\pm$ respectively, and where $D^\pm$ are the restrictions of $D$ to $\mathcal{D}_{VAPS}(D)^\pm$ respectively. 
This grading descends to the kernel of the operator $D$. 

\subsection{The Dolbeault Dirac and spin$^\mathbb{C}$ Dirac complexes}
\label{subsection_Dolbeault_SpinC_introduction}

We briefly discuss the two complexes in the title of this subsection since we will derive specialized results for them.

When the structure group of resolved stratified space $X$ with a wedge metric can be lifted to a spin$^\mathbb{C}$ structure, we can define a natural wedge Clifford algebra which gives rise to the spin$^\mathbb{C}$ Dirac operator on $X^{reg}$ and we refer to \cite{duistermaat2013heat} for details. 
It is well known that for an almost complex manifold of dimension $2n$ equipped with an almost hermitian metric, there is a corresponding spin$^{\mathbb{C}}$ structure and spin$^{\mathbb{C}}$ Dirac operator, and the complex spinor bundle $\mathcal{S}$ is canonically identified with $\oplus_q \Lambda^{0,q}X$, where $\oplus_{p,q} \Lambda^{p,q}X$ is the splitting of the exterior algebra of $T^*X$ in $(p,q)$ types (see, e.g., \cite{epstein2006subelliptic}).

Locally one can write the operator as 
\begin{equation}
    D=\sqrt{2} (\overline{\partial}+\overline{\partial}^*)+\mathcal{M}^{eo}
\end{equation}
where $\mathcal{M}^{eo}$ is a zero-th order operator that intertwines the odd and even degree spinors, vanishing where the metric is locally K\"ahler, in which case $D$ is equal to the Dolbeault Dirac operator $(\overline{\partial}+\overline{\partial}^*)$ (up to constants determined by various conventions).

In the non-K\"ahler case, the square of the spin$^{\mathbb{C}}$ Dirac operator does not preserve the form degree (see \cite[\S 3.1]{duistermaat2013heat}). 
On a general complex manifold, both the spin$^{\mathbb{C}}$ Dirac operator and the Dolbeault Dirac operator have the same principal symbol they have the same Fredholm index and we can compute the Lefschetz numbers for the Dolbeault complex using the spin$^{\mathbb{C}}$ Dirac complex.
For almost complex structures that are non-integrable, we will study Lefschetz numbers for the spin$^{\mathbb{C}}$ Dirac complex.

%+++++++++++++++++++++++++++++++++++++++++++++++++++++++++%
%+++++++++++++++++++++++++++++++++++++++++++++++++++++++++%

\section{Geometric endomorphisms}
\label{section_Geometric_endomorphism}

%++++++++++++++++++++++++++++++++++++++++++++++++++++++++++++++++++++++++%
%Introduction
In this section we introduce endomorphisms of elliptic and Dirac complexes which are associated to self maps of geometric spaces which we call geometric endomorphisms. 
We defined endomorphisms of complexes in Definition \ref{geometric_endo}.

\begin{definition}
\label{definition_geometric_endomorphism_proper}
An endomorphism $T$ of an elliptic complex $\mathcal{P}(X)=(L^2(X;E_k),P)$ on a stratified pseudomanifold $\widehat{X}$ is a \textbf{\textit{geometric endomorphism}} if there is a smooth self map $f:X \rightarrow X$ and smooth bundle morphisms $\varphi_k : f^*E_k \rightarrow E_k$ such that 
\begin{equation}
    T_{k} S=\varphi_k (f^* S)
\end{equation}
for sections $S \in \Gamma(E_k)$. We shall denote the geometric endomorphism associated to a map $f$ by $T_f$.
\end{definition}

This definition is a natural generalization of the definition for smooth manifolds given on page 377 of \cite{AtiyahBott1}. For instance given a self-map $f$ of a smooth manifold, the pullback on forms is of the form $\phi_k \circ f^*$, where $\phi_k=\Lambda^{k}df$.

In the words of Atiyah and Bott, \textit{there may not be a natural construction for $\varphi_k$, and their existence must be postulated. In many geometrically interesting examples, however, there is a natural choice for the $\varphi_k$.}

In the setting of smooth manifolds, a smooth self map on the space induces a geometric endomorphism in this way on the complex of smooth forms. However when considering $L^2$ cohomology, looking at different extensions of domains of operators on stratified pseudomanifolds, the pullback by a self map no longer induces a bounded map on the domain unless some additional conditions are met by the self map. 
We will work with the assumption that the lift of the self map to the resolved space is a local diffeomorphism (except in Section \ref{section_de_Rham}, where we will show how this can be relaxed for the de Rham complex).
This assumption is used in the literature when studying Lefschetz numbers on spaces with isolated conic singularities \cite{Bei_2012_L2atiyahbottlefschetz}, and we shall use many ideas in that article.

In Section \ref{section_de_Rham}, we construct a modified geometric endomorphism which relaxes the conditions demanded on the self map and we shall give detailed proofs of why said modified geometric endomorphism is indeed an endomorphism on the de Rham complex. 

We begin by reviewing some of the more commonly studied self maps on stratified pseudomanifolds and the properties of their induced maps on cohomology.

%+++++++++++++++++++++++++++++++++++++++++++++++++++++++++++++++++++++++++%
\subsection{Self maps on stratified pseudomanifolds}
%+++++++++++++++++++++++++++++++++++++++++++++++++++++++++++++++++++++++++%
% Definitions: Stratum preserving, codimension preserving, strongly stratum preserving.
% For us, a stratified space is always in the sense of Thom-Mather.

In order to construct suitable endomorphisms associated to self maps on a stratified pseudomanifold, we need that the self maps \textit{respect} the stratification. We introduce notions of such maps which appear commonly in the literature. 

Let $\widehat{X}, \widehat{Y}$ be two compact stratified spaces, $X, Y$ be the respective resolved manifolds with corners and $\widehat{X}^{reg}, \widehat{Y}^{reg}$ be the regular parts of the respective spaces. Following \cite{Kirwan&woolf_2006_book} Definition 4.8.1, and \cite{Albin_signature} we introduce the following definitions. 
\begin{definition}

\label{stratum_preserving}
A map ${\widehat{f} : \widehat{X} \longrightarrow \widehat{Y}}$  between stratified spaces is called \textit{\textbf{stratum preserving}} if 
\begin{equation}
{S \in \mathcal{S}_{\widehat{Y}} \implies \widehat{f}^{-1}(S)} \text{ is a union of strata of $\widehat{X}$}.
\end{equation}
A stratum preserving map is called \textit{\textbf{placid}} if for each stratum $S \in \mathcal{S}_{\widehat{Y}}$, 
\begin{equation*}
    \text{codim} \widehat{f}^{-1}(S) \geq \text{codim}(S)
\end{equation*}
and is called \textbf{\textit{codimension preserving}} if 
\begin{equation*}
    \text{codim} \widehat{f}^{-1}(S) = \text{codim}(S).
\end{equation*}
\end{definition}

\begin{definition}
A \textit{\textbf{stratum-preserving homotopy equivalence}} between topological pseudomanifolds $\widehat{X}, \widehat{Y}$ with chosen stratifications $\widehat{X}^j, \widehat{Y}^j$ is a pair of codimension preserving maps $\widehat{f} : \widehat{X} \longrightarrow \widehat{Y},\widehat{g} : \widehat{Y} \longrightarrow \widehat{X}$, such that $\widehat{f} \circ \widehat{g}$ and $\widehat{g} \circ \widehat{f}$ are homotopic to the identity via codimension preserving homotopies
\begin{equation*}
    \widehat{h}: \widehat{X} \times [0,1] \rightarrow \widehat{X} \text{  and  } \widehat{k}: \widehat{Y} \times [0,1] \rightarrow \widehat{Y}.
\end{equation*}
\end{definition}
Friedman shows that stratum preserving homotopy equivalent maps induce the same map at the level of intersection homology (see Chapter 4 of \cite{friedman2020singular}). 
Recall that we use the notation
\begin{center}
    $\rho_X = \prod_{H \in \mathcal{M}_1(X)} \rho_H$
\end{center}
for a total boundary defining function. A \textbf{multiweight} for $X$ is a map
\begin{center}
    $\mathfrak{s} : \mathcal{M}_1(X) \rightarrow \mathbb{R} \cup \{ \infty \} $
\end{center}
and we denote the corresponding product of boundary defining functions by 
\begin{center}
    $\rho_X^{\mathfrak{s}} = \prod_{H \in \mathcal{M}_1(X)} \rho_H^{\mathfrak{s}(H)}$
\end{center}
We write $\mathfrak{s} \leq \mathfrak{s'}$ if $\mathfrak{s}(H) \leq \mathfrak{s'}(H)$ for all $H \in \mathcal{M}_1(X)$.
A smooth map between manifolds with corners $f : X \rightarrow Y$ is a \textbf{b-map} if, for each $H \in \mathcal{M}_1(X)$, and some choice of boundary defining functions, we have
\begin{equation}
    f^*\rho_H = \prod_{H \in \mathcal{M}_1(X)} \rho_G^{e_f(H,G)}
\end{equation}
where $e_f(H,G)$ is a non-negative integer. These are called `interior b-maps’ in \cite{melrose1992calculus} because they map the interior of the domain into the interior of the target.
The map 
\begin{center}
    $e_f : \mathcal{M}_1(X) \times \mathcal{M}_1(Y) \rightarrow \mathbb{N}_{0}$
\end{center}
is known as the \textbf{exponent matrix} of the b-map $f$ and we write
\begin{center}
    $ker(e_f)= \{ H \in \mathcal{M}_1(X) : e_f(H,G)=0$ for all $G \in \mathcal{M}_1(Y) \}$
\end{center}
If the exponent matrix of a $b$ map has only entries $0,1$, we say that it is a \textit{\textbf{simple $b$-map}}.

\begin{definition}
\label{smoothly stratified}
A \textit{\textbf{smoothly stratified self-map}} $\widehat{f}$ of a stratified space $\widehat{X}$, is a continuous map $\widehat{f} : \widehat{X} \rightarrow \widehat{X}$ sending the regular part $X^{reg}$ of the space to itself, and for which there exists a lift $f : X \rightarrow X$, a b-map of manifolds with corners, such that $\widehat{f} \circ \beta = \beta \circ f$ (where $\beta: X \rightarrow \widehat{X}$ is the blow-down map corresponding to the resolution) which is compatible with the iterated fibration structures. 
\end{definition}

\begin{remark}[Nomenclature conventions]
If a smoothly stratified map is the lift under $\beta$ of a \textit{stratum preserving, placid} or \textit{codimension preserving} map, we refer to the lifted b-map by the same adjectives. The lift of a \textit{stratum-preserving homotopy equivalence} will similarly be refered to as a smoothly stratum-preserving homotopy equivalence.

\end{remark}

\begin{remark}
In \cite{Albin_signature}, codimension preserving is also referred to as strongly codimension preserving. Definition 4.8.7 of \cite{Kirwan&woolf_2006_book} is a version of the definition of \textit{stratified maps} which does not use the resolution of the space.
\end{remark}

In this article we work with self maps $f$ that are simple b-maps. This is similar to the condition imposed on maps $f$ on manifolds with cylindrical ends in \cite{seyfarth1991lefschetz} where they prove a Lefschetz fixed point formula. A simple computation shows that pull-back by a simple $b$-map which is a local diffeomorphism induces a bounded map in $L^2$, and we shall see that there are many interesting examples which satisfy this condition.

\begin{remark}[convention]
\label{unit_exponent}
From now on, unless otherwise stated, we will assume that the self maps we study are simple b-maps and are stratum preserving. 
\end{remark}

Manifolds with fibered corners and their morphisms are studied in detail in \cite{kottke2022products}. In particular, the manifolds with corners with iterated fibration structures that we study are \textit{interior maximal} manifolds with fibered corners. Morphisms in the category of manifolds with fibered corners are interior b-maps $f:X \rightarrow Y$ that are simple b-maps, \textit{b-normal, ordered, fibered and consistent}. The authors of the article note that the consistency is automatically satisfied in the interior maximal case, and following the definitions it is easy to see that so is the fibered condition. We briefly describe the other terms and their definitions, referring to \cite{kottke2022products} for more details.

Given a b-map $f: X \rightarrow Y$, the differential extends by continuity to a bundle map $\prescript{b}{}{df}_*: \prescript{b}{}{NX} \rightarrow \prescript{b}{}{NY}$ over $f$ called the b-differential, which restricts over each $E \in \mathcal{M}(X)$ to a bundle map
\begin{equation}
    \prescript{b}{}{df}_*: \prescript{b}{}{NE} \rightarrow \prescript{b}{}{NF}, \quad F=f_{\sharp}(E) \subset \mathcal{M}(Y).
\end{equation}
A b-map is called b-normal if the map above is surjective on fibers for all boundary hypersurfaces $E \in \mathcal{M}_1(X)$. The set $\mathcal{M}_{1,0}(X):=\mathcal{M}_1(X) \cup \mathcal{M}_0(X)=\mathcal{M}_0(X) \cup \{X\}$ is called the set of principal faces and an interior b-map is called ordered if $f_{\sharp}:\mathcal{M}_{1,0}(X) \rightarrow \mathcal{M}_{1,0}(Y)$ is a map of ordered sets (this corresponds to the placid condition, as one case see from Definition 3.1 of \cite{kottke2022products}). The discussion preceding Theorem 5.1 in the introduction of that article shows that the graph of any \textit{morphism} $f:X \rightarrow X$ is a p-submanifold. 

\begin{remark}
\label{remark_Fibered_morphism}
Except for the Hilsum-Skandalis geometric endomorphism that we introduce in Section \ref{section_de_Rham}, we will study geometric endomorphisms associated to self maps $\widehat{f}: \widehat{X} \rightarrow \widehat{X}$ that are codimension preserving local homeomorphisms of Thom-Mather stratified pseudomanifolds, each of which lifts to a unique (see \cite[\S 2.4]{Albin_signature}) self map $f:X \rightarrow X$ of the resolved space.
It is easy to observe that these are morphisms in the sense described above, and therefore the graph of $f$ is a p-submanifold.
\end{remark}

\subsection{Mapping properties of geometric endomorphisms}
\begin{proposition}
\label{Geometric_Endomorphism_Diffeo}
Let $\widehat{X}$ be a stratified pseudomanifold with an elliptic complex $\mathcal{P}=(H=L^2(X;E),P)$, and let $f: X \rightarrow X$ be a simple b-map and a local diffeomorphism. Let $\varphi_k: f^*E_k \rightarrow E_k$ 
be smooth bundle homomorphisms such that $T=T_f=\varphi \circ f^*$ commutes with the operator $P$ 
on $C^{\infty}_c(\widehat{X}^{reg};E_*)$. Then $T_k= \varphi_k \circ f^*$
is a geometric endomorphism of the elliptic complex.
\end{proposition}

\begin{proof}
What we need to prove is that $T_f$ extends to a bounded map of $L^2(X;E_*)$ which preserves $\mathcal{D}_{VAPS}(P)$.
Our proof follows the outline of \cite{Bei_2012_L2atiyahbottlefschetz}, and we give straightforward generalizations of Lemma 2 and Proposition 10 of that article.

\begin{lemma}
\label{lemmata_mapping_properties}
In the same setting as in the proposition, the geometric endomorphism satisfies the following properties:
\begin{enumerate}

    \item For each k, $T_k$ extends as a bounded operator from ${L^2(X;  E_k)}$ to itself (we denote this extension by $T_k$ as well).
    
    \item For each k, let ${T_k^* : L^2(X;  E_k) \rightarrow L^2(X; E_k) }$ be the adjoint of $T_k$.\\
    Then ${T_k^* :  C^{\infty}_c(\widehat{X}^{reg};  E_k) \rightarrow  C^{\infty}_c(\widehat{X}^{reg};  E_k) }$. 
\end{enumerate}
\end{lemma}

\begin{proof}
The first property follows from the fact that $f$ is a local diffeomorphism on the resolved pseudomanifold. 

The metric on the bundle $E_k$ pulls back to a metric on the pullback bundle $f^*E_k$, and we can define the adjoint map $\varphi_k^* :E_k \rightarrow f^*E_k$ as the adjoint of $\varphi_k$ in each fiber, with respect to the pointwise inner product. 

The pullback of the volume form $f^*dvol_g$ can be written as $h \cdot dvol_g$ for a smooth non-vanishing function $h$, which is positive if $f$ is orientation preserving and negative if orientation reversing. 
We can define an operator 
\begin{equation}
\label{equation_Bei_adjoint}
    S_k(u)(\xi) = \varepsilon h(\xi) (\varphi_k^* \circ (f^{-1})^*) (u)(\xi)
\end{equation}
for all $u \in L^2(X; E_k)$, where we take $\varepsilon$ to be 1 if $f$ is orientation preserving and $-1$ if it is reversing.
If $f$ is not a diffeomorphism and only a local diffeomorphism then since $\widehat{X}$ is compact, there is a covering space $\pi: X_2 \rightarrow \widehat{X}^{reg}$ with finite covering degree $c$ such that the elliptic complex lifts to $X_2$, and 
\begin{equation}
    \langle \phi_k \circ f^* v , u \rangle_{L^2(X,E_k)} = \frac{1}{c} \langle \phi_k \circ \widetilde{f}^*v , \pi^*u \rangle_{L^2(X_2, \pi^*E_k)} 
\end{equation}
for all $u, v \in L^2(X; E_k)$, where the $\phi_k$ on the right hand side is the bundle morphism on the lift, and $\widetilde{f}: X^{reg} \rightarrow X_2$ is the unique diffeomorphism which satisfies $\widetilde{f} \circ \pi=f$. We can therefore assume for the rest of the proof that $f$ is a diffeomorphism.
By construction we have the equality
\begin{equation}
    \langle T_k v , u \rangle_{L^2(X,E_k)} = \langle v , S_k u \rangle_{L^2(X,E_k)} 
\end{equation}
for all $u, v \in L^2(X; E_k)$, which shows that over $C^{\infty}_c(X^{reg},  E^k)$, $T_k^*$ coincides with $S_k$ from which the second property follows immediately.
\end{proof}

Now we will show that $T_k$ gives rise to geometric endomorphisms on the complexes of minimal and maximal extensions, generalizing Proposition 10 of \cite{Bei_2012_L2atiyahbottlefschetz}.

By definition, for $s \in \mathcal{D}_{min}(P_k)$, there exists a sequence $\{ s_j \}$ with $s_j \in C^{\infty}_c(\widehat{X}^{reg},  E_k)$ such that $s_j \rightarrow s$ in $L^2(X,E_k)$, and $P_ks_j \rightarrow P_ks$ in $L^2(X,E_{k+1})$. It follows that $T_k(s) \in \mathcal{D}_{min}(P_k)$ since $T_k s_j \rightarrow T_k s $ in $L^2(X,E_{k})$, and $P_k(T_ks_j)=T_{k+1}P_k(s_j) \rightarrow T_{k+1}P_ks$ in $L^2(X,E_{k+1})$. This also shows that $P_k T_k s =T_{k+1} P_k s$, i.e., that $T$ is a geometric endomorphism for the complex with the minimal domain of $P$.

Notice that by property 2 of the lemma we just proved, the arguments used for the minimal domain show that $P^*$ commutes with $T^*$ on the minimal domain of $P^*$. Since the maximal domain is defined as the adjoint of the minimal domain, this shows that $P$ commutes with $T$ on the maximal domain of $P$.

Since $P$ commutes with $T$ on its maximal domain, it commutes with the operator on any domain that is preserved by the geometric endomorphism. In Subsection \ref{subsection_Hilbert_complexes_stratified_pseudomanifolds} we defined the VAPS domain for complexes satisfying the Witt condition.

Since we assume that the map $f$ is a simple b-map, elements of the weighted $L^2$ space $\rho^{1/2}_X L^2(X;E)$ are mapped to itself. The same argument that showed that the minimal domain was preserved now shows that the VAPS domain is preserved, since it can be defined as a graph closure as seen by Proposition \ref{equivalent_definition_VAPS_domain}. This shows that the geometric endomorphism preserves the VAPS domain, proving the result.
\end{proof}

In Definition \ref{geometric_endo} we defined the adjoint endomorphism $T^*$ of an endomorphism $T$, and it is easy to see that it commutes with the operator $P^*$ on the adjoint domain. We observe that the second property of Lemma \ref{lemmata_mapping_properties} shows that the adjoint endomorphism of a geometric endomorphism can be realized as a \textit{geometric} endomorphism of the dual complex. In the case where $f$ is a diffeomorphism on the resolved pseudomanifold, and given a complex $\mathcal{P}$ and its adjoint complex $\mathcal{P}^*$ (with the VAPS conditions), for all $u \in L^2(X;E)$, for all $\xi \in X$, 
\begin{equation}
\label{equation_dual_complex_adjoint_geometric_endo_by_inverse}
    T^{P^*}_{f^{-1}}(u)(\xi)=\varphi^{\dag} \circ (f^{-1})^*:=(T^{P}_f)^*=\varepsilon h(\xi) (\varphi^* \circ (f^{-1})^*) (u)(\xi)
\end{equation}
where $h$ and $\varepsilon$ are as explained in the proof above. This defines the bundle morphism $\varphi^{\dag}$ such that given a self map $g$, $T^{P^*}_g= \varphi^{\dag} \circ g^*$ is a geometric endomorphism for the dual complex, in particular commuting with $P^*$.
Let us study the adjoint geometric endomorphism for the de Rham complex. 

\begin{example}[Case of the de Rham complex]
\label{remark_notation_bundles_vs_density_bundles_2}
For the de Rham complex $\mathcal{P}=(L^2\Omega(X),d)$, given a diffeomorphism of resolved stratified pseudomanifolds $f:X \rightarrow X$,
\begin{equation*}
\int_X f^*u \wedge \star v =\int_X (f^{-1})^*(f^*u \wedge \star v) =\int_X u \wedge (f^{-1})^*(\star v)
\end{equation*}
shows that the adjoint $T_f^*$
is $\star^{-1} (f^{-1})^* \star$. It is clear that this operator commutes with $\delta$, which is $\star^{-1} d \star$ up to a sign. 
Hence, for a self map $g: X \rightarrow X$, we have a geometric endomorphism on the complex $\mathcal{P}^*=(\Omega,\delta)$ given by $T^{\delta}_g:=\star^{-1} (g)^* \star$, which can be identified with $T_{g^{-1}}^*$ (the adjoint geometric endomorphism of $T_{g^{-1}}$ on the complex $\mathcal{P}$), since the underlying Hilbert spaces are the same.
\end{example}

\begin{remark}
One reason that we expound on the adjoint operator and the dualities is due to its importance when studying the space with different boundary conditions for the complex and the dual complex, and the role played by duality in deriving local Lefschetz formulas.
\end{remark}

%+++++++++++++++++++++++++++++++++++++++++++++++++++++++++%
%+++++++++++++++++++++++++++++++++++++++++++++++++++++++++%

\section{Lefschetz numbers for stratified pseudomanifolds}
\label{section_formulas}

In this section we focus on deriving formulae for local Lefschetz numbers at fixed points of self maps $f:X \rightarrow X$ of resolved stratified pseudomanifolds that give rise to geometric endomorphisms of wedge elliptic complexes.

In the first subsection, we will explain why Lefschetz numbers can be computed as sums of local Lefschetz numbers at fixed points using heat kernel localization. We will show that for wedge elliptic complexes associated to Dirac-type operators, the Lefschetz numbers depend only on the self map induced at the tangent cone $\mathfrak{T}_pf$. This subsection uses results developed in \cite{Albin_2017_index} using functional calculus on infinite cones to describe model heat kernels. 

In the next subsection we study elliptic complexes at fundamental neighbourhoods of fixed points, where we pick domains with boundary conditions. When the choices of domain for the elliptic complexes yield a trace class heat kernel, we can use Theorem \ref{Lefschetz_supertrace} but this needs to be replaced for non-trace class heat kernels. By using functional calculus on truncated tangent cones with boundary conditions, we develop renormalized traces which can be used to define the local Lefschetz numbers at a fixed point for the de Rham and Dolbeault complexes.

\subsection{Heat Kernel Localization}
\label{subsection_heat_kernel_localization}

One important phenomenon used to study local Lefschetz numbers is \textit{heat kernel localization}. We will first review the heat kernel construction for stratified pseudomanifolds in \cite{Albin_2017_index}, and then use this to show that the local Lefschetz numbers only depend on the map induced at the tangent cone.

\subsubsection{Heat kernel on stratified pseudomanifolds}

It is well known that the heat kernel on a smooth manifold is a distributional section on $X \times X \times \mathbb{R}^+$, where the singularities are at the diagonal of $X\times X$ at $\{t=0\}$. 
For our purposes, the key is to understand the Lefschetz supertrace in the limit as $t$ goes to $0$, and that can be achieved by first understanding the limit of the heat kernel as $t$ goes to $0$. In the smooth case, Melrose introduced a \textit{parabolic blow up} of the heat space at the singular set using the scaling behaviour of the operator in the limit, which allows one to make sense of the limits important for index theory \cite[\S7]{melroseAPS}. More complicated constructions have been done to understand these limits in various singular spaces. 

In the breakthrough work of Atiyah and Bott, the formulas for local Lefschetz numbers were computed using \textit{nice asymptotics} of elliptic parametrices. Heat kernel asymptotics have since been used in the smooth setting and a nice exposition of this method is given in Theorem 10.12 of \cite{roe1999elliptic}. One of the key ideas used is that the heat kernel has exponential decay estimates in $t$, away from the diagonal of the heat space, as $t$ goes to $0$. This is used to get estimates for the Lefschetz heat supertraces.

In \cite{Albin_2017_index}, the authors show that the heat kernel lifts to a suitably resolved wedge heat space. 
We now briefly introduce the edge double space corresponding to a stratified pseudomanifold.
Given a manifold with corners and an iterated fibration structure $X$, edge pseudodifferential operators can be described via their integral kernels on a replacement of $X^{2}$ that takes the iterated fibration structure into account.

A submanifold $W \subseteq X$ of a manifold with corners is a p-submanifold if every point in $W$ has a neighborhood $\mathcal{U}$ in $X$ such that
$$
\begin{gathered}
X \cap \mathcal{U}=X^{\prime} \times X^{\prime \prime}, \text { where } \partial X^{\prime \prime}=\emptyset, \\
W \cap \mathcal{U}=X^{\prime} \times\left\{p^{\prime \prime}\right\} \text { for some } p^{\prime \prime} \in X^{\prime \prime} .
\end{gathered}
$$
The reader is referred to \cite[Appendix B]{EpsteinMelroseMendozaResolvent1991} for more details.
The radial blow-up of a manifold with corners $X$ along a p-submanifold $W$ is the manifold with corners $[X ; W]$ obtained by replacing $W$ with the inward pointing part of its spherical normal bundle, see e.g., \cite[\S4.2]{melroseAPS}, \cite[\S2.2]{mazzeomelroseEta}, \cite[Chapter 5]{melrose1996differential}.

Recall that there is a partial order on $\mathcal{S}(X)$, $Y <Y^{\prime}$ if and only if $\mathfrak{B}_{Y} \cap \mathfrak{B}_{Y^{\prime}} \neq \emptyset$ and $\operatorname{dim} Y<$ $\operatorname{dim} Y^{\prime}$. The edge double space associated to $X$ is obtained from $X^{2}$ by blowing-up certain p-submanifolds. For each $Y \in \mathcal{S}(X)$ we denote the the fiber diagonal of $\phi_{Y}$ in $X^{2}$ by

$$
\mathfrak{B}_{Y} \times_{\phi_{Y}} \mathfrak{B}_{Y}=\left\{\left(\zeta, \zeta^{\prime}\right) \in\left(\mathfrak{B}_{Y}\right)^{2}: \phi_{Y}(\zeta)=\phi_{Y}\left(\zeta^{\prime}\right)\right\}.
$$

We now review the construction of the wedge heat space. 
Starting with the space $X^{2} \times \mathbb{R}_{t}^{+}$ we can blow-up $\{t=0\}$ parabolically so that $\tau=\sqrt{t}$ is a smooth function. As in \cite{Albin_2017_index}, will not include this blow-up explicitly but simply change the notation to $X^{2} \times \mathbb{R}_{\tau}^{+}$.

\begin{definition}
\label{definition_heat_space_1}
Let $X$ be a manifold with corners and an iterated fibration structure and $\left\{Y_{1}, \ldots, Y_{\ell}\right\}$ a non-decreasing listing of $\mathcal{S}(X)$. The \textit{\textbf{wedge heat space}} of $X$ is
$$
H X_{\mathrm{w}}=\left[X^{2} \times \mathbb{R}_{\tau}^{+} ; \mathfrak{B}_{Y_{1}} \times_{\phi_{Y_{1}}} \mathfrak{B}_{Y_{1}} \times\{0\} ;...; \mathfrak{B}_{Y_{\ell}} \times_{\phi_{Y_{\ell}}} \mathfrak{B}_{Y_{\ell}} \times\{0\} ; \operatorname{diag}_{X} \times\{0\}\right].
$$
\end{definition}
This is called the intermediate wedge heat space in \cite{Albin_2017_index}, and it is noted there that this suffices to describe the heat kernel of the Laplace-type operators we study on X as conormal distributions with bounds.
Implicit in the definition of $H X_{\mathrm{w}}$ is the fact that for $\widetilde{Y} \in \mathcal{S}(X)$ of depth $k$, the interior lift of $\mathfrak{B}_{\widetilde{Y}} \times_{\phi_{\widetilde{Y}}}\mathfrak{B}_{\widetilde{Y}}\times \{0\}$ to the space in which the $\mathfrak{B}_{Y} \times_{\phi_{Y}}\mathfrak{B}_{Y}\times \{0\}$ have been blown up for all $Y<\widetilde{Y}$ is a p-submanifold. 

Local coordinates near $\mathfrak{B}_{Y}$, say $x, y, z$ where $x$ is a bdf for $\mathfrak{B}_{Y}, y$ are coordinates along $Y$ and $z$ coordinates along the fiber $Z$ of $\phi_{Y}$, induce coordinates $x, y, z, x^{\prime}, y^{\prime}, z^{\prime}$ near $\mathfrak{B}_{Y} \times \mathfrak{B}_{Y}$, in which
$$
\mathfrak{B}_{Y} \times_{\phi_{Y}} \mathfrak{B}_{Y} \times \{0\} =\left\{x=x^{\prime}=0, y=y^{\prime}, \tau=0 \right\}
$$
and so this is a p-submanifold whenever $Y$ is a closed manifold (e.g., for $Y_{1}$).
If $Y<\widetilde{Y}$, so that $\widetilde{Y}$ has a collective boundary hypersurface $\mathfrak{B}_{Y \widetilde{Y}}$ as in Definition \ref{iterated_fibration_structure}, let us label the fibers of these fiber bundles,
\[\begin{tikzcd}
	{Z \supseteq } & {{\mathfrak{B}_{\widetilde{Y}Z}}} &&&&&& W \\
	\\
	{\widetilde{Z}} &&& {\mathfrak{B}_Y \cap \mathfrak{B}_{\widetilde{Y}}} &&& {\mathfrak{B}_{Y\widetilde{Y}} \subseteq \widetilde{Y}} \\
	\\
	&&&&& Y
	\arrow["{\phi_{\widetilde{Y}Z}}", from=1-2, to=1-8]
	\arrow[no head, from=1-2, to=3-4]
	\arrow[from=1-8, to=3-7]
	\arrow["{\phi_{\widetilde{Y}}}"', from=3-4, to=3-7]
	\arrow["{\phi_{\widetilde{Y}}}"', from=3-4, to=5-6]
	\arrow["{\phi_{Y\widetilde{Y}}}"',from=3-7, to=5-6]
	\arrow[no head, from=3-4, to=3-1]
	\arrow[no head, from=3-1, to=1-2]
\end{tikzcd}\]
and choose coordinates near $\mathfrak{B}_{Y} \cap \mathfrak{B}_{\widetilde{Y}}$ of the form $x, y, w, r, \widetilde{z}$,
in which $x$ is a bdf for $\mathfrak{B}_Y$ and $r$ is a bdf for $\mathfrak{B}_{\widetilde{Y}}, y$ are coordinates along $Y$, $w$ coordinates along $W$ and $\widetilde{z}$ coordinates along $\widetilde{Z}$, so that $(x, y, w)$ are coordinates along $\widetilde{Y}$ and $(w, r, \widetilde{z})$ are coordinates along $Z$. In the induced coordinates $x, y, w, r, \widetilde{z}, x^{\prime}, y^{\prime}, w^{\prime}, r^{\prime}, \widetilde{z}^{\prime}$, we have
$$
\mathfrak{B}_{Y} \times_{\phi_{Y}} \mathfrak{B}_{Y} \times \{ 0 \}=\left\{x=x^{\prime}=\tau=0, y=y^{\prime} \right\}, \mathfrak{B}_{\widetilde{Y}} \times_{\phi_{\widetilde{Y}}} \mathfrak{B}_{\widetilde{Y}} \times \{0\}=\left\{r=r^{\prime}=\tau=0,(x, y, w)=\left(x^{\prime}, y^{\prime}, w^{\prime}\right) \right\}
$$
which shows that the latter is not a p-submanifold. After blowing-up the former, projective coordinates with respect to $x^{\prime}$ are given by
\begin{equation}
    \label{More_projective_coordinates}
s=\frac{x}{x^{\prime}}, \quad u=\frac{y-y^{\prime}}{x^{\prime}}, \quad w, \quad r, \quad \widetilde{z}, \quad x^{\prime}, \quad y^{\prime}, \quad w^{\prime}, \quad r^{\prime}, \quad \widetilde{z}^{\prime}, \quad T=\frac{\tau}{x^{\prime}}
\end{equation}
and the interior lift of $\mathfrak{B}_{\widetilde{Y}} \times_{\phi} \mathfrak{B}_{\widetilde{Y}} \times\{\tau=0\}$ is given by
$$
\left\{T=0, r=r^{\prime}=0, s=1, u=0, w=w^{\prime}\right\},
$$
which is a p-submanifold.
We denote the blow-down map by
$$
\beta_{(H)}: H X_{\mathrm{w}} \longrightarrow X^{2} \times \mathbb{R}_{\tau}^{+}
$$
and its composition with the projections onto the left or right factor of $X$ by $\beta_{(H), L}, \beta_{(H), R}$ respectively, which fit in the commutative diagram
\[\begin{tikzcd}
	&& {H X_{\mathrm{w}}} \\
	\\
	X \times \mathbb{R}_{\tau}^{+} && {X^{2} \times \mathbb{R}_{\tau}^{+}} && X \times \mathbb{R}_{\tau}^{+}
	\arrow["{\beta_{(H)}}", from=1-3, to=3-3]
	\arrow["{\beta_{(H), R}}", from=1-3, to=3-5]
	\arrow["{\beta_{(H), L}}"', from=1-3, to=3-1]
	\arrow["{\pi_L}"', from=3-3, to=3-1]
	\arrow["{\pi_R}", from=3-3, to=3-5].
\end{tikzcd}\]
There are boundary hypersurfaces
$$
X^{2} \times\{0\} \leftrightarrow \mathfrak{B}_{00,1}^{(H)}, \quad \operatorname{diag}_{X} \times\{0\} \leftrightarrow \mathfrak{B}_{d d, 1}^{(H)}
$$
and collective boundary hypersurfaces, one for each $Y \in \mathcal{S}(X)$,
$$
\begin{aligned}
& \mathfrak{B}_{Y} \times X \times \mathbb{R}^{+} \leftrightarrow \mathfrak{B}_{10,0}^{(H)}(Y), \quad X \times \mathfrak{B}_{Y} \times \mathbb{R}^{+} \leftrightarrow \mathfrak{B}_{01,0}^{(H)}(Y) \\
& \mathfrak{B}_{Y} \times_{\phi_{Y}} \mathfrak{B}_{Y} \times\{0\} \leftrightarrow \mathfrak{B}_{\phi \phi, 1}^{(H)}(Y).
\end{aligned}
$$
The collection 
$$\mathrm{ff}\left(H X_{\mathrm{w}}\right)=\bigcup_{Y \in \mathcal{S}(X)} \mathfrak{B}_{\phi \phi, 1}^{(H)}(Y)$$
is known as the front face of the wedge heat space.
The faces created by the blow ups are fibre bundles whose fibers are suspended versions of wedge heat spaces. In particular, one can find polyhomogenous expansions at these boundary hypersurfaces separately and is a wedge heat space where the `time' $[0, \infty)_{\tau}$ is compactified along with other normal directions. Proposition 3.5 of \cite{Albin_2017_index} explains the structure of the front faces of the wedge heat space in more detail.

In Section 4 of \cite{Albin_2017_index}, the heat kernel of the square of the Dirac-type operator is constructed and we summarize the key points which we use from that section. In Proposition 4.5 of \cite{Albin_2017_index}, the heat kernel of the model operator at the tangent cone at a point $y \in Y$ is shown to be a conormal distribution with polyhonomogenous expansions at the faces $\mathfrak{B}^{(H)}_{dd,1} \cap \mathfrak{B}^{(H)}_{\phi \phi,1}$.
This is constructed in the projective coordinates for a \textit{vertical heat operator} $\frac{1}{2}\sigma \partial_{\sigma} +\sigma^2 (\mathcal{N}_y(\eth_X))^2$, where $\mathcal{N}_y(\eth_X)$ is the \textit{normal operator} at the tangent cone of the square of a conjugated Dirac-type operator, which on the cone $Z^+$ is simply the Dirac operator for the product type heat kernel.

Theorem 4.13 of that article shows that the heat kernel vanishes away from the diagonal on the wedge heat space as $t$ goes to $0$. Moreover, it vanishes to infinite order in $t$ away from all the boundary hypersurfaces except for $\mathfrak{B}^{(H)}_{dd,1}$ and $\mathfrak{B}^{(H)}_{\phi \phi,1}(Y)$ for all boundary faces $Y \in \mathcal{S}(X)$. 

Moreover, it is shown that the heat kernel at a boundary hypersurface $\mathfrak{B}^{(H)}_{\phi \phi,1}(Y)$ of the front face has leading term given by the model heat kernel (as a right density)
\begin{equation}
\label{model_heat_kernel_product_type_11}
    \mathcal{N}_{\mathfrak{B}^{(H)}_{\phi \phi,1}(Y)}(e^{-t \eth^2_X})=e^{-\sigma^2\Delta_{TY}} e^{-\sigma^2 D^2_{C(\phi_Y)/Y}}
\end{equation}
where $Z^+ \textendash C(\phi_Y)/Y \xrightarrow{\phi^+_Y} Y$ ($C(\phi_Y)$ denotes the mapping cylinder of $\phi_Y$). Here $\Delta_{TY}$ is a Laplace-type operator on the tangent space of $Y$, and $D^2_{C(\phi_Y)/Y}$ corresponds to a Dirac-type operator on each fiber $Z^+$ of the mapping cylinder. We will study the heat kernel of this model operator in more detail below.
Theorem 4.13 of that article also shows that the heat kernel vanishes away from the diagonal to order $\mathcal{O}(t^\infty)$.

Since we are considering isolated fixed points, in the limit as $t$ goes to $0$ we need only study the heat kernel on the strata $Y$, in a lift of a neighbourhood of the fixed point onto the wedge heat space.
At the tangent cone of a point $y \in Y$, we can decompose the heat kernel into the two factors in \eqref{model_heat_kernel_product_type_11}.

\subsubsection{Heat kernel on model cone}
\label{subsubsection_heat_kernels_intro}

We now describe the structure of the heat kernel on the model cone following the discussion in Section 4.2 of \cite{Albin_2017_index}. Since the heat kernel of a product is the product of the heat kernels, we can disregard the $\mathbb{R}^h$ factor of the model wedge and focus on the exact Riemannian cone $Z^+ = \mathbb{R}^+_x \times Z$, $g_{Z^+} =dx^2 +x^2g_Z$.
We denote $E$ pulled back to $Z^+$ by $E$ again (abusing notation) and the corresponding Dirac-type operator by 
\begin{equation}
\label{separation_of_variables_cone}
    \eth_{Z^+} = {cl}(dx) \partial_x +\frac{1}{x}D_Z
\end{equation}
From the discussion that follows in \cite{Albin_2017_index}, we see that 
\begin{center}
    $\mathcal{D}_{VAPS}(\eth^2_{Z^+})= \{ u \in \mathcal{D}_{VAPS}(\eth_{Z^+}) : \eth_{Z^+}u \in \mathcal{D}_{VAPS}(\eth_{Z^+}) \}$
\end{center}
is a self adjoint domain. 
Let $e^{-t\eth^2_{Z^+}}$ be the heat kernel of $(\eth_{Z^+},\mathcal{D}_{VAPS}(\eth^2_{Z^+}))$ considered as a density,
\begin{center}
    $e^{-t\eth^2_{Z^+}}= \widetilde{\mathcal{K}} \mu_R$ 
\end{center}
where $\mu_R$ is the volume measure of the product metric $g_{Z^+} =dx^2 +g_Z$ in the notation of \cite{Albin_2017_index}. 
We can instead write this as $\mathcal{\widetilde{K}} \mu$ where $\mu$ is the volume measure of the wedge metric, and $\mathcal{\widetilde{K}}$ is the heat kernel as a conormal section on the wedge heat space.

\begin{remark}
\label{remark_choice_of_measure_conjugation}
Here we use the volume measure in a fundamental neighbourhood (the degenerate measure), $x^{l}dxdzdy$, whereas, in \cite{Albin_2017_index} the measure is taken to be $dxdzdy$ and there is a factor of $x^{l}$ in the heat kernel.
Our choice of measure is that used in \cite{Bei_2012_L2atiyahbottlefschetz,cheeger1980hodge}.

\end{remark}
From the spectral theorem, we know that $\mathcal{\widetilde{K}}$ is a distribution on the space $\mathbb{R}^+_t \times (Z^+)^2$, such that:
\begin{itemize}
    \item $\text{lim}_{t \rightarrow 0} e^{-t\eth^2_{Z^+}} = \text{Id}$ 
    \item For every $t > 0$, $\mathcal{\widetilde{K}}(t,x,z,x',z')$ is smooth in all of its variables in the interior of $(Z^+)^2 \times \mathbb{R}^+_t$. 
    \item For each t, $e^{-t\eth^2_{Z^+}}$ is a self adjoint operator, and hence $\mathcal{\widetilde{K}}(t,x,z,x',z')=\mathcal{\widetilde{K}}(t, x',z',x,z)$ 
\end{itemize}

The heat kernel on the non-compact tangent cone can be expressed as follows after conjugating the expression in \cite[\S 4.2]{Albin_2017_index}. For an instance of such a conjugation see Proposition 9 of \cite{Bei_2012_L2atiyahbottlefschetz}. The heat kernel on the cone with a homogeneous wedge metric is of the form
\begin{equation}
\label{heat_kernel_exact_cone}
    \sum_{\lambda \in Spec(D_{Z_y})} \frac{(xx')^{\frac{1-l}{2}}}{2 t} I_{\nu_{APS}(\lambda)} \Big( \frac{xx'}{2t} \Big) exp \Big(-\frac{(x^2)+(x')^2}{4t} \Big) \Phi_{\lambda}(z,z')
\end{equation}
where $l$ is the dimension of the link $Z$. Here we use the notation,
\begin{equation}
    \nu_{APS}(\lambda) =\begin{cases}
      - |\lambda -\frac{1}{2}|, & \text{if}\ \lambda \in Spec(D_{Z_y}) \cap (0,\frac{1}{2})  \\
      |\lambda -\frac{1}{2}|, & \text{if }\ \lambda \in Spec(D_{Z_y}) \setminus (0,\frac{1}{2}),
\end{cases}
\end{equation}
where $I_{\nu_{APS}(\lambda)}$ is the modified Bessel function of the first kind, and $\Phi_{\lambda}(z,z')$ is the projection onto the $\lambda$-eigenspace of ${cl}(dx)D_{Z_y}$. This series converges uniformly on compact subsets of $(Z^+)^2 \times \mathbb{R}^+_t$ (see the comments following Example 3.1 of \cite{cheeger1983spectral}).

One of the main properties that is clear from this form and the homogeneity of the cone is the following scaling invariance of the heat kernel. We set
\begin{equation}
\label{Scaling_relation}
S_c: \mathbb{R}^+_t \times (Z^+)^2 \rightarrow \mathbb{R}^+_t \times (Z^+)^2, \text{   }
S_c(t,x,z,x',z')=(c^2t, cx,z,cx',z')
\end{equation}
We will use the same symbol to denote the corresponding scalings on $(Z^+)^2$ and $Z^+$. As $E$ is pulled back from $Z$, it makes sense to pull-back a section of $E$ over $Z^+$ along $S_c$ and it is easy to see that $S_c^*: {C}_c^{\infty}(\mathring{(Z^+)} ; E) \rightarrow {C}_c^{\infty}(\mathring{(Z^+)} ; E)$ extends to a bounded map on $L^2(\mathring{(Z^+)} ; E)$.
It's easy to see (conjugating the version of the operator in \cite[\S 4.2]{Albin_2017_index}, or alternately as in Lemma 1 of \cite{Bei_2012_L2atiyahbottlefschetz}) that 
\begin{equation}
\label{scaling_operator}
    S_c^* \mathcal{\widetilde{K}} = c^{1+l} \mathcal{\widetilde{K}}
\end{equation}
In particular note that $S_{x^{-1}}^* \mathcal{\widetilde{K}} = x^{-(1+l)} \mathcal{\widetilde{K}}$.

This dilation invariance is exploited in the construction of the heat kernel, and the next construction in \cite{Albin_2017_index} is a heat space for the model operator which resolves the singularity in the limit as $t$ goes to $0$. First $\{t=0\}$ is blown up parabolically so that $\tau=\sqrt{t}$ is a generator of the smooth structure. Second, the cone-tip at time $0$ is blown up to obtain the space
\begin{equation}
\label{model_heat_space_at_cone_tip}
    H_1Z^+ =[\mathbb{R}^+_s\times Z \times \mathbb{R}^+_s\times Z \times \mathbb{R}_{\tau}^+; \{s=s'=\tau=0\}] \cong Z \times Z' \times \mathbb{S}^2_+ \times \mathbb{R}^+_R,
\end{equation}
where $\mathbb{S}^2_+=\{(\omega_s,\omega_{s'},\omega_{\tau}) \in [0,\infty)^3 : \omega_s^2 + \omega_{s'}^2 +\omega_{\tau}^2=1 \}$. The blow-down map is
\begin{equation}
    H_1Z^+ \xlongrightarrow{\beta} Z^+ \times Z^+ \times \mathbb{R}^+_t
\end{equation}
\begin{equation}
\label{lifted_scaling_relation}
    (z,z', (\omega_s,\omega_{s'},\omega_{\tau}),R) \mapsto ((R\omega_s,z),(R\omega_{s'},z'), R^2\tau^2)
\end{equation}
and it is observed that the dilation \eqref{scaling_operator} lifts to be simply $R \mapsto cR$, and $\{R=0\}$ is denoted by $\mathfrak{B}_R(H_1Z^+)$.
Instead of using polar coordinates, the projective coordinates
\begin{equation}
\label{equation_key_projective_coordinates}
    r=\frac{s}{s'}, z, s',z', \sigma=\frac{\tau}{s'},
\end{equation}
which are valid away from $\omega_{s'}=0$ are introduced, in which $s'$ is a boundary defining function for $\mathfrak{B}_R(H_1Z^+)$ and $S_c$ is $s' \mapsto cs'$.

At the tangent cone of a point $y \in Y$, we can decompose the leading term of the expansion of the heat kernel at the boundary hypersurface $\mathfrak{B}^{H}_{\phi\phi,1}$ into the two factors in \eqref{model_heat_kernel_product_type_11}, and on the conic factor the model heat kernel is that on the heat space \eqref{model_heat_space_at_cone_tip}, where $\sigma$ can be identified with the projective coordinate $\sigma=\tau/s'$ introduced in \eqref{equation_key_projective_coordinates}. The heat kernel on the smooth factor can be similarly understood, considering it to be the smooth cone over a sphere of one dimension less, or using more conventional methods as used in Theorem 10.12 of \cite{roe1999elliptic} since it is simply the Euclidean heat kernel.
We present the following formal statement, which follows from Theorem 4.13 of \cite{Albin_2017_index}.

\begin{proposition}
\label{proposition_summarizing_heat_kernel_stuff_Pierre_Jesse}
Let $D$ be a Dirac-type wedge operator acting on sections of a Clifford bundle $E$ on the resolution $X$ of a stratified pseudomanifold $\widehat{X}$ with the VAPS domain. Then its heat kernel is a conormal distributional section of $Hom(E)$ on the wedge heat space $HX_{w}$, polyhomogenous at the face  $\mathfrak{B}^{(H)}_{dd,1}$ and each $\mathfrak{B}^{(H)}_{\phi \phi,1}(Y)$, where the leading term at the latter is given by
\begin{equation*}
e^{-\sigma^2{\Delta_{TY}}} e^{-\sigma^2 D^2_{C(\phi_Y)/Y}}
\end{equation*} 
where $D_{C(\phi_Y)/Y}$ at $y \in Y$ is given by $D_{Z^+_y}=\eth_{Z^+}$ from equation \eqref{separation_of_variables_cone}. 
Furthermore, the heat kernel vanishes to infinite order in $t$ as $t$ goes to $0$, away from points on the wedge double space which are in the inverse image of the blow down map of the diagonal in $X^2$.
\end{proposition}

The model heat kernel at the front face at $\mathfrak{B}^{(H)}_{\phi \phi,1}(Y)$ in equation \eqref{model_heat_kernel_product_type_11} is a right density where the volume forms $dvol_{\mathbb{D}^{k}} dvol_Z$ at the front face are included in the density. We can equivalently represent this model heat kernel as
\begin{equation}
    \widetilde{K}_{TY}(\sigma,y,y')dvol_Y \widetilde{K}_Z(r,z,s',z',\sigma) dvol_Z s^lds
\end{equation}
where we have used the measure on the cone as in Remark \ref{remark_choice_of_measure_conjugation}.

In the case of isolated conic singularities, the dilation invariance of the heat kernel on the model cone was used to show that the Lefschetz numbers of ``scalable operators" can be expressed as the zeros of a $\zeta$ function in \cite{Bei_2012_L2atiyahbottlefschetz}, generalizing the result of Atiyah and Bott to cones with product type wedge metrics in fundamental neighbourhoods of isolated fixed points as in Subsection \ref{subsection_local_structures}. We will show that the local Lefschetz number at a \textit{simple} isolated fixed point only depends on the induced self map on the tangent cone, and since the heat kernels on the tangent cone are dilation invariant, we extend the expressions in \cite{Bei_2012_L2atiyahbottlefschetz} to the general case. 

\begin{remark}
    We use a standard abuse of notation in the literature going back to \cite{AtiyahBott1}, denoting by the left hand side of the expression
    \begin{equation}
    T_f \circ \mathcal{K}_{e^{-\Delta t}}(t,x,y):=
    \varphi \circ \mathcal{K}_{e^{-\Delta t}}(t,f(x),y)=\mathcal{K}_{T_f \circ e^{-\Delta t}}(t,x,y)
\end{equation}
the quantity on the right hand side. 
It is understood by convention that the pullback is on the left factor, and that the bundle morphism acts on the left factor (note that the bundle morphism is tensorial).    
\end{remark}

%+++++++++++++++++++++++++++++++++++++++++++++++++++++++++++++++++++++++++++++++++++++++++++

\subsubsection{Heat kernel localization}

Given an elliptic complex $\mathcal{P}$ of wedge elliptic operators on a stratified pseudomanifold $\widehat{X}$ and a geometric endomorphism $T_f=T$, since $L(\mathcal{P}(X), T)$ is independent of $t$, we can take the limit as $t \rightarrow 0$, yielding
\begin{equation}
\label{L definition limit}
    L(\mathcal{P}(X), T)= \lim_{t \rightarrow 0} \sum_{k=0}^n (-1)^k Tr(T_k e^{-t \Delta_k}). 
\end{equation}

\begin{definition}
\label{simple_fixed_points_definition}
If $p$ is an isolated fixed point of a smoothly stratified placid self map of a pseudomanifold %$\widehat{f} : \widehat{X} \rightarrow \widehat{X}$ 
$\widehat{f}:\widehat{X} \rightarrow \widehat{X}$ which lies on $\widehat{X}^{reg}$, and if $det(Id-Df_p) \neq 0$, where $Df$ is the induced map on the tangent space at $p$, we call $p$ a \textit{\textbf{simple fixed point}}. 
\end{definition}
Notice that the definition of simple is equivalent to saying that the only fixed point of $Df$ on the tangent space at the fixed point is the origin.
If we assume that $f$ has only isolated fixed points, we can prove the following.
\begin{theorem}
\label{localization_of_simple_fixed_points}
Let $\widehat{X}$ be an n dimensional pseudomanifold with a wedge metric g and $\mathcal{P}$, an elliptic complex associated to Dirac-type operators and let $T_f$ 
be a geometric endomorphism associated to a self map $f$ of $X$. If $f$ has only isolated fixed points, then
\begin{equation}
\label{localization}
    L(\mathcal{P}(X),T_f)= \lim_{t \rightarrow 0} \{ \sum_{p \in Fix(f)} \sum_{k=0}^n (-1)^k \int_{U_p} tr(T_k \circ \mathcal{K}^k_t) dvol_g \}
\end{equation}
where $Fix(f)$ is the set of fixed points of $f$ on $\widehat{X}$, and ${U_p}$ is a fundamental neighbourhood of $p$ as in \eqref{fundamental neighbourhood}. Here $\mathcal{K}^k_t$ is the heat kernel of the Laplace-type operator $\Delta_k$ on the resolution $X$.
\end{theorem}

\begin{proof}
The proof is similar to that of Theorem 5 of \cite{Bei_2012_L2atiyahbottlefschetz} which treats the case for spaces with singular strata of dimension 0 (isolated conic singularities). 
For the general case, since we assume that $f$ has only isolated fixed points, we can choose disjoint fundamental neighbourhoods $\widehat{U_p}$ for each fixed point $p$.
Let $V= X - \cup_{p \in Fix(f)} {U_p}$. The equation \eqref{L definition limit} with this decomposition yields 

\begin{equation*}
    L(\mathcal{P}(X), T_f)= \lim_{t \rightarrow 0} \sum_{p \in Fix(f)} \sum_{k=0}^n (-1)^k \int_{U_p} tr(T_k \circ \mathcal{K}^k_t)dvol_g\\ + \lim_{t \rightarrow 0} \sum_{k=0}^n (-1)^k \int_{V} tr(T_k \circ \mathcal{K}^k_t)dvol_g 
\end{equation*}

Then Proposition \eqref{proposition_summarizing_heat_kernel_stuff_Pierre_Jesse} shows that the second term is 0, proving the result.
\end{proof}

This suggests that we can break the Lefschetz number of maps with isolated fixed points into local Lefschetz numbers at each isolated fixed point.

\begin{definition}
\label{definition_local_Lefschetz_numbers}
In the notation above, we define the \textbf{\textit{local Lefschetz number}} at $p \in X$ to be
\begin{center}
    $L(\mathcal{P}(X), T_f, p) = \lim_{t \rightarrow 0} \sum_{k=0}^n (-1)^k \int_{U_p} tr(T_k \circ \mathcal{K}^k_t) dvol_g$
\end{center}
\end{definition}

Now the aim is to find explicit formulas for the Lefschetz number in each stratum. The fixed points at regular strata will have the classical localization formula of Atiyah and Bott. We follow their notation for the induced linear maps at the vector bundles on the fixed point $q$, denoting it as
\begin{equation}
    \varphi_{k,p} : (E_k)_{f(p)=p} \longrightarrow (E_k)_p.
\end{equation}
This means that the trace of $\varphi_{k,p} \circ f^*$ is well defined on the vector space $(E_k)_p$.

\begin{theorem}
\label{Nonsingular_Contribution}
Under the assumptions of Theorem \ref{localization_of_simple_fixed_points}, consider a smooth isolated simple fixed point $p \in X^{reg}$ and let $D_pf$ be the differential of $f$ at $p$.
We have that 
\begin{equation}
    L(\mathcal{P}(X),T_f, p) = \sum_{k=0}^n \frac{(-1)^k Tr(\varphi_{k,p} \circ D_pf)}{|det(Id-D_pf)|}
\end{equation}
\end{theorem}

\begin{proof}
It is clear that this is a local expression and must be the same expression as that on a compact smooth manifold, given in Theorem A of \cite{AtiyahBott1}, cf. Theorem 10.12 of \cite{roe1999elliptic}.
\end{proof}

For Dirac complexes, we have the formula 
\begin{equation}
    L(\mathcal{D}(X), T_f, p) = \frac{Tr(\varphi_{+,p} \circ D_pf)-Tr(\varphi_{-,p} \circ D_pf)}{|det(Id-D_pf)|}
\end{equation}
which is equation (8.38) of \cite{AtiyahBott2}.

In the proof given in \cite{roe1999elliptic}, the main ingredients are the asymptotics of the metric, the geometric endomorphism (the bundle morphism and the map self map), as well as the structure of the heat kernel in a neighbourhood of the fixed point, which allows one to compute using Gaussian integrals.

The translation invariance of the heat kernel on Euclidean space is key and allows one to write the heat kernel with the spacial variables appearing only in an exponential factor ($e^{-|x-y|^2/t}$). In the conic case, we only have the dilation invariance, but we can still show that the Lefschetz number only depends on the induced map at the tangent cone. We prove this in the next part of this section. 

\subsubsection{Local Lefschetz numbers on the tangent cone of fixed points}
\label{subsubsection_baruk_khazad}

We will now present some normal forms for functions near fixed points as well as some further simplifications of local Lefschetz numbers when the metric on the fundamental domain is a product type metric on the Euclidean and conic factors. We will show that for an isolated simple fixed point we can assume that the map we are studying is the induced map on the tangent cone.

We first observe that since $f$ is a smooth map of stratified spaces and is in particular continuous, if $p$ is an isolated fixed point then there are fundamental neighbourhoods $\widehat{U_1}, \widehat{U_2}$ around $p$ such that 
\begin{equation}
\label{containment}
    \overline{{U_1}} \subset f^{-1} ({U_2})
\end{equation}
where $U_1,U_2$ are the resolved fundamental neighbourhoods (see Subsection \ref{subsection_local_structures}).
We know from the discussion in Subsection \ref{subsection_local_structures} that this means there are homeomorphisms of the pseudomanifolds which lift to diffeomorphisms
\begin{equation}
\label{fundamental_coordinates}
    \psi_i:U_i \longrightarrow \mathbb{R}^k_{y_i} \times C_{x_i}(Z_{z_i})
\end{equation}
between the resolutions of the respective spaces. For an isolated fixed point $p$ we will take the larger of these two neighbourhoods to be $U$ and the corresponding map to be $\psi$.
We shall call this a \textbf{\textit{fundamental homeomorphism}} for the fundamental neighbourhood at a fixed point $p$, where $Z$ is the link at $p$ and $x$ is the radial variable of the cone. 

The resolution of this neighbourhood corresponds to $\mathbb{R}^k \times [0,1]_x \times Z$, where $Z$ is the resolution of the link $\widehat{Z}$ if it is singular. We will refer to the resolved neighbourhood and the resolved coordinates by the same name.
If we consider placid (in particular stratified) self maps $f$ restricted to the (resolved) fundamental neighbourhoods (which we refer to as $f$ by abuse of notation), this allows us to decompose $f$ locally as three maps, $A, B$ and $C$ as follows:
\begin{gather} 
\label{Decomposition_of_function}
F =\psi \circ f \circ {\psi}^{-1} : \mathbb{R}^k_{y} \times [0,1]_{x} \times Z_{z}
    \longrightarrow \mathbb{R}^k_{y} \times [0,1]_{x} \times Z_{z}\\ 
\hspace{30mm} (y,x,z) \longrightarrow (C(y,x,z), xA(y,x,z), B(y,x,z))
\end{gather}
where $A:U_1 \rightarrow [0,1]_{x}$, $B:U_1 \rightarrow Z_{z}$ and $C: U_1 \rightarrow \mathbb{R}^k_{y}$. We say that the map $F$ is the \textbf{\textit{local model form}} of the map $f$ on the fundamental neighbourhood.

The following generalizes the notions of attracting/contracting and repelling fixed points given in Definition 16 of \cite{Bei_2012_L2atiyahbottlefschetz}.

\begin{definition}
\label{attracting/repelling_fixed_point}
Let $p$ be an isolated fixed point of a map $f$ at a singular strata such that the decomposition of $f$ in a fundamental neighbourhood centered at the fixed point $p$ (with no other fixed points in the neighbourhood) is as in \eqref{Decomposition_of_function}. If one can find fundamental neighbourhoods $U_1,U_2$ as in \eqref{containment}, such that $U_2 \subseteq U_1$, then the fixed point is called \textbf{\textit{non-expanding}} and if the containment $U_2 \subset U_1$ is proper, it is called attracting.
If one can find fundamental neighbourhoods as in \eqref{containment}, such that $U_1 \subseteq U_2$, then the fixed point is called \textbf{\textit{non-attracting}} and if the containment $U_1 \subset U_2$ is proper, it is called expanding.
\end{definition}

\begin{remark}[notational history]
These are versions of the following conditions used by Goresky and MacPherson to prove the analog of Theorem \ref{Shylock} for intersection homology in Section 10 of \cite{Goresky_1985_Lefschetz}. In that article, the term contracting is used instead of attracting. In \cite{Bei_2012_L2atiyahbottlefschetz}, the terms attractive and repulsive are used for the conditions.
\end{remark}

We now prove a product decomposition of Lefschetz numbers that we shall use to simplify the formulas at the tangent cone. 

\begin{proposition}[Lefschetz K\"unneth formula]
\label{Lefschetz_product_formula}
Let $A:M\rightarrow M$ and $B: N \rightarrow N$ be two self maps of resolved spaces $M$ and $N$, with elliptic complexes $\mathcal{P}_M= (H_M,P_M)$ and $\mathcal{P}_N=(H_N,P_N)$ respectively. Then $M\times N$ with the product wedge metric carries a complex $\mathcal{P}_{M}\times \mathcal{P}_{N}=(H_M \bigoplus H_N, P_M+P_N)$. Let $T_A$, $T_B$ and $T_{A\times B}$ be geometric endomorphisms associated to $A$, $B$ and $A\times B$ respectively. Then 
\begin{equation}
    \mathcal{L}(\mathcal{P}_{M}\times \mathcal{P}_{N},T_{A\times B})(b,t) = \mathcal{L}(\mathcal{P}_{M},T_A)(b,t) \cdot \mathcal{L}(\mathcal{P}_{N},T_{B})(b,t)
\end{equation}
when the heat kernels are trace class.
In particular,
\begin{equation}
\label{pint_tile_213}
    L(\mathcal{P}_{M}\times \mathcal{P}_{N},T_{A\times B}) = L(\mathcal{P}_{M},T_A) \cdot L(\mathcal{P}_{N},T_{B})
\end{equation}
\end{proposition}

\begin{proof}
The polynomial Lefschetz heat supertrace, is expressed using heat kernels in Definition \eqref{definition_polynomial_Lefschetz_supertrace}. 
Since the heat kernel on the product is the product of heat kernels, we can write the heat kernel on the product manifold as 
\begin{equation}
    \mathcal{K}_{M\times N}^{k}= \bigoplus_{k=i+j}\mathcal{K}_{M}^{i} \otimes \mathcal{K}_{N}^{j}.
\end{equation}
Since the trace over a tensor product is the product of the traces, and the grading keeps track of the degree of the monomials in $b$, the result follows.

This includes the result for $t \rightarrow \infty$, when the contributions from the eigensections for positive eigenvalues to the traces vanish exponentially and the formula reduces to one in cohomology which is a \textit{polynomial K\"unneth formula}. For $b=-1$, this yields equation \eqref{pint_tile_213}.
\end{proof}

\begin{remark}   
\label{remark_decomposes_as_a_product}
If an elliptic complex on such a product space admits a decomposition as in the statement of the Proposition, we say that \textbf{\textit{the elliptic complex decomposes as a product}}. In particular, on the tangent cone with a product type wedge metric, the normal operator of a Dirac operator always decomposes as a product.
\end{remark}

\begin{remark}
Whenever we take traces to define polynomial Lefschetz heat supertraces or Lefschetz heat supertraces in this Subsection we will assume that the heat kernel is trace-class. However in Theorem \ref{theorem_renormalized_McKean_Singer} we shall introduce renormalized versions of these traces when the heat kernels are not trace class for which the result above can be generalized as well as the other Lefschetz heat traces considered in this section.
\end{remark}

Consider a self map $f$ associated to a geometric endomorphism $T_f$ of an elliptic complex $\mathcal{P}$, with a fixed point $p$ that has a fundamental neighbourhood $U_p=\mathbb{D}^k \times C(Z)$ on which the metric is of product type. We assume that the elliptic complex decomposes as a product into $\mathcal{P}(\mathbb{D}^k) \times \mathcal{P}(C_x(Z))$ in this neighbourhood, and that $f$ has a local model map $F$ in this fundamental neighbourhood as given in \eqref{Decomposition_of_function} such that \textit{$C(y,x,z)$ is independent of $x$ and $z$, and that $A$ and $B$ are independent of $y$}, then Proposition \ref{Lefschetz_product_formula} shows that we can decompose the polynomial local Lefschetz heat supertraces as
\begin{equation}
\label{product_cones_links_22}
    \mathcal{L}(\mathcal{P}(U_p),T_f,p)(b,t)= \mathcal{L}(\mathcal{P}(\mathbb{D}^k), T_{C},p)(b,t) \cdot \mathcal{L}(\mathcal{P}(C_x(Z)), T_{(Ax,B)},p)(b,t)
\end{equation}
and the local Lefschetz numbers as
\begin{equation}
\label{equation_product_formula_77}
    L(\mathcal{P}(U_p),T_f,p)= L(\mathcal{P}(\mathbb{D}^k), T_{C},p) \cdot L(\mathcal{P}(C_x(Z)), T_{(Ax,B)},p)
\end{equation}
where the geometric endomorphisms are the induced ones on the factors. For the moment, it is clear from Proposition \ref{Lefschetz_product_formula} that all one needs for this is that the heat kernel of the Laplacian of $\mathcal{P}(U_p)$ decomposes into the product of the heat kernels of the Laplacians of $\mathcal{P}(\mathbb{D}^k)$ and $\mathcal{P}(C_x(Z))$.

What we show next is that we can take the fundamental neighbourhood to be the truncated tangent cone on which the metric is of product type, and by Proposition \ref{proposition_summarizing_heat_kernel_stuff_Pierre_Jesse} we can assume that the heat kernels factor as stated above. Furthermore, we will see that the geometric endomorphism on the tangent cone will factor as in Proposition \ref{Lefschetz_product_formula}.

Moreover, we will show that the local Lefschetz number only depends on the induced map at the tangent cone of a space at $p$, which we will denote $\mathfrak{T}_pf$. Later in Subsection \ref{subsubsection_local_lefschetz_numbers_fundamental_neighbourhoods} we shall study in detail the polynomial local Lefschetz heat supertraces. 

Let $f$ be a placid self map of a stratified pseudomanifold $X$. Assume that there is an isolated fixed point on a singular stratum where the restriction to a fundamental neighbourhood $U_1$ of $f$ has local model form $F$ as in equation \ref{Decomposition_of_function}. Assume that $f$ is a homeomorphism that lifts to a diffeomorphism of the resolved space. Say that the fixed point is at $p=\{y=0, x=0\}$ in the image of the fundamental homeomorphism \eqref{fundamental_coordinates}. In the resolved fundamental neighbourhood, we can see that $C(y,x,z)=C(y,0,z)+\mathcal{O}(x)$, since $C$ is a smoothly stratified map. Moreover, $C(y,0,z)$ is actually independent of $z$, since the map at $x=0$ is the resolution of the smoothly stratified map restricted to the stratum $Y$ where the link has metrically degenerated to a point. We define the functions
\begin{equation}
\label{equation_maps_building_tangent_map}
    \widetilde{A}(z):=A(0,0,z): Z \rightarrow [0,1], \hspace{5mm} \widetilde{B}(z)=B(0,0,z): Z \rightarrow Z 
\end{equation}
and we observe that by the smoothness of $f$ we have that $xA(x,0,z)=x\widetilde{A}(z)+\mathcal{O}(x^2)$.
We also define
\begin{equation}
    \widetilde{C}(y) = DC(y,0,z) : \mathbb{R}^k \rightarrow \mathbb{R}^k  
\end{equation}
where $DC$ is the differential of the map $C$ on the tangent space of the $\mathbb{R}^k$ factor at the fixed point, which we identify with $\mathbb{R}^k$. This is independent of $z$ because $f$ is a smoothly stratified map as in Definition \ref{smoothly stratified}.
Recall that we defined the tangent cone $\mathfrak{T}_pX$ at a point $p \in X$ in Subsection \ref{subsection_local_structures}.
We call 
\begin{equation}
\label{equation_map_induced_on_tangent_cone}
    \mathfrak{T}_pf(y,x,z)=( \widetilde{C}(y), x \widetilde{A}(z),  \widetilde{B}(z)): \mathfrak{T}_pX \rightarrow \mathfrak{T}_pX
\end{equation}
the \textbf{\textit{induced map on the tangent cone at $p$}}, where $\mathfrak{T}_pX=\mathbb{D} \times C(Z)$. While the map $\widetilde{C}$ is linear on $\mathbb{R}^k$, the map $(x \widetilde{A}(z), \widetilde{B}(z))$ on the cone $C(Z)$ is a homogeneous map in $x$ for positive $x$.

\begin{remark}  
\label{remark_tangent_cone_join_construction}
Notice that this map is homogeneous in both the (radial variable) of the cone as well as on the Euclidean factor (where it is even linear). If we think of the tangent cone as the product of two cones, $C_{r'}(\mathbb{S}^{k-1}_{\omega})\times C_x(Z_z)$ where the factor $C_{r'}(\mathbb{S}^{k-1}_{\omega})=\mathbb{D}^k$ is smooth, we can alternatively write the induced map on the tangent cone as
\begin{equation}
\label{equation_map_induced_on_tangent_cone_22}
    \mathfrak{T}_pf(r',\omega,x,z)=(\widetilde{C}_1(\omega) r', \widetilde{C}_2(\omega), x \widetilde{A}(z),  \widetilde{B}(z))
\end{equation}
and it is homogeneous in $r_1=\sqrt{(r')^2+x^2}$. This is in fact the radial variable of the cone $C_{r_1}(J_j)$, over the join of the link $Z$ and the sphere $\mathbb{S}^{k-1}$. We can write the map as
\begin{equation}
\label{equation_map_induced_on_tangent_cone_24}
    \mathfrak{T}_pf(r_1,j)=(\widetilde{E}(j)r_1, \widetilde{F}(j)),
\end{equation}
similar to \eqref{equation_map_induced_on_tangent_cone} in the case where $Y$ is trivial.
\end{remark}

We will now give a geometric definition of a simple map.
\begin{definition}
\label{definition_simple_map_on_tangent_cone}
An isolated fixed point $p$ of a self map $f$ on a Witt space $X$ is called simple if the induced map $\mathfrak{T}_pf$ on the tangent cone at $p$ has only one fixed point on the tangent cone $\mathfrak{T}_pX$.
\end{definition}
It is easy to see that the notions of simple used by Atiyah and Bott (Definition \ref{simple_fixed_points_definition}) and that in Definition 15 of \cite{Bei_2012_L2atiyahbottlefschetz} are generalized by this definition. 
We can now define infinitesimal versions of the properties in Definition \ref{attracting/repelling_fixed_point}. 

\begin{definition}
\label{infinitesimally_attracting/repelling_fixed_point}
Let $p$ be the isolated fixed point of a self map $f$. We say that the map is \textit{\textbf{infinitesimally attracting/non-expanding/expanding/non-attracting}} if the induced map on the tangent cone is attracting/non-expanding/expanding/non-attracting.

The condition of infinitesimally non-expanding can be equivalently defined as $||\widetilde{C}||\leq 1$, $||\widetilde{A}|| \leq 1$, for the map induced on the tangent cone as in equation \eqref{equation_map_induced_on_tangent_cone}, and the other conditions can be phrased similarly.

\end{definition}

It is clear that infinitesimally attracting/ repelling/ non-expanding/ non-attracting imply that the fixed point is attracting/ repelling/ non-expanding/ non-attracting respectively.
The next theorem shows that the local Lefschetz number at a simple fixed point only depends on the induced map on the tangent cone at the fixed point.

\begin{theorem}
\label{theorem_localization_model_metric_cone}
Let $f$ be a self map on a pseudomanifold which is associated to a geometric endomorphism $T_f=\varphi \circ f$ of an elliptic complex $\mathcal{P}$. For an isolated simple fixed point at $p=y_0$ which lies on a stratum $Y$ and has a fundamental neighbourhood $U_p=\mathbb{D}^k \times C(Z)$, we assume that the elliptic complex decomposes as a product. Then the local Lefschetz number $L(\mathcal{P}(U_p),T_f,p)$ can be expressed as 
\begin{equation}
\label{equation_finale_tangent_cone}
    \Bigg( \sum_{k=0}^n \frac{(-1)^k Tr(\varphi_{k,p} \circ D_pC)}{|det(Id-D_pC)|} \Bigg) \int_{\tau=0}^{\infty} \int_{Z}  str \Big(\varphi \circ \widetilde{K}_Z(\widetilde{A}(z),\widetilde{B}(z),1,z,\tau) \Big) dvol_Z  \frac{1}{2\tau} d\tau
\end{equation}
where we use the notation introduced in the discussion above. In particular, the local Lefschetz number only depends on the induced map on the tangent cone.
\end{theorem}

\begin{remark}
\label{Remark_Bei_Bismut_Cheeger_Zhang_no_renormalization_needed}

The integral in \eqref{equation_finale_tangent_cone} is a \textbf{Lefschetz version} of the \textbf{Bismut-Cheeger $\mathcal{J}$-form} given in \cite[\S 6]{Albin_2017_index}.
There a renormalized integral is used for the initial definition but it is shown that no renormalization is needed. For the generalized eta invariants in \cite[\S 3]{weiping1990note} it is shown that the renormalization is not necessary in the contribution to the equivariant index. Similarly in Theorem 7 of \cite{Bei_2012_L2atiyahbottlefschetz}, a renormalized integral is used to express local Lefschetz numbers but it is then shown that no renormalization is needed, and we use this to show that the same holds true for local Lefschetz numbers, i.e., that the integrals are convergent.
\end{remark}

\begin{proof}
In Proposition \ref{proposition_summarizing_heat_kernel_stuff_Pierre_Jesse} we explained that the heat kernel lifts to a conormal distribution on the wedge heat space, which at a singular isolated fixed point $p$ on a stratum $Y$ has a polyhomogenous expansion at the boundary hypersurface $\mathfrak{B}^{(H)}_{\phi \phi,1}(Y)$.

The discussion above shows that the map $f$ on $U_p$ is equal to the map induced on the tangent cone $\mathfrak{T}_pf$ up to terms vanishing to higher orders of the radial distance in the conic and Euclidean factors, and lifts to a simple b-map on the resolution. 
We denote the blow down map of the wedge heat space to $X^2 \times \mathbb{R}_{\tau}^+$ as $\beta_H$ where $X$ is the resolved space.

The intersection of the interior lift of the graph of $f$ in $X \times X$ to the wedge heat space and the boundary hypersurface $\mathfrak{B}^{(H)}_{\phi \phi,1}(Y)$ of the front face is the intersection of the lift of the graph of the model map $\mathfrak{T}_pf$ with the front face. This can be seen by studying the graph of the map $f$ in local coordinates near the 
the inward pointing spherical normal bundle of $\mathfrak{B}_{Y} \times_{\phi_{Y}} \mathfrak{B}_{Y} \times \{0\}$. In the projective coordinates for $\mathfrak{B}^{(H)}_{\phi \phi,1}(Y)$ for $Y$ containing $y_0$ given in \eqref{equation_key_projective_coordinates}, the lift of the map can be written as 
$$(r, z, s, z, \sigma=\tau/s) \mapsto (\widetilde{A}(z)r, \widetilde{B}(z), s, z,\sigma=\tau/s)$$
up to terms that vanish to higher order in the boundary defining function for $\mathfrak{B}^{(H)}_{\phi \phi,1}(Y)$ when restricted to the diagonal. Since $r=s$ on the diagonal, this amounts to vanishing to higher orders of $r=s$.
In particular, this shows that the graph of the self map $f$ lifts to a $p$-submanifold of the wedge heat space, at least in the pre-image under $\beta_H$ of neighbourhoods of fixed points at the diagonal on $X \times X \times \mathbb{R}_{\tau}^+$ for all $\tau$. Away from the diagonal, we can see that this is a $p$-submanifold by Remark \ref{remark_Fibered_morphism}.

We use Melrose's \textit{\textbf{pullback and pushforward theorems}} in this setting. These are explained in detail in, e.g., Section 3.1 of \cite{Albin_2017_index} and Appendix B of \cite{EpsteinMelroseMendozaResolvent1991}.
The heat kernel has a polyhomogenous expansion at the face $\mathfrak{B}^{(H)}_{\phi \phi,1}(Y)$.
By our discussion above, the interior lift of the graph of $f$ is a $p$-submanifold on the wedge heat space and pulling back the heat kernel $\mathcal{K}$ along the inclusion of the graph of $f$, we get a distribution that has a polyhomogeneous expansion at the front face by the pullback theorem.

Moreover the leading term in the polyhomogenous expansion is given by $\widetilde{\mathcal{K}}(\tau,\mathfrak{T}_pf(y,s,z),(y,s,z))$ where $\widetilde{\mathcal{K}}$ is the model heat kernel given in Proposition \ref{proposition_summarizing_heat_kernel_stuff_Pierre_Jesse}. 
Using the notation of the discussion that follows said proposition, we can write this as 
\begin{equation}
    \widetilde{K}_{TY}(\sigma,\widetilde{C}(y),y) dvol_Y \widetilde{K}_Z(\widetilde{A}(z), \widetilde{B}(z),s,z, \sigma ) dvol_Z.
\end{equation}

The index sets of the polyhomogenous expansions of the heat kernel as a right density have been worked out in Theorem 4.4 of \cite{Albin_2017_index} and since $f$ is a simple b-map, the pullback by $f$ and the geometric endomorphism preserve the index sets on the relevant faces of the wedge heat space discussed above.
The local Lefschetz number is given by the limit of the integral
\begin{equation}
    \label{roe_type_expansion_24}
    \lim_{t \rightarrow 0} \int_{U_p} str \Big( \varphi \circ \mathcal{K}(t,f(y,x,z),(y,x,z)) \Big) dvol_{g_w}.
\end{equation}
Theorem \ref{localization_of_simple_fixed_points} shows that the Lefschetz heat supertrace vanishes away from where the graph of $f$ intersects the diagonal of $X \times X$. The interior lift of the graph of $f$ does not intersect the front face away from points which are in the pre-image of $\beta_H$ of fixed points at the diagonal of $X^2 \times \mathbb{R}^+_{\tau}$. 

We show that the integral, which is a trace of a wedge heat operator on the wedge heat space, can be realized as a pushforward of a b-fibration following Section 5.2 of \cite{Albin_2017_index}. There it is shown that the interior lift of the diagonal of $X^2$ in $X^2 \times \mathbb{R}^+_{\tau}$ can be identified with
\begin{equation}
    \text{diag}_w^{(H)}(X)=[X \times \mathbb{R}^+_{\tau}; \mathfrak{B}_{Y_1}\times \{0\};...;\mathfrak{B}_{Y_{l'}}\times \{0\} ]
\end{equation}
where $\mathcal{S}(X)=\{Y_1,...,Y_{l'}\}$ are listed in a non-decreasing order. 
We denote the blow down map from $\text{diag}_w^{(H)}(X)$ to its image in $X^2 \times \mathbb{R}^+_{\tau}$ by 
$$\beta_{(\Delta)}: \text{diag}_w^{(H)}(X) \rightarrow X \times \mathbb{R}^+_{\tau},$$ and for $Y \in \mathcal{S}(X)$ we denote the boundary hypersurfaces of $\text{diag}_w^{(H)}(X)$ above
$\mathfrak{B}_Y \times\{0\}$ by $\mathfrak{B}_{1, 1}^{(\Delta)}(Y)
$. We denote composition of $\beta_{(\Delta)}$ with the projection onto $\mathbb{R}^+_{\tau}$ as 
\begin{equation}
    \beta_{(\Delta),\tau}: \text{diag}_w^{(H)}(X) \rightarrow \mathbb{R}^+_{\tau}
\end{equation}
which is a b-fibration.
Now we observe that the integral in the expression \eqref{roe_type_expansion_24} corresponds to the pushforward of the integrand along this b-fibration, and so Theorem 5.6 of \cite{Albin_2017_index} can be applied to our integral. Since the index sets have been worked out in \cite{Albin_2017_index}, arguing similarly to Corollary 5.7 of \cite{Albin_2017_index}, we see that the pushforward gives a polyhomogenous expansion in $\tau$ such that the limit exists at $\tau=0$.
The discussion above shows that the leading order term of the polyhomogenous expansion is given by
\begin{equation}
    \lim_{\tau \rightarrow 0} \int_{\mathbb{R}^k \times Z^+} str \Big( \varphi \circ \widetilde{K}_{TY}(\sigma,(\widetilde{C}(y),y) dvol_Y \widetilde{K}_Z(\widetilde{A}(z), \widetilde{B}(z),s,z,\sigma) dvol_Z s^lds\Big).
\end{equation}
Since we assume that the elliptic complex decomposes as a product (see Remark \ref{remark_decomposes_as_a_product}), we can use the geometric endomorphisms on each factor, and split this integral into the product of the factor
\begin{equation}
    \lim_{\tau \rightarrow 0} \int_{\mathbb{R}^k} str \Big( \varphi \circ \widetilde{K}_{TY}(\tau,(\widetilde{C}(y),y) dvol_Y \Big)=\sum_{k=0}^n \frac{(-1)^k Tr(\varphi_{k,p} \circ D_pC)}{|det(Id-D_pC)|}
\end{equation}
where the last equality follows from Theorem \ref{Nonsingular_Contribution}, and the factor
\begin{equation}
\label{Lefschetz_version_Bismut_Cheeger_J_form_prequel}
    \lim_{\tau \rightarrow 0} \int_{Z^+} str \Big( \varphi \circ \widetilde{K}_Z(\widetilde{A}(z), \widetilde{B}(z),s,z,\sigma) \Big) dvol_Z s^lds.
\end{equation}

Observe that this factor depends only on the heat kernel of the product type metric at the tangent cone, and the induced map on the tangent cone. 
Now the result follows from Theorem 7 of \cite{Bei_2012_L2atiyahbottlefschetz}, where the expression \eqref{Lefschetz_version_Bismut_Cheeger_J_form_prequel} is shown to be equal to the second factor in \eqref{equation_finale_tangent_cone} when the heat kernel is \text{scalable}. 
In particular this is the case for heat kernels of product type metrics. As we discussed in Remark \ref{Remark_Bei_Bismut_Cheeger_Zhang_no_renormalization_needed}, Bei shows in \cite{Bei_2012_L2atiyahbottlefschetz} that the second factor in \eqref{equation_finale_tangent_cone} can be defined as the evaluation at $s=0$ of a holomorphic function in $s$ for all $s\in \mathbb{C}$ (which is referred to as a modified zeta function). Since it is holomorphic in all of $\mathbb{C}$, we see that it is not necessary to renormalize the integral, proving the theorem.

\end{proof}

\begin{remark}
\label{polynomial_version_works_with_relative_heat_kernels}
It is easy to observe that the arguments in the proof can be used to show that the contribution from the heat trace in each degree to the local Lefschetz numbers (as opposed to the supertrace) only depend on the induced map on the tangent cone. This shows that for simple isolated fixed points, even the polynomial supertraces depend only on the induced maps on the tangent cone. 
\end{remark}

Using Proposition \ref{Lefschetz_product_formula}, for non-isolated singularities of the type that we study where the model space is simply the product of the tangent space and the cone over the link $Z$, we have the following corollary.

\begin{corollary}
\label{Theorem_local_Lefschetz_tangent_cone}
Let $f$ be a self map on a pseudomanifold which is associated to a geometric endomorphism $T_f$ on the elliptic complex $\mathcal{P}$. For an isolated fixed point $p$ at a singular stratum, with a fundamental neighbourhood $U_p$, the local Lefschetz number is
\begin{equation}
    L(\mathcal{P}(U_p),T_f,p)= L(\mathcal{P}(\mathbb{D}^k),T_{\widetilde{C}},p) \cdot L(\mathcal{P}(C_x(Z)),T_{(\widetilde{A}x,\widetilde{B})},p).
\end{equation}
where the geometric endomorphisms are the ones corresponding to the elliptic complexes on the tangent cone, where the complexes on the factors are those of the product decomposition as in equation \eqref{equation_product_formula_77}.
\end{corollary}

\begin{proof}
The previous theorem shows that the local Lefschetz number depends only on the map induced on the tangent cone. The corollary follows from equation \eqref{equation_product_formula_77} and the discussion there.
\end{proof}

\subsection{Cohomological formulae for local Lefschetz numbers}
\label{subsection_cohomological_formulae}

In this subsection we will explore new techniques for deriving formulae for local Lefschetz numbers. In the work of Goresky and MacPherson, the local Lefschetz numbers in intersection homology were given a homological interpretation which was then used to compute these numbers on stratified spaces. 
We will now develop analytic definitions of local cohomology for the de Rham and Dolbeault complexes that can be used to compute local Lefschetz numbers. We develop these in a generality that covers twisted versions of these complexes as well as Witten deformed versions.

First we will construct an approximate heat kernel which will be a key technical tool. Then we will show that the Lefschetz heat supertrace on a fundamental neighbourhood of an isolated fixed point can be expressed differently by substituting the global heat kernel on the stratified space restricted to a fundamental neighbourhood, with the heat kernel on the fundamental neighbourhood (as a pseudomanifold with boundary) with certain boundary conditions, possibly up to boundary contributions.

Then we shall present a general formula, which in summary says that local Lefschetz formulae for elliptic complexes are global Lefschetz formulae for fundamental neighbourhoods considered as stratified spaces with boundary possibly up to certain boundary contributions, and discuss how this can be used for different choices of domains with boundary conditions.

For the domains and boundary conditions that we impose on the Dolbeault complex restricted to a fundamental neighbourhood, the Laplace-type operator will not be Fredholm and we will present a  renormalized McKean-Singer theorem, modifying Theorem \ref{Lefschetz_supertrace}. 

\subsubsection{Approximate heat kernel on Witt spaces with boundary}
\label{subsection_approximate_heat_kernel}

We will show that in Definition \ref{definition_local_Lefschetz_numbers} for local Lefschetz numbers at an isolated fixed point, we can replace the heat kernel $\mathcal{K}_X$ on the global manifold with the heat kernel on a fundamental neighbourhood $U_p$ for the Laplace-type operator on a domain with boundary conditions. This follows from a relative Lefschetz heat trace estimate.
While Theorem \ref{theorem_localization_model_metric_cone} can be used for self maps with simple isolated fixed points, the treatment that follows will address the non-simple case as well.

We set it up as follows. Let us assume that $f : X \rightarrow X$ has an isolated fixed point, $p_0$. Let $V_1$ be a fundamental neighbourhood as in \eqref{fundamental neighbourhood} of $p_0$. Let $V_2, V_3=U$ be slightly larger neighbourhoods of $V_1$ such that $f(V_1) \subseteq V_2 \subseteq V_3$. Consider the heat kernel for $\Delta=D^2=PP^*+P^*P$ on $U$ with a self-adjoint domain, which we denote as $\mathcal{K}_{U}(t,p,q)$. 

We require that the domain chosen for the Laplace type operator on $U$ is compatible with the VAPS domain for the Laplace-type operator on $X$ in the sense that the set of sections in the domain on $U$ restricted to $V_1$ are the same as the set of sections in the VAPS domain for the operator on $X$ restricted to $V_1$.
The domains we study will satisfy this condition (see Remark \ref{remark_boundary_conditions_for_singular_links}).

We denote the distance from $p_0$ to be $x$, and construct the sets $V_i$ for $i=1,2,3$ as $V_i= \{ x \leq c_i \}$, for appropriate constants $c_i$. This can be achieved by making $c_1$ small enough and using the continuity of the map $f$.

Let the heat kernel of $\Delta$ on $X$ be $\mathcal{K}_X(t,p,q)$. We construct an approximate heat kernel by generalizing the construction in Section 4 of \cite{aldana2013asymptotics}. First choose four positive constants $c_2 < e_1 < e_2 < e_3 < e_4 < c_3$. Let $\phi_1,\phi_2,\psi_1$ be smooth functions on $X$ satisfying

\begin{align}
    \phi_1 =
    \begin{cases}
      1 & \text{if $x>e_2$              }\\
      0 & \text{if $x<e_1$              }  
    \end{cases}       &&
    \phi_2 =
    \begin{cases}
      1 & \text{if $x<e_4$              }\\
      0 & \text{if $x>c_3$              }
    \end{cases}       &&
    \psi_1 =
    \begin{cases}
      1 & \text{if $x>e_4$              }\\
      0 & \text{if $x<e_3$              }
    \end{cases}       
\end{align}
and let $\psi_2=1-\psi_1$. Then 
\begin{equation}
\label{equation_parametrix}
    Q(t,p,q)=\phi_1(p)\mathcal{K}_{X}(t,p,q)\psi_1(q)+\phi_2(p)\mathcal{K}_{U}(t,p,q)\psi_2(q)
\end{equation}
is an approximation of the heat kernel $\mathcal{K}_X(t,p,q)$ for short times, as we shall show using the following proposition.

\begin{proposition}
\label{Gluing_heat_kernels}
In the setting introduced above,
\begin{equation}
\label{Samith}
    \lim_{t \searrow 0} \int_X \big| Q(t,f(p),p)-\mathcal{K}_X(t,f(p),p) \big| dvol_X =0
\end{equation}
\end{proposition}

\begin{proof}
We begin by proving the following Lemma.

\begin{lemma}
\label{lemma_exponential_parametrix_estimates}
For $Q$ corresponding to a Laplace-type operator $D^2$, we have that
\begin{equation}
\bigg| \bigg( \frac{\partial}{\partial t} + D^2 \bigg) Q(t,p,q) \bigg| =\mathcal{O}(t^{\infty}), \text{ for } 0 < t \leq 1
\end{equation}
that is, the expression on the left vanishes super-polynomially in $t$ as $t$ goes to $0$, and restricted to $p=q$ is identically $0$
away from $\{supp(\nabla \phi_1 ) \times supp(\psi_1) \cup supp(\nabla \phi_2 ) \times supp(\psi_2) \} \subset X \times X$, for $t>0$.
\end{lemma}

\begin{proof}
From the definition of $Q$, and the properties of the heat kernels, we have the inequality
\begin{equation}
\label{abracadabra_29}
\bigg| \bigg( \frac{\partial}{\partial t} + D^2 \bigg) Q(t,p,q) \bigg| \lesssim S_1 + S_2
\end{equation}
where the terms on the right hand side are
\begin{equation*}
     S_1:=|( \langle \nabla \phi_1, \nabla_p \mathcal{K}_{X} \rangle + (D^2 \phi_1)\mathcal{K}_{X}) \psi_1(q)|
\end{equation*}
\begin{equation*}
     S_2:=|( \langle \nabla \phi_2, \nabla_p \mathcal{K}_{U} \rangle + (D^2 \phi_2)\mathcal{K}_{U}) \psi_2(q)|
\end{equation*}
where we have used the fact that the norms of $D\phi_i$ and $\nabla \phi_i$ are comparable. 
Now we consider the first term which satisfies the inequality
\begin{equation}
    S_1 \leq (|\nabla \phi_1(p))| |\nabla_p \mathcal{K}_X(t,p,q)|+ |D^2 \phi_1(p))| |\mathcal{K}_X(t,p,q)|) \chi_{supp(\psi_1(q))}
\end{equation}
where $\chi_{supp(\psi_1(q))}$ denotes the indicator function of the set $supp(\psi_1(q))$. By the construction of the functions $\phi_1,\psi_2$, we see that $supp(\nabla \phi_1) \subseteq \{ e_1 \leq x \leq e_2 \}$ and $supp(\psi_1) \subseteq \{e_3 \geq x\}$.

For positive $t$, the function $\nabla \phi_1(p) \mathcal{K}_X(t,p,q) \chi_{supp(\psi_1(q))}$ (which is supported away from the diagonal at $t=0$) vanishes super-polynomially in $t$, which follows from standard decay results for the heat kernels (as in Proposition \ref{proposition_summarizing_heat_kernel_stuff_Pierre_Jesse})
and its derivatives away from the diagonal.

\begin{remark}  
Restricted to compact sets on $X^{reg}$, one can prove exponential vanishing estimates, similar to the form of the estimate for $S_1$ in Lemma 4.1 of \cite{aldana2013asymptotics}, which uses a similar argument for estimating the term $S_2$ as well. This is worked out in that article for the heat kernel of a cusp end, whereas we work with the heat kernel on stratified spaces.
\end{remark}

For $S_2$, we have that the supports of $\phi_2,\psi_2$ vanish near the boundary of $V_3=U$. Since heat kernels have local parametrix constructions (as used in the proofs in \cite{Albin_2017_index}) which yield super-polynomial vanishing estimates away from the boundary, we see that $S_2$ also has super-polynomial vanishing by arguments similar to those for $S_1$.

The right hand side of inequality \eqref{abracadabra_29} vanishes away from where the derivatives of the cutoff functions $\phi_i$ and the cutoff functions $\psi_i$ are supported, for $i=1,2$. This proves the lemma.
\end{proof}

\begin{remark}  
One main idea in the above lemma is what is known as \textit{Kac's principle of not feeling the boundary} (see for instance \cite{li2016heat} for a treatment in the smooth setting), and while there are exponential vanishing estimates in the smooth setting, super-polynomial estimates suffice for our purposes.
\end{remark}

We can now use this lemma to prove the proposition which is an adaptation of Lemma 4.3 of \cite{aldana2013asymptotics} to our setting. Duhamel's principle shows that
\begin{equation}
Q(t,p,q)-\mathcal{K}_X(t,p,q) = \int_0^t \int_X \mathcal{K}_X(s,p,w) \bigg( \frac{\partial}{\partial t} + D^2 \bigg) Q(t-s,w,q) dvol_X(w) ds.
\end{equation}
Now, we can take the pullback by $f$ on the left factor and restrict to the diagonal to obtain 
\begin{equation}
Q(t,f(p),p)-\mathcal{K}_X(t,f(p),p) = \int_0^t \int_{U \setminus V_2} \mathcal{K}_X(s,f(p),w) \bigg( \frac{\partial}{\partial t} + D^2 \bigg) Q(t-s,w,p) dvol_X(w) ds
\end{equation}
where we have used that the integrand is only supported in $(U \setminus V_2)$ on the diagonal for all $t$, which follows from the lemma above.
There are no fixed points of $f$ on $U \setminus V_1$, and so on $U \setminus V_1$, we have that 
\begin{equation*}
    \int_{U \setminus V_1} \big| Q(t,f(p),p)-\mathcal{K}_X(t,f(p),p) \big| dvol_X(p)
\end{equation*}
vanishes super-polynomially in $t$ as $t$ goes to $0$ by Proposition \ref{proposition_summarizing_heat_kernel_stuff_Pierre_Jesse}.
Since the integrand is identically zero for $p \in X \setminus U$ (by the definition of $Q)$, we only need to consider the integral 
\begin{multline}
\label{guchcha_1}
\int_{V_1} \big| Q(t,f(p),p)-\mathcal{K}_X(t,f(p),p) \big| dvol_X(p) \\ 
\leq \int_{V_1} \Big|\int_0^t \int_{U \setminus V_2} \mathcal{K}_X(s,f(p),w) \bigg( \frac{\partial}{\partial t} + D^2 \bigg) Q(t-s,w,p) dvol_X(w) ds \Big| dvol_X(p).
\end{multline}
Since $f(V_1) \subseteq V_2$ (in particular $f(V_1)\cap \{U \setminus V_2 \} = \emptyset$), we have that $\mathcal{K}_X(s,f(p),w)$ vanishes superpolynomially in $s$ on the support of the integrals and by Lemma \ref{lemma_exponential_parametrix_estimates} the other factor in the integrand vanishes superpolynomially in $(t-s)$. 

We observed in the proof of Proposition \ref{theorem_localization_model_metric_cone} that the heat kernels composed with the geometric endomorphisms are distributions on the wedge heat space with polyhomogenous expansions at the boundary face at the diagonal. We define $\widetilde{\mathcal{K}}(s,p,w):=\mathcal{K}_X(s,f(p),w)$, which lifts to a distribution on the wedge heat space. This distribution has bounds at the other faces, and $\widetilde{\mathcal{K}}$ is a wedge heat operator in the sense of \cite[Appendix B]{Albin_2017_index}.
Similarly 
\begin{equation}
    \bigg( \frac{\partial}{\partial t} + D^2 \bigg) Q(t-s,w,p)
\end{equation}
also defines a wedge heat operator and the integral that corresponds to the composition of wedge heat operators, which is treated in detail in \cite[Appendix B]{Albin_2017_index} shows that the expression on the right hand side of \eqref{guchcha_1} vanishes superpolynomially in $t$, proving the result.

\end{proof}

\begin{remark}
\label{Remark_only_need_morphism_in_neighbourhoods}
We observe that in the proof above, we used that the pullback and pushforward theorems are only applied to $\mathcal{K}_X(s,f(p),w)$ restricted to subsets of $W=U \times U \times \mathbb{R}^+$. In particular, we only require that the graph of $f$ is a p-submanifold (which was used in the arguments of Proposition \ref{theorem_localization_model_metric_cone}), restricted to the pre-image of the set $W$ under the blow-down map from the wedge heat space.
In particular this is the case if the map is of the model form at the tangent space, as in \eqref{Decomposition_of_function} where it is a local homeomorphism, or is of the form of the induced map on the tangent cone in \eqref{equation_map_induced_on_tangent_cone}.

Even though the bundle morphisms involved in geometric endomorphisms are not considered in the above proposition, it is easy to see that one only needs geometric endomorphisms associated to self maps that are in model form near the fixed points.
We use this observation to get formulas for more general self-maps in the de Rham case.
\end{remark}

\begin{remark}
\label{remark_not_taking_tangent_cone_neighbourhoods_even_though_possible}
While in the case of wedge metrics, for isolated simple fixed points we can take the fundamental neighbourhoods to be the truncated tangent cone using Corollary \ref{Theorem_local_Lefschetz_tangent_cone}, we state Proposition \ref{Gluing_heat_kernels} above for general fundamental neighbourhoods because we can use this for model neighbourhoods where the metric is not of product type and for self maps with non-simple fixed points.
\end{remark}

\subsubsection{Local Lefschetz numbers on fundamental neighbourhoods}
\label{subsubsection_local_lefschetz_numbers_fundamental_neighbourhoods}

In this subsection, we present formulas for Lefschetz numbers for fundamental neighbourhoods of isolated fixed points, where we consider the fundamental neighbourhoods as stratified pseudomanifolds with boundary. This will be used to compute local Lefschetz numbers at singular strata.

There are studies of Lefschetz numbers on manifolds with boundary in a number of papers, including \cite{brenner1990atiyah,brenner1981atiyah} for elliptic complexes which give explicit formulas for the de Rham complex. This was followed by \cite{kytmanov2004holomorphic} for the holomorphic Lefschetz number with non-Fredholm boundary conditions. Zhang shows how to compute local Lefschetz numbers for the spin Dirac operator in \cite{weiping1990note}, where the boundary is replaced by an isolated conic singularity to get a formula in terms of a generalized eta invariant. When one considers Dirac-type operators with non-local boundary conditions (as in \cite{donnelly1978eta}), there are contributions to the Lefschetz numbers from the boundary. We develop a framework which can capture all these instances and generalize it to the singular setting.

Here we present a general formula for local Lefschetz numbers that we shall then instantiate for different complexes in later sections. While the more general case of Lefschetz numbers of pseudomanifolds with boundary with fixed points at the boundary is interesting in its own right and much of our developments here can be applied to those cases, our primary focus is on developing tools for the purpose of computing local Lefschetz numbers in the interior of a space.

Goresky and MacPherson proved that for intersection homology, and for other sheaf theoretic cohomology theories, the local Lefschetz numbers can be expressed as the supertrace of the induced map on local homology/cohomology groups \cite{Goresky_1985_Lefschetz,Goresky_1993_local_Lefschetz}. The latter can be easily rephrased as the global Lefschetz number of a manifold with boundary, which contains only one isolated fixed point.
Using Proposition \ref{Gluing_heat_kernels} we can now take a similar viewpoint for elliptic complexes. The natural candidate for local cohomology will then be the cohomology of the elliptic complex on the stratified pseudomanifold with boundary, with a suitable domain. We can then use the results for abstract Hilbert complexes to see that a modified version of Theorem \ref{Lefschetz_supertrace} holds.

Let $\widehat{U_p}$ be a fundamental neighbourhood of an isolated fixed point of a self map $f$ which induces a geometric endomorphism on an elliptic complex $\mathcal{P}_B(U_p)=(H,P)$, where $B$ denotes the choice of domain for the complex which dictates the boundary conditions on the corresponding Laplace-type operator. Then 
\begin{equation}
     L(\mathcal{P}_B(U_p),T_f)= \sum_{k=0}^n (-1)^k Tr[f^*|_{\mathcal{H}^k(\mathcal{P}_B(U_p))}]
\end{equation}
is called the \textit{\textbf{(global) Lefschetz number of the geometric endomorphism $T_f$, on the elliptic complex $\mathcal{P}_B(U_p)$ on the fundamental neighbourhood $U_p$}}.
Here $\mathcal{H}^k(\mathcal{P}_B(U_p))$ is the cohomology of the Hilbert complex obtained by restricting the sections of the Hilbert complex associated to $\mathcal{P}$ on $X$, to the fundamental neighbourhood $U_p$, for the choice of domain denoted by $B$.

Proposition \ref{Gluing_heat_kernels} shows that we can compute local Lefschetz numbers in the following way.
If $\widehat{U_p}$ is a fundamental neighbourhood of a non-expanding fixed point $p \in X$ of a map $f$ which induces a geometric endomorphism $T_f$ on an elliptic complex $\mathcal{P}$, then
\begin{equation}
\label{whats_to_do}    L(\mathcal{P}_B(U_p),T_f)=L(\mathcal{P}_B(U_p),T_f,p)+L(\partial U_p, \mathcal{P}_B(U_p), T_f).
\end{equation}
Here \textit{\textbf{$L(\mathcal{P}_B(U_p),T_f,p)$ is the local Lefschetz number}} at the fixed point $p$, which in light of Proposition \ref{Gluing_heat_kernels} is the same as $L(\mathcal{P}(X),T_f,p)$.
The second term $L(\partial U_p, \mathcal{P}_B(U_p), T_f)$ is the \textit{\textbf{Lefschetz boundary contribution}} which we defined in equation \eqref{equation_Lefschetz_boundary_contribution},
\begin{equation}
\label{equation_Lefschetz_boundary_contribution_innit}
    L(\partial U_p, \mathcal{P}_B(U_p), T_f)= \lim_{t \rightarrow 0} \sum_{k=0}^n \int_{W_p} (-1)^k tr(\phi_k \circ \mathcal{K}_{U_p}^k(t,f(q),q) dvol)
\end{equation}
where $W_p \subset U_p$ is an open neighbourhood of the smooth boundary $\partial U_p$ that does not contain the fixed point $p$. This appears because the heat kernel for the associated Laplace-type operator does not necessarily have off diagonal decay at the boundary for all choices of self adjoint boundary conditions. These boundary contributions contribute to 
\begin{equation}
   L(\mathcal{P}_B(U_p),T_f)= \lim_{t \rightarrow 0} \sum_{k=0}^n \int_{\overline{U}_p} (-1)^k tr(\varphi_k \circ \mathcal{K}_{U_p}^k(t,f(q),q) dvol)
\end{equation}
which is the Lefschetz heat supertrace for the fundamental neighbourhood, generalizing \eqref{Heat_formula_all_t_P}. 

The Lefschetz boundary contributions give non-trivial contributions for non-local boundary conditions such as the APS boundary conditions (we will explore this in an upcoming article) 
and will vanish for local boundary conditions for which the associated the heat kernel vanishes away from the diagonal in the small time limit.
We summarize the content of this discussion in the following proposition that gives an equivalent definition for the local Lefschetz numbers presented in Definition \ref{definition_local_Lefschetz_numbers}.

\begin{proposition}
\label{Local_Lefschetz_numbers_definition}
Given a self map $f$ of a fundamental neighbourhood $\widehat{U_p}$ with a fixed point at $p$, associated to a geometric endomorphism $T_f$ on the elliptic complex $\mathcal{P}_B(U_p)=(H,P)$ with $B$ denoting the boundary conditions for the complex, the \textbf{\textit{local Lefschetz numbers}} can be expressed as
\begin{equation*}
\begin{split}
L(\mathcal{P}_B(U_p),T_f,p) & = \lim_{t \rightarrow 0} \sum_{k=0}^n (-1)^k \int_{\overline{U_p}} tr(T_k \circ e^{-t \Delta_k})-L(\partial U_p, \mathcal{P}_B(U_p), T_f)\\
 & = \sum_{k=0}^n (-1)^k tr(T_k|_{\mathcal{H}^k(\mathcal{P}_B(U_p)})-L(\partial U_p, \mathcal{P}_B(U_p), T_f)
\end{split}
\end{equation*}
where the heat kernel $e^{-t \Delta_k}$ is that on the fundamental neighbourhood $U_p$ corresponding to $\Delta_k$ with the induced domain for the Laplacian.
\end{proposition}

For Dirac complexes $\mathcal{D}(U_p)=\mathcal{D}_B(U_p)$, we can write this as
\begin{equation}
\label{APS_Index_type_lefschetz}
\begin{split}
L(\mathcal{D}(U_p),T_f,p) & = \lim_{t \rightarrow 0} \int_{\overline{U_p}} str(T_k \circ e^{-t \Delta_k})-L(\partial U_p, \mathcal{D}(U_p), T_f)\\
 & = Tr(T|_{{\mathcal{H}(\mathcal{D}(U_p))}^+})- Tr(T|_{{\mathcal{H}(\mathcal{D}(U_p))}^-})-L(\partial U_p, \mathcal{D}(U_p), T_f)
\end{split}
\end{equation}

\begin{remark}
The expression \eqref{APS_Index_type_lefschetz} is the Lefschetz version of the expression for the index in Theorem 3.10 of \cite{atiyah1975spectral} up to a rearrangement of the terms. We will explore this further when we study APS conditions in a different article.
\end{remark}

Theorem \ref{theorem3} is a simplified version of the above result.
If one picks local boundary conditions for which the boundary contribution vanishes, we have a formalism where the local Lefschetz numbers are sums of supertraces of local cohomology groups, similar to the global Lefschetz numbers on a stratified space. In this setting we can even study the polynomial version as follows.

Let $f$ be a self map of a pseudomanifold with an isolated fixed point $p$. Let $\widehat{U_p}$ be a fundamental neighbourhood of $p$. Let $\mathcal{P}_B(U_p)$ be an elliptic complex on $U_p$ with the choice of domain $B$, for which the associated Laplace-type operator has discrete spectrum and is Fredholm such that there are no Lefschetz boundary contributions. This will be the case for the local boundary conditions that we shall use for the de Rham and Dolbeault complexes. Let $T=T_f$ be a geometric endomorphism for the complex. We call 
\begin{equation*}
\begin{split}
L(\mathcal{P}_B(U_p),T_f)(b)= \sum_{k=0}^n b^k Tr[f^*|_{\mathcal{H}^k(\mathcal{P}_B(U_p))}]
\end{split}
\end{equation*}
the \textbf{\textit{local Lefschetz polynomials}} for the fixed point $p$ and we call 
\begin{equation}
    \mathcal{L}(\mathcal{P}_B(U_p),T_f)(b,t)  = \sum_{k=0}^n b^k \int_{U_p} tr(T_k \circ e^{-t \Delta_k}).
\end{equation}
the \textbf{\textit{local Lefschetz heat polynomial supertraces}} for the fixed point $p$.
In this setting, Theorem \ref{Lefschetz_supertrace} shows that we have 
\begin{equation*}
\mathcal{L}(U_p,\mathcal{P}_B(U_p),T_f)(b,t) =L(U_p,\mathcal{P}_B(U_p),T_f)(b) + (1+b) \sum_{k=0}^{n-1} b^k S_k(t)
\end{equation*}
where
\begin{equation} 
    S_k(t)=\sum_{\lambda_i \in Spec(\Delta_k) \setminus \{0 \} } e^{-t \lambda_i}  \langle T_k v_{\lambda_{i}}, v_{\lambda_{i}} \rangle
\end{equation}
where $v_{\lambda_i}$ form an orthonormal basis of co-exact eigensections of $\Delta_k$ on $U_p$.

\subsubsection{Domains and boundary conditions for fundamental neighbourhoods}
\label{subsubsubsection_Neumann_boundary_condition}

In this subsection, we discuss natural choices of domain for fundamental neighbourhoods for certain elliptic complexes such as the de Rham and Dolbeault complex. We will use these conditions in the next two sections with twisted coefficients and with Witten deformation.
The choices of domain for the elliptic complexes will result in natural boundary conditions for the Laplace-type operators which we shall call \textit{\textbf{generalized Neumann boundary conditions}}. These are local boundary conditions, which generalize the usual Neumann boundary conditions for the de Rham complex on manifolds with boundary (see Chapter 5, Section 9 of \cite{taylor1996partial}), as well as the $\overline{\partial}$-Neumann boundary conditions for the Dolbeault complex on complex manifolds with boundary (see Chapter 12 of \cite{taylor2013partial}).

On smooth manifolds with boundary, and even on $\mathbb{D}^2$, already these two boundary value problems show different properties. In the de Rham case, the operator is Fredholm, while in the $\overline{\partial}$-Neumann case, the cohomology is infinite dimensional. 
Even in the simple case of the $\overline{\partial}$-Neumann problem on the disc, there is essential spectrum in the form of eigenspaces of infinite multiplicity (including the eigenvalue 0 and the first non-zero eigenvalue, as explained in the end of page 729 of \cite{fu2007spectrum}).

We will first study the choices of domain for smooth manifolds with boundary and then discuss how this generalizes to fundamental neighbourhoods with singularities.

Given an elliptic complex $\mathcal{P}(X)=(L^2(X;E),P_X)$ on a smooth Riemannian manifold $X$, when we restrict it to a neighbourhood $\widehat{U}$ with smooth boundary $\partial U$, we have two canonical ways of defining elliptic complexes $\mathcal{P}_B(U)=(L^2(U;E),P_U)$ in a neighbourhood, and these correspond to the choices of domain for the operators $P$ of the complex. We will denote $P_U$ by $P$ when it is understood by context that we are studying the operator of the complex on $U$.
We \textbf{\textit{define the complex $\mathcal{P}_N(U)$ to be the Hilbert complex with the maximal domain}} 
\begin{equation}
    \mathcal{D}_{max}(P_U)=\{ u \in L^2(U;E_i) : P_i u \in L^2(U;E_{i+1}) \}
\end{equation}
where $P_i u$ is defined in the distributional sense.
Recall that for an operator $P \in \text{Diff}^1(U;E,F)$, we have the Green-Stokes formula (see, e.g., Proposition 9.1 of \cite{taylor1996partial})
\begin{equation}
\label{equation_Greens_identity}
    \langle Ps, \tau \rangle_F - \langle s, P^* \tau \rangle_E = \int_{\partial U} g_F( i\sigma_1(P)(dr) s, \tau) dVol_{\partial U}
\end{equation}
where $\sigma_1(P)(dr)$ is the principal symbol of the operator $P$ evaluated at the differential of a boundary defining function $r$, where we have chosen $r$ such that the outward pointing unit normal vector to the boundary is given by $\partial_r$, and the corresponding covector is $dr$.

The Hilbert space adjoint of $(P,\mathcal{D}_{max}(P))$ is then $P^*$ endowed with its minimal domain, 
the elements of which satisfy the boundary condition
\begin{equation}
\label{primitive_boundary_condition_Green_Stokes}
    \sigma(P^*)(dr)u|_{\partial U}=0.
\end{equation}
This is an easy consequence of the Green-Stokes formula (for the $\overline{\partial}$ operator this is explained in Lemma 4.2.1 of \cite{chen2001partial}). It is easy to verify that the minimal domain of $P^*$ consists of sections in the maximal domain of $P^*$ that satisfy this boundary condition distributionally.

This fixes the domain for operators in the \textit{\textbf{dual Hilbert complex of $\mathcal{P}_N(U)$}}, and we will \textit{\textbf{denote it as $(\mathcal{P}_N(U))^*$}}.
This induces a domain for the Dirac-type operator $D=P+P^*$
\begin{equation}
\label{equation_generalized_Neumann_Dirac_conditions_ultima}
    \{s \in \mathcal{D}_{\max}(D) \text{  such that  }  \sigma_1(P^*)(dr) s |_{\partial U}=0  \},
\end{equation}
by the prescription in equation \eqref{Domain_Dirac_first}.
The induced domain for the Laplace-type operator is then that given in \eqref{Laplacian_P_type}, that is,
\begin{equation}
\label{equation_generalized_Neumann_Robin_type}
    s \in \mathcal{D}_{\min}(P^*), \quad s \in \mathcal{D}_{\max}(P), \quad Ps \in \mathcal{D}_{\min}(P^*), \quad P^*s \in \mathcal{D}_{\max}(P),
\end{equation}
which we call the \textbf{\textit{generalized Neumann boundary conditions}} for the Laplace-type operator of the complex $\mathcal{P}_N(U)$. The first and third of these conditions imply 
\begin{equation}
\label{equation_generalized_Neumann_zeroth_first_order_conditions}
    \sigma_1(P^*)(dr) s |_{\partial U}=0, \quad \sigma_1(P^*)(dr) Ps |_{\partial U}=0,
\end{equation}
respectively, which we call the \textit{\textbf{zeroth order and first order conditions}} of the generalized Neumann boundary conditions, in that order.

\begin{remark}
\label{Remark_hidden_condition}
It follows from $(P^*)^2=0$, that $s \in \mathcal{D}_{\min}(P^*)$ implies $P^*s \in \mathcal{D}_{\min}(P^*)$. We note that hence any $s \in \mathcal{D}_{min}(P^*)$ satisfies $\sigma_1(P^*)(dr) P^*s |_{\partial U}=0$.
\end{remark}

Similarly if we choose the \textit{\textbf{maximal domain for the complex $\mathcal{P}^*=(L^2(X;E_i),P^*)$ restricted to $U$, which we denote as $\mathcal{P}^*_N(U)$}}, then the elements in the corresponding domain for the Laplacian will satisfy the generalized Neumann boundary conditions for $\mathcal{P}^*$,
\begin{equation}
    s \in \mathcal{D}_{\min}(P), \quad s \in \mathcal{D}_{\max}(P^*), \quad P^*s \in \mathcal{D}_{\min}(P), \quad Ps \in \mathcal{D}_{\max}(P^*).
\end{equation}
which we shall call the \textit{\textbf{generalized Dirichlet boundary conditions}} for the Laplace-type operator of the complex $\mathcal{P}$. We denote the \textit{\textbf{dual complex as $\mathcal{P}_D(U):=\{\mathcal{P}^*_N(U)\}^*$}}. This is the same as the complex $\mathcal{P}$ restricted to $U$ with the minimal domain for $P$.

It is easy to check that for the de Rham complex and the dual complex, the corresponding boundary conditions are intertwined by the Hodge star operator (see Section 9, Chapter 5 of \cite{taylor1996partial}).
The cohomology of $\mathcal{P}_N(U)$ for the de Rham complex corresponds to the absolute cohomology of the manifold with boundary $U$, and is isomorphic to the null space of the Laplace-type operator with the generalized Neumann boundary conditions. 

We now consider the case when $U$ is singular, initially restricting to the case of fundamental neighbourhoods with isolated singularities $C(Z)$ for some smooth link $Z$. In this case, we impose VAPS conditions at the singular point, and can impose boundary conditions as above at the boundary.
We will denote the operator $P$ acting on sections over $U$ as $P_U$.

We define the complex $\mathcal{P}_{N}(U)=(L^2(U;E),P)$ with the domain
\begin{equation}
    \mathcal{D}_{N}(P):= \{ v \in \mathcal{D}_{VAPS}(P_U) : Pv \in L^2(U;E) \},
\end{equation}
where $\mathcal{D}_{VAPS}(P_U)$ are sections which 
satisfy the VAPS conditions. We call this the maximal domain after imposing VAPS conditions.

\begin{remark}[Boundary conditions for singular links]
\label{remark_boundary_conditions_for_singular_links}

In the case of a general fundamental neighbourhood $U_p \subset X$ 
define the maximal domain after imposing VAPS conditions as follows. Consider a total boundary defining function $\rho_X$ on the resolved manifold with corners with iterated fibration structures (but importantly the boundary defining function of $\partial U$ does not appear in $\rho_X$). 
Then we define the domain
\begin{equation}
    \mathcal{D}_{N}(P):= \text{graph closure of } \{ \mathcal{D}_{\max}(P_U) \cap \rho_X^{1/2}L^2(U_p;E) \},
\end{equation}
The operator $P^*$ of the complex $(\mathcal{P}_{N})^*(U_p)=(L^2(X;E),P^*)$ has the adjoint domain, and the domains $\mathcal{D}_{N}(P^*)$ for the operator $P^*$, and its adjoint can be defined similarly.

\end{remark}

\begin{remark}[Product boundary conditions]
\label{remark_introducing_product_boundary_conditions}
By Corollary \ref{Theorem_local_Lefschetz_tangent_cone} for simple isolated fixed points, we can take fundamental neighbourhoods to be a cone with product type metric. If we have an isolated conic singularity with a product type metric, we can take the boundary defining function for a fundamental neighbourhood to be $1-x$, where $x$ is the distance from the singularity.
We can take this to be the boundary defining function for the conic factor and take a similar choice for the $\mathbb{D}^k$ factor. Since the metric on the tangent cone and the model wedge is product type we can impose boundary conditions on each factor of $\mathbb{D}^k$ and $C(Z)$, and take the product complex, and we call these \textit{\textbf{product boundary conditions}}.
\end{remark}

We summarize this as follows. Given a complex $\mathcal{P}=(L^2(X;E),P)$, and a fundamental neighbourhood $U$, we have the following choices of domains. We will denote the operator $P$ acting on sections over $U$ as $P_U$.
    The complex $\mathcal{P}_N(U)=(L^2(U;E),P_U)$ is equipped with the maximal domain for $P_U$ after imposing VAPS conditions. This is the graph closure of $\{ \mathcal{D}_{\max}(P_U) \cap \rho_X^{1/2}L^2(U;E) \}$.
    We denote it by $N$ since this corresponds to the generalized Neumann boundary conditions for the complex $\mathcal{P}$ restricted to $U$. The dual of the above complex is $(\mathcal{P}_N(U))^*$.    
    The complex $\mathcal{P}^*_N(U)=(L^2(U;E),P_U^*)$ has domain for $P_U^*$ is chosen to be the maximal domain after imposing VAPS conditions (i.e. the graph closure of $\{ \mathcal{D}_{\max}(P_U^*) \cap \rho_X^{1/2}L^2(U;E) \}$) 
    and the dual of which is $(\mathcal{P}^*_N(U))^*=\mathcal{P}_D(U)=(L^2(U;E),P_U)$ 
    denoted by $D$ since this corresponds to the generalized Dirichlet boundary conditions for the complex $\mathcal{P}$ restricted to $U$.

It follows that the Laplace-type operators for $\mathcal{P}_N(U)$ and $(\mathcal{P}_N(U))^*$ have the same domain (see \eqref{Laplacian_P_type}) whose sections satisfy the generalized Neumann conditions for $P$, while $\mathcal{P}^*_N(U)$ and $\mathcal{P}_D(U)$ have Laplace-type operators with the same domain. The latter domain has the generalized Dirichlet conditions of the complex $\mathcal{P}(X)$ restricted to $U$, which are the generalized Neumann conditions for the complex $\mathcal{P}^*(X)$ restricted to $U$. However, the gradings of the elements of the Laplace-type operators are reversed for the dual complexes as is apparent from the discussion following \eqref{dual_complex}. This is a crucial point in computing the Lefschetz numbers in light of duality results.
In particular, given a complex $\mathcal{P}(X)$, we know from Proposition \ref{proposition_Lefschetz_on_adjoint}, that we can choose to write the local Lefschetz numbers and the local Lefschetz polynomials for an invertible map $f$ using either the complex or the dual complex. If the map $f$ has a non-expanding fixed point $p$, then there is a fundamental neighbourhood $U_p$ such that $f|_{U_p}$ is a self map of the fundamental neighbourhood, and we can use the local Lefschetz number of the complex $\mathcal{P}_N(U)$. If the map is also strictly attracting, then the support of the pullback of any section on $U$ is supported away from $\partial U$ and the geometric endomorphism preserves the boundary conditions. If the self map is strictly attracting and locally invertible, $(f|_{U_p})^{-1}$ would be expanding and will not be a self map of the fundamental neighbourhood and we cannot compute its global Lefschetz number for the manifold with boundary $U_p$.
If it is non-attracting, and the map is locally invertible, then there is a fundamental neighbourhood such that $f^{-1}$ restricted to it is a self map, and we can use the complex $\mathcal{P}^*_N(U)$ to compute the global Lefschetz numbers for the manifold with boundary. If in addition the map is strictly expanding we cannot use the complex $\mathcal{P}_N(U)$ to compute the local Lefschetz numbers of $\mathcal{P}(X)$. 

The maximal domain of an operator on a manifold with boundary is preserved by a geometric endomorphism of a smooth map $f$ since the pullback will preserve it, and we can apply the results in Section \ref{section_Hilbert_complexes} for these \textit{local} Hilbert complexes. The restricted map $f$ will induce a geometric endomorphism $T_f$ of the complex $\mathcal{P}_N(U)$. In the case of geometric endomorphisms acting on $\mathcal{P}_D$, one has to check whether the minimal domain is preserved, which includes checking that the boundary conditions are preserved. This can be checked easily for group actions and other `nice' maps but we shall avoid this by working with the maximal domains and the generalized Neumann conditions. We summarize this in the following remark.

\begin{remark}
\label{which_boundary_conditions}
Given a complex $\mathcal{P}$ and a self map $f$ associated to a geometric endomorphism $T_f$, with a fixed point $p$, then if $p$ is non-expanding there is a fundamental neighbourhood $U_p$ such that $f(U_p) \subseteq U_p$ and $L(\mathcal{P}_N(U_p),T_f)$ is defined.
If it is non-attracting and the map $f$ is locally invertible at $p$ then there is a fundamental neighbourhood $U_p$ such that $f^{-1}(U_p) \subseteq U_p$ and $L(\mathcal{P}^*_N(U_p),T_{f^{-1}}^{P^*})$ is defined, where by $T_{f^{-1}}^{P^*}$ we denote the geometric endomorphism of the dual complex $\mathcal{P}^*_N(U_p)$ associated to $f^{-1}$.
\end{remark}

We now study how to define suitable domains for twisted spin$^{\mathbb{C}}$ Dirac complexes when there are almost complex structures on the tangent cone.
We build on observations of Epstein in \cite{epstein2006subelliptic,EpsteinSubellipticSpinc2_2007,EpsteinSubellipticSpinc3_2007} where he generalizes the $\overline{\partial}$-Neumann boundary conditions to the case where there is a spin$^{\mathbb{C}}$ structure which is defined in a neighbourhood of the boundary by an almost complex structure.
We refer to \cite{epstein2006subelliptic} for a summary of his 4 main articles on spin$^{\mathbb{C}}$ Dirac operators, referring to the third article \cite{EpsteinSubellipticSpinc3_2007} for more details.

As discussed in Subsection \ref{subsection_Dolbeault_SpinC_introduction}, the complex spinor bundle of a resolution of a stratified space $X$ with an almost complex $\mathcal{S}$ is canonically identified with $\oplus_q \Lambda^{0,q}X$ or more generally with $\oplus_q \Lambda^{p,q}X$ after twisting with $E=\Lambda^{p,0}X$. One can impose the boundary condition $\iota_{{\overline{\partial} r}^\#} u|_{r=0}=0$ where $r$ is a boundary defining function.

Now suppose $X$ is a resolution of a pseudomanifold with a spin$^{\mathbb{C}}$ structure.
Given a compatible almost complex structure in a neighbourhood of a singular point, there is an induced almost complex structure $J_p$ at the tangent cone at the fixed point (realized by freezing coefficients at $p$ and extending homogeneously) which is homogeneous with respect to dilations on the infinite cone.
%If $J_p$ is integrable then we are in the setting of the holomorphic Lefschetz fixed point theorem locally. However, we can define local Lefschetz numbers for the spin$^\mathbb{C}$ Dirac complex even when $J_p$ is not integrable, and more generally when there is some almost complex structure $J$ near the boundary of the truncated tangent cone that is compatible with the spin$^\mathbb{C}$ structure.

Let $\widehat{X}$ be a stratified pseudomanifold of dimension $2n$, and let $\mathcal{R}(X)=(L^2(X;\mathcal{S} \otimes E), D)$ be the twisted spin$^{\mathbb{C}}$ Dirac complex (see Definition \ref{equation_two_term_complex}) for a twisted spin$^{\mathbb{C}}$ Dirac operator. Let $f$ be a self map preserving the spin$^{\mathbb{C}}$ structure, with isolated fixed points, and inducing a geometric endomorphism $T_f$ on the complex.

Let $U=U_a$ be the truncated tangent cone of a non-expanding fixed point $a$ of $f$, where the spin$^{\mathbb{C}}$ structure is defined by an almost complex structure near the boundary. Then we can identify $L^2(U;\mathcal{S} \otimes E)$ with $L^2\Omega^{0,\cdot}(U;E)$ as discussed above. 
We define the complex $\mathcal{R}_N(U_a)=(L^2\Omega^{0,\cdot}(U;E), D_U)$ with domain 
\begin{equation}
\label{equation_domain_for_spin_c_123}
    \mathcal{D}_{N}(D):= \text{graph closure of } \{ \mathcal{D}_{\max}(D_U) \cap \rho_X^{1/2}L^2(U_p;E) : \iota_{{\overline{\partial} x}^\#} u|_{x=1}=0  \}.
\end{equation}
It is easy to see that in the case where the almost complex structure is integrable, this matches up with the domain for the Dolbeault-Dirac type operator (see equation \ref{equation_generalized_Neumann_Dirac_conditions_ultima}).
For more general fixed points with a product decomposition as in Proposition \ref{proposition_local_product_Lefschetz_heat}, we can use the product complex $\mathcal{R}_B(U_a)$, where sections in the domain of the Dirac operator of $\mathcal{R}^*_N(U_a)$ satisfy the boundary condition $(\overline{\partial} x) \wedge u|_{x=1}=0$. However most geometric endomorphisms of spin$^{\mathbb{C}}$ Dirac complexes that are studied widely arise from isometries, in which case 
$\mathcal{R}_B(U_a)=\mathcal{R}_N(U_a)$.

\subsubsection{Functional calculus on truncated cones with boundary conditions}
\label{subsubsection_spectrum_cone}

Now we build machinery to imitate the proof of the McKean-Singer theorem for the truncated tangent cone $U_p=C(M)$, with boundary given by the link at $x=1$. We first do this for a self map $f$ with a simple isolated fixed point $p$ at the cone point, for an elliptic complex $\mathcal{P}_B(U_p)$, where we cover the cases of de Rham and Dolbeault complexes with twisted coefficients, with the domains $B=N, D$ or the dual complexes as explained in the previous subsection, then proceed to prove this for the spin$^{\mathbb{C}}$ Dirac complex.

As we observed in the earlier section, the $\overline{\partial}$ operator with the maximal domain is not a Fredholm complex and the corresponding Laplace-type operator with the $\overline{\partial}$ boundary conditions is not a Fredholm operator.
Non-Fredholm complexes do not have trace class heat kernels and we must show that the Lefschetz heat supertraces still make sense. The key to this is to understand the spectrum of the Laplace-type operator.

We will show that there is an orthonormal basis of eigensections of the Laplace-type operator in each degree. Moreover, we will show that there is a filtration of the eigenspaces of the Laplace-type operator on the cone, given by the eigenvalues of a Laplace-type operator on the link. We will use this to evaluate the Lefschetz heat supertrace in a renormalized sense.
In \cite{cheeger1983spectral}, Cheeger used functional calculus on infinite cones to understand the model heat kernel and we borrow techniques, ideas and results from that article.

\begin{proposition}
\label{Proposition_spectral_properties}
Let $U_p=C_x(M)$ be the truncated metric cone with link $M$, with $x \leq 1$, with an elliptic complex $\mathcal{P}_N(U_p)$, which is either a de Rham or Dolbeault complex restricted to $U_p$, or the corresponding dual complex $\mathcal{P}^*_N(U_p)$.

Let $\Delta=(P+P^*)^2$ be the associated Laplace-type operator on $C(M)$ and let $\Delta_M$ be the associated Laplace-type operator on $M$. 
Then there exists an orthonormal basis of eigensections $\psi_{q,i,j}$ of $\Delta$ such that each eigensection factors into a product of a function of the radial variable $x$ and an eigensection of $\Delta_M$, where $q$ is the degree of the section in the elliptic complex, $i$ indexes the eigenvalues $\mu_i$ of the eigensections with respect to the operator $\Delta_M$, and $j \in \mathbb{N}$.

Moreover, these are eigensections of the operator $\Delta^q + \Delta_M^q$ on the same domain as $\Delta^q$. 
The operator $\Delta^q + \Delta_M^q$ has discrete spectrum, 
with eigenvalues $\lambda_{(q,i,j)}^2 + \mu_i$, where $\lambda_{(q,i,j)}^2$ are eigenvalues of the operator $\Delta^q$. Moreover we have that for any fixed $(q,i)$ the sum $\sum_{j} \lambda^{-4}_{(q,i,j)}$ is finite, and that for each co-exact eigensection $\psi_{q,i,j}$ with positive eigenvalue $\lambda_{(q,i,j)}$ for $\Delta$, $\frac{1}{\lambda_{(q,i,j)}} P\psi_{q,i,j}$ is an exact eigensection with the same eigenvalues for $\Delta^{q}$ and $\Delta^{q}_M$.
\end{proposition}

We observe that this result strengthens Lemma \ref{Lemma_super_symmetry} restricted to the tangent cone.

\begin{proof}
\textit{\textbf{Outline:}}
The proof has two main steps. In the first main step we show how to compute the eigensections of the Laplace type operators of the complexes on the infinite tangent cone over $M$ following \cite[\S 3]{cheeger1983spectral}. In that article Cheeger studied the eigensections of the Hodge Laplacian on an exact (i.e., infinite) cone, obtaining a description of the spectrum that was rich enough for many applications in geometry and index theory, in particular for constructing the model heat kernel in \eqref{heat_kernel_exact_cone}.

The eigensections of the Hodge Laplacian with positive eigenvalues are of 6 types which he calls types $1, 2, 3, 4, E$ and $O$. Cheeger observes that these come in what we will call \textit{\textbf{supersymmetric pairs}}, by which we mean that given an eigensection $\psi_{q, i, j}$ which is a form of degree $q$ with eigenvalue $\lambda^2_{q,i,j}$ for $\Delta^{q}$ and eigenvalue $\mu_i$ for $\Delta_M$ and of one of the types $1, 4, E$, then $\frac{1}{\lambda_{(q,i,j)}} P\psi_{q,i,j}$ is an exact eigensection with the same eigenvalues for $\Delta^{q}$ and $\Delta^{q}_M$ which is of type $2, 3, O$. The Hodge star operator also interchanges types $1$ and $3$, $2$ and $4$, $E$ and 
$O$.
Similarly Cheeger works out the harmonic eigenforms in equations $(3.24-3.27)$ of \cite{cheeger1983spectral}.

We will go over the methods that Cheeger uses and show that the eigensections for the Laplace-type operator corresponding to the Dolbeault complex on the exact cone can be worked out similarly (in the K\"ahler case it is well known that the two Laplace-type operators differ by a constant factor). Roughly, we use a separation of variables ansatz, assuming that the eigensections are sums of products of eigensections of $\Delta_M$ and eigensections of Bessel type Sturm-Liouville problems on the infinite cone.

In Cheeger's work, he uses the functional calculus on the infinite cone with absolutely continuous spectrum for the Laplace-type operator. The boundary conditions we use ``quantize" the spectrum, giving discrete spectrum involving only a subset of the eigenfunctions on the infinite cone derived by Cheeger.

In the second main step we will show that the generalized Neumann boundary conditions and the VAPS condition for the complexes correspond to singular Sturm-Liouville boundary conditions for the equation obtained by separation of variables for functions on the $(0,1]$ factor of the cone. Then Sturm-Liouville theory shows that there is a complete, orthogonal basis of eigensections corresponding to simple eigenvalues $\lambda_{j}^2$ indexed by $j$ for $L^2[0,1]$ with these boundary conditions. Since the Laplacian on the compact link has domain with VAPS conditions, we have that there is a complete countable orthogonal basis of eigensections of $\Delta^q_M$ indexed by $j$. Then as in our ansatz, the eigensections $\psi_{q,i,j}$ of $\Delta^q$ on $C(M)$ are sums of products of the eigensections on $[0,1]$ and the link $M$.
These eigensections give the entire spectrum for the self adjoint Laplace-type operators $\Delta^q$ (see for instance Lemma 1.2.2 of \cite{davies1996spectral}). 

This will show that there is a filtration of each eigenspace of $\Delta^q$, the Laplace-type operator of the elliptic complex at degree $q$, given by the eigenspaces of $\Delta_M^q$. Each eigenvalue $\mu_i$ of $\Delta_M^q$ can be associated to countably many simple eigenvalues $\{\lambda^2_{q,i,j}\}_j$, where the eigenvalues grow to $\infty$ for a fixed $\mu_i$ by Sturm-Liouville theory.
Therefore the eigenvalues of $\Delta^q + \Delta_M^q$ are $\lambda_{q,\mu,j}^2 + \mu$, and the operator has discrete spectrum.

The work in Section 2.4.2 of \cite{al2008sturm} shows that $\sum_j \lambda_{q,i,j}^{-4}$ is finite (the reciprocals of the eigenvalues $\lambda_j^2$ are $\l^2$ summable) for any given $q,i$. 
Finally, normalizing the eigensections as in Lemma \ref{Lemma_super_symmetry}, and using the correspondence of supersymmetric pairs for each complex, we see that the last statement of the proposition holds.

\textit{\textbf{Step 1: Eigensections on the infinite cone.}}
We will show that for both the de Rham and Dolbeault complexes there is a decomposition of the sections on the Hilbert complex into 6 main types, which following Cheeger \cite[\S 3]{cheeger1983spectral} we call types $1, 2, 3, 4, E$ and $O$. 

\textit{\textbf{Step 1.1: Hodge Laplacian.}}
We will first study the case of the de Rham complex on the truncated cone. The de Rham complex has a natural splitting near the boundary, given by 
\begin{equation}
\label{deRham_form_decomposition}
    \Omega^k(C(M))=\Omega^k(M) \oplus dx \wedge \Omega^{k-1} (M)
\end{equation}
with respect to which the operator $d+\delta$ is given by
\begin{equation}
 d+\delta=
  \begin{bmatrix}
   d_M+\frac{1}{x^2}\delta_M &
   -\frac{1}{x}(l+2-2k) -\partial_x\\
   \partial_x &
   -d_M-\frac{1}{x^2}\delta_M 
   \end{bmatrix}
\end{equation}
and this can be used to compute the Hodge Laplacian. Since the forms in $\Omega^k(M)$ satisfy the strong Kodaira decomposition, we use this to decompose the forms further into 6 different types following \cite[\S 3]{cheeger1983spectral}.

Let $\phi^k(x,m)$ be a $k$ form such that for each $x_0 \leq 1$, $\phi^k(x_0,m)$ restricted to the link $M$ at $x=x_0$ is coexact. We denote
$\alpha(k)=1/2(1+2k-l)$, where $l$ is the dimension of $M$.
Then the $k$ forms 
\begin{equation}
\label{Cheegers_1_type}
    x^{\alpha(k)}\phi^k,
\end{equation}
\begin{equation}
\label{Cheegers_2_type}
    x^{\alpha(k-1)}d_M\phi^{k-1}+dx \wedge (x^{\alpha(k-1)}\phi^{k-1})',
\end{equation}
\begin{equation}
\label{Cheegers_3_type}
    x^{2\alpha(k-1)+1}(x^{-\alpha(k-1)}d_M\phi^{k-1})'+x^{\alpha(k-1)-1}dx \wedge \delta_M d_M \phi^k,
\end{equation}
\begin{equation}
\label{Cheegers_4_type}
    x^{\alpha(k-2)+1}dx \wedge d_M \phi^{k-2}
\end{equation}
\begin{equation}
\label{Cheegers_E_type}
    x^{\alpha(k)}h^k
\end{equation}
\begin{equation}
\label{Cheegers_O_type}
    dx \wedge (x^{\alpha(k-1)}h^{k-1})'
\end{equation}
are called forms of the type $1, 2, 3, 4, E$ and $O$ respectively, where the primed notation indicates differentiation with respect to $x$. In the latter two types, $h^k(x,m)$ are $k$ forms which are in the null space of $\Delta_M$ restricted to the link $M$ at $x=x_0$ for each $x_0 \leq 1$.

Cheeger observes that eigenforms of types $1, 3$ are coexact while forms of types $2, 4$ are exact. Moreover, these are \textit{\textbf{supersymmetric pairs}} in the sense that $P(=d)$ carries types $1, 3$ to types $2, 4$ while $P^*(=\delta)$ does the reverse. Similarly, $P$ takes type $O$ forms to those of type $E$ and $P^*$ does the reverse.
Cheeger then shows that the eigenvalues of the eigenforms of these types completely describe the spectrum of the Hodge Laplacian on the infinite cone. 
We will briefly describe why this is the case in Remark \ref{Remark_why_SUSY_gives_ONB}.

\textit{\textbf{Step 1.1.1: Detailed study on forms of types $1,4$}}.
We denote the Hodge-Laplacian restricted to forms of types $1$ and $4$ as $\Delta_1$ and $\Delta_4$ and study this in detail.
On type $1$ forms, which are in the $\Omega^k(M)$ summand of the above decomposition for the de Rham complex, the Hodge Laplacian is 
\begin{equation} 
\label{equation_Model_laplacian_type_1}
    \Delta_1 = -\partial_x^2 - \frac{l-2k}{x} \partial_x +\frac{1}{x^2} \Delta_M
\end{equation}
and on type $4$ forms which are in the $dx \wedge \Omega^{k-1} (M)$ summand (up to shifts in degree), it is 
\begin{equation} 
\label{equation_Model_laplacian_type_4}
    \Delta_4 = -\partial_x^2 - \frac{l+2-2k}{x} \partial_x +\frac{1}{x^2} \Delta_M+\frac{l+2-2k}{x^2}.
\end{equation}
If we instead look at the rescaled Laplacian acting on sections of the rescaled sub-bundle $dx \wedge x^{-A} \Omega^{k-1}(M)$ where $A=l+2-2k$, we get
\begin{equation}
    \Delta'_4 = -\partial_x^2 + \frac{l+2-2k}{x} \partial_x +\frac{1}{x^2} \Delta_M.
\end{equation}
In order to treat these two cases uniformly, we study the Laplace-type operator
\begin{equation} 
\label{equation_Model_laplacian}
    \Delta = -\partial_x^2 - \frac{A}{x} \partial_x +\frac{1}{x^2} \Delta_M
\end{equation}
where $A$ is a constant depending on the degree $k$ of the form and $l$.
We will use the standard ansatz for separation of variables, and look for Laplace eigensections (with positive eigenvalues $\lambda^2$) on $C(M)$ in the form $\psi_{\lambda^2}=f(x)x^ag_\mu(m)$ 
where $f$ is a function on the interval $[0,1]$ and $g_\mu$ is an eigensection of $\Delta_M$ on $C(M)$, with eigenvalue $\mu$. 

We first observe that for $\mu>0$, $x^ag_\mu(m)$ is in the null space of $\Delta$ for a unique choice of positive real value $a$. Indeed expanding $\Delta x^ag_\mu(m)=0$, we get
\begin{equation}
\label{equation_harmonic_power}
    {a(a-1)}+{aA}-{\mu}=0,
\end{equation}
and it is easy to see that if $\mu>0$, then there is exactly one positive root $((1-A)+\sqrt{(A-1)^2+4\mu})/2$ for $a$. We note in particular that as $\mu$ grows, this root grows of order $\sqrt{\mu}$. We choose this positive root $a$ since for that value the ansatz yields a standard Sturm-Liouville problem for the function $f$. Indeed solving $\Delta f(x)x^ag_\mu(m) =\lambda^2 f(x)x^ag_\mu(m)$ reduces to solving the equation
\begin{equation}
\label{equation_Gota}
   L[f]=(\partial_x^2 f + \frac{(2a+A)}{x}\partial_x f + \lambda^2 f)=0.
\end{equation}
If we set $f(x)=(\lambda x)^{\nu}F(\lambda x)$, one can check that 
\begin{equation}
    L[(\lambda x)^{\nu}F(\lambda x)]=\lambda^2R^{\nu}\Big( \partial_R^2 F+\frac{2\nu+2a+A}{R}\partial_R F+ \frac{(\nu^2-\nu)+\nu(2a+A)+R^2}{R^2}F(R) \Big)
\end{equation}
where $R= \lambda x$. If we set $\nu= \frac{(-2a-A+1)}{2}$, it is easy to check this reduces equation \eqref{equation_Gota} to the well known classical form of Bessel's equation
\begin{equation}
    R^2 \partial_R^2(F(R)) +R \partial_R F(R) +(R^2-\nu^2) F(R)=0.
\end{equation}
One can check that in the smooth case, this corresponds to the change of variables computations on page 117 of \cite{folland1972tangential}.
For non-integer values of $\nu$, eigenfunctions satisfying the ansatz are spanned by
\begin{equation}
\label{tempura}
    cx^{\frac{-A+1}{2}} J_\nu(\lambda x) g_\mu(m), \hspace{5mm} cx^{\frac{-A+1}{2}} Y_\nu(\lambda x) g_\mu(m)
\end{equation}
where, if they are in $L^2$ (with respect to the conic volume measure), we can choose $c$ so that the $L^2$ norm of each eigensection is 1. Here $J_\nu$ and $Y_{\nu}$ are Bessel functions of the first and second kind. For non-integer values of $\nu$, $Y_\nu$ is a linear combination of $J_{\nu}, J_{-\nu}$ and it suffices to study the eigensections with Bessel functions of the first kind with positive and negative values of $\nu$ since they form a basis for the eigenvalue problem before imposing boundary conditions or integrability. This is the approach followed by Cheeger and he obtains the first kind of eigensection in \eqref{tempura} in equations (3.15) and (3.18) of \cite{cheeger1983spectral}. 
For integer $\nu$ (half integer in Cheeger's conventions for $\nu$), the Bessel functions $J_{\nu}, J_{-\nu}$ are not linearly independent and logarithmic solutions (as in \cite{cheeger1983spectral}) replace the negative ${\nu}$ Bessel functions (c.f. the discussion in Section 5.3 of \cite{al2008sturm}).
However the eigensections corresponding to negative $\nu$ (or logarithmic solutions in the integer case) are not $L^2$ bounded, even when multiplied by a cutoff function supported away from $\infty$ (see page 588 of \cite{cheeger1983spectral}).
Thus we recover Cheeger's eigensections of types $1, 4$ which we denote as
\begin{equation}
    \psi_{1,k-1,\mu}=x^{\alpha(k-1)}J_{\nu}(\lambda x) \phi_{\mu}^{k-1} \quad \psi_{4,k,\mu}=(x^{\alpha(k-1)-1}J_{\nu}(\lambda x)) dx \wedge d_M \phi_{\mu}^{k-1}
\end{equation}
respectively.

\textit{\textbf{Step 1.1.2: General case of the Hodge Laplacian.}}
The Hodge Laplacian restricted to forms of types $2, 3$ have off-diagonal terms corresponding to the decomposition \eqref{deRham_form_decomposition}. Let us split the de Rham operator as $d=P_1+P_2$ where $P_1=dx \wedge \partial_{x}$ and $P_2=d_M$. Similarly we write $\delta=L_1+L_2$ where $L_2=\frac{1}{x^2}\delta_M$ determines the splitting.
A computation similar to that above can verify that the eigensections are of the form
\begin{multline}  
\label{Cheegers_eigenfunctions_of_type_2}
    x^{\alpha(k-1)}J_{\nu}(\lambda x) d\phi_{\mu}^{k-1}+(x^{\alpha(k-1)}J_{\nu}(\lambda x))' dx \wedge \phi_{\mu}^{k-1}=P_2(\psi_{1,k-1,\mu})+P_1(\psi_{1,k-1,\mu})=P(\psi_{1,k-1,\mu})
\end{multline}
for forms of type $2$ and
\begin{multline}  
\label{Cheeger_3_type_Dolbeault}
    x^{2\alpha(k-1)+1}(x^{-\alpha(k-1)}J_{\nu}(\lambda x))' d\phi_{\mu}^{k-1}+(x^{\alpha(k-1)-1}J_{\nu}(\lambda x)) dx \wedge \delta_M d_M \phi_{\mu}^{k-1}\\=L_1(\psi_{4,k,\mu})+L_2(\psi_{4,k,\mu})=P^*(\psi_{4,k,\mu})
\end{multline}
for forms of type $3$ where $\phi^k_{\mu}$ are co-exact eigenforms of $\Delta_M$ of degree $k$, and the values of $\nu$ are given \cite[\S 3]{cheeger1983spectral}. Cheeger obtains these as well as the eigenforms for types $E, O$ (see page 587 of \cite{cheeger1983spectral}).
For the de Rham complex, Cheeger explains that the supersymmetric pairs of the eigensections of types $1, 4$ are those of types $2, 3$ and types $E$ and $O$ are supersymmetric pairs as well. Thus one can compute the eigensections of types $1,4,E,O$ and use the supersymmetric pairing to get the eigensections of types $2,4$.
\begin{remark}
\label{Remark_why_SUSY_gives_ONB}
We briefly explain why these eigensections give an orthonormal basis of eigensections on $M$ when restricted to $\{x=1\}$. Given $\phi_k$, a coexact eigensection of degree $k$, we observe (given explicitly above) that it appears in eigenforms of type $1$ without a $dx$ factor, and in eigenforms of types $2,3$ with a $dx$ factor. Similarly, the exact eigensection $d\phi_k$ appears in forms of type $4$ with a $dx$ factor, and without a $dx$ factor in types $2,3$. These four types therefore give a linearly independent basis for $\phi_k$, $d\phi_k$, in both summands of the decomposition \eqref{deRham_form_decomposition}.
For instance, exact tangential forms for the de Rham (Dolbeault) complex can be written as linear combinations of forms of types $2,3$, as is clear from \eqref{Cheegers_2_type}, \eqref{Cheegers_2_type}.
The harmonic forms appear with and without $dx$ factors in types $O, E$. 
\end{remark}

\textit{\textbf{Step 1.2: Dolbeault Laplacian.}}
For the Dolbeault complex, the splitting in equation \eqref{deRham_form_decomposition} is replaced by 
\begin{equation}
\label{Dolbeault_form_decomposition_1}
    \Omega^{0,q}(C(M);E)=\Omega^{0,q}(M;E) \oplus (dx-\sqrt{-1}xJ(dx)) \wedge \Omega^{0,q-1} (M;E)
\end{equation}
where the forms in $\Omega^{0,q}(M;E)$ satisfy $\iota_{(xJ(dx))^{\sharp}} u=0$. Note that $dx-\sqrt{-1}xJ(dx)=\overline{\partial}x$ plays an analogous role to that of $dx$ in the case of the de Rham complex. We have that $\sigma(\overline{\partial}^*)(dx)u=\iota_{(dx-\sqrt{-1}xJ(dx))^{\sharp}} u$.

We define $P_1=(dx-\sqrt{-1}xJ(dx)) \wedge (\partial_x + \sqrt{-1}\frac{1}{x}\partial_\xi)$, which determines the decomposition $P=P_1+P_2$ of the operator $P=\overline{\partial}$ and we can write
\begin{equation}
    \Delta=P_1^*P_1+P_1P_1^*+P_2^*P_2+P_2P_2^* 
\end{equation}
where $\Delta_{M}'= x^2(P_2^*P_2+P_2P_2^*)$ is an operator on sections on $M$, for fixed values of $x$.
Let $\phi^q(x,m)$ be a $q$ form of the Dolbeault complex such that for each $x_0 \leq 1$, $\phi^q(x_0,m)$ restricted to the link $M$ at $x=x_0$ is can be written as 
%$(-\iota_{\frac{1}{x} \partial_\xi}  \frac{1}{x}\partial_\xi  +P^*_2)\psi^{q+1}$ for some form $\psi^{q+1}$.
$P^*_2 \psi^{q+1}$ for some form $\psi^{q+1}$. Indeed any form $\phi^{q}$ on $C(M)$ that is an eigensection of $\Delta_M$ with non-zero eigenvalue $\mu$ (in particular satisying $\partial_x \psi^{q+1}=0$) can be written as $P_2 \psi_2 + P_2^* \psi_1$ using the supersymmetry of the transversally elliptic operator $\Delta_{M}'$. This corresponds to a transverse Dolbeault complex on the space $M/ \mathcal{F}_{\xi}$ where $\xi=J(\partial_x)$ is the Reeb vector field and $\mathcal{F}_{\xi}$ is the foliation generated by $\xi$.

Then the forms of types $2$ are those of the form 
\begin{equation}
    x^{\alpha(q-1)} P_2\phi^{q-1} + P_1(x^{\alpha(q-1)}\phi^{q-1}),
\end{equation}
while those of type $4$ are those of the form
\begin{equation}
    x^{\alpha(q-2)+1} (dx-\sqrt{-1}xJ(dx)) \wedge P_2\phi^{q-2}
\end{equation}
where the forms $\phi$ are co-exact (with respect to $P=\overline{{\partial}}$).
Forms of types $2, 4$ are exact with respect to $P=\overline{\partial}$, and the forms of types $1$ and type $3$ are generated by applying $P^*$ to the forms of types $2$ and $4$, respectively.

\begin{remark}
We observe that forms of type $1$ are have the structure $x^{\alpha(q)}\phi^q$ which shows that $\overline{\partial}$ maps forms of type $1$ to those of type $2$ when $\partial_x \phi^q =0$. To see that $\overline{\partial}^*$ maps those of type $2$ to type $1$ when $\partial_x \phi^q =0$, we observe that
\begin{equation}
    P_1^*x(^{\alpha(q-1)} P_2\phi^{q-1})=0, \hspace{2mm} P_2^*x(^{\alpha(q-1)} P_2\phi^{q-1})= P_2^* P_2(x^{\alpha(q-1)}\phi^{q-1}), \hspace{2mm} P^*_2P_1(x^{\alpha(q-1)}\phi^{q-1})=0
\end{equation}
where the last equality follows since by the assumption that $\phi^{q-1}$ can be realized as $P^*_2 \psi^{q+1}$ for some form $\psi^{q+1}$ and since $(P^*_2)^2=0$. We then note that the Laplace type operator on such forms $\phi^q$ is given by the sum of $P^*_1P_1(x^{\alpha(q-1)}\phi^{q-1})$ and $P_2^* P_2(x^{\alpha(q-1)}\phi^{q-1})$. 
A similar computation shows that the forms of types $4,3$ are supersymmetric pairs.
\end{remark}

The forms of types $E, O$ are
\begin{equation}
    x^{\alpha(q)}h^q
\end{equation}
and 
\begin{equation}
    P_1 (x^{\alpha(q-1)}h^{q-1})
\end{equation}
respectively where $h^{q-1}$ are harmonic with respect to the operator $\Delta_M$. With this we put the decomposition of forms into the same formalism that we used for the Hodge Laplacian, and it is easy to see that the eigensections can be constructed analgously, yielding supersymmetric pairs $1,4$ with $2,3$ and $E$ with $O$.

\begin{remark}[Harmonic sections]
\label{Remark_growth_of_harmonic_sections}
In pages 587-588 of \cite{cheeger1983spectral}, Cheeger computes explicitly the harmonic forms of $\Delta$ on the infinite cone. Aside from those that are harmonic with respect to $\Delta_M$ (which are finite since $\Delta_M$ is a Fredholm operator on $M$), these are of the four types $1,2,3,4$. For each eigensection of $\Delta_M$ on $M$ that is co-closed, there are 2 harmonic sections of each of these four types (corresponding to $a_j^{\pm}$ in Cheeger's notation). Cheeger observes that some of these may coalesce when the forms are $\Delta_M$ harmonic.

The harmonic forms of type $1$ are $x^{a_j^{\pm}(k)}\phi_j^k$ where $\phi_j^k$ are co-closed, $a_j^{\pm}(k)=\alpha(k) \pm \nu_j(k)$ and Cheeger observes that these forms are not in $L^2$ on the infinite cone. However it is easy to see that the harmonic forms corresponding to $a_j^{+}(k)$ are precisely those of the form $x^{a_i}g_{\mu_i}$ that we considered in our ansatz from which we computed that $a_i$ must satisfy \eqref{equation_harmonic_power} for $x^{a_i}g_{\mu_i}$  to be harmonic. For $L^2$ boundedness on the truncated cone, for large enough $\mu_i$ we need the positive root of the quadratic equation $((1-A)+\sqrt{(A-1)^2+4\mu_i})/2$ for $a_i$ (which corresponds to $a_j^{+}(k)$ in Cheeger's notation). Weyl's law for the eigenvalues of $\Delta_M$ on $M$ shows that $\mu_i$ goes to $\infty$ as $i$ goes to $\infty$.
It is easy to see that as $i$ goes to $\infty$, $a_i$ grows of order $\sqrt{\mu_i}$.

Similarly, by analyzing the harmonic forms of the other three types studied by Cheeger, we can check that the harmonic forms have factors of $x^a_i$ for eigenvalues $\mu_i$ of $\Delta_M$ whose exponents grow of order $\sqrt{\mu_i}$ as $i$ goes to $\infty$.
\end{remark}

\textit{\textbf{Step 2: Singular Sturm-Liouville boundary conditions.}}
We now show that for the ODE's obtained by restricting to an eigenspace of $\Delta_M$, the generalized boundary conditions  at $x=1$ and the VAPS conditions at $x=0$ will reduce to singular Sturm-Liouville boundary conditions of the type studied in \cite[\S 2.4.3]{al2008sturm}.
The differential equations of Sturm-Liouville type that we consider are of the form
\begin{equation}
\label{equation_Sturm_Liouville_singular_equation}
    (pu')'+ru+\lambda \rho u=0
\end{equation}
on an interval $(0,1]$, where $p$ is smooth and vanishes only at $x=0$, $r$ is continuous, and $\rho$ is positive and continuous on $(0,1]$. At $x=1$, we demand the boundary condition
\begin{equation}
\label{equation_Sturm_Liouville_boundary_condition_main}
    \gamma_1 u(1) + \gamma_2 u'(1)=0
\end{equation}
for some constants $\gamma_1, \gamma_2$, and we demand that the function $u(x)$ has a well defined limit at $x=0$.

Equation \ref{equation_Gota}, up to a multiplication of the operator $L$ by $p(x)=\rho(x)=x^{2a+A}$ is of the form \eqref{equation_Sturm_Liouville_singular_equation}.
Indeed it is shown that the classical Bessel equation is of this form in \cite[\S 2.4.3]{al2008sturm}, and 
as discussed in the outline each such Sturm-Liouville problem will give eigenspaces that are one dimensional, mutually orthogonal and together span $L^2[0,1]$ with respect to the measure $x^ldx$ where $l$ is the dimension of the link in our setting.

The discussion following \eqref{tempura} shows that the $L^2$ condition means that only the eigensections with Bessel functions of the first kind with positive values of $\nu$ will be $L^2$ bounded on $[0,1]$. However all these eigensections have a well defined limit as $x$ goes to $0$ and thus are in the VAPS domain. It is easy to see that the singular Sturm-Liouville boundary condition at $x=0$ for a Bessel problem will only admit solutions that are linear combinations of eigensections with Bessel functions $J_{\nu}$ of the first kind with positive values of $\nu$.
Thus we will focus on the boundary conditions at $x=1$. Since our boundary defining function is $r=1-x$, we can take $-dx$ instead of $dr$ in the boundary conditions. We begin with the de Rham complex.

\textit{\textbf{Step 2.1: Boundary conditions for the Hodge Laplacian.}} 
The $0$-th order boundary condition of the generalized Neumann boundary conditions is
\begin{equation}
    \sigma(P^*)(dr)u|_{\partial U}=0, 
\end{equation}
 which is equivalent to $\iota_{\partial x} u|_{\{x=1\}}=0$, and
is trivially satisfied on forms of type $1$ since they do not have a factor of the form $dx$.  
On type $4$ forms which do have the components $dx$, this is a Dirichlet type condition $\psi_{\lambda^2}(1,m)=0$ which is satisfied when $\lambda$ is a zero of the Bessel function $J_\nu$. This is therefore a Dirichlet type condition and is of the form in \eqref{equation_Sturm_Liouville_boundary_condition_main} where $\gamma_1=1$, $\gamma_2=0$.

We now consider the $1$-st order boundary condition
\begin{equation}
    \sigma(P^*)(dr)Pu|_{\partial U}=0.
\end{equation}
On forms of type $4$, the first order boundary condition is satisfied when $\lambda$ is a zero of the Bessel function, which was the condition for the zero-th order boundary condition as well. Thus they are compatible and not over-determined. On forms of type $1$, the zero-th order condition was satisfied trivially, but one can see that the 1st order boundary condition reduces to the form
\begin{equation}
\label{boundary_conditions_for_type_1}
    \partial_x \psi_{\lambda}(x,m)=\partial_x (x^{\alpha(k)}J_{\nu}(\lambda x))|_{x=1}=0
\end{equation}
which is a Robin type boundary condition of the form in \eqref{equation_Sturm_Liouville_boundary_condition_main}.

The Sturm-Liouville boundary conditions and the ODE for type $E$ are the same as those for type $1$ and those for type $O$ are the same as those for type $4$. This can also be seen using Cheeger's remarks on page 590 of \cite{cheeger1983spectral}.

Let us consider the boundary conditions at $x=1$ for the eigensections of type $2$ given in equation \eqref{Cheegers_eigenfunctions_of_type_2},
\begin{equation}    
     x^{\alpha(k-1)}J_{\nu}(\lambda x) d\phi_{\mu}^{k-1}+(x^{\alpha(k-1)}J_{\nu}(\lambda x))' dx \wedge \phi_{\mu}^{k-1}=P_2(\psi_{1,k-1,\mu})+P_1(\psi_{1,k-1,\mu})=P(\psi_{1,k-1,\mu}).
\end{equation} 
The zeroth order boundary condition is trivial on the first term, $P_2(\psi_{1,k-1,\mu})$. From the zeroth order condition on the second term we see that we need $(x^{\alpha(k-1)}J_{\nu}(\lambda x))'|_{x=1}=0$. It is easy to see that the first order boundary condition amounts to the same condition and that it is of the form of \eqref{equation_Sturm_Liouville_boundary_condition_main}.
We observe that the eigensections \eqref{Cheegers_eigenfunctions_of_type_2} of type $2$ are exactly those obtained by taking $P=d$ of the eigensections of type $1$. In fact the boundary conditions of the eigensections of type $1$ in equation \eqref{boundary_conditions_for_type_1} are exactly the same boundary conditions that we obtained above for the type $2$ eigensections. This shows that the eigensections of types $1$ and $2$ satisfying the boundary conditions are supersymmetric pairs. Thus for fixed $\phi^{k}_\mu$ eigensections of $\Delta_M$, given an orthonormal basis of eigensections for the Sturm-Liouville operator on the interval for type $1$ sections, there is a corresponding orthogonal basis of type $2$ eigensections for the corresponding Sturm-Liouville operator on the interval, given by the supersymmetric pairing.
Thus, as explained in the outline above, the separation of variables ansatz gives an orthonormal basis of eigensections of the Laplace type operator $\Delta$ for $L^2$ bounded forms in the domain.
The case of eigensections of type $3$ can be handled similarly by observing that they are obtained by applying $P^*=\delta$ to the eigensections of type $4$.

We observed above that the eigensections of types $1, 3$ go to those of types $2, 4$ under $d$ and conversely by $\delta$, even once the boundary conditions are imposed.
The eigensections of types $E, O$ on $C(M)$ that satisfy the boundary conditions are supersymmetric pairs as well and can be verified more easily since the forms $h^k$ are harmonic.

\textit{\textbf{Step 2.2: Boundary conditions for the Dolbeault Laplacian}}

This is similar to the de Rham case. Let us consider the boundary conditions on the forms of types $2, 4$. We first consider the $0$-th order boundary condition at $x=1$. For forms of type $4$, this corresponds to a Dirichlet type boundary condition, similar to the case of the de Rham complex for forms of type $4$. For forms of type $2$, the zero-th order condition corresponds to the vanishing of $P_1(x^{\alpha(q-1)}\phi^{q-1})$ at $x=1$. 

The first order boundary condition on forms of type $2$ corresponds to the vanishing of $P_1(x^{\alpha(q-1)}P_2\phi^{q-1})$ at $x=1$. Observe that $P_2$ annihilates radial functions and $P_1$ commutes with $P_2$. Thus by the separation of variables ansatz, the boundary condition is is equivalent to the vanishing of $P_1(x^{\alpha(q-1)}\phi^{q-1})$ at $x=1$. This is a Robin condition, similar to the case of the de Rham complex for forms of type $4$.
It is easy to see that for all forms of the 6 types, we have Sturm-Liouville problems similar to the case of the de Rham complex, and are supersymmetric pairs.
This generalizes the arguments in the smooth case, studied in Section V of \cite{folland1972tangential} where the supersymmetric pairs are implicit in the computations.
\end{proof}

\begin{remark}[Witten deformed versions]
In \cite[\S 4]{ludwig2017index}, the eigensections for the Witten deformed de Rham complex on cones are computed using similar techniques, augmented with the deformation. Those computations are on the infinite cone. Using a separation of variables ansatz for the eigenfunction equation of the Witten deformed Laplace type operator reduces it to the direct sum of infinitely many Laguerre type Sturm-Liouville equations (as opposed to the Bessel type ones we studied above), %and the type of Sturm-Liouville operator corresponding to non-trivial deformation is a Laguerre type one, 
and the spectrum is quantized by the $L^2$ boundedness. We will appeal to this in the proof of Proposition \ref{Proposition_model_spectral_gap_modified_general}.
\end{remark}

\begin{comment}
%\subsubsection{Spin$^{\mathbb{C}}$ Dirac complexes, version modified}
%\label{subsection_almost_complex_spin_c_Dirac}

\begin{theorem}
In the setting described above, we have that
\begin{equation}
    L(\mathcal{R}(X),T_f)=\sum_{a \in Fix(f)} L(\mathcal{R}_B(U_a),T_f).
\end{equation}
\end{theorem}
\end{comment}

The following result is a generalization of Proposition \ref{Proposition_spectral_properties} to truncated metric cones with only almost complex structures, and initially seems weaker when the almost complex structure is integrable since it only takes into account the odd and even degree supersymmetry. However since it is much easier to see that in the integrable case, the Dolbeault-type operator sends $q$ forms to $q+1$ forms, the result can be used to recover the results in Proposition \ref{Proposition_spectral_properties} introducing the exact and co-exact supersymmetric pairs using the arguments in Lemma \ref{Lemma_super_symmetry}.

Given the spin$^{\mathbb{C}}$ Dirac complex $\mathcal{R}(X)=(L^2(X;\mathcal{S} \otimes E), D)$, the restricted complex $\mathcal{R}_N(U_a)=(L^2(U;\mathcal{S} \otimes E), D_U)$ is identified with $\mathcal{R}_N(U_a)=(L^2\Omega^{0,\cdot}(U;E), D_U)$ as discussed in Subsection \ref{subsubsection_local_lefschetz_numbers_fundamental_neighbourhoods}.
%We denote the even and odd degrees of the complex by $q=0,1$ respectively.

\begin{proposition}
\label{Proposition_spectral_properties_almost_complex}
Let $U_p=C_x(M)$ be the truncated metric cone with link $M$, with $x \leq 1$, with a twisted spin$^{\mathbb{C}}$ Dirac complex $\mathcal{R}_B(U_a)=(L^2\Omega^{0,\cdot}(U;E), D_U)$. Let $\Delta=(D_U)^2$ be the associated Laplace-type operator on $C(M)$ and let $\Delta_M$ be the associated Laplace-type operator on $M$. 
Then there exists an orthonormal basis of eigensections $\psi_{q,i,j}$ of $\Delta$ such that each eigensection factors into a product of a function of the radial variable $x$ and an eigensection of $\Delta_M$, where $q$ is the odd-even degree of the section in the complex, $i$ indexes the eigenvalues $\mu_i$ of the eigensections with respect to the operator $\Delta_M$, and $j \in \mathbb{N}$.

Moreover, these are eigensections of the operator $\Delta + \Delta_M$ on the same domain as $\Delta$. 
The operator $\Delta + \Delta_M$ has discrete spectrum, 
with eigenvalues $\lambda_{(i,j)}^2 + \mu_i$, where $\lambda_{(i,j)}^2$ are eigenvalues of the operator $\Delta$. Moreover we have that for any fixed $i$ the sum $\sum_{j} \lambda^{-4}_{(i,j)}$ is finite, and that for each even degree eigensection $\psi_{i,j}$ with positive eigenvalue $\lambda_{(i,j)}$ for $\Delta$, $\frac{1}{\lambda_{(i,j)}} D\psi_{i,j}$ is an odd degree eigensection with the same eigenvalues for $\Delta$ and $\Delta_M$.
\end{proposition}

\begin{proof}
For simplicity we will do the case of $\mathcal{R}_N(U_a)$. The more general case of $\mathcal{R}_B(U_a)$ can be handled similarly.
The decomposition of \eqref{Dolbeault_form_decomposition_1}
\begin{equation}
\label{Dolbeault_form_decomposition_22}
    \Omega^{0,q}(C(M);E)=\Omega^{0,q}(M;E) \oplus (dx-\sqrt{-1}xJ(dx)) \wedge \Omega^{0,q-1} (M;E)
\end{equation}
near the boundary at $\{x=1\}$ for the Dolbeault complex can be imposed on the spinor bundle using the identification $\mathcal{S} \otimes E=\oplus_q \Lambda^{0,q}X \otimes E$ discussed in Subsection \ref{subsubsubsection_Neumann_boundary_condition}. We refer to projections of sections to the first/ second summand as the \textit{tangential/ normal} parts of spinors respectively.

When imposing the classical $\overline{\partial}$-Neumann condition for the Dolbeault-Dirac operator, the normal parts vanish at the boundary. This is generalized by the domain \eqref{equation_domain_for_spin_c_123} (c.f. equation (12) of \cite{EpsteinSubellipticSpinc3_2007}) for the spin$^\mathbb{C}$ Dirac complex. 
%For the dual complex, the tangential parts of the spinors vanish at the boundary.

Given a one form $\eta=\eta^{1,0}+\eta^{0,1}$ where $\eta^{1,0}$ is the holomorphic part and $\eta^{0,1}$ is the anti-holomorphic part (defined via the almost complex structure) respectively, the Clifford action corresponding to the Clifford module for the spin$^\mathbb{C}$ Dirac complex, identified with the sections of the spinor bundle can be written as
$cl(\eta)=\sqrt{2} (\eta^{0,1} \wedge-\iota_{X^{0,1}})$ where $X^{0,1}$ is the anti-holomorphic vector field dual to $\eta^{1,0}$. In particular, $cl(dx)=\sqrt{2}(\overline{\partial}x \wedge -\iota_{{\overline{\partial} x}^\#})$.

The discussion in \cite[\S 4.2]{Albin_2017_index} shows that the Dirac type operator has a decomposition as in equation \eqref{equation_Model_laplacian} where the constants $A$ depend on the rescaling of the sub-bundles and associated sections as discussed in the previous proposition. We will adapt the rescaling in \cite{Albin_2017_index}, and denote eigensections of $\Delta=D^2$ as
\begin{equation}
\label{equation_ansatz_Pierre_Jesse}
    \sqrt{x}J_{\nu_i}(\lambda x) \phi_i(m)
\end{equation}
where $\phi_i$ are eigensections of the operator $(cl(dx)D_Z -1/2)^2$ with eigenvalues $\nu^2_i$. Here the supersymmetry is generated by the operator $D$ of the two term complex, where each even degree eigensection with eigenvalue $\lambda$ has a corresponding odd degree eigensection with the same eigenvalue, and these sections are intertwined by $D$ (up to a normalization constant) by an argument similar to that in Lemma \ref{Lemma_super_symmetry}. Thus we call $D$ the supersymmetry generator, and each pair of intertwined eigensections are called a supersymmetric pair.

Given an eigensection $\psi_{\lambda}$ of $\Delta$  with eigenvalue $\lambda$, we have the corresponding \textit{supersymmetric partner} $D\psi_{\lambda}=\widetilde{\psi}_{\lambda}$ where $\widetilde{\psi}_{\lambda}$ is an eigensection of $\Delta$ with eigenvalue $\lambda$.
We have decompositions of these eigensections into normal and tangential parts
\begin{equation}
    \psi_{\lambda}=\psi^N_{\lambda}+\psi^T_{\lambda}, \hspace{5mm} \widetilde{\psi}_{\lambda}=\widetilde{\psi}^N_{\lambda}+\widetilde{\psi}^T_{\lambda}
\end{equation}
where the superscripts $N,T$ denote the normal and tangential parts respectively. %The linearity of $\Delta$ implies it respects the decomposition in equation \eqref{Dolbeault_form_decomposition_22}, showing that the each of the normal and tangential parts are eigensections of $\Delta$ as well.
The $0$-th order condition corresponds to $\psi^T_{\lambda}|_{x=1}=0$ and $\widetilde{\psi}^T_{\lambda}|_{x=1}=0$.

We denote $\psi_{\lambda}=\sqrt{x}J_{\nu_j}(\lambda x) \phi_j$ where $\phi_j=\phi^N_j+\phi_j^T$, and thus $\psi^N_{\lambda}=\sqrt{x}J_{\nu_j}(\lambda x) \phi^N_j$. The boundary condition in \eqref{equation_domain_for_spin_c_123} is satisfied if and only if $J_{\nu_i}(\lambda)=0$, a Dirichlet condition for the Bessel type Sturm-Liouville problem on the interval $[0,1]$ which yields the discretization of the spectrum that we desire.

Writing the supersymmetric partner as $D\psi_{\lambda}=\widetilde{\psi}_{\lambda}=\sqrt{x}J_{\nu_k}(\lambda x) \phi_k(m)$ using \eqref{equation_ansatz_Pierre_Jesse}, we note that the $0$-th order condition in \eqref{equation_domain_for_spin_c_123} for $D\psi_{\lambda}$ (which corresponds to $J_{\nu_k}(\lambda)=0$) is also precisely the $1$-st order condition for $\psi_{\lambda}$ corresponding to the domain for the Laplacian given by \ref{Laplacian_D_type}.

Since the $1$-st order boundary condition for a section $s$ of the Laplace-type operator is simply the $0$-th order condition for the section $Ds$, we see that these supersymmetric pairs satisfy the boundary conditions for the domain of the Laplace-type operator. Now we have shown that for a fixed eigensection of $\Delta_M$, the corresponding eigenvalue problem of $\Delta$ reduced to a Bessel problem with discrete spectrum. Now it is easy to see that the arguments in Proposition \ref{Proposition_spectral_properties} carry over, proving the result.
\end{proof}

\begin{remark}
The ansatz we used in Proposition \ref{Proposition_spectral_properties} is compatible with the ansatz in \eqref{equation_ansatz_Pierre_Jesse}.
This is due to special properties of Bessel functions which Cheeger uses in equations (3.21-3.23) of \cite{cheeger1983spectral} to explain invariance properties for eigensections in his ansatz that we used in Proposition \ref{Proposition_spectral_properties}.

The proof of Proposition \ref{Proposition_spectral_properties_almost_complex} is much simpler for the case of the Dolbeault-Dirac complex when the almost complex structure is integrable and K\"ahler in which case both said proposition and Proposition \ref{Proposition_spectral_properties} hold.% In that case, the Kodaira decomposition theorem implies that the exact and co-exact decomposition is the same. 
When the almost complex structure is integrable but not K\"ahler, the Dolbeault-Dirac operator and the spin$^{\mathbb{C}}$ Dirac operators are different, and the supersymmetry generators for the Dolbeault-Dirac operators  are different.
\end{remark}

Since the operators $\Delta, \Delta_M$ commute and have discrete spectrum, we can use functional calculus to write 
\begin{equation}
\label{equation_renormalizing_preparation_heat_kernel}
e^{-(s)(\Delta^q)+s(\Delta^q_M)}=\Big( \sum_{i} \sum_{j} e^{-s \lambda_j^2} e^{-s \mu_i}  \psi_{q,i,j} \otimes \psi^*_{q,i,j} \Big)  + \Big( \sum_i \sum_{c} e^{-s \mu_i} h_{q,i,c} \otimes  h^*_{q,i, c}  \Big)
\end{equation}
where $h_{q,i,c}$ make up an orthonormal basis of the harmonic sections of $\Delta^q$ satisfying the boundary conditions while $\psi_{q,i,j}$ are the eigensections introduced in the previous proposition, analogous to those in equation \ref{tempura}, which together form the complete orthonormal basis of eigensections.

\subsubsection{Renormalized local Lefschetz numbers.}
\label{subsubsection_renormalized_Lefschetz}

We will now prove the renormalized McKean-Singer theorem on the tangent cone that we have been building up to. 

\begin{definition}[Weighted trace functions]
\label{definition_renormalized_trace}
Let $U_p=C_x(M)$ be the truncated tangent cone at $p \in X$, with the Dolbeault complex $\mathcal{P}=\mathcal{P}_N(U_p)$, where we have picked the maximal domain after imposing VAPS conditions (see Remark \ref{remark_boundary_conditions_for_singular_links}). Let $f=\mathfrak{T}_p(f_1)$ be the induced self map on the tangent cone where $f_1:X \rightarrow X$ has an isolated simple fixed point at the singularity at $p=\{x=0\}$. Let $T=T_f$ be the associated to a geometric endomorphism on the complex on $C_x(M)$. We define the \textit{\textbf{weighted trace function}} by
\begin{equation} 
\begin{split}
Tr(s)\Big(\varphi_q \circ f^*|_{{\mathcal{H}}^{q}(\mathcal{P}_N(U_p))} \Big):= Tr \Big(e^{-s\sqrt{\Delta_M}}T_f|_{{\mathcal{H}}^{q}(\mathcal{P}_N(U_p))} \Big)
\end{split}
\end{equation}
where $\Delta_M$ is the non-negative Laplace-type operator on the link $M$ appearing in equation \eqref{equation_Model_laplacian}.
From Proposition \ref{Proposition_spectral_properties} we know that there is an orthonormal basis of harmonic sections of the corresponding Laplace-type operator $\Delta^q$ satisfying the boundary conditions, $h_{q,i,c}$, where the index $c$ varies in a finite set $\kappa_{q,i}$ for any $i$ such that $\mu_i>0$ by Remark \ref{Remark_growth_of_harmonic_sections}, where  $\mu_i$ is the eigenvalue of the eigensection $h_{q,i,c}$ with respect to the operator $\Delta_M$. 
While Remark \ref{Remark_growth_of_harmonic_sections} shows that there can be as many as $8$ harmonic sections corresponding to each $i$, the $L^2$ boundedness, VAPS conditions and the boundary conditions will reduce the harmonic sections that appear in the domains for the generalized Neumann condition. 
In any case for $\mu_i=0$, since the Laplace type operators $\Delta_M$ are Fredholm on $M$, again the index set $\kappa_{q,i}$ will be finite.
We therefore have that 
\begin{equation}
 Tr \Big(e^{-s\sqrt{\Delta_M}}T_f|_{{\mathcal{H}}^{q}(\mathcal{P}_N(U_p))} \Big)= \sum_{i} \sum_{c} e^{-s \sqrt{\mu_i}} \langle \varphi_q \circ f^*h_{q,i,c}, h_{q,i, c} \rangle
\end{equation}
and we will show that this converges for large $s$.

We showed in Remark \ref{Remark_growth_of_harmonic_sections} that for $\mu_i>0$ the
harmonic sections in this basis are of the form $h_{q,i,c}=x^{a_i}g_{\mu_i}(m)$ for large $\mu_i$, where $a_i$ are constants that grow of order $(\mu_i)^{1/2}$. 
We can pick these harmonic sections to be of unit norm on the truncated tangent cone. Then for self maps $f$ on the tangent cone that are infinitesimally attracting ($|f^*(x)| = A(m)x$ where $A(m)<1$), $\langle \varphi_q \circ f^*h_{q,i,c}, h_{q,i, c} \rangle$ for such a harmonic section has magnitude bounded by $A(m)^{a_i}$ times $||B||$, the operator norm of the map induced on the $L^2$ sections on the link by the pullback by $B:Z \rightarrow Z$ (see \ref{equation_maps_building_tangent_map}) composed with $\varphi_q$. This makes the series summable for $s \geq 0$. Therefore the weighted trace function is holomorphic on a right half plane including the origin and we can evaluate it at $s=0$. If the self map at the tangent cone is non-expanding, then the weighted trace is holomorphic on a right half plane for $s$ greater than some constant (determined by $\sum_i A(m)^{a_i} e^{-s\sqrt{\mu_i}}$). We can seek to meromorphically continue the weighted trace function such that there is a meromorphic extension which is regular at $s=0$ and evaluate it there. We will see that the ring structure of holomorphic functions will allow us to do this in Section \ref{Dolbeault_section}.
\textbf{It is easy to see that the construction above goes through for the spin$^{\mathbb{C}}$ Dirac complex, where the heat kernels and traces are for the odd and even degree spinors.}
\end{definition}

In the setting of Definition \ref{definition_renormalized_trace} we define the \textbf{\textit{Polynomial Lefschetz heat supertrace function}} generalizing Definition \ref{definition_polynomial_Lefschetz_supertrace} as
\begin{equation}
    \mathcal{L}(\mathcal{P}_N(U_p),T_f,p)(t,b,s):=\sum_{q=0}^n b^q Tr(T_q e^{-(t+s) \Delta} e^{-s\sqrt{\Delta_M}})
\end{equation}
for all $s>0$ and all $t>0$, when $\Delta^{(q)}+({\Delta_M^{(q)}})^{1/2}$ has a trace class heat kernel. For spin$^\mathbb{C}$ Dirac complexes, we have
\begin{equation}
    \mathcal{L}(\mathcal{R}_N(U_p),T_f,p)(t,b,s):=\sum_{q=0}^1 b^q Tr(T_q e^{-(t+s) \Delta} e^{-s\sqrt{\Delta_M}}).
\end{equation}
The following result shows that the \textit{\textbf{Lefschetz heat supertrace functions}} 
\begin{equation}
    \mathcal{L}(\mathcal{P}_N(U_p),T_f,p)(t,s):=\mathcal{L}(\mathcal{P}_N(U_p),T_f,p)(t,-1,s)
\end{equation}
obtained by setting $b=-1$ and similarly $\mathcal{L}(\mathcal{R}_N(U_p),T_f,p)(t,s)$ can be defined even when the heat kernel is not trace class.

\begin{theorem}[Renormalized McKean-Singer theorem]
\label{theorem_renormalized_McKean_Singer}
In the setting of Definition \ref{definition_renormalized_trace},
the Lefschetz heat supertrace function satisfies, for all $t,s> 0$
\begin{equation} 
\begin{split}
\mathcal{L}(\mathcal{R}_N(U_p), T_f, p)(t,s) = L(\mathcal{R}_N(U_p), T_f, p)(s) %+ (1+b) \sum_{q=0}^{n-1} b^q S_q(t,s)
\end{split}
\end{equation}
where
\begin{equation} 
\begin{split}
L(\mathcal{R}_N(U_p), T_f, p)(s):= \sum_{q=0}^1 (-1)^q Tr(s) \Big(\varphi_q \circ f^*: {\mathcal{H}}^{q}(\mathcal{R}_N(U_p)) \rightarrow {\mathcal{H}}^{q}(\mathcal{R}_N(U_p)) \Big)
\end{split}
\end{equation}

\end{theorem}

\begin{proof}
From %Proposition \ref{Proposition_spectral_properties} and 
Proposition \ref{Proposition_spectral_properties_almost_complex}, we know that 
\begin{equation}
e^{-(t+s) \Delta} e^{-s \sqrt{\Delta_M}}=\Big( \sum_{i} \sum_{j} e^{-(s+t) \lambda_j^2} e^{-s \sqrt{\mu_i}}  \psi_{q,i,j} \otimes \psi^*_{q,i,j} \Big)  + \Big( \sum_{c} e^{-s \sqrt{\mu_i}} h_{q,i,c} \otimes  h^*_{q,i, c}  \Big)
\end{equation}
as in equation \eqref{equation_renormalizing_preparation_heat_kernel} in the notation introduced in Proposition \ref{Proposition_spectral_properties}.
We now compose this operator with the geometric endomorphism $T_f$ and express the supertrace as
\begin{multline}
\label{somename_blah1314}
Tr[ \sum_{q=0}^1 (-1)^q (\varphi_q \circ f^*) \circ e^{-(t+s) \Delta^{(q)}} e^{-s (\Delta_M^{(q)})^{1/2}}] \\ = \lim_{N \rightarrow \infty} \sum_{q=0}^1 (-1)^q
\Big( \sum_{i=1}^N \sum_{j=1}^{\infty} e^{-(t+s) \lambda_j^2} e^{-s \sqrt{\mu_i}} \langle \varphi_q \circ f^*\psi_{q,i,j},  \psi_{q,i,j} \rangle  + \sum_{c} e^{-s \sqrt{\mu_i}} \langle \varphi_q \circ f^*h_{q,i,c},  h_{q,i, c} \rangle \Big)
\end{multline}
where we have to justify that the limit exists. Here $\psi_{q,i,j}$ are the eigensections corresponding to the positive eigenvalues $\lambda^2_j$ ($\lambda_{q,i,j}$) from Proposition \ref{Proposition_spectral_properties_almost_complex} above and we have used functional calculus for the operator $\Delta^q+(\Delta_M^q)^{1/2}$ which has discrete spectrum, and the fact that $\Delta^q$ and $\Delta_M^q$ commute.

In Proposition \ref{Proposition_spectral_properties} and Proposition \ref{Proposition_spectral_properties_almost_complex}, we observed that for any given $(q,i)$, $\sum_j \lambda_j^{-4}$ is bounded.
Using Theorem 2.5 of \cite{karamataAmbrosio2018} (Karamata's Tauberian theorem) for $\gamma=4$ in the notation of that article, we see that having a bound $C_i/\Gamma(5)$ for $\sum_j \lambda_j^{-4}$ is equivalent to a bound $(C_i/s^{4})$ for $\sum_{j} e^{-s \lambda_j^2}$ as $s$ goes to $0$. Moreover Proposition \ref{Proposition_spectral_properties} shows that we have supersymmetric pairs for any fixed $q,i$, and we can replicate the arguments in the proof of Theorem \ref{Lefschetz_supertrace} to show that for any finite $N$, the expression in equation \ref{somename_blah1314} is equal to
\begin{equation}
    \sum_{q=0}^1 (-1)^q \sum_{c} e^{-s \sqrt{\mu_i}} \langle \varphi_q \circ f^*h_{q,i,c},  h_{q,i, c} \rangle.
\end{equation}
which is an alternating sum of the weighted trace functions in each degree and converges by the discussion above for large $s$. This proves the result.
 
\end{proof}

%The description of the spectrum for the Dolbeault-Dirac complex given by Proposition \ref{Proposition_spectral_properties} takes into account the supersymmetry between sections of degrees $q$ with those in degrees $q \pm 1$. While such supersymmetries are important for Witten instanton complexes in applications such as proving Morse inequalities, a weaker supersymmetry between the odd and even degree sections suffice for the Lefschetz fixed point theorem. ALREADY WRITTEN ABOVE.

This result reduces to Theorem \eqref{Lefschetz_supertrace} if the boundary conditions yield a Fredholm complex with trace class heat kernel, without any renormalization needed. Indeed, this is the case for the de Rham complex (or its dual complex) in the smooth setting where we are dealing with the Hodge Laplacian on the disc with generalized Neumann conditions, which is well known to be elliptic in the sense of Lopatinsky-Shapiro (see \cite{brenner1990atiyah}). These results are crucial in Section \ref{Dolbeault_section}, where we will use ring structures of harmonic sections to meromorphically continue the weighted trace functions to all of $\mathbb{C}$, the evaluation of which at $s=0$ gives \textit{\textbf{renormalized trace formulas}} for local Lefschetz numbers.

For non-attracting fixed points, we can simply use the above result with the dual complex. More generally, for fundamental neighbourhoods which are metric products $U_1 \times U_2$, we can use Proposition \ref{Lefschetz_product_formula} with the weighted trace functions to compute local Lefschetz numbers. Next we explore this.

\subsubsection{Local to global formulae}
\label{subsubsection_Local_to_global_formulae}

Atiyah and Bott's formulas allow one to easily compute local Lefschetz numbers on smooth manifolds since the local geometry at the tangent cone is the same at all smooth points. However the local Lefschetz numbers at singular points see aspects of different singular structures that are captured by local cohomolgy groups in the case of the de Rham and spin$^\mathbb{C}$ Dirac complexes.

Many Lefschetz fixed point theorems proven by methods of homological algebra \cite{verdier1967lefschetz,Goresky_1985_Lefschetz,Baumformula81} are based on the idea that the local Lefschetz numbers are some form of trace on \textit{local cohomology groups}
We can now phrase the $L^2$ Atiyah-Bott-Lefschetz fixed point theorem for the spin$^\mathbb{C}$ Dirac and de Rham complexes in this way as well.
We first demonstrate how the results we have proven can be used to compute local Lefschetz numbers.

\begin{definition}
\label{definition_product_decompositions_formalized}
Let $f$ be a self map on a pseudomanifold $X$ with an isolated fixed point at $p$. Let $\mathcal{P}$ be an elliptic complex on $X$ where $f$ is associated to a geometric endomorphism $T=T_f$. Let there be a fundamental neighbourhood $U_p=U_1 \times U_2$ of the fixed point which has a product metric, where $f$ restricts to maps $f_1|_{U_1}$ and $f_2|_{U_2}$ on each factor and where $f_2$ is locally invertible. Moreover, let $f$ be non-expanding on $U_1$ and non-attracting on $U_2$. If there is such a decomposition, we say that \textit{\textbf{$U_p$ admits a product decomposition for the self map $f$}}.
If a fixed point of a self map admits a fundamental neighbourhood such that it admits a product decomposition, then we say that the \textit{\textbf{fixed point admits a product decomposition}}.
In this case we define the \textit{\textbf{product complex}} $\mathcal{P}_B(U_p):=\mathcal{P}_N(U_1) \times \mathcal{P}_N^*(U_2)$.
\end{definition}

Note that the definition of the product complex depends on the choice of decomposition, if $U_p$ admits any for a self map $f$. We assume that any isolated fixed point $p$ has a fundamental neighbourhood of the form $U_p= U_1\times U_2$, where $U_1=\mathbb{D}^{k_1} \times C(Z_1)$ and $U_2=\mathbb{D}^{k_2} \times C(Z_2)$, where the map is non-expanding on $U_1$ and non-attracting on $U_2$. Then Proposition \ref{proposition_local_product_Lefschetz_heat} can be used to compute the local Lefschetz numbers. 

Indeed, for simple isolated fixed points Theorem \ref{theorem_localization_model_metric_cone} shows that we can consider the fundamental neighbourhoods to be the truncated tangent cone, where the metric is of product type and for strictly non-expanding ($U_p=U_1$) or strictly non-attracting ($U_p=U_2$) fixed points, we have such fundamental neighbourhoods. Moreover in the smooth setting, the tangent cone is the tangent space and the linearized map on the tangent space can be split in this way based on the eigenvalues of $df$ at the fixed point.

For maps which are isometries on $U_p$ (isometries are both non-attracting and non-expanding), there may be multiple choices of decompositions. For instance consider the map $(z_1,z_2) \mapsto (e^{i\theta_1}z_1,e^{i\theta_2}z_2)$ on $\mathbb{C}^2$, and consider the unit polydisc neighbourhood to be $U_p$. We can take $U_1=\mathbb{D}^2_{z_1}$, and $U_2=\mathbb{D}^2_{z_2}$ which gives a decomposition. Alternately, we can take all of $U_p$ to be either $U_1$ or $U_2$. The following proposition shows how to compute local Lefschetz numbers given a product decomposition. 

\begin{proposition}
\label{proposition_local_product_Lefschetz_heat}
Let $f$ be a self map on a pseudomanifold $X$ with an isolated fixed point at $p$. Let $\mathcal{P}$ be an elliptic complex on $X$ where $f$ is associated to a geometric endomorphism $T=T_f$. Consider a fundamental neighbourhood $U_p$ such that it admits a product decomposition as $U_1\times U_2$ for the self map.
Let $\mathcal{P}_B(U_p)=\mathcal{P}_N(U_1) \times \mathcal{P}_N^*(U_2)$ be the corresponding product complex. We assume that the geometric endomorphism $T_f$ decomposes into a product of geometric endomorphisms on the two factors as $T_{f_1}$ and $T_{f_2}$, corresponding to the decomposition of the map $f$ into factors $f_1$ and $f_2$. 
For the local Lefschetz polynomials we have
\begin{equation}
    L(\mathcal{P}_B(U_p), T_f,p)(b,s)=L(\mathcal{P}_N(U_1), T_{f_1},p)(b,s) \cdot b^m L(\mathcal{P}^*_N(U_2),T^{P^*}_{f_2^{-1}},p)(b^{-1},s)
\end{equation} 
where $T^{P^*}_{f_2^{-1}}$ is the geometric endomorphism on the dual complex induced by $f_2^{-1}$ on $\mathcal{P}^*_N(U_2)$ (see Remark \ref{which_boundary_conditions}). Here $m$ is the maximal degree of the complex $\mathcal{P}_N^*(U_2)$. The Lefschetz numbers decompose as
\begin{equation}
\label{equation_want_to_refer_289}
    L(\mathcal{P}_B(U_p), T_f,p)=L(\mathcal{P}_N(U_1), T_{f_1},p) \cdot (-1)^m L(\mathcal{P}^*_N(U_2), T^{P^*}_{f_2^{-1}},p)
\end{equation}
where for geometric endomorphisms which are both non-attracting and non-expanding, $L(\mathcal{P}_B(U_p), T_f,p)$ is defined to be the evaluation at $s=0$ of the meromorphic continuation of $L(\mathcal{P}_B(U_p), T_f,p)(b=-1,s)$ when it exists.

\end{proposition}

We observe that given a product decomposition, the geometric endomorphisms $T_{f_1}=\phi \circ f_1^*, T_{f_2}=\phi \circ f_2^*$ and their adjoints exist for natural complexes such as the de Rham and spin$^\mathbb{C}$ Dirac complexes, but must be postulated in general, similar to how the existence of geometric endomorphisms must be postulated as we discussed following Definition \ref{definition_geometric_endomorphism_proper}.

\begin{proof}
For the product, we use Proposition \ref{Lefschetz_product_formula} in the local neighbourhood for the two complexes. In the Fredholm setting, this yielded equation \ref{product_cones_links_22} for the polynomial supertrace functions, and Corollary \ref{Theorem_local_Lefschetz_tangent_cone} for the local Lefschetz numbers.
On the non-attracting factor $U_2$, $f^{-1}(U_2) \subseteq U_2$ which shows we must use the adjoint endomorphism on the dual complex (see Remark \ref{which_boundary_conditions}). The relationship between the Lefschetz polynomials for the complex and the dual complex is given by Proposition \ref{proposition_Lefschetz_on_adjoint}, which goes through with the weighted trace functions. This proves the result.
\end{proof}

\begin{remark}
We observe that the idea in the above proposition of using the adjoint domains when computing Lefschetz numbers for attracting vs expanding maps generalizes to other choices of domains for Hilbert complexes, for instance the minimal and maximal domains which are adjoint to each other. We will demonstrate this in some explicit computations following Example \ref{example_cusp_singularity}.
\end{remark}

We remark that for simple isolated fixed points $p$ of $f$, if $U_p$ is the (truncated) tangent cone $\mathfrak{T}_pC$, then the condition that $\mathfrak{T}_pf$ is non-attracting/non-expanding is equivalent to $f$ being infinitesimally non-attracting/non expanding at $p$. In practice we will therefore use decompositions of the tangent cone as above.
We also observe that the proposition above shows that the local Lefschetz numbers are independent of the choice of product decomposition.
\begin{remark}
Henceforth we assume that all the fixed points of the maps we consider admit product decompositions.
\end{remark}

\begin{remark}
\label{Remark_on_local_boundary_condition_boundary_vanishing}
The local boundary conditions that we use for the de Rham and Dolbeault operators are widely studied in the smooth case, and it is well known that the heat kernel has off diagonal vanishing estimates in the limit as $t$ goes to $0$ up to the boundary (see \cite{brenner1981atiyah,perez2012heat}). The domains introduced by Epstein for spin$^\mathbb{C}$ Dirac complexes are less well studied but are similar to the case of the Dolbeault complex, yielding a self-adjoint extension of the Dirac operator. For simple isolated fixed points, with the realization of the spectrum as eigenvalues that we have from Proposition \ref{Proposition_spectral_properties}, we can express the heat kernel in terms of the the eigensections and functional calculus (Cheeger computes the heat kernel in this way for the infinite cone in \cite{cheeger1983spectral} deriving an expression of the form in equation \ref{heat_kernel_exact_cone}). In particular this includes the case of discs for which the vanishing estimates follow from \cite{brenner1981atiyah,brenner1990atiyah,perez2012heat}.
Similarly one can show that the heat kernel has off diagonal vanishing estimates up to the boundary in the limit as $t$ goes to $0$.

This phenomena is called the \textit{\textbf{Kac's principle of not feeling the boundary}} and has been identified to follow from general considerations (see, e.g. \cite{li2016heat}).
\end{remark}

Now we can prove the following theorem which can be stated roughly as \textit{the sum of local supertraces in cohomology equals the global supertrace in cohomology}. 

\begin{theorem}
\label{Corollary_Polynomial_Lefschetz_fixed_point_theorem}
Let $\widehat{X}$ be a pseudomanifold with the de Rham or spin$^\mathbb{C}$ Dirac complex (possibly with twisted coefficients) $\mathcal{P}$, and let $f$ be a self map on $\widehat{X}$ with simple isolated fixed points, associated to a geometric endomorphism $T_f$ on the complex. Then we have
\begin{equation}
\label{equation_11tgedgZAHSHJ}
    \sum_{p \in Fix(f)}  L(\mathcal{P}_B(U_p), T_f)= L(\mathcal{P}(X), T_f),
\end{equation}
where $L(\mathcal{P}_B(U_p), T_f)=L(\mathcal{P}_B(U_p), T_f,p)$ is as introduced in Proposition \ref{proposition_local_product_Lefschetz_heat}.
\end{theorem}

\begin{proof}
We know from Theorem \ref{localization_of_simple_fixed_points} that in the limit as $t$ goes to $0$, the local Lefschetz numbers depend only on the map and the heat kernel in a neighbouhood of the fixed point. Proposition \ref{Gluing_heat_kernels} shows that in the limit as $t$ goes to $0$, the heat kernel of the Laplace-type operator on $X$ used in the definition of the local Lefschetz numbers can be replaced by the heat kernel of the Laplace-type operator corresponding to the local complex.
This shows that we have
\begin{equation}
\label{Ding_Liren_21_preparation}
    \Big( \sum_{p \in Fix(f)}  \mathcal{L}(\mathcal{P}_B(U_p), T_f)(t,-1,s=0) \Big) - \mathcal{L}(\mathcal{P}(X), T_f)(t,-1) = E(t)
\end{equation}
where the error $E(t)$ vanishes as $t$ go to $0$, where Theorem \ref{theorem_renormalized_McKean_Singer} is used to show that $\mathcal{L}(\mathcal{P}_B(U_p), T_f)(t,-1,0)$ is the local Lefschetz number at the fixed point in the case of the spin$^\mathbb{C}$ Dirac complex. 

Since we have off-diagonal vanishing of the heat kernels with the local boundary conditions, the result follows from Proposition \ref{Local_Lefschetz_numbers_definition} and the vanishing of boundary contributions for the local boundary conditions which shows that $L(\mathcal{P}_B(U_p), T_f)=L(\mathcal{P}_B(U_p), T_f,p)$.
Proposition \ref{proposition_local_product_Lefschetz_heat} shows that the complex we choose yields equation \eqref{equation_11tgedgZAHSHJ}.

\end{proof}

\begin{remark}
\label{Remark_keep_track_spin_c_Dirac_2}
Here the fact that the boundary conditions are local is crucial for the boundary contributions to vanish. In \cite{atiyah1975spectral}, the eta invariant at the boundary arises in the formulae for the index of Fredholm operators because there is no off diagonal vanishing.
\end{remark}

%+++++++++++++++++++++++++++++++++++++++++++++++++++++++++%
%+++++++++++++++++++++++++++++++++++++++++++++++++++++++++%

\section{The de Rham complex}
\label{section_de_Rham}

In this section we study Lefschetz numbers in the case of the de Rham complex for Witt spaces. A \textit{\textbf{Witt space}} in this section is a stratified pseduomanifold where each link has vanishing middle intersection cohomology in the middle degree $l/2$, where $l$ is the dimension of the link. In particular this is always satisfied if all links are odd dimensional.

It is well known that the $L^2$ de Rham cohomology of a Witt space endowed with a wedge metric, is isomorphic to the middle perversity intersection homology of Goresky and MacPherson (see, e.g., the discussion following Theorem 1 of \cite{Albin_2017_index}). The Lefschetz fixed point theorem in the latter homology theory setting was proven in \cite{Goresky_1985_Lefschetz} for Witt spaces. 
Moreover Cheeger showed (see \cite{cheeger1982l2}) that for any wedge metric, the minimal and maximal extensions for the de Rham complex coincide. Indeed, one way to see this is to note that the $L^2$ spaces of forms, as topological vector spaces, are the same for quasi-isometric wedge metrics and so $\mathcal{D}_{\max}(d)$ and $\mathcal{D}_{\min}(d)$ remain unchanged if we replace the metric with a wedge metric satisfying the geometric Witt condition. But then we have $\mathcal{D}_{\min}(\eth)=\mathcal{D}_{\max}(\eth)$, which implies $\mathcal{D}_{\max}(d)=\mathcal{D}_{\min}(d)$.

We shall begin by giving a brief introduction to these results of Goresky and MacPherson. The maps for which they define Lefschetz numbers are placid maps, more general than those which lift to diffeomorphisms in the resolved spaces and in Subsection \ref{subsection_HS_replacement} we will consider geometric endomorphisms for the de Rham complex that will relax the conditions imposed on self maps. This construction is based on the replacement for the pullback that was constructed by Hilsum and Skandalis \cite{Hilsum_1992_invariance}, which was extended to the setting of Witt spaces in \cite{Albin_signature}. 

Since $L^2$ cohomology on a stratified pseudomanifold is completely determined by the metric on $\widehat{X}^{reg}$, we can define the Hilsum-Skandalis geometric endomorphism for self-maps which have good properties on $\widehat{X}^{reg}$. All one needs is that the \textbf{self map on $\widehat{X}^{reg}=\mathring{X}$ is smooth, and is proper with respect to a complete \textit{edge} metric on $\mathring{X}$}.
While this is satisfied by all placid self-maps, we note that the extension to the singular strata is not important when considering maps induced on the global cohomology. However, in order to obtain suitable local Lefschetz numbers, we will need to assume more structure for the self maps restricted to fundamental neighbourhoods of fixed points at the singular strata.

In Subsection \ref{subsection_induced_map_in_cohomology}, we prove that stratum preserving homotopy equivalent maps induce the same endomorphism at the level of $L^2$ cohomology for the Hodge de Rham complex. We will also show that if a map already induces a geometric endomorphism by pullback, then the Hilsum-Skandalis replacement will induce the same map on $L^2$ cohomology.
We also do hybrid constructions which allows us to study local Lefschetz numbers of self maps which are codimension preserving near fixed points of the self map at singular strata.

An analytic proof of the formula for the local Lefschetz numbers derived by Goresky and MacPherson in the case of Witt spaces with isolated conic singularities was presented in \cite{Bei_2012_L2atiyahbottlefschetz} for self maps which lift to diffeomorphisms, by appealing to the isomorphism between de Rham cohomology and intersection homology. This isomorphism has been used in extensions of the Cheeger-M\"uller theorem on spaces with isolated conic singularities (see \cite{ludwig2017comparison,ludwig2020extension}).
The local cohomology groups used in Goresky and MacPherson are isomorphic to the ones we defined in the previous section with (generalized Neumann) boundary conditions for the de Rham complex.

Goresky and MacPherson show how the local Lefschetz number of a non-repelling fixed point is computed using the absolute homology of a fundamental neighbourhood while that of a repelling fixed point corresponds to the relative homology of a fundamental neighbourhood. 
This is a topological version of the duality that we used in Proposition \ref{proposition_Lefschetz_on_adjoint} and Proposition \ref{proposition_local_product_Lefschetz_heat}.
In Subsection \ref{subsection_witten_deformation_morse_de_Rham} we will give an analytic analogue of Goresky and MacPhersons Morse inequalities in intersection homology.

One reason for an extensive study of these local cohomology groups is that we can use the ideas and techniques developed here for more general operators, in particular for the Dolbeault operator in the next section. We will study further applications and computations of our results in this section in subsections \ref{subsection_Morse_Hirzebruch_cohomological} and \ref{subsection_Computations_for_singular_examples}.

\begin{remark}
The work in this section generalizes \textit{mutatis mutandis} to the case of the de Rham complex with coefficients in a flat vector bundle, which is important in applications such as the proof of the Cheeger-M\"uller theorem. We write everything in the case of the trivial bundle for brevity.
\end{remark}

\subsection{The Intersection Lefschetz number}
\label{subsection_intersection_Lefschetz_number}

Goresky and MacPherson initiated the study of intersection homology on stratified pseudomanifolds and went on to study stratified Morse theory and the topology of singular spaces. Intersection homology is defined for a choice of perversity associated to the stratified space. The $L^2$ cohomology of the maximal domain is isomorphic to the intersection homology with ``lower middle" perversity while the $L^2$ cohomology of the minimal domain is isomorphic to the intersection homology with ``upper middle" perversity (see Theorem 6.1 in \cite{cheeger1980hodge}). For Witt spaces the intersection homology corresponding to the upper middle and lower middle perversities coincide.

In \cite{Goresky_1985_Lefschetz}, the authors proved a Lefschetz fixed point theorem for intersection homology which we review here.
\begin{definition}
Let $\widehat{X}$ be a stratified Witt space, and $f$ be a placid self map on it. This induces a self homomorphism on the intersection homology of the space with middle perversity (upper and lower middle perversities coincide for Witt spaces)
\begin{equation*}
    f_{*_k} : IH^{\overline{m}}_k(\widehat{X}) \longrightarrow IH^{\overline{m}}_k(\widehat{X})
\end{equation*}
and its \textit{\textbf{intersection Lefschetz number}} is
\begin{equation*}
    IL(f) = \sum_{k=0}^{n} (-1)^k Trace(f_{*_k})
\end{equation*}
\end{definition}
It is clear from the definition that $IL(f)$ corresponds to $L( \mathcal{P}(X), T_f)$ for the de Rham complex $\mathcal{P}(X)=(L^2\Omega(X),d)$ with the VAPS domain defined in Subsection \ref{subsection_Hilbert_complexes_stratified_pseudomanifolds}. 

The authors of \cite{Goresky_1985_Lefschetz} go on to show that there exist fundamental classes $[G(f)]$ and $[\Delta]$ in $IH^{\overline{m}}_n(\widehat{X} \times \widehat{X})$, for the graph $G(f) \subset \widehat{X} \times \widehat{X}$ and the diagonal $\Delta \subset \widehat{X} \times \widehat{X} $, and then that the intersection Lefschetz number is equal to the intersection number of these classes. The definitions of these two intersecting classes are subtle, as indicated in the paper. This is similar to how the heat supertrace localizes to where the diagonal of the double space intersects the graph of the self map $f$ on the double space in the smooth case but in the singular case we need to resolve these sub spaces. The following is Theorem 1 of \cite{Goresky_1985_Lefschetz}.
\begin{theorem}
\label{Intersection Lefschetz}
\begin{equation*}
    IL(f)= [G(f)].[\Delta]
\end{equation*}
\end{theorem}

From the theorem, it follows that non-vanishing $IL(f)$ implies that $f$ has fixed points.

If the fixed points are isolated, they obtain formulas for the local contribution to the Lefschetz number. The following is Theorem 3 of \cite{Goresky_1985_Lefschetz}.
\begin{theorem}
If p is an isolated fixed point of f
\begin{comment}
then the cycles $\Delta$ and $G(f)$ intersect the link $\mathfrak{L}$ of (p,p) in $X \times X$ in disjoint n-1 cycles which we denote by $\Delta_L$ and $G_L(f)$. Then the contribution of p to IL(f) is the linking number of $[\Delta_L]$ and $[G_L(f)]$ in $\mathfrak{L}$.
\end{comment}
which is attracting, then there is an induced map on the local intersection homology with (upper) middle perversity,
\begin{equation*}
\overline{f}_{*_k} : IH^{\overline{m}}_k(\widehat{X}, \widehat{X}-p) \longrightarrow IH^{\overline{m}}_k(\widehat{X},\widehat{X}-p)
\end{equation*}
and the contribution to IL(f) is given by
\begin{equation*}
\sum_{k=0}^{n} (-1)^k Trace(\overline{f}_{*_k})
\end{equation*}
where $n$ is the dimension of the pseudomanifold $\widehat{X}$.
\end{theorem}

In the following, we briefly review the results of Section 10 of \cite{Goresky_1985_Lefschetz} to which we refer the interested reader. All the intersection homology groups are in middle perversity (upper and middle perversities being equal on Witt spaces).
If $f$ is a placid map and $p$ is an isolated fixed point, then one can find fundamental neighbourhoods $\widehat{V}_1, \widehat{V}_2$ around $p$, such that $\widehat{V}_1 \subset f^{-1}(\widehat{V}_2 \setminus \partial \widehat{V}_2)$. The map $f:\widehat{V}_1 \rightarrow \widehat{V}_2$ induces homomorphisms,
\begin{equation*}
(f^p_*)_k : IH^{\overline{m}}_k(\widehat{V}_1) \longrightarrow IH^{\overline{m}}_k(\widehat{V}_2) \quad (f^*_p)_{n-k} : IH^{\overline{m}}_{n-k}(\widehat{V}_2,\partial \widehat{V}_2) \longrightarrow IH^{\overline{m}}_{n-k}(\widehat{V}_1,\partial \widehat{V}_1)
\end{equation*}
 where the latter is the adjoint homomorphism of the former, on relative intersection homology.
\begin{comment}
\begin{equation*}
(f^*_p)_{n-k} : IH^{\overline{m}}_{n-k}(V_2,\partial V_2) \longrightarrow IH^{\overline{m}}_{n-k}(V_1,\partial V_1).
\end{equation*}
\end{comment}
Goresky and MacPherson define their \textbf{\textit{local trace}} to be
\begin{equation*}
    Tr_p(f)= \sum_{i=0}^{n} (-1)^i Trace((f^p_*)_k)
\end{equation*}
which gives the local Lefschetz number for an attracting fixed point. They then extend it to non-expanding fixed points. In the case where $f$ is expanding, if $f$ is a diffeomorphism restricted to $U_1$, then $(f^{-1})_*$ corresponds to the adjoint homomorphism $f^*$ in relative homology and the local intersection Lefschetz number can be written as
\begin{equation}
\label{intersection_relative_number}
    Tr_p(f) = \sum_{i=0}^{n} (-1)^i Trace((f^*_p)_k).
\end{equation}
where the trace is now over the relative homology. For Witt spaces Poincar\'e duality holds and we have that
\begin{equation*}
    IH^{\overline{m}}_{k}(\widehat{V}_1)=IH^{\overline{m}}_{n-k}(U_1,\partial \widehat{V}_1)
\end{equation*}
and if the link at the point $p$ is $Z$, then
\begin{equation}
\label{equation_Intersection_Homology_of_cone}
IH^{\overline{m}}_{k}(\widehat{V}_1)=
\begin{cases}
      0, & \text{for}\ k \geq [\frac{n+1}{2}] \\
      IH^{\overline{m}}_{k}(Z), & \text{for}\ k \leq [\frac{n-1}{2}]
\end{cases}
\end{equation}
which can be used to simplify the expression for the local Lefschetz number.

%+++++++++++++++++++++++++++++++++++++++++++++++++++++++++++++++++++++++++%
\subsection{Hilsum-Skandalis replacement}
\label{subsection_HS_replacement}
%+++++++++++++++++++++++++++++++++++++++++++++++++++++++++++++++++++++++++%

We have had brief glimpses of the geometric endomorphism induced by a self map and its adjoint in the de Rham case in the discussion following Definition \ref{definition_geometric_endomorphism_proper} and in Example \ref{remark_notation_bundles_vs_density_bundles_2}. This was under the condition that the self map $f$ was a diffeomorphism on the resolved space $X$.

Goresky and MacPherson proved their result for arbitrary placid self maps on Witt spaces, which include branched coverings and flat maps on algebraic varieties (see \cite[\S 1]{Goresky_1985_Lefschetz}).
We now present a construction of a geometric endomorphism for stratum preserving maps, which we can use to define global Lefschetz numbers.

Consider a disc bundle $\pi_X:\mathbb{D}_X \rightarrow \widehat{X}^{reg}=\mathring{X}$ over the regular part of a stratified pseudomanifold 
$\widehat{X}$ and the associated pullback bundle $f^*\mathbb{D}_X$, by the smooth resolution of a stratum preserving map $f: X^{reg} \rightarrow X^{reg}$. Denote by $\pi^f_X:f^*\mathbb{D}_X \rightarrow \mathring{X}$, the pullback bundle which fits in the commutative diagram
\begin{equation}
\begin{CD}
f^*\mathbb{D}_X @>\mathbb{D}(f)>> \mathbb{D}_X\\
@VVV @VVV\\
 \mathring{X} @>f>>  \mathring{X}.
\end{CD}
\end{equation}

Consider a (smooth) map $e: \mathbb{D}_X \rightarrow  \mathring{X}$ such that $p=e \circ \mathbb{D}(f): f^*\mathbb{D}_X \rightarrow  \mathring{X}$ is a submersion, and a choice of Thom form $\mathcal{T}$ for $\pi_X$. Following \cite{Albin_signature}, one can then define the Hilsum-Skandalis replacement of the pullback of $f$ as follows.

\begin{definition}
\label{HS_replacement}
The \textbf{\textit{Hilsum-Skandalis replacement}} for the pullback of forms by $f$ is given by
\begin{gather*}
    HS(f)= HS_{\mathcal{T},f^*\mathbb{D}_X,\mathbb{D}_X,e}(f): C_{c}^{\infty}(\mathring{X},\Lambda^*) \rightarrow C_c^{\infty}(\mathring{X},\Lambda^*)\\
    u \longrightarrow (\pi_X)_*(\mathcal{T} \wedge p^*u)
\end{gather*}
where $C_{c}^{\infty}(\mathring{X},\Lambda^*)$ denotes smooth forms with compact support on $\mathring{X}$. 
We will define the Hilsum-Skandalis replacement for self-maps that are proper on $\mathring{X}$ with respect to an edge metric, and we will refer to such self maps as \textit{\textbf{proper}} self maps. Equivalently, these are maps which preserve the singular set $\widehat{X}^{sing}$.

To construct a suitable map $e: \mathbb{D}_X \rightarrow  \mathring{X}$, first choose a complete edge metric $g_{e}$ on $\mathring{X}$. A complete edge metric is what is called an iterated edge metric in \cite{Albin_signature}, as opposed to an incomplete edge metric which is what we call wedge metrics in this article. We denote by $\mathbb{D}_X$ the disc sub-bundle $\pi_X: \mathbb{D}_X \rightarrow \mathring{X}$ of the edge tangent bundle $^{\text{e}} TX$, where we pick the radius of the discs to be less than half of the injectivity radius $r_L$ (we know there is a positive injectivity radius $r_L$ for complete edge metrics on these spaces, for instance by results in \cite{ammann2007pseudodifferential}).

The exponential map of the metric $g_e$ extends uniquely to a smooth edge submersion ${exp}$ on the disc sub-bundle ${\mathbb{D}_X}$ (see Lemma 3.3 in \cite{albin2013novikov}). This induces a surjective map $exp_*: ^{\text{e}} T \mathbb{D}_X \rightarrow   ^{\text{e}} TX$. Then the map $p=\exp \circ \mathbb{D}(f)$ is a submersion onto $\mathring{X}$. The pullback by a submersion maps $L^2$ sections to $L^2$ sections, and integrating along the fibers (which are compact) after taking the wedge product with a Thom form gives the $L^2$ bounded operator, $HS(f)$. This was observed in Section 9.1 of \cite{Albin_signature} and in section 3 of \cite{albin2013novikov}. 
\end{definition}
%+++++++++++++++++++++++++++++++++++++++++++++++++++++++++++++++++++++++++++++++++++%

For the de Rham complex, the Hilsum-Skandalis replacement commutes with the differential $d$ on smooth compactly supported sections on $\mathring{X}$, since pullback by $p$ and pushforward along $\pi_X$ commute with the differential, and since Thom forms are closed.

We now develop some tools to show that this construction gives a geometric endomorphism on the complex. First we will show that Thom forms restricted to compact sets in $\mathring{X}$ can be realized as fiberwise volume forms. We will now explain what this means.

Since $X^n$ is orientable (i.e., $\mathring{X}$ is a smooth orientable manifold of dimension $n$) then $\Lambda^{n}\prescript{e}{}{TX}$ is trivializable and hence so is its pullback to $\mathbb{D}X$. Recall that a Thom form $\mathcal{T}$ on the bundle $\pi_X: f^*\mathbb{D}_X^n \rightarrow X^n$ is a closed form in $\Lambda^n(T(f^*\mathbb{D}_X))$ which integrates to 1 on each fiber (i.e., $(\pi_X)_*(\mathcal{T})=1$). 

\begin{proposition}
\label{any_Thom_form_is_a_fiberwise_volume}
Given a Thom form $\mathcal{T}$ and a family of metrics $g_{fiber}$ on the fibers of $\pi_X: f^*\mathbb{D}_X^n \rightarrow X^n$, there is a function $h:f^*\mathbb{D}_X^n \rightarrow \mathbb{R}$ such that the volume form of 
\begin{equation}
    g_{f^*\mathbb{D}_X}=\pi_X^*(g_e)+h g_{fiber}
\end{equation}
is $\mathcal{T} \wedge \pi_X^*dV_{g_e}$.
\end{proposition}
\begin{proof}
It is clear that the volume form of the Riemannian metric $g_{f^*\mathbb{D}_X}$ factors into $dvol_{g_e}$ and the volume form of $g_{\text{fiber}}$ up to a power of $h$. Since all Thom forms are sections of a trivializable bundle, we can choose the power of $h$ to be the Thom form divided by the volume form of the fiber metric (since the latter is a non-vanishing quantity).
\end{proof}

In the following section, we will prove that the Hilsum-Skandalis replacement of stratum preserving self maps which are simple b-maps gives a geometric endomorphism for the de Rham differential $d$.
%+++++++++++++++++++++++++++++++++++++++++++++++++++++++++++++++++++++++++++++++++++%
\subsubsection{Mapping properties of the Hilsum-Skandalis geometric endomorphism}

%++++++++++++++++++++++
%Basic properties of the geometric endomorphism

We begin by proving some elementary mapping properties and then showing that the exterior derivative with the minimal domain commutes with the geometric endomorphism $HS(f)$, for a proper self map $f: X \rightarrow X$. We have already shown that they commute on the dense set $C_c^{\infty}(\mathring{X}, {\Lambda^*})$. %We will prove that they commute with the minimal and maximal domains and then show that the $VAPS$ domain is preserved by the Hilsum-Skandalis replacement. 
This is analogous to Proposition \ref{Geometric_Endomorphism_Diffeo}. In what follows, we denote ${\Lambda^i}:=\Lambda^i (\prescript{w}{}{T}^*X)$ and $\varphi_i \circ HS(f)$ by $T_i$. We observe that wedge forms with compact support in $\mathring{X}$ are just smooth forms with compact support on $\mathring{X}$, for which we defined the Hilsum-Skandalis replacement in Definition \ref{HS_replacement}.
%+++++++++++++++++++++++++++++++++++++++++++++++++++++++++++++++++++++++++++++++++++++++++++++++%
%proposition : basic_properties_geometric_endo

\begin{proposition}
\label{basic_properties_geometric_endo}

The Hilsum-Skandalis geometric endomorphism defined above satisfies the following properties:
\begin{enumerate}

    \item For each i and for each  ${v \in C^{\infty}_c(\mathring{X}, {\Lambda^i})}$, we have ${T_i v \in C^{\infty}_c(\mathring{X}, {\Lambda^i})}$

    \item For each i, $T_i$ extends to a bounded operator on ${L^2(X, {\Lambda^i})}$ (we denote this extension by $T_i$ as well).
    
    \item For each i, let ${T_i^* : L^2(X, {\Lambda^i}) \rightarrow L^2(X, {\Lambda^i}) }$ be the adjoint of $T_i$.
    Then ${T_i^* :  C^{\infty}_c(\mathring{X}, {\Lambda^i}) \rightarrow  C^{\infty}_c(\mathring{X}, {\Lambda^i}) }$. 

\end{enumerate}
\end{proposition}

\begin{proof}
What we need to show are the mapping properties of $HS(f)$ and $HS(f)^*$.

\textit{proof of property 1}:
It is easy to see that $HS(f)$ takes smooth sections to smooth sections. Since we picked the Thom form 
such that it is smooth and supported on the ball bundle of radius less than $r_L$ on $\mathbb{D}_X$, $supp(HS(f) \psi ) \subseteq \overline{\bigcup_{x \in supp(\psi)} \mathbb{B}_{x,r_L}}$, where $\mathbb{B}_{x,r_L}$ denotes the open ball with radius less than or equal to the lower bound on the injectivity radius around $x \in \mathring{X}$, with respect to the complete metric used in the definition of $HS(f)$. Since the metric is complete, and the singular strata are at the boundary hypersurfaces at infinity, we have that $X^{sing} \cap \mathbb{B}_{x,r_L}=\emptyset$ for all $x \in supp(\psi)$.
The proper condition shows that compactly supported sections are mapped to compactly supported sections.

\textit{proof of property 2}: 
The operator is a composition of the pullback by a submersive b-map, which induces a bounded map,  
exterior multiplication by a Thom form, and integration along the fibers, which are all bounded maps.

%++++++++++++++++++++++++++++++++++++++++++++++++++++++++++++++++++++++++++++++++++%
%Property 3: The big one
%++++++++++++++++++++++++++++++++++++++++++++++++++++++++++++++++++++++++++++++++++++++%

\textit{proof of property 3}: 
Let $u, v$ be wedge forms of the same degree, compactly supported in $\mathring{X}$. 
Then

\begin{equation*}
    \langle HS(f)u,v \rangle_X= \int_X HS(f)u \wedge \ast_X v
\end{equation*}
where $\ast_X$ is the Hodge star operator on $\mathring{X}$. By the definition of the Hilsum-Skandalis operator, this is equal to
\begin{equation*}
    \int_X \int_{f^*\mathbb{D}_X/X} \mathcal{T} \wedge p^*u \wedge \ast_X v = \int_{f^*\mathbb{D}_X} \mathcal{T} \wedge p^*u \wedge \pi_X^*(\ast_X v). 
\end{equation*}
Since we can write the Thom form we constructed as the fiberwise volume form of a metric on $\mathbb{D}_X$, the expression above is further equal to 

\begin{equation*}
    \pm \int_{f^*\mathbb{D}_X} p^*u \wedge \ast_{f^*\mathbb{D}_X} \pi_X^*v
\end{equation*}
where $\ast_{f^*\mathbb{D}_X}$ is the Hodge star operator of the metric on the bundle we constructed in Proposition \ref{any_Thom_form_is_a_fiberwise_volume} and where the equality is up to a sign because we commute the forms across the wedge product. But this is equal to

\begin{equation*}
    \langle p^*u, \pi_X^*v \rangle_{f^*\mathbb{D}_X} = \langle u, p_* \pi_X^*v \rangle_X
\end{equation*}
which shows that $HS(f)^* = p_* \pi_X^*$, up to a sign. If we show that $p$ is a proper submersion, then by Ehresmann's theorem, $p_* \pi_X^*v$ is a smooth section. But $p=\exp \circ \mathbb{D}f$ is proper because $\mathbb{D}X$ is a ball bundle where the radii of the balls are bounded and the exponential map is proper. If $v$ has compact support in $\mathring{X}$, so does $p_* \pi_X^*v$ by the same argument as in the proof of the first part of the proposition.
\end{proof}

We now prove the analog of Proposition 10 of \cite{Bei_2012_L2atiyahbottlefschetz}, which finishes the proof that $HS(f)$ is indeed a geometric endomorphism of the de Rham complex on a Witt space.

\begin{proposition}
\label{commutates with operator}

In the same setting, for each $i=0,1,2,...,n$, for each $s \in \mathcal{D}_{\min}(d)_i$, we have $T_i(s)=HS(f)(s) \in \mathcal{D}_{\min}(d)_i$ and $(d)_i \circ T_i = T_{i+1} \circ (d)_i$.
\end{proposition}

\begin{proof}

The proof given in Proposition \ref{Geometric_Endomorphism_Diffeo} goes through here with the modifications for the Hilsum-Skandalis operator given by Proposition \ref{basic_properties_geometric_endo} proven above replacing the mapping properties given by Lemma \ref{lemmata_mapping_properties} which are used in the proof of Proposition \ref{Geometric_Endomorphism_Diffeo}.

\end{proof}

\begin{remark}[Independence of choice of Thom form]
The Hilsum-Skandalis geometric endomorphisms corresponding to different Thom forms which satisfy the conditions that we impose will all induce the same map in $L^2$ cohomology. This 
can be proven by an argument similar to that in Proposition \ref{Homotopy_operator_1}.
\end{remark}
%+++++++++++++++++++++++++++++++++++++++++++++++++++++++++++++++++++++++++++++++++++%

\subsection{Induced maps in cohomology}
\label{subsection_induced_map_in_cohomology}

We now show that if the pullback by $f$, $f^*$ already induces a  self map on the VAPS domain, then the two geometric endomorphisms constructed by using $f^*$ and $HS(f)$ induce the same map in the $L^2$ de Rham cohomology.

%lemma 9.3 of Signature on Witt: that f and HS(f) induce the same map on L2 cohomology

\begin{proposition}
\label{Homotopy_operator_1}
Let $f : \mathring{X} \rightarrow \mathring{X}$ be a proper self map. If the pullback by $f$ induces a bounded operator $f^* : \mathcal{D}_{\min}(d_X) \rightarrow \mathcal{D}_{\min}(d_X)$, then $HS(f)$ and $f^*$ induce the same endomorphism on the cohomology of the complex $\mathcal{P}(X)=(L^2\Omega^k(X), d_X)$.
\end{proposition}

\begin{proof}
%No more conditions 
We begin by setting up a lemma. Given the proper map, $f :\mathring{X} \rightarrow \mathring{X}$, we define,
\begin{equation}
\label{temp_1}
    (e_s)_{0 \leq s \leq 1} : \mathbb{D}_X \rightarrow \mathring{X}, \quad
    (x,\xi) \rightarrow exp(x,(1-s)\xi)
\end{equation}
This is a homotopy of maps which induces a surjective map on wedge vector fields for each $s$. We consider the homotopy constructed by composing with the lift of $f$ to the disc bundle,
\begin{equation}
\label{temp_2}
    p_s=e_s \circ \mathbb{D}(f_s) : f_s^*\mathbb{D}_X \rightarrow \mathring{X}.
\end{equation}

%+++++++++++++++++++++++++++
%lemma 9.1 of signature Witt
\begin{lemma}
With $f, e$ and $p$ as in \eqref{temp_1} and \eqref{temp_2} above, there exists a bounded (homotopy) operator $\Upsilon: L^2(X;E) \rightarrow L^2(f^*\mathbb{D}_X;E)$ where $E=\Lambda^*(\prescript{w}{}{T}^*X)$ such that

\begin{equation}
\label{Homotopy_operator}
     \mathcal{T} \wedge p_1^* - \mathcal{T} \wedge p_0^* = d \Upsilon + \Upsilon d
\end{equation}
where $d=d_s\otimes 1 + 1 \otimes d_X$ (where $d_s$ is the exterior differential on forms on $[0,1]$) acts on $L^2(f^*\mathbb{D}_X \times [0,1] ; F)$ where $F=\Lambda^*(T^*[0,1]) \times E$ and we have the explicit formula $\Upsilon(\omega) = \mathcal{T} \wedge \int_0^1 i_{\frac{\partial}{\partial_s}}(p_s^*)\omega ds$.
\end{lemma}

\begin{proof}
We follow lemma 9.1 in \cite{Albin_signature}.
Let $\omega \in L^2(X; E)$. Then $p_s^*{\omega} \in L^2(f^*\mathbb{D}_X \times [0,1];F)$ can be decomposed as $p^*_s(\omega) = \omega_t + ds \wedge \omega_n$. Then $\Upsilon(\omega)= \mathcal{T} \wedge \int_0^1 \omega_n ds$.
The proof follows from the computation
\begin{multline}
\label{tarantino_27}
    (d \Upsilon + \Upsilon d) \omega = d ( \mathcal{T} \wedge \int_0^1 \omega_n ds) + \mathcal{T} \wedge \int_0^1 i_{\frac{\partial}{\partial_s}}(p_s^*) d\omega ds \\
    = \mathcal{T} \wedge \int_0^1 d_X \omega_n ds + \mathcal{T} \wedge ds \wedge \int_0^1 \partial_s \omega_n ds + \mathcal{T} \wedge \int_0^1 i_{\frac{\partial}{\partial_s}} dp_s^*\omega ds  
\end{multline}
where in the last step we have used naturality, and that Thom forms are closed. The last term simplifies into the expression
\begin{equation*}
    \mathcal{T} \wedge \int_0^1 i_{\frac{\partial}{\partial_s}} (d_X \omega_t + ds \wedge \frac{\partial}{\partial_s} \omega_t - ds \wedge (d_X\omega_n)) ds = \mathcal{T} \wedge \int_0^1 \frac{\partial}{\partial_s} \omega_t - d_X\omega_n ds.    
\end{equation*}
Substituting this into the original expression in equation \eqref{tarantino_27} and doing the obvious cancellation, we get 
\begin{equation*}
    (d \Upsilon + \Upsilon d) \omega = \mathcal{T} \wedge [\omega_t + ds \wedge \omega_n] \big|_0^1 = \mathcal{T} \wedge (p_1^*- p_0^*) \omega
\end{equation*}
which proves the result.
\end{proof} 

To complete the proof of the theorem, observe the following.

\begin{center}
    $HS(f)(u)-f^{*}(u) = (\pi_X)_{*}(\mathcal{T} \wedge [u(exp(f(x),\zeta)
    - u(exp(f(x),0)])$
\end{center}
where ${\zeta}$ is the variable over the cotangent bundle and the integrand is

\begin{equation}
    \mathcal{T} \wedge (u(exp(f(x),\zeta) - u(exp(f(x),0)) 
     = \mathcal{T} \wedge \int_{0}^{1} {\iota_{\partial t} u(exp(f(x),t\zeta)} dt
\end{equation}

The last equation is exactly the left hand side of \eqref{Homotopy_operator}. What remains to be seen is that $\Upsilon$ composed with the pushforward $(\pi_X)*$ and the  bundle maps $\varphi_i$ preserves the minimal domain. 
But this follows from the same reasons that ensure the Hilsum-Skandalis replacement preserves the minimal domain. 
Since \eqref{Homotopy_operator} gives a homotopy operator $\Upsilon$, the fact that $(d \Upsilon + \Upsilon d)$ takes closed forms to exact forms, along with the fact that $d$ commutes with $(\pi_X)_*$ proves the theorem.
\end{proof}

In a similar vein, we can prove that proper maps that are homotopy equivalent (via proper maps) induce the same endomorphisms in $L^2$ de Rham cohomology. In particular, this proves that the Lefschetz numbers for the de Rham complex on a compact stratified pseudomanifold are well defined in the stratified homotopy class. 

\begin{proposition}
\label{Homotopy_operator_2}
Let $(f_t)_{0 \leq t \leq 1} : \mathring{X} \rightarrow \mathring{X}$ be a homotopy of proper maps.
Then $HS(f_0)$ and $HS(f_1)$ induce the same endomorphism on the $L^2$ cohomology of the de Rham complex with the minimal domain.
\end{proposition}

\begin{proof}
We introduce a homotopy operator following Bott-Tu for forms $u(x)$
\begin{equation}
    K(u)(x)= \int_0^1 \iota_{\frac{\partial}{\partial_t}} \omega(x,t)dt
\end{equation}
where 
\begin{equation}
\label{equation_to_replace_in_next_proof}
    \omega(x,t)=(\pi_X)_*(u(exp(f_t(x), y)) \wedge \mathcal{T}).
\end{equation}
By a calculation similar to that done in the proof of proposition 4.1 of Chapter 1 of Bott-Tu, one sees that
\begin{equation}
\label{Mahinda_1}
    \omega(x,1)-\omega(x,0)= (dK(u)(x)-Kd(u)(x)) 
\end{equation}
where $d$ is the de Rham differential on $\mathring{X}\times [0,1]_t$, $\omega(x,1)=HS(f_1)u(x)$ and $\omega(x,0)=HS(f_0)(u)(x)$. The right hand side of \eqref{Mahinda_1} vanishes in cohomology since it maps closed forms to exact forms, proving the proposition.
\end{proof}

Moreover, one can use a hybrid operator by only modifying the endomorphism where the rank of Jacobian of the self map drops.
\begin{proposition}
\label{Homotopy_operator_3}
Let $f : \mathring{X} \rightarrow \mathring{X}$ be a proper map. Let $(\chi_S)$ be a smooth cutoff of an open neighbourhood of the set where the rank of $Df$ drops. Then the operator $\widetilde{HS}(f)$ defined by
\begin{equation}
    \widetilde{HS}(f)u(x)=(\pi_X)_*(u(exp(f(x), (\chi_S) y)) \wedge \mathcal{T})
\end{equation}
induces the same map in cohomology as $HS(f)$.
\end{proposition}

\begin{proof}
We can repeat the proof of the earlier proposition, replacing $\omega(x,t)$ in \eqref{equation_to_replace_in_next_proof} with
\begin{equation*}
    \omega(x,t)= (\pi_X)_*(u(exp(f(x), ((\chi_S)t+(1-t)) y)) \wedge \mathcal{T}).
\end{equation*}
\end{proof}

\begin{remark}    
Remark \ref{Remark_only_need_morphism_in_neighbourhoods} shows that one only needs the self-map $f$ to be of model form in small enough fundamental neighbourhoods of the fixed points for the localization theorem to hold. Thus, we can apply the result whenever we have a proper map that is a local diffeomorphism near fixed points, where away from a neighbourhood of the fixed point we can use the Hilsum-Skandalis replacement using Proposition \ref{Homotopy_operator_3}.
\end{remark}

\subsection{Analytic derivation of the de Rham local Lefschetz number}
\label{subsection_analytic_derivation_intersection}

In this section we derive the results of Goresky and MacPherson using our analytic definitions of local cohomology.

\begin{theorem}
\label{Shylock}
Let $\widehat{X}$ be a Witt space of dimension $n$. Let $f$ be a placid self map on $\widehat{X}$ with an isolated simple non-expanding fixed point at a singularity $p$, with a fundamental neighbourhood $U_p=\mathbb{D}^j_{y} \times C_{x}(Z_{z})$ of $p$ with no other fixed points. Let $T_f=\phi \circ HS(f)$ be the endomorphism on $\mathcal{P}_N(U_p)=(L^2\Omega(U_p),d)$.
Let the induced map at the tangent cone at $p$ be given by 
\begin{equation}
\label{slam_dunk}
    \mathfrak{T}_pf(y,x,z)=( \widetilde{C}(y), x \widetilde{A}(z),  \widetilde{B}(z))
\end{equation}
as in equation \eqref{equation_map_induced_on_tangent_cone}. Then the local Lefschetz number at $p$ is given by
\begin{equation}
L(\mathcal{P}_N(U_p), T_f, p) = \sum_{k=0}^{\lfloor \frac{l-1}{2} \rfloor} (-1)^k Tr(\tilde{B}^*: \mathcal{H}^k(Z) \rightarrow \mathcal{H}^k(Z))
\end{equation}
where $Z$ is the link of $p$ in $X$, $l$ its dimension, and $\mathcal{H}^{\cdot}(Z)$ denotes its $L^2$ de Rham cohomology groups.

\end{theorem}

\begin{proof}
We first compute the cohomology explicitly for a fundamental neighbourhood with a product type metric.

\begin{lemma}
\label{conic_cohomology_1}
Let $C(Z_z)$ be the truncated cone with metric $g=dx^2+x^2g_Z$, on which we have the de Rham complex, imposing absolute boundary conditions at the link at the boundary $Z=\{x=1\}$. The cohomology of this elliptic complex is
\begin{equation*}
\mathcal{H}^k(\mathcal{P}_N(C(Z)))=
\begin{cases}
      0, & \text{for}\ k \geq [\frac{l+1}{2}] \\
      \mathcal{H}^k(Z), & \text{for}\ k \leq [\frac{l-1}{2}]
\end{cases}
\end{equation*}
where $\mathcal{H}^k(Z)$ is the $L^2$ de Rham cohomology of the link $Z$.
\end{lemma}
 
\begin{proof}

Observe that $r=1-x$ is a boundary defining function for the boundary on the truncated cone. The zeroth and first order conditions (\eqref{equation_generalized_Neumann_zeroth_first_order_conditions}) corresponding to the generalized Neumann boundary conditions given in \eqref{equation_generalized_Neumann_Robin_type} for the Laplace type operator of the complex $\mathcal{P}_N(C(Z))$ are
\begin{align}
\label{Neumann_bc_1}
    \sigma(\delta, dx)u|_{x=1} =0 \\ 
    \label{Neumann_bc_2}
    (\sigma(\delta, dx) du)|_{x=1} =0
\end{align}
where $-\sigma(\delta, dx)=\sigma(\delta, dr)$ is simply the interior product with $\partial_r$.

On page 588 of \cite{cheeger1983spectral} Cheeger shows that the harmonic forms on the infinite cone are of four types, similar to the forms of types $1,2,3,4$ that we used in Step 1.1 of Proposition \ref{Proposition_spectral_properties} (see Remark \ref{Remark_growth_of_harmonic_sections}). These are linearly independent, and it is easy to see that only the first of those types of forms do not have $dx$ factors in them, and thus only these satisfy the $0$-th order condition \eqref{Neumann_bc_1}. These have a basis of forms $x^{a^{\pm}_j(k)}\phi^k_j$, where $\phi^k_j$ are co-closed eigenforms of $\Delta_Z$, and $a^{\pm}_j(k)$ are real numbers made explicit in \cite[\S 3]{cheeger1983spectral}. We observe that only the harmonic forms for which $a^{\pm}_j(k)=0$ satisfy the first order condition \eqref{Neumann_bc_2}. But these are precisely the harmonic forms of $\Delta_Z$.

Thus the null space is given by forms in $\pi_Z^*u$ for $u \in \mathcal{H}^*(Z)$, where $\pi_Z:C(Z) \rightarrow Z=\{x=1\}$. It is easy to check that these forms satisfy the boundary conditions. Since not all $\pi_Z^*u$ will be bounded in $L^2$ with respect to the conic measure we need to restrict to those that are.

It is clear that what we need to prove is that the elements of the form $\pi_Z^*u$ for $u \in \mathcal{H}^k(Z)$ that are bounded in $L^2$ with respect to the conic measure are those of degree $k \leq [\frac{l-1}{2}]$. 
Since there is no $x$ dependence in sections of the form $\pi_Z^*u$, the $L^2$ norm of a section $\pi_Z^*u$ is given by $\int_{C(Z)} x^{-2k+l}||u(z)||^2dvol_{g_z}dx$. 
This can be seen for instance by considering the Hodge star operator for wedge forms $u$ of degree $k$, which takes local wedge forms $xdz_1 \wedge xdz_2...\wedge xdz_k$ to $xdz_{k+1} \wedge xdz_{k+2}...\wedge xdz_l \wedge dx$, where $\{xdz_1, xdz_2,..., xdz_l, dx\}$ is an oriented local basis for the wedge cotangent bundle on the cone. 
This integral is finite if and only if $-2k+l>-1$, or equivalently $k < \frac{l+1}{2}$. Since we are in the Witt case, this is equivalent to $k \leq [\frac{l-1}{2}]$. Since $\mathcal{D}_{\max}(d)=\mathcal{D}_{VAPS}(d)$, we are done.

\end{proof}

\begin{remark}[Generalizations]
\label{Remark_generalizations_perversities_L2boundedness}
This was first worked out by Cheeger \cite{cheeger1980hodge} in a more general setting which includes the case of horn metrics of the form $dx^2+x^{2c}dg_Z$. The condition $-2k+l>-1$ that arises in the proof of the above lemma is replaced by $c(-2k+l)>-1$, equivalently $k< 1/2(l+1/c)$ (see for instance \cite[\S 3]{Bei_2011_Perversities}).
One can also work out the $L^p$ cohomology of these spaces (see \cite{bei2020p}) by similar local computations and relate it to certain perversity functions and the related intersection homology theory.
In \cite[\S 15]{Goresky_1985_Lefschetz}, Goresky and MacPherson indicate how their Theorem can be extended to other perversities.
One could similarly study the Hodge cohomology of other warped product metrics (which appear in many spaces such as real algebraic varieties, symmetric spaces) using these boundary conditions (see for instance Theorem 4.30 of \cite{cruz2020examples}, \cite[\S 7]{Albin_hodge_theory_on_stratified_spaces},  \cite{GoreskyL2IntersectionexpositZucker92},\cite{Jesus2018Wittens,Jesus2018Wittensgeneral}).
In the non-Witt case, different choices of mezzo-perversities and their associated cohomology theories have been studied in \cite{Albin_hodge_theory_cheeger_spaces,Albin_hodge_theory_on_stratified_spaces}.

\end{remark}

In particular, Lemma 6.16 shows that the de Rham complex on this fundamental neighbourhood is a Fredholm complex.
The proof of the theorem now follows from Proposition \ref{Local_Lefschetz_numbers_definition} where the Lefschetz boundary contributions vanish since we use local boundary conditions (see Remark \ref{Remark_on_local_boundary_condition_boundary_vanishing}), and by applying Corollary \ref{Theorem_local_Lefschetz_tangent_cone} to the tangent cone, with factors given by the cone over the link $Z$ and the disk  $\mathbb{D}^j$. 
It is well known that these boundary conditions give an elliptic complex in the sense of Boutet-de-Monvel (see \cite{brenner1990atiyah}), and we can even use the usual McKean-Singer theorem instead of the renormalized version.
\end{proof}

The Laplace type operator of the complex $\mathcal{P}^*_N(C(Z))$ has domain as given in \eqref{equation_generalized_Neumann_Robin_type} for $P=\delta$ with the zeroth and first order conditions \eqref{equation_generalized_Neumann_zeroth_first_order_conditions} given by
\begin{align}
\label{Dirichlet_bc_1}
    \sigma(d)(dr)u|_{x=1} =0\\
    \label{Dirichlet_bc_2}
    \sigma(d)(dr) \delta u|_{x=1} =0.
\end{align}
The Hodge star operator intertwines these boundary conditions with the boundary conditions given in equations \eqref{Neumann_bc_1} and \eqref{Neumann_bc_2}. A good reference for this is Section 9 of Chapter 5 of \cite{taylor1996partial}.

\begin{corollary}
\label{Shylock_2}
In the same setting as Theorem \ref{Shylock}, but for non-attracting fixed points, let $\mathcal{P}^*_N(U_p)$ be the dual complex of the de Rham complex with generalized Neumann boundary conditions. We assume that $f$ is locally invertible. We have
\begin{equation}
\label{never_again_17856}
\begin{split}
L(\mathcal{P}_N(U_p) , T_f, p)= (-1)^{n} L(\mathcal{P}^*_N(U_p) , T^{\delta}_{f^{-1}}, p) &= (-1)^{n} \sum_{k=0}^{n} (-1)^{k} Tr(T^{\delta}_{f^{-1}}|_{\mathcal{H}^k(\mathcal{P}^*_N(U_p))})\\
&= (-1)^{n} \varepsilon \sum_{k=0}^{l+1} (-1)^{k} Tr(T^{\delta}_{f^{-1}}|_{\mathcal{H}^k(\mathcal{P}^*_N(C(Z)))})\\
\end{split}
\end{equation}
where $\varepsilon$ is $1$ if $\widetilde{C}$ given in equation \eqref{slam_dunk} is orientation preserving and $-1$ if not. 
\end{corollary}

This formula is precisely the analog of equation \eqref{intersection_relative_number} in intersection homology. 

\begin{proof}
This follows from the proof of Theorem \ref{Shylock} above and the duality given in Proposition \ref{proposition_local_product_Lefschetz_heat}. The factor of $(-1)^{n}$ comes from setting $b=-1$ in the factor of $b^{n}$ given by Proposition \ref{proposition_Lefschetz_on_adjoint}, and this proves the first equality. The second equality follows from the definition of the Lefschetz numbers.

We can make the duality in the latter proposition explicit via the Hodge star operator. From the setting of Theorem \ref{Shylock} we see that $n=j+l+1$ is the dimension of $X$, where $l$ is the dimension of $Z$, and $j$ the dimension of $\mathbb{D}^j$.
Taking the Hodge star of a form $\omega$ of degree $k$ in the cohomology of $C(Z)$ computed in Lemma \ref{conic_cohomology_1}, we get the form $dx \wedge x^{-2k+l} \star_Z \omega$, which is a form of degree $l+1-k$. The volume form of $\mathbb{D}^j$ is a representative of the cohomology class of the disc for the dual complex. This shows that if $\widetilde{C}$ is orientation preserving, then $\varepsilon=1$, and if not $\varepsilon=-1$. Now we can use the K\"unneth formula of Proposition \ref{Lefschetz_product_formula} to write the Lefschetz numbers of $U_p$ as the product of that on the two factors, and since that of $\mathbb{D}^j$ is $\varepsilon$, we have the result.

\end{proof}

In \cite{brenner1990atiyah}, the Lefschetz fixed point theorem for elliptic complexes on smooth manifolds with boundary was developed, and the de Rham case was studied in detail. It is observed in that article that the sheaf of germs of forms smooth up to the boundary form a fine resolution of the constant sheaf on the manifold with boundary, and the Poincar\'e lemma shows that the higher cohomology vanishes. With the presence of conic singularities, the higher $L^2$ de Rham cohomology groups in a fundamental neighbourhood do not vanish in general. 

For strictly non-expanding or strictly non-attracting fixed points, we can use the de Rham complex or its adjoint complex and compute the Lefschetz numbers in cohomology. If there is a product decomposition (see Definition \ref{definition_product_decompositions_formalized}), we can use Proposition \ref{proposition_local_product_Lefschetz_heat} which shows that the local formula should be considered as the product of two formulas in cohomology.

\subsubsection{Recovering the Atiyah-Bott local formula}
\label{subsubsection_de_rham_smooth}
We shall now briefly explain why the topological formulas in Theorem \ref{Shylock} and Corollary \ref{Shylock_2} as well as the product formula given in Proposition \ref{proposition_local_product_Lefschetz_heat} can be used to recover the linear algebraic formula in Theorem \ref{Nonsingular_Contribution} for smooth fixed points $p$, which for the de Rham complex reduces to
\begin{equation}
\label{Nimal_Perera}
    Tr_p(f)= sign(det(Id-df_p)).
\end{equation}
First choose coordinates so that the matrix $df_p$ is in Jordan form on the tangent space. Let $\mathbb{R}^{k_1}$ be the sum of the generalized eigenspaces of eigenvalues with absolute value less than or equal to $1$, and let $\mathbb{R}^{k_2}$ be the complement.

We can use Proposition \ref{proposition_local_product_Lefschetz_heat} to see that the local Lefschetz number at $p$ can be computed using the local Lefschetz numbers for the induced maps on the two factors in $\mathbb{D}^{k_1}\times \mathbb{D}^{k_2}$ (see Definition \ref{definition_product_decompositions_formalized}), where $\mathbb{D}^{k_i}$ is the unit disc in $\mathbb{R}^{k_i}$ for $i=1,2$. The Lefschetz number for the non-expanding map on the factor $\mathbb{D}^{k_1}$ is obtained by applying Theorem \ref{Shylock} to the unit disc in this factor. Since the cohomology is generated by the constants in degree $0$, and the pullback of any constant is the same constant, the map induced in cohomology is the identity and we see that the Lefschetz number is $1$. Similarly for the factor $\mathbb{R}^{k_2}$, we use Corollary \ref{Shylock_2} and see that the Lefschetz number is $(-1)^{k_2}$, and the Lefschetz number at $p$ is the product, $(-1)^{k_2}$. The integer $k_2$ is the number of eigenvalues which have absolute value greater than $1$, and Proposition \ref{proposition_local_product_Lefschetz_heat} shows that the local Lefschetz number is equal to $sign(det(Id-df_p))$.

\subsection{Witten instanton complex and Lefschetz-Morse inequalities}
\label{subsection_witten_deformation_morse_de_Rham}

In this section we construct the Witten instanton complex or the small eigenvalue complex and prove the polynomial Morse inequalities, and Lefschetz-Morse inequalities for Witt spaces. Let us first consider the polynomial de Rham Morse inequalities in the smooth setting.

A smooth function $h$ on a compact smooth manifold $X$ of dimension $n$ with isolated non-degenerate critical points is called a Morse function, where non-degenerate means that the Hessians at the critical points are invertible.
One consequence of the non-degeneracy is that the number of critical points is finite. The Morse index $m_a$ at a critical point is the number of negative eigenvalues of the Hessian at $a$, and the $k$-th Morse number $M_k$ is the number of critical points of Morse index $k$ for the function $h$. If the $k$-th Betti numbers of the manifold are denoted by $\beta_k$, then the polynomial Morse inequalities can be expressed by saying that the coefficients $Q_k$ in the expression,
\begin{equation}
    \label{equation_polynomial_morse_inequalties}
    \sum_{k=0}^n M_k b^k -\sum_{k=0}^n \beta_k b^k = (1+b) \sum_{k=0}^{n-1} Q_k b^k
\end{equation}
are non-negative integers.

It is well known that the left hand side of \eqref{equation_polynomial_morse_inequalties} vanishes  when $b=-1$ by the Poincar\'e-Hopf theorem, which is a corollary of the Lefschetz fixed point theorem for the self map given by flowing along the gradient flow corresponding to the dual vector field of $dh$ with respect to a Riemannian metric for short time. This is clear by the equality
\begin{equation}
    \label{Chamitha_598}
    \sum_{k=0}^n M_k b^k = \sum_{a \in Crit(h)} b^{m_a}
\end{equation}
where the left hand side is known as the \textbf{\textit{Morse polynomial}}. The right hand side for $b=-1$ is the Euler characteristic of the manifold by the Poincar\'e Hopf theorem, which is equal to the evaluation at $b=-1$ of the \textbf{\textit{Poincar\'e polynomial}}, $\sum_{k=0}^n \beta_k b^k$.

Ludwig uses a small eigenvalue complex (see Section 2.9 of \cite{ludwig2020extension}) to prove the polynomial Morse inequalities for the $L^2$ de Rham complex on a stratified pseudomanifold with conic singularities in \cite{ludwig2017index} for what are called \textit{radial and anti-radial Morse functions} in \cite{ludwig2020extension}.
The following is a generalization of those Morse functions for stratified spaces, also called relatively Morse functions in \cite{Jesus2018Wittens,Jesus2018Wittensgeneral}.

\begin{definition}[stratified Morse function]
\label{definition_stratified_Morse_function}
Let $\widehat{X}$ be a stratified pseudomanifold with a wedge metric $g_w$ and a continuous function $h$ which lifts to a map $h' \in C_{\Phi}^{\infty}(X)$ (see \eqref{equation_smooth_functions_on_stratified_spaces}) on the resolved manifold with corners, i.e., $h'=h \circ \beta$, which is a Morse function when restricted to $\widehat{X}^{reg}$. We demand that the image of the set $|dh'|_{g_w}^{-1}(0)$ under the blow-down map $\beta$ consists of isolated points on the stratified pseudomanifold.

We call such points the \textbf{\textit{critical points of h}}. Moreover, at critical points $a$, we ask that there exist fundamental neighbourhoods $\widehat{U_a}=\widehat{U_1}\times \widehat{U_2}$ where the metric respects the product decomposition, with radial coordinates $x_a$ on $\widehat{U_1}$ and $r_a$ on $\widehat{U_2}$ such that restricted to $\widehat{U_1}$ (respectively $\widehat{U_2}$), $x_a$ (respectively $r_a$) is the geodesic distance to $a$, and such that the function $h$ restricted to $\widehat{U_a}$ can be written as $x_a^2-r_a^2$. 
We shall notate these radial coordinates as $x, r$ for brevity, where it is understood that there are such coordinates corresponding to each critical point $a$. 
Then we say that $h$ is a \textit{\textbf{stratified Morse function}}.
We will denote $dh'$ by $dh$ by abuse of notation.
\end{definition}

It is well known that such coordinates exist around smooth critical points on $\widehat{X}^{reg}$ by the classical Morse Lemma, possibly up to deforming the metric within the quasi-isometry class. In the case of an isolated singularity, there are many examples of such stratified Morse functions. 
Consider for instance a Morse-Bott function $h' \in C_{\Phi}^{\infty}(X)$ on the resolved manifold with boundary corresponding to a pseudomanifold with a single isolated singularity, where the boundary hypersurface corresponding to the resolved singularity is a Morse-Bott singularity. This maps to the singular point under the blow-down map $\beta$, and the composition of $h'$ with $\beta$ is a stratified Morse function. Another source of examples are Hamiltonian Morse functions arising from moment map reductions on smooth spaces. We study such examples arising from K\"ahler reduction in Subsection \ref{subsection_Computations_for_singular_examples}, and \cite{jayasinghe2024holomorphicwitteninstantoncomplexes}.

Consider a fundamental neighbourhood $U_a$ of a critical point $a$ of a stratified Morse function $h$ on a stratified Witt space $\widehat{X}$, with a decomposition $U_a=U_1\times U_2$ as in Definition \ref{definition_stratified_Morse_function}. The gradient flow of $h=x^2-r^2$ for a short time yields a 
self map $f$ on $X$ which is attracting on $U_1$ and expanding on $U_2$.
If the gradient flow is stratum preserving then in order to compute the Lefschetz numbers for the geometric endomorphism induced by $f$ on the de Rham complex we can use the complexes $\mathcal{P}_{N}(U_1)$ and $\mathcal{P}^*_{N}(U_2)$, and compute the local Lefschetz numbers for $\mathcal{P}_{B}(U_a)$ (see Proposition \ref{proposition_local_product_Lefschetz_heat}). This also happens to give the correct decomposition for the Morse inequalities.

Given a product decomposition of a fundamental neighbourhood $U_a$ at a critical point as in Definition \ref{definition_stratified_Morse_function}, we define the \textit{\textbf{product complex at the critical point}} as $\mathcal{P}_B(U_a)=\mathcal{P}_N(U_1) \times \mathcal{P}^*_N(U_2)$, generalizing Definition \ref{definition_product_decompositions_formalized} which is for fixed points of self-maps.

\begin{definition}
\label{definition_product_decomposition_critical_point}
In the setting of the discussion we define the \textit{\textbf{local Morse polynomial}} at a critical point $a$ to be
\begin{equation}
    M(X,h,a)(b):=\sum_{k=0}^n b^k \dim(\mathcal{H}^{k}(\mathcal{P}_B(U_a))),
\end{equation}
with the product complex as in the discussion above, and the \textit{\textbf{global Morse polynomial}} to be
\begin{equation}
    M(X,h)(b)=\sum_{a \in Crit(h)} M(X,h,a)(b),
\end{equation}
the sum of the local Morse polynomials at the critical points. The \textit{\textbf{Poincar\'e polynomial}} is defined to be
\begin{equation}
    N(X)(b)=\sum_{k=0}^n \beta_k(X) b^k
\end{equation}
similar to the smooth case, where $\beta_k(X)=\dim(\mathcal{H}^{k}(\mathcal{P}(X)))$ are the \textit{\textbf{Betti numbers}} of the complex.
\end{definition}

\begin{remark}[Assumptions]
\label{remark_general_domain_choices_Morse}
We can always perturb a metric within its quasi-isometry class in a neighbourhood of the critical points of $h$ and arrange that it is isometric to a neighbourhood in the union of the tangent cones with a model metric such that the Laplace-type operators on the neighbourhoods are the same as the model operators obtained by freezing coefficients at the critical point. Moreover as discussed in the beginning of this section, we can always scale the links on Witt spaces in order to ensure that the geometric Witt condition is satisfied and we will continue to assume that the Hodge-de Rham operator $D=d+d^*$ is essentially self-adjoint for the rest of this article.
\end{remark}

\subsubsection{Witten deformed complexes}
\label{subsubsection_Witten_deformed_elliptic}

Given a stratified pseudomanifold $\widehat{X}$ equipped with a wedge metric %, a flat vector bundle $E$ on $X$, 
and a stratified Morse function $h$, we denote the de Rham complex by $\mathcal{P}(X)=(H_k=L^2\Omega^k(X),P_k=d_k)$. We define the Witten deformed differential $P_\varepsilon:=e^{-\varepsilon h}Pe^{\varepsilon h}$, where $\varepsilon \geq 0 $ is a parameter. Since $h$ is smooth (i.e., lifts to $C_{\Phi}^{\infty}(X)$), the map $P_\varepsilon-P=\varepsilon (dh) \wedge$ extends to a bounded map $H_k \rightarrow H_{k+1}$ for any $k$, as does $\varepsilon \iota_{dh^{\#}}$.
We follow \cite[\S 4.3]{Zhanglectures} and introduce the Clifford operators 
\begin{equation}
    cl(u)=u \wedge - \iota_{u^\#}, \quad \widehat{c}(u)=u \wedge + \iota_{u^\#}
\end{equation}
where $cl(u)$ is the Clifford multiplication for the de Rham complex. Zhang's notation is for the Clifford algebra on the tangent space, while we use that on the wedge cotangent bundle by duality as in Definition \ref{wedge_Clifford_module}. Zhang shows that $D_\varepsilon= D+ \varepsilon \widehat{c}(dh)$ and we refer the reader to \cite{Zhanglectures} for more details.  
Let $F=\Lambda^*(\prescript{w}{}{T^*X})$.
Since $h' \in C_{\Phi}^{\infty}(X)$, given $u \in L^2(X;F)$ we have that $\widehat{c}(dh)u \in L^2(X;F)$. Moreover given a sequence $(u_n) \subset C^{\infty}_c(\mathring{X},F)$, we have that ($\widehat{c}(dh)u_n) \subset C^{\infty}_c(\mathring{X},F)$. If $D(u_n)$ is $u_n$ $L^2$-Cauchy, then so is $D_\varepsilon(u_n)$.
It follows that $\mathcal{D}_{\min}(D)=\mathcal{D}_{\min}(D_\varepsilon)$. 

Note also that multiplication by $e^{\varepsilon h}$ is a homeomorphism $H_k \rightarrow H_k$ that takes a section in the domain of any closed extension $\mathcal{D}(P)=\mathcal{D}(P_\varepsilon)$ to another section in the same space 
and conjugates $P$ and $P_\varepsilon$.
We define the \textit{\textbf{Witten deformed complex}} $\mathcal{P}_\varepsilon=(H_{k},P_{\varepsilon,k})$ by changing the differential in $\mathcal{P}(X)$ to $P_\varepsilon$ with the same domain as $P$. Multiplication by $e^{-\varepsilon h}$ induces an isomorphism between the cohomology of $\mathcal{P}_\varepsilon(X)$ and $\mathcal{P}(X)=\mathcal{P}_0(X)$. 
This can be summarized using the commutative diagram 
\[\begin{tikzcd}
	{...} && {H_k} && {H_{k+1}} && {...} \\
	\\
	{...} && {H_{k}} && {H_{k+1}} && {...}
	\arrow["P", from=1-1, to=1-3]
	\arrow["P", from=1-3, to=1-5]
	\arrow["P", from=1-5, to=1-7]
	\arrow["{P_\varepsilon}", from=3-3, to=3-5]
	\arrow["{P_\varepsilon}", from=3-1, to=3-3]
	\arrow["{P_\varepsilon}", from=3-5, to=3-7]
	\arrow["{e^{-\varepsilon h}}", from=1-3, to=3-3]
	\arrow["{e^{-\varepsilon h}}", from=1-5, to=3-5]
\end{tikzcd}\]
where the domains $\mathcal{D}(P_{k})$ and $\mathcal{D}(P_{\varepsilon,k})$ can be canonically identified as described above.

Similar to the complexes on $X$, when we restrict to a fundamental neighbourhood $U_a$ of a critical point $a$ of $h$, we can define the complex $\mathcal{P}_{\varepsilon,N}(U_a)$ by changing the differential in $\mathcal{P}_N(U_a)$ to $P_\varepsilon$ with the same domain as $P_{U_a}$.
There are isomorphisms from the Hilbert complexes $\mathcal{P}_{N}(U_a)$ to $\mathcal{P}_{\varepsilon,N}(U_a)$, and $(\mathcal{P}^*_{N}(U_a))^*=\mathcal{P}_{D}(U_a)$ to $(\mathcal{P}^*_{\varepsilon,N}(U_a))^*=\mathcal{P}_{\varepsilon,D}(U_a)$ given by multiplication by the function $e^{-\varepsilon h}$. Similarly, multiplication by
$e^{+\varepsilon h}$ gives an isomorphism from $\mathcal{P}^*_{N}(U_a)$ to $\mathcal{P}^*_{\varepsilon,N}(U_a)$, and  $(\mathcal{P}_{N}(U_a))^*$ to $(\mathcal{P}_{\varepsilon,N}(U_a))^*$.

Consider the Laplacian $\Delta_{\varepsilon}=P_{\varepsilon}P^*_{\varepsilon}+P^*_{\varepsilon}P_{\varepsilon}$. We show that we have a Bochner type formula
\begin{equation}    
\label{equation_Witten_deformed_Laplace_type_expansion}
\Delta_{\varepsilon}=\Delta+\varepsilon^2||dh||^2+\varepsilon K
\end{equation}
generalizing that of Proposition 4.6 of \cite{Zhanglectures}. 

\begin{proposition}[Bochner type formula]
\label{Proposition_stratified_Morse_Laplace_structure}
Let $\widehat{X}$ be a stratified pseudomanifold with a wedge metric and a stratified Morse function $h$, and let $\mathcal{P}(X)$ be the de Rham complex. For any $\varepsilon \in \mathbb{R}$, given $s \in \mathcal{D}(D^2)$, we have
\begin{equation}
\label{Zhangs_Bochner_formula_1}
    ||D_\varepsilon s||^2_{L^2(X;F)}=\langle [D^2+\varepsilon K  +\varepsilon^2 |dh|^2] s,s \rangle_{L^2(X;F)}.
\end{equation}
where $F=\Lambda^*(\prescript{w}{}{T^*X})$ and $K=(D \widehat{c}(dh) + \widehat{c}(dh) D)$ is a $0$-th order operator.
\end{proposition}

\begin{proof}
We begin by expanding the left hand side of the expression to get 
\begin{equation}
\begin{split}
||D_\varepsilon s||_{L^2(X;F)}^2 & = \langle D_\varepsilon s, D_\varepsilon s \rangle_{L^2(X;F)} \\
 & = \langle Ds, Ds \rangle_{L^2(X;F)} + 2\varepsilon \langle Ds, \widehat{c}(dh)s \rangle_{L^2(X;F)} + \varepsilon^2 \langle \widehat{c}(dh)s, \widehat{c}(dh)s \rangle_{L^2(X;F)}\\
& = \langle [D^2+\varepsilon^2 |dh|^2]s, s \rangle_{L^2(X;F)}  + \varepsilon [\langle Ds, \widehat{c}(dh)s \rangle + \langle \widehat{c}(dh)s, Ds \rangle_{L^2(X;F)} ]
\end{split}
\end{equation}
where we have used the self-adjointess of $D$.
We have that 
\begin{multline}
\label{equation_with_boundary_term_Dirac_deformed}
    \langle Ds, \widehat{c}(dh)s \rangle_{L^2(X;F)}+ \langle \widehat{c}(dh)s, Ds \rangle_{L^2(X;F)} \\
    =\langle s, [D \widehat{c}(dh) + \widehat{c}(dh) D]s \rangle_{L^2(X;F)}+ \int_{\partial X} \langle i cl(d\rho_X)s,\widehat{c}(dh)s \rangle_F dvol_{\partial X}
\end{multline}
where the boundary integral vanishes if $\widehat{c}(dh)s \in \mathcal{D}(D)$. But we have that $D_\varepsilon s- Ds=\varepsilon\widehat{c}(dh)s$, and that $\mathcal{D}(D_\varepsilon)=\mathcal{D}_{\min}(D_\varepsilon)=\mathcal{D}_{\min}(D)=\mathcal{D}(D)$.
Thus given $s \in \mathcal{D}(D^2)$, we have that $\widehat{c}(dh)s \in \mathcal{D}(D)$.

It is easy to verify that $K$ is a zeroth order operator and we refer to \cite{Zhanglectures} (c.f., \cite{witten1982supersymmetry}) for details.
\end{proof}

\begin{remark}
\label{remark_Shubin_vs_us_difference}
In the work of \cite{ProkhorenkovShubin2009Morseboundary}, the Morse inequalities for manifolds with boundary are worked out.
Their definition of \textit{Morse function on manifolds with boundary} corresponds to functions $h$ which are Morse in the interior, have no critical points on the boundary, and the restriction of $h$ to the boundary is also Morse. There the boundary term in equation \eqref{equation_with_boundary_term_Dirac_deformed} plays an important role, and there are contributions at the boundary from the critical points of the Morse function restricted to the boundary. The condition $\mathcal{D}(D_\varepsilon)=\mathcal{D}(D)$ does not hold for the choices of domain in that article, which is why the boundary term does not vanish. 
The gradient flow of such Morse functions would not preserve the manifold with boundary. In particular, the self maps associated to such gradient flows can only be defined in some extension of the manifold with boundary, and there would not be fixed points on the original boundary.

On the other hand there is a Lefschetz fixed point theorem on manifolds with boundary in \cite{brenner1981atiyah,brenner1990atiyah} where there are boundary fixed points. While this is different from the Lefschetz fixed point theorem obtained by setting $b=-1$ in the Morse inequalities of \cite{ProkhorenkovShubin2009Morseboundary}, there are similarities in the boundary contributions.

Our setting is similar to that of Goresky and MacPherson, matching their Morse inequalities and Lefschetz fixed point theorem in intersection homology for Witt spaces.
\end{remark}

Now we can prove the following estimate away from the critical points of a stratified Morse function.

\begin{proposition}
\label{Propostion_growth_estimate_witten_deformed}
In the same setting as Proposition \ref{Proposition_stratified_Morse_Laplace_structure}, there exist constants $C>0$, $\varepsilon_0>0$ such that for any section $s$ in the domain with $\text{supp}(s) \subset (X \setminus \cup_{a \in crit(h)} U_{a})$ and $\varepsilon \geq \varepsilon_0$, one has 
\begin{equation}
    ||D_{\varepsilon}s||_{L^2(X;F)} \geq C \sqrt{\varepsilon}||s||_{L^2(X;F)}.
\end{equation}
\end{proposition}

\begin{proof}
This is similar to Proposition 4.7 of \cite{Zhanglectures}.
Let $C_1$ be the minimum value of $||dh||$ on $X \setminus \cup_{a \in crit(h)} U_{a}$.
Using the Bochner type formula \eqref{equation_Witten_deformed_Laplace_type_expansion},  it is easy to see that there exists a finite constant $K$ such that
\begin{equation}
||D_{\varepsilon} s||_{L^2(X;F)}^2  = \langle D_{\varepsilon} s, D_{\varepsilon} s \rangle_{L^2(X;F)} \geq (\varepsilon^2 C_1^2-\varepsilon |K|)||s||_{L^2(X;F)}
\end{equation}
since $\langle D s, D s \rangle_{L^2(X;F)}$ is positive.
\end{proof}
From the Laplace-type operator in a neighbourhood with a product type metric $U_a$ of a critical point $a$, we get a model operator on the tangent cone (the model harmonic oscillator in the smooth case) for which the null space can be computed explicitly. The elements in the null space of the model operator on the tangent cone restrict to elements on the truncated tangent cone satisfying the boundary conditions corresponding to the choices of complexes on $U_1$ and $U_2$ described above. For the de Rham complex on the infinite cone with the conic metric (the tangent cone at isolated conic singularities), the null space of the model operator has been computed explicitly in \cite{ludwig2017index}. 

The articles mentioned above, and indeed much of the existing literature studies model operators on infinite cones (including the tangent space in the smooth setting) and uses scaling arguments for the Witten deformed Laplace type operators to show that the eigenvalues scale. This can be done in our setting as well on the non-compact tangent cone for a wedge metric following the model computations are in Section 4 of \cite{ludwig2017index}. 
Our next proposition explains this. We begin with a definition.

\begin{definition}
\label{definition_tangent_cone_truncations}
Given the infinite tangent cone $\mathfrak{T}_aX$ of a critical point $a$ of a stratified Morse function $h$ on $\widehat{X}$, we can find a product decomposition $\mathfrak{T}_aX=V_1 \times V_2$, and we can extend the Morse function from the truncated tangent cone $U_a$ to this space as $x_a^2-r_a^2$ where $x_a$ is the geodesic distance on $V_1$ and $r_a$ that on $V_2$, from the critical point corresponding to the induced metric on the tangent cone. 
For any $z >0$, we define $U_{1,z}:=\{x_a \leq z\}$, $U_{2,z}:=\{r_a \leq z\}$, and $U_{a,z}:=U_{1,z} \times U_{2,z}$.
In particular $U_a=U_{a,1}$. Given a complex $\mathcal{P}(d)$, we define the complex $\mathcal{P}_{\varepsilon,B}(U_{a,z})$ similarly to how we defined it for the case where $z=1$.
We denote the non-compact tangent cone by $U_{a,\infty}$.
\end{definition}

We observe that while the local complexes $\mathcal{P}_{B,\varepsilon}(U_{a,z})$ are only defined for $z \in [0,\infty)$ for $\varepsilon \geq 0$, when we restrict to $\varepsilon>0$ they are defined for $z=\infty$ as well, and the cohomology groups of all these complexes are isomorphic as discussed above. In the case of $z=\infty$, the key is that factors of $e^{-\varepsilon (x_a^2+r_a^2)}$ appear in the local cohomology, giving sufficient decay at infinity for the harmonic forms to be $L^2$ bounded on the infinite tangent cone.

\begin{proposition}[Model equations and spectral gap]
\label{Proposition_model_spectral_gap_modified_general}
In the setting of this subsection, consider the complex $\mathcal{P}_{\varepsilon, B}(U_{a,z/\sqrt{\varepsilon}})$ for some fixed $z \in [0,\infty]$. Let $\Delta_{a,\varepsilon}=D_\varepsilon^2$ be the Laplace-type operator on $U_a$ for $\varepsilon>0$. 
If the positive eigenvalues of the Laplace type operator can be written as $\{\lambda^2_i\}_{i \in \mathbb{N}}$ for $\Delta_{a,1}$, then the positive eigenvalues for $\Delta_{a,\varepsilon}$ are $\{\varepsilon \lambda^2_i\}_{i \in \mathbb{N}}$. 
\end{proposition}

In the case of $z=\infty$, we note that $\mathcal{P}_{\varepsilon, B}(U_{a,z/\sqrt{\varepsilon}})=\mathcal{P}_{\varepsilon, B}(U_{a,z})$.

\begin{proof}
Let us first consider the case of $\mathcal{P}_{\varepsilon,N}(U_a)$ where $U_a=C_x(M)$ and $h=x^2$.
For the case where $z=\infty$, it is easy to verify that the cohomology is finite dimensional and the harmonic forms are the forms $\omega_a e^{-\varepsilon(x^2)}$ ($x=x_a$) where $\omega_a \in \mathcal{H}^*(\mathcal{P}_{0,N}(U_a))$, which are $L^2$ bounded for $\varepsilon>0$ since there is enough decay at infinity (see, e.g., \cite[\S 4]{ludwig2017index}).
The complex is Fredholm and there is a spectral gap. Indeed, we can follow the methods in Proposition \ref{Proposition_spectral_properties} to construct an orthonormal basis of eigensections for the Laplace type operator. This corresponds to the singular Sturm-Liouville problem where instead of imposing boundary conditions on an interval as in Step 2 of Proposition \ref{Proposition_spectral_properties}, following  \cite[\S 2.4.3]{al2008sturm} we use the decay at $\infty$ on the infinite cone for the corresponding singular Sturm-Liouville problem on the interval $(0,\infty)$.

The scaling given by $S(x)=\sqrt{\varepsilon}x$ induces an isometry between 
$U_{a,z/\sqrt{\varepsilon}}$ with the metric $dx^2+x^2g_M$ and $U_{a,z}$ with the metric $\varepsilon(dx^2+x^2g_M)$. Thus $S^*\Delta=\varepsilon \Delta$, the eigenforms of $S^*\Delta$ with domain $S^*\mathcal{D}(\Delta)$ are $S^*\psi$ where $\psi$ are the eigenforms of $\Delta$ with domain $\mathcal{D}(\Delta)$, and the eigenvalues scale by $\varepsilon$.

Restricted to the neighbourhood $U_a$, we have that the Witten deformed Laplace-type operator in \eqref{equation_Witten_deformed_Laplace_type_expansion} reduces to $\Delta_{\varepsilon}=\Delta+\varepsilon^2 x^2 +\varepsilon K$, and it is easy to observe that 
\begin{equation}
    \frac{1}{x} [S^* u] = \sqrt{\varepsilon}S^*[\frac{1}{x} u]
\end{equation}
for forms $u$. Thus, the eigenvalues of $\Delta_{\varepsilon}$ for eigensections on the domain of $\mathcal{P}_{\varepsilon,N}(U_{a,z/\sqrt{\varepsilon}})$ scale by $\varepsilon$ under the scaling $S(x)$. This proves the Proposition for $\mathcal{P}_{\varepsilon,N}(C_x(M))$.

It is easy to see that the case for $(\mathcal{P}_{N,\varepsilon}(C_r(M)))^*$ with $h=-r^2$ is similar.
Since the complex on $U_a \cong \mathbb{D}^j \times C(Z)$ is a product complex, separation of variables shows that the eigenforms of the Witten deformed Laplace type operator are power series where the terms are products of eigenforms on the smooth and the conic factors on the model tangent cone. Furthermore, since the Witten deformed complex decomposes as a product (see Definition \ref{definition_product_decompositions_formalized}) for the factors $U_{1,z/\sqrt{\varepsilon}}, U_{2,z/\sqrt{\varepsilon}}$ described in Definition \ref{definition_stratified_Morse_function}. 
This proves the proposition.
\end{proof}

\begin{remark}
We note that the model Witten deformed Laplace type operators can we written as 
\begin{equation}    
\Delta_{\varepsilon}=J_{\varepsilon}+ \varepsilon K, \quad J_{\varepsilon}=D^2+\varepsilon^2||dh||^2
\end{equation}
where $J$ and $K$ commute with each other. This is the analog of equation (17) of \cite{witten1982supersymmetry}, and using the creation and annihilation operator formalism (also known as the ladder operator formalism) used in that article one can give an alternative proof of the growth of eigenvalues.
\end{remark}

Given a geometric endomorphism $T_f$ of $\mathcal{P}$, we define \textit{\textbf{Witten deformed geometric endomorphism}} of $\mathcal{P}_{\varepsilon}$ on $X$ to be $T_{f,\varepsilon}=e^{-\varepsilon h} T_f e^{\varepsilon h}$, and the corresponding adjoint endomorphism $T^*_{f,\varepsilon}=e^{\varepsilon h} T_f^* e^{-\varepsilon h}$, where $T_f=T_{f,0}$ is the geometric endomorphism on the undeformed complex $\mathcal{P}$.
In Example \ref{remark_notation_bundles_vs_density_bundles_2} we saw that $T_f^* \omega = \star^{-1} (f^{-1})^* \star \omega$ for the de Rham complex, for invertible self maps $f$. 
The adjoint endomorphism for the Witten deformed complex can be expressed using the Hodge star as
\begin{equation}
\label{Witten_deformed_geometric_endomorphism}
T_{f,\varepsilon}^*(v)=e^{\varepsilon h}\star^{-1} (f^{-1})^* (\star (v_\lambda e^{-\varepsilon h}))
\end{equation}
which is easy to verify via the computation
\begin{equation*}
\langle T_{f,\varepsilon} v, u \rangle = \int_X e^{-\varepsilon h}f^*(v e^{\varepsilon h}) \wedge \star u = \int_X (v e^{\varepsilon h}) \wedge (f^{-1})^* (\star (u e^{-\varepsilon h}))
\end{equation*}
for forms $u$ and $v$.

\subsubsection{The de Rham Witten instanton complex}

This Subsection \ref{subsection_witten_deformation_morse_de_Rham} is devoted to the proof of Theorem \ref{theorem_small_eig_complex}. The method of Witten deformation has now become a standard proof technique for these types of results and Chapters 4 and 5 of \cite{Zhanglectures} give an excellent exposition  which we follow in closely in this subsection (c.f. Remark 4 of \cite[\S 5.4]{Zhanglectures}). We refer the reader to \cite{jayasinghe2024holomorphicwitteninstantoncomplexes} for a version of this proof for twisted Dolbeault complexes with various choices of domains.

Given $z \in (0, \infty]$, choose $\gamma_{a,z}: \mathbb{R}_s \rightarrow[0,z]$ to be a function in $C^{\infty}(\mathbb{R})$ such that $\gamma_a(s)=1$ when $s<z/2$, and $\gamma_{a,z}(s)=0$ when $s>3z/4$. At each critical point $a$, we define $t=x^2+r^2$ where the functions $x,r$ are the functions in Definition \ref{definition_stratified_Morse_function} corresponding to each critical point $a$.
Given a harmonic form $\omega_{a} \in \mathcal{H}(\mathcal{P}_{B}(U_{a}))$, we define $\omega_{a,\varepsilon}:=\omega_a e^{-t \varepsilon} \in \mathcal{H}(\mathcal{P}_{\varepsilon,B}(U_{a,z}))$ and 
\begin{equation}
\label{equation_modify_forms_cutoff}
    \alpha_{a, z, \varepsilon} :=\left\|\gamma_a(t) \omega_{a,\varepsilon}\right\|_{L^{2}(U_{a,z})}, \hspace{5mm} \eta_{a, z, \varepsilon}:=\frac{\gamma_{a,z}(t) \omega_{a,\varepsilon}}{\alpha_{a, z,  \varepsilon}},
\end{equation}
and we define 
\begin{equation}
\label{definition_perturbed_basis}
    \mathcal{W}(\mathcal{P}_{\varepsilon,B}(U_{a,z})):=\Bigg\{ \eta_{a, z, \varepsilon}=\frac{\gamma_{a,z}(t) \omega_{a,\varepsilon}}{\alpha_{a, z,  \varepsilon}} : \omega_{a,\varepsilon} \in \mathcal{H}(\mathcal{P}_{\varepsilon, B}(U_{a,z})) \Bigg\}
\end{equation}
where the forms $\eta_{a, z, \varepsilon}$ each have unit $L^2$ norm and are supported on $U_{a,z}$ as in Definition \ref{definition_tangent_cone_truncations}. In the case where $z=1$ we drop the subscript $z=1$ and denote the  corresponding forms $\alpha_{a, \varepsilon}, \eta_{a, \varepsilon}$, and the cutoff function $\gamma_a$. Since the fundamental neighbourhood $U_a$ of any critical point $a$ is quasi-isometric to that on the truncated tangent cone (see Remark \ref{remark_general_domain_choices_Morse}), we can extend the forms from a small fundamental neighbourhood $U_{a}$ to $X$ by $0$ away from their supports.
We have the Witten deformed Dirac type operator $D_{\varepsilon}=P_{\varepsilon}+P^*_{\varepsilon}$, whose square is the Witten deformed Laplace type operator $\Delta_{\varepsilon}$.

Let $E_{\varepsilon}$ be the vector space generated by the set $\left\{\mathcal{W}(\mathcal{P}_{\varepsilon,B}(U_{a})) : a \in Cr(h)\right\}$ corresponding to all the harmonic sections $\omega_{a,\varepsilon}$ as above; $E_{\varepsilon}$ is a subspace of $L^2\Omega(X;E)$ since each $\eta_{a, \varepsilon}$ has finite length and compact support. Since $E_{\varepsilon}$ is finite dimensional (in particular closed), there exists an orthogonal splitting
\begin{equation}
    L^2\Omega(X;E)=E_{\varepsilon} \oplus E_{\varepsilon}^{\perp}.
\end{equation}
where $E_{\varepsilon}^{\perp}$ is the orthogonal complement of $E_{\varepsilon}$ in $L^2\Omega(X;E)$. Denote by $\Pi_{\varepsilon}, \Pi_{\varepsilon}^{\perp}$ the orthogonal projection maps from $L^2\Omega(X;E)$ to $E_{\varepsilon}, E_{\varepsilon}^{\perp}$, respectively.

We split the deformed Witten operator by the projections as follows.
\begin{equation}
\label{equation_fourfold_operators}
D_{\varepsilon, 1} =\Pi_{\varepsilon} D_{\varepsilon} \Pi_{\varepsilon}, \hspace{3mm}
D_{\varepsilon, 2} =\Pi_{\varepsilon} D_{\varepsilon} \Pi_{\varepsilon}^{\perp}, \hspace{3mm} D_{\varepsilon, 3} =\Pi_{\varepsilon}^{\perp} D_{\varepsilon} \Pi_{\varepsilon}, \hspace{3mm} D_{\varepsilon, 4} =\Pi_{\varepsilon}^{\perp} D_{\varepsilon} \Pi_{\varepsilon}^{\perp}.
\end{equation}
In the following proposition and its proof, the norms and inner products are those for the $L^2$ forms (unless otherwise specified) corresponding to the Hilbert space of the complex $\mathcal{P}_{\varepsilon}(X)$, and the inner products are the same for all $\varepsilon \geq 0$. We observe that quantities such as $\eta_{a,\epsilon}$ and $\gamma_a(t)$ are only supported in the fundamental neighbourhoods corresponding to the critical points $a$, where the $L^2$ inner products for the local complexes $\mathcal{P}_{\varepsilon,B}(U_{a,z})$ match the global inner product. In the proof, we will only need the complexes for $z=1$ and $z=\infty$, the latter being used only in the proof of the third statement of the proposition.

\begin{proposition}
\label{proposition_Zhangs_Morse_inequalities_intermediate_estimates}
In the setting described above, we have the following estimates.
\begin{enumerate}
    \item There exist a constant $\varepsilon_{0}>0$ such that for any $\varepsilon>\varepsilon_{0}$ and for any $s \in \mathcal{D}(D_{\varepsilon})$, $$||D_{\varepsilon, 1}s|| \leq \frac{\|s\|}{2\varepsilon}.$$ 
    \item There exists a constant $\varepsilon_{1}>0$ such that for any $s \in E_{\varepsilon}^{\perp} \cap \mathcal{D}(D_{\varepsilon}), s^{\prime} \in E_{\varepsilon}$, and $\varepsilon>\varepsilon_{1}$,
$$
\begin{aligned}
\left\|D_{\varepsilon, 2} s\right\| & \leq \frac{\|s\|}{2\varepsilon} \\
\left\|D_{\varepsilon, 3} s^{\prime}\right\| & \leq \frac{\left\|s^{\prime}\right\|}{2\varepsilon}
\end{aligned}
$$
    
    \item  There exist constants $\varepsilon_{2}>0$ and $C>0$ such that for any $s \in E_{\varepsilon}^{\perp} \cap \mathcal{D}(D_{\varepsilon})$ and $\varepsilon>\varepsilon_{2}$,
$$
\left\|D_{\varepsilon} s\right\| \geq C \sqrt{\varepsilon}\|s\|
$$
\end{enumerate}
\end{proposition}

\begin{remark}
We remark that the first Sobolev space appearing in the statement of Proposition 5.6 of \cite{Zhanglectures} in the smooth setting is replaced by the domain $\mathcal{D}(D_\varepsilon)$ in this generalization to our singular setting. We observe that on non-Witt spaces different choices of domains for $P_\varepsilon$ have different local cohomology groups, and the estimates hold only for sections in the domain $\mathcal{D}(D_\varepsilon)$.
\end{remark}

\begin{proof}
\textbf{\textit{Outline:}} We adapt the proof of Proposition 5.6 of \cite{Zhanglectures}. The notation is slightly different (in particular the Witten deformation parameter is taken to be $T$ in that article) but it is easy to follow that proof.
The first statement in this version is different from that in the smooth setting and we prove it in detail. The proof of the second and third statements are essentially the same as in the smooth setting. The same proofs in the smooth case go through for these two results after replacing the model solutions in the smooth proofs with the forms in $\mathcal{W}(\mathcal{P}_{\varepsilon,B}(U_{a}))$. We can pick an orthonormal basis $\widehat{W_{a,\varepsilon}}$ for the vector space generated by those forms. 

\textbf{\textit{proof of 1:}} For any $s \in L^2\Omega(X)$ the projection $\Pi_{\varepsilon} s$ can be written
\begin{equation}
\label{Projection_first_witten_deform}    
\Pi_{\varepsilon} s=\sum_{a \in Cr(h)} \sum_{\eta \in \widehat{W_{a,\varepsilon}}} \left\langle \eta, s\right\rangle_{L^2} \eta
\end{equation}
where each $\eta$ can be written as a linear combination of forms $\eta_{a,\varepsilon}$ as defined in equation \eqref{definition_perturbed_basis}. We have that
\begin{equation}
    ||\Pi_{\varepsilon} D_\varepsilon \eta_{a, \varepsilon}|| \leq e^{-C_0 \varepsilon}
\end{equation}
for some $C_0>0$ and $\varepsilon$ large enough using the following argument. Since $D_{\varepsilon}=D+ \varepsilon \widehat{c}(dh)$, observe that for a smooth function $v$ we have 
\begin{equation}
\label{equation_Leibniz_for_Witten_deformation}
    D_{\varepsilon}( v \omega_{a,\varepsilon}) = D(v \omega_{a,\varepsilon})+ \varepsilon \widehat{c}(dh)( v \omega_{a,\varepsilon}) = cl(dv) \omega_{a,\varepsilon} + v \big ( (D \omega_{a,\varepsilon}) + \varepsilon \widehat{c}(dh)(\omega_{a,\varepsilon}) \big ) = cl(dv) \omega_{a,\varepsilon}
\end{equation}
since $D_{\varepsilon} \omega_{a,\varepsilon}=0$. Since $\text{supp}( d\gamma_a(t)) \subseteq \{1/2 \leq t=x^2+r^2 \leq 3/4\}$ (recall that $\gamma_a=\gamma_{a,z=1}$), 
each $D_{\varepsilon}\eta_{a,\varepsilon}$ is compactly supported in $\{1/2 \leq t=x^2+r^2 \leq 3/4\}$. Then for $\varepsilon$ large enough
\begin{equation} 
\label{inequality_useful_for_Morse_proof_234}
\left\langle D_{\varepsilon} \eta_{a, \varepsilon}, \eta_{a, \varepsilon} \right\rangle_{L^2} =  \left\langle cl(d\gamma_a(t)) \omega_{a, \varepsilon}/ {\alpha_{a,\varepsilon}}, \eta_{a, \varepsilon} \right\rangle_{L^2} \leq e^{-C_0 \varepsilon}
\end{equation}
for some large enough positive constant $C_0$ 
since $cl(d\gamma_a(t)) \omega_{a, \varepsilon}/ {\alpha_{a,\varepsilon}}$ is supported away from $t \leq 1/2$ and $\omega_{a, \varepsilon}=\omega_a e^{-t\varepsilon}$. 

Since each $\eta$ has unit norm, the Cauchy Schwartz inequality shows that $|\left\langle \eta, s\right\rangle_{L^2}| \leq ||s||$.
Since the forms $\eta$ in the basis $\widehat{W_{a,\varepsilon}}$ for a given critical point $a$ are orthogonal,
and since the supports of the forms in $\widehat{W_{a,\varepsilon}}$ for different critical points have no intersection, using equation \eqref{Projection_first_witten_deform} we see that
\begin{equation}
    ||\Pi_{\varepsilon} D_\varepsilon \Pi_{\varepsilon}s|| \leq e^{-C_0 \varepsilon}||s||
\end{equation}
for large enough $\varepsilon$ in order to compensate for the finite sum of terms as well as ensuring \eqref{inequality_useful_for_Morse_proof_234}. The estimate of the Proposition statement follows since the exponential decays faster than the required decay.

\textbf{\textit{proof of 2:}} In the smooth setting, this follows from Proposition 4.11 of \cite{Zhanglectures} which we will redo in this case, changing the details where necessary.
Since $D_\varepsilon$ is self-adjoint, it is easy to see that $D_{\varepsilon,2}$ is the adjoint of $D_{\varepsilon,3}$, and it suffices to prove the first estimate of the two.

Since each $\eta_{a, \varepsilon}$ has support in $U_a$, one deduces that for any $s \in E_{\varepsilon}^{\perp} \cap \mathcal{D}(D_{\varepsilon})$,

$$
\begin{aligned}
D_{\varepsilon, 2} s & =\Pi_{\varepsilon} D_{\varepsilon} \Pi_{\varepsilon}^{\perp} s=\Pi_{\varepsilon} D_{\varepsilon} s \\
& =\sum_{a \in Cr(h)} \sum_{\eta \in \widehat{W_{a,\varepsilon}}} \left\langle\eta, D_{\varepsilon} s\right\rangle_{L^2} \eta \\
& =\sum_{a \in Cr(h)} \sum_{\eta \in \widehat{W_{a,\varepsilon}}} \eta \int_{U_a} \left\langle\eta, D_{\varepsilon} s\right\rangle_{\Lambda^{\cdot}} dv_{U_a} \\
& =\sum_{a \in Cr(h)} \sum_{\eta \in \widehat{W_{a,\varepsilon}}} \eta \int_{U_a} \left\langle D_{\varepsilon} \eta,  s\right\rangle_{\Lambda^{\cdot}}  dv_{U_a}
\end{aligned}
$$
where we have denoted the inner product on the wedge exterior bundle by the angle brackets with subscript $\Lambda^{\cdot}$. Since each $\eta$ can be written as a linear combination of forms $\eta_{a,\varepsilon}$ as defined in equation \eqref{definition_perturbed_basis}, it suffices to estimate the integral over $U_a$ with integrand $\left\langle D_{\varepsilon} \eta_{a,\varepsilon},  s\right\rangle_{\Lambda^{\cdot}}$. This can be expanded as
\begin{equation}
    \left\langle D_{\varepsilon} \frac{\gamma_a(t) \omega_{a} e^{-(x^2+r^2) \varepsilon}}{{\alpha_{a, \varepsilon}}},  s\right\rangle_{L^2} =\left\langle \frac{cl(d\gamma_a(t)) \omega_{a} e^{-(x^2+r^2) \varepsilon}}{{\alpha_{a, \varepsilon}}},  s\right\rangle_{L^2} 
\end{equation}
restricted to each $U_a$, where we have used the argument in \eqref{equation_Leibniz_for_Witten_deformation}, similar to equation (4.40) of \cite{Zhanglectures}. We know that $d\gamma_a$ is only supported on the set $\{1/2 \leq t=(x^2+r^2) \leq 3/4\}$ in each $U_a$, we see that the desired inequality follows from 
Cauchy-Schwarz.

\textbf{\textit{proof of 3:}} This is the third statement in Proposition 5.6 of in \cite{Zhanglectures}, where the proof is that of Proposition 4.12 of that article.
This is proven in three steps:
\begin{enumerate}
    \item  Assume $\operatorname{supp}(s) \subset \bigcup_{a \in Cr(h)} U_{a}$.
    \item  Assume $\operatorname{supp}(s) \subset X \backslash \bigcup_{a \in Cr(h)} V_{a}$, where $V_{a}=\{t=(x^2+r^2) \leq 3/4\} \subset U_a$ where the functions $x,r$ are those in the neighbourhood given in Definition \ref{definition_stratified_Morse_function}.
    \item General Case.
\end{enumerate}
Note that the forms in $\mathcal{W}(\mathcal{P}_{\varepsilon,N}(U_{a}))$ are only supported on $V_a$. Thus the second step above is the analog of that of the second step of Proposition 4.12 of \cite{Zhanglectures}.

\textit{Step 1:} 
In this step, Zhang uses the forms $\mathcal{W}(\mathcal{P}_{\varepsilon,B}(U_{a,\infty}))$ in the smooth case, where he explicitly computes the normalization factors $\alpha_{a, \varepsilon}$ in \eqref{equation_modify_forms_cutoff}. We follow his approach without attempting to making the factors explicit. Observe that since $\gamma_{a,\infty}$ is identically $1$ on the infinite cone, the forms in $\mathcal{W}(\mathcal{P}_{\varepsilon,B}(U_{a,\infty}))$ are elements in $\mathcal{H}(\mathcal{P}_{\varepsilon,B}(U_{a,\infty}))$ on the infinite cone with unit norm.
For any section $s$ verifying $\operatorname{supp}(s) \in \bigcup_{a \in {cr}(h)} U_{a}$, the projection $\Pi'_{\varepsilon} s$ is defined by
\begin{equation}
\label{Projection_first_witten_deform_non_compact}    
\Pi'_{\varepsilon} s:=\sum_{a \in Cr(h)} \sum_{\omega_{a,\varepsilon} \in \widehat{W'_{\varepsilon,a}}} \left\langle \omega_{a,\varepsilon}, s\right\rangle_{L^2(U_{a,\infty})} \omega_{a,\varepsilon}
\end{equation}
where $\widehat{W'_{\varepsilon,a}}$ is an orthonormal basis for the vector space $\mathcal{H}(\mathcal{P}_{\varepsilon,B}(U_{a,\infty}))$.
Then $\Pi'_{\varepsilon}$ is an orthogonal projection
\begin{equation}
    \Pi'_{\varepsilon} :\bigoplus_{a \in {cr}(h)} L^2\Omega(U_{a,\infty}) \rightarrow \mathcal{H}(\mathcal{P}_{\varepsilon,B}(U_{a,\infty})).
\end{equation}
Note that if $\omega_{a,\varepsilon} \in \mathcal{H}(\mathcal{P}_{\varepsilon,B}(U_{a,\infty}))$, then $\gamma_a(t) \omega_{a,\varepsilon} \in E_{\varepsilon}$ and so if $s \in E^{\perp}_{\varepsilon}$ we have
\begin{equation}
\label{equation_for_projection_modified_21}
\Pi'_{\varepsilon} s=\sum_{a \in Cr(h)} \sum_{\omega_{a,\varepsilon} \in \widehat{W'_{\varepsilon,a}}}
\omega_{a,\varepsilon} 
\left\langle(1-\gamma_a(t)) \omega_{a,\varepsilon}, s \right\rangle_{L^2\Omega(U_{a,\infty};E)}.
\end{equation}
Using an argument similar to that in the first point of the proposition, we can show that
\begin{equation}
    ||(1-\gamma_a(t))\omega_{a,\varepsilon}||^2_{L^2\Omega(U_{a,\infty};E)} \leq \frac{C}{\sqrt{\varepsilon}}
\end{equation}
for some constant $C$ since $\gamma_a$ equals to 1 near each $a$. Then equation \eqref{equation_for_projection_modified_21} shows that there exists $C_{5}>0$ such that when $\varepsilon \geq 1$,
\begin{equation}
\label{equation_4.46_of_Zhang}
\left\|\Pi'_{\varepsilon} s \right\|^{2} \leq \frac{C_{5}}{\sqrt{\varepsilon}}\|s\|^{2}.
\end{equation}
Since $s-\Pi'_{\varepsilon} s$ is in $E_{\varepsilon}^{\perp}$, by equation \eqref{equation_4.46_of_Zhang} and the spectral gap given by Proposition \ref{Proposition_model_spectral_gap_modified_general},
there exist constants $C_{6}>0$, $C_{7}>0$ such that
\begin{equation}
\begin{aligned}
\left\|D_{\varepsilon} s\right\|^{2}= & \left\|D_{\varepsilon}\left(s-\Pi'_{\varepsilon} s\right)\right\|^{2} \geq C_{6} \varepsilon\left\|s-\Pi'_{\varepsilon} s\right\|^{2} \\
& \geq \frac{C_{6} \varepsilon}{2}\|s\|^{2}-C_{7} \sqrt{\varepsilon}\|s\|^{2},
\end{aligned}
\end{equation}
from which one sees directly that there exists $\varepsilon_{1}>0$ such that for any $\varepsilon \geq \varepsilon_{1}$
\begin{equation}
\left\|D_{\varepsilon} s\right\| \geq \frac{\sqrt{C_{6} \varepsilon}}{2}\|s\|,
\end{equation}
proving step 1. 

\textit{Step 2:} 
Since $\operatorname{supp}(s) \subset X \backslash \bigcup_{a \in Cr(h)} V_{a}$, one can proceed as in the proof of Proposition \ref{Propostion_growth_estimate_witten_deformed} to find constants $\varepsilon_{2}>0$ and $C_{8}>0$, such that for any $\varepsilon \geq \varepsilon_{2}$,
$$
\left\|D_{\varepsilon} s\right\| \geq C_{8} \sqrt{\varepsilon}\|s\|,
$$
proving step 2.

\textit{Step 3:} 
Let $\widetilde{\gamma} \in C_{\Phi}^{\infty}(X)$ be defined such that restricted to each $U_{a}$ for critical points $a$, $\widetilde{\gamma}(t)=\gamma_a(t)$, and that $\left.\widetilde{\gamma}\right|_{X \backslash \bigcup_{a \in Cr(h)} U_{a}}=0$.
For any  $s \in E_{\varepsilon}^{\perp} \cap \mathcal{D}(D_{\varepsilon})$ we see that $\widetilde{\gamma} s \in E_{\varepsilon}^{\perp} \cap \mathcal{D}(D_{\varepsilon})$.
Then the results of steps 1 and 2 shows that there exists $C_{9}>0$ such that for any $\varepsilon \geq \varepsilon_{0}+\varepsilon_{1}+\varepsilon_{2}$,
$$
\begin{gathered}
\left\|D_{\varepsilon} s\right\| \geq \frac{1}{2}\left(\left\|(1-\widetilde{\gamma}) D_{\varepsilon} s\right\|+\left\|\widetilde{\gamma} D_{\varepsilon} s\right\|\right)
=\frac{1}{2}\left(\left\|D_{\varepsilon}((1-\widetilde{\gamma}) s)+[D, \widetilde{\gamma}] s\right\|+\left\|D_{\varepsilon}(\widetilde{\gamma} s)+[\widetilde{\gamma}, D] s\right\|\right) \\
\geq \frac{\sqrt{\varepsilon}}{2}\left(C_{8}\|(1-\widetilde{\gamma}) s\|+\sqrt{C_{6}}\|\widetilde{\gamma} s\|_{0}\right)-C_{9}\|s\| 
\geq C_{10} \sqrt{\varepsilon}\|s\|_{0}-C_{9}\|s\|,
\end{gathered}
$$
where $C_{10}=\min \left\{\sqrt{C_{6}} / 2, C_{8} / 2\right\}$, which completes the proof of the proposition.
\end{proof}

For any $c>0$, denote by {$E_{\varepsilon,c}$} the direct sum of the eigenspaces of $D_{\varepsilon}$ with eigenvalues lying in $[-c, c]$, which is a finite dimensional subspace of $L^2\Omega(x)$. Let $\Pi_{\varepsilon,c}$ be the orthogonal projection from $L^2\Omega(x)$ to $E_{\varepsilon,c}$. The following is a generalization of Lemma 5.8 of \cite{Zhanglectures}. Again we closely follow Zhang's proof, modifying as needed for the singular setting.

\begin{lemma}
\label{Lemma_inequality_spectral_for_Witten_deformation}
There exist $C_1>0$, $\varepsilon_3>0$ such that for any $\varepsilon>\varepsilon_3$, and any $\sigma \in E_{\varepsilon}$, 
\begin{equation}
    \left\|\Pi_{\varepsilon,c} \sigma-\sigma\right\| \leq \frac{C_1}{\varepsilon}\|\sigma\|.
\end{equation}
\end{lemma}

\begin{proof}
Let $\delta=\{\lambda \in \mathbf{C}:|\lambda|=c\}$ be the counter-clockwise oriented circle. By Proposition \ref{proposition_Zhangs_Morse_inequalities_intermediate_estimates}, one deduces that for any $\lambda \in \delta, \varepsilon \geq \varepsilon_{0}+\varepsilon_{1}+\varepsilon_{2}$ and $s \in \mathcal{D_\varepsilon}$, there exists positive constants $c_0, C_0$ such that
\begin{equation}   
\begin{gathered}
\left\|\left(\lambda-D_{\varepsilon}\right) s\right\| \geq \frac{1}{2}\left\|\lambda \Pi_{\varepsilon} s -D_{\varepsilon, 1}\Pi_{\varepsilon}s   -D_{\varepsilon, 2} \Pi_{\varepsilon} s\right\| +\frac{1}{2}\left\|\lambda \Pi_{\varepsilon}^{\perp} s-D_{\varepsilon, 3} \Pi_{\varepsilon}^{\perp} s-D_{\varepsilon, 4} \Pi_{\varepsilon}^{\perp} s\right\| \\
\geq \frac{1}{2}\left(\left(c_0-\frac{1}{\varepsilon}\right)\left\|\Pi_{\varepsilon} s\right\|+\left(C_0 \sqrt{\varepsilon}-c_0-\frac{1}{\varepsilon}\right)\left\|\Pi_{\varepsilon}^{\perp} s\right\|\right) .
\end{gathered}
\end{equation}
This shows that there exist $\varepsilon_{4}>\varepsilon_{0}+\varepsilon_{1}+\varepsilon_{2}$ and $C_{2}>0$ such that for any $\varepsilon \geq \varepsilon_{4}$ and $s \in \mathcal{D_\varepsilon}$,
\begin{equation}  
\label{equation_(5.27)_Zhang}
\left\|\left(\lambda-D_{\varepsilon}\right) s\right\| \geq C_{2}\|s\|.
\end{equation}
Thus, for any $\varepsilon  \geq \varepsilon _{4}$ and $\lambda \in \delta$,
\begin{equation}  
\lambda-D_{\varepsilon }:  \mathcal{D_\varepsilon} \rightarrow L^2\Omega(X)
\end{equation}
is invertible and the resolvent $\left(\lambda-D_{\varepsilon}\right)^{-1}$ is well-defined.
By the spectral theorem 
one has
\begin{equation}
\label{equation_spectral_projector_countour_integral}
\Pi_{\varepsilon,c} \sigma-\sigma=\frac{1}{2 \pi \sqrt{-1}} \int_{\delta}\left(\left(\lambda-D_{\varepsilon}\right)^{-1}-\lambda^{-1}\right) \sigma d \lambda.
\end{equation}
Now one verifies directly
that for any $\sigma \in E_\varepsilon$
\begin{equation}
\left(\left(\lambda-D_{\varepsilon}\right)^{-1}-\lambda^{-1}\right) \sigma=\lambda^{-1}\left(\lambda-D_{\varepsilon}\right)^{-1} (D_{\varepsilon, 1}+D_{\varepsilon, 3}) \sigma .
\end{equation}
From Proposition \ref{proposition_Zhangs_Morse_inequalities_intermediate_estimates} and \eqref{equation_(5.27)_Zhang}, one then deduces that for any $\varepsilon \geq \varepsilon_{4}$ and $\sigma \in E_{\varepsilon}$,
\begin{equation}
\label{equation_estimate_finale}
\left\|\left(\lambda-D_{\varepsilon}\right)^{-1} (D_{\varepsilon, 1}+D_{\varepsilon, 3}) \sigma\right\| \leq C_{2}^{-1}\left\|(D_{\varepsilon, 1}+D_{\varepsilon, 3}) \sigma\right\| \leq \frac{1}{C_{2} \varepsilon}\|\sigma\|
\end{equation}
From \eqref{equation_spectral_projector_countour_integral}-\eqref{equation_estimate_finale}, we get the estimate in the statement of the Lemma, finishing the proof.
\end{proof}

In Remark 5.9 of \cite{Zhanglectures}, Zhang explains that one can work out an analog of the proof with real coefficients, whereas the proof above implicitly uses the fact that we work in the category of complex coefficients.
The following is a generalization of Proposition 5.5 of \cite{Zhanglectures}.
\begin{proposition}
\label{Proposition_small_eig_estimate_and_dimension}
For any $c>0$, there exists $\varepsilon_0>0$ such that when $\varepsilon>\varepsilon_0$, the number of eigenvalues in $[0,c]$ of $\Delta_{\varepsilon,k}$, the Laplace-type operator acting on forms of degree $k$ of the complex $\mathcal{P}_{\varepsilon}(X)$, is equal to $\sum_{a \in Cr(h)} \dim \mathcal{H}^k(\mathcal{P}_{\varepsilon,B}(U_{a}))$.
\end{proposition}

\begin{proof}
By applying Lemma \ref{Lemma_inequality_spectral_for_Witten_deformation} to the elements of $\eta_{a,\varepsilon} \in \left\{\mathcal{W}(\mathcal{P}_{\varepsilon,B}(U_{a})) : a \in Cr(h)\right\}$, one sees easily that for any $c>0$ when $\varepsilon$ is large enough, the elements of the set $\{\Pi_{\varepsilon,c} \eta_{a,\varepsilon}: \eta_{a,\varepsilon} \in  \cup_{a \in Cr(h)} \widehat{W}_{a,\varepsilon}\}$, where $\widehat{W}_{a,\varepsilon}$ is the basis chosen in the proof of Proposition \ref{proposition_Zhangs_Morse_inequalities_intermediate_estimates}, are linearly independent. Thus, there exists $\varepsilon_{5}>0$ such that when $\varepsilon \geq \varepsilon_{5}$,
\begin{equation}
    \dim E_{\varepsilon,c} \geq \operatorname{dim} E_{\varepsilon}.
\end{equation}
Now if $\operatorname{dim} E_{\varepsilon,c}>\operatorname{dim} E_{\varepsilon}$, then there should exist a nonzero element $s \in E_{\varepsilon,c}$ such that $s$ is perpendicular to $\Pi_{\varepsilon,c} E_{\varepsilon}$. That is, $\left\langle s,\Pi_{\varepsilon,c} \eta_{a,\varepsilon} \right\rangle_{L^2(X;E)}=0$ for any $\eta_{a,\varepsilon}$ as above.
Then as in \eqref{Projection_first_witten_deform}, we can write the projection
\begin{equation}
\begin{gathered}
\Pi_{\varepsilon} s=\sum_{a \in Cr(h)} \sum_{\eta \in \widehat{W_{a,\varepsilon}}} \Big( \left\langle s, \eta\right\rangle_{L^2} \eta
-\left\langle s, \Pi_{\varepsilon,c} \eta 
\right\rangle_{L^2}\Pi_{\varepsilon,c} \eta
\Big)\\
=\sum_{a \in Cr(h)} \sum_{\eta \in \widehat{W_{a,\varepsilon}}} \left\langle s, \eta\right\rangle_{L^2} (\eta-\Pi_{\varepsilon,c} \eta)
+\sum_{a \in Cr(h)} \sum_{\eta \in \widehat{W_{a,\varepsilon}}} \left\langle s, (\eta-\Pi_{\varepsilon,c} \eta)\right\rangle_{L^2} \Pi_{\varepsilon,c} \eta.
\end{gathered}
\end{equation}
This together with Lemma \ref{Lemma_inequality_spectral_for_Witten_deformation} shows that there exists $C_{3}>0$ such that when $\varepsilon \geq \varepsilon_{5}$,
\begin{equation}
    \left\|\Pi_\varepsilon s\right\| \leq \frac{C_{3}}{\varepsilon}\|s\|.
\end{equation}
Thus, there exists a constant $C_{4}>0$ such that when $\varepsilon>0$ is large enough,
\begin{equation}
\left\|\Pi_\varepsilon^{\perp} s\right\| \geq\|s\|-\left\|\Pi_\varepsilon s\right\| \geq C_{4}\|s\| .
\end{equation}

Using this and Proposition \ref{proposition_Zhangs_Morse_inequalities_intermediate_estimates} one sees that when $\varepsilon>0$ is large enough,
\begin{equation}
\begin{gathered}
C C_{4} \sqrt{\varepsilon}\|s\| \leq\left\|D_{\varepsilon} \Pi_\varepsilon^{\perp} s\right\|=\left\|D_{\varepsilon} s-D_{\varepsilon} \Pi_\varepsilon s\right\|
=\left\|D_{\varepsilon} s-D_{\varepsilon, 1} s - D_{\varepsilon, 3} s\right\|\\
\leq\left\|D_{\varepsilon} s\right\|+\left\|D_{\varepsilon, 1} s\right\|+\left\|D_{\varepsilon, 3} s\right\|
\leq\left\|D_{\varepsilon} s\right\|+\frac{1}{\varepsilon}\|s\|
\end{gathered}
\end{equation}
from which one gets
\begin{equation}
    \left\|D_{\varepsilon} s\right\| \geq C C_{4} \sqrt{\varepsilon}\|s\|-\frac{1}{\varepsilon}\|s\|.
\end{equation}
Clearly, when $\varepsilon>0$ is large enough, this contradicts the assumption that $s$ is a nonzero element in $E_{\varepsilon,c}$.
Thus, one has
\begin{equation}
    \dim E_{\varepsilon,c}=\dim E_\varepsilon=\sum_{a \in Cr(h)} \dim \mathcal{H}^k(\mathcal{P}_{\varepsilon,B}(U_{a,z}))
\end{equation}
proving the result.
\end{proof}

These results show that the small eigenvalue complex of the Witten deformed Laplacian encodes information about the critical point structures. This is also known as the Witten instanton complex.

\begin{theorem}[Witten instanton complex]
\label{theorem_small_eig_complex}
For any integer $0 \leq k \leq n$, let $
\mathrm{F}_{\varepsilon, k}^{[0, c]} \subset L^2\Omega^k(X)$ denote the vector space generated by the eigenspaces of $\Delta_{\varepsilon,k}$ associated with eigenvalues in $[0, c]$. For any $c>0$, there exists $\varepsilon_0>0$ such that when $\varepsilon>\varepsilon_0$ this has the same dimension as $\sum_{a \in Cr(h)} \mathcal{H}^k(\mathcal{P}_{\varepsilon,B}(U_a))$, and together form a finite dimensional subcomplex of $\mathcal{P}_{\varepsilon}(X)$ :
\begin{equation}
    \label{small_eigenvalue_complex}
\left(\mathrm{F}_{\varepsilon, k}^{[0, c]}, P_{\varepsilon}\right): 0 \longrightarrow \mathrm{F}_{\varepsilon, 0}^{[0, c]} \stackrel{P_{\varepsilon}}{\longrightarrow} \mathrm{F}_{\varepsilon, 1}^{[0, c]} \stackrel{P_{\varepsilon}}{\longrightarrow} \cdots \stackrel{P_{\varepsilon}}{\longrightarrow} \mathrm{F}_{\varepsilon, n}^{[0, c]} \longrightarrow 0.
\end{equation}
\end{theorem}

\begin{proof}    
This follows from Proposition \ref{Proposition_small_eig_estimate_and_dimension} once one shows that the small eigenvalue eigensections form a complex.
Since
$$
P_{\varepsilon} \Delta_{\varepsilon}=\Delta_{\varepsilon} P_{\varepsilon} =P_{\varepsilon} P_{\varepsilon}^* P_{\varepsilon} \text{   and    }
P_{\varepsilon}^* \Delta_{\varepsilon}=\Delta_{\varepsilon} P_{\varepsilon}^* =P_{\varepsilon}^* P_{\varepsilon} P_{\varepsilon}^*
$$
one sees that $P_{\varepsilon}$ (resp. $P_{\varepsilon}^*$ ) maps each $\mathrm{F}_{\varepsilon, k}^{[0, c]}$ to $\mathrm{F}_{\varepsilon, k+1}^{[0, c]}$ (resp. $\mathrm{F}_{\varepsilon, k-1}^{[0, c]}$ ). 
The Kodaira decomposition of $\mathcal{P}_{\varepsilon}(X)$ restricts to this finite dimensional complex $\left(\mathrm{F}_{\varepsilon, k}^{[0, c]}, P_{\varepsilon}\right)$. In particular for an integer $0 \leq k \leq n$,
$$
\beta_{\varepsilon, k}^{[0, c]}:=\operatorname{dim}\left(\frac{\left.\operatorname{ker} P_{\varepsilon}\right|_{\mathrm{F}_{\varepsilon, k}^{[0, c]}}}{\left.\operatorname{Im} P_{\varepsilon}\right|_{\mathrm{F}_{\varepsilon, k}^{[0, c]}}}\right)
$$
equals $\operatorname{dim}\left(\left.\operatorname{ker} \Delta_{\varepsilon}\right|_{L^2\Omega^{k}(X)}\right)$, which in turn equals $\beta_{k}$ by the discussion in Subsection \ref{subsubsection_Witten_deformed_elliptic}.
\end{proof}

\subsubsection{Morse and Lefschetz Morse inequalities}

We prove the following polynomial form of the Strong Morse inequalities.

\begin{theorem}
\label{Theorem_strong_Morse_de_Rham}
[Strong polynomial Morse inequalities]
Let $\widehat{X}$ be a Witt space of dimension $n$ with a wedge metric and a stratified Morse function $h$. Let $\mathcal{P}(X)=(L^2\Omega(X),d)$ be the de Rham complex.
Then there exist non-negative integers $Q_0,..., Q_{n-1}$ such that
\begin{equation}
\label{Morse_inequality_de_Rham_dimension_cohomology}
    M(X,h)(b)-N(X)(b) = (1+b) \sum_{k=0}^{n-1} Q_k b^k.
\end{equation}
\end{theorem}

We observe that for Morse functions for which the gradient flow of $h$ is stratum preserving, equation \eqref{Morse_inequality_de_Rham_dimension_cohomology} evaluated at $b=-1$ corresponds to the Lefschetz fixed point theorem for the geometric endomorphism $T_f$ obtained from the short time flow by the gradient flow of $h$.
Indeed the evaluation at $b=-1$ is preserved by the duality given by Proposition \ref{proposition_Lefschetz_on_adjoint}, which corresponds to the well known manifestation of Poincar\'e duality in Morse theory in the smooth case (see, e.g., the introduction of \cite{witten1984holomorphic}).
In the smooth case we see that the local cohomology $\mathcal{H}^{*}(\mathcal{P}_B(U_a))$ at a critical point $a$ of $h$, where the Hessian of $h$ has $j$ negative eigenvalues, will be a one dimensional vector space in degree $j$ and all other cohomology groups will be $0$. This can also be seen from the results in Subsection \ref{subsection_analytic_derivation_intersection} where we have worked out the cohomology groups explicitly, using the fact that $U_1$ and $U_2$ both are discs with spherical links which only have cohomology in degree $0$, and the formulas of Proposition \ref{proposition_local_product_Lefschetz_heat}.
Similarly, it is easy to see that the Morse inequalities in \cite{ludwig2017index} for isolated conic singularities can be recovered using the isomorphism of $L^2$ de Rham cohomology and intersection homology with middle perversity (both for the local and global complexes). 

\begin{proof}[Proof of Theorem \ref{Theorem_strong_Morse_de_Rham}]
To prove the polynomial Morse inequalities, we apply equation \eqref{equation_with_the_b} of Theorem \ref{Lefschetz_supertrace} with the endomorphism $T=Id$
\begin{equation}
\label{interesting_step_small_eig_complex}
    \mathcal{L}(\mathcal{P},T)(b,t)=L(\mathcal{P},T)(b)+ (1+b) \sum_{k=0}^{n-1} b^k S_k(t)
\end{equation}
to the complex \eqref{small_eigenvalue_complex}. Since this is a complex of finite dimensional Hilbert spaces, we can take $t$ to $0$ 
to see that the left hand side is exactly the expression
\begin{equation}
    \Big( \sum_{a \in Crit(h)}  \sum_{k=0}^n b^k \dim(\mathcal{H}^{k}(\mathcal{P}_{\varepsilon,B}(U_{a,z}))) \Big)
\end{equation}
and the right hand side is of the form
\begin{equation}
    \sum_{k=0}^n b^k \dim(\mathcal{H}^{k}(\mathcal{P}_{\varepsilon}(X))) + (1+b) \sum_{k=0}^{n-1} Q_k b^k
\end{equation}
where $Q_k$ are non-negative integers, indeed the dimension of the co-exact small eigenvalue eigensections of the deformed Laplace-type operator. Since the cohomology groups of the Witten deformed and the undeformed complexes have the same dimensions for both the local and global complexes (see Subsection \ref{subsubsection_Witten_deformed_elliptic}), we see that this proves Theorem \ref{Theorem_strong_Morse_de_Rham}.
\end{proof}

\begin{definition}[Perfect stratified Morse function]
\label{definition_perfect_morse_functions}
    We say that a stratified Morse function $h$ is a \textbf{perfect stratified Morse function at degree $k$} if the cohomology of the corresponding instanton complex at degree $k$ has the same dimension as the cohomology at degree $k$ of the $L^2$ de Rham complex.
    
    We say \textbf{$h$ is perfect} if it is perfect at each degree $k$, equivalently if the Morse polynomial is equal to the Poincar\'e polynomial.
\end{definition}

Observe that if $h$ perfect at degree $k$, then it is also perfect at either degree $k+1$ or $k-1$, or both. This is due to the supersymmetry of the instanton complex.

In equation \eqref{interesting_step_small_eig_complex} of the above proof, we took the geometric endomorphism to be that induced by the identity map, which is always a geometric endomorphism on any elliptic complex. However, we can consider other natural geometric endomorphisms on the Witten deformed small eigenvalue complex to extract more information.

\begin{theorem}[Lefschetz-Morse inequalities]
\label{theorem_Lefschetz_Morse_inequalities_de_Rham}
Let $X$ be a resolution of a stratified pseudomanifold equipped with a resolution of a stratified Morse function $h$. Let $f$ be a self map of $X$ such that $f^*h=h$ and the fixed points of $f$ are a subset of the critical points of $h$ that lifts to a geometric endomorphism of the de Rham complex on $X$. Then we have the Lefschetz inequalities
\begin{equation}
\label{Lefschetz_inequalities}
    \mathcal{L}(\mathcal{P},T_f)(b)=L(\mathcal{P},T_f)(b)+ (1+b) \sum_{k=0}^{n-1} b^k Q_k
\end{equation}
where $Q_k=0$ if $h$ is perfect at degrees $k, k+1$. 
\end{theorem}

\begin{proof}
If $f^*h=h$, then $T_f$ is a geometric endomorphism not only of the de Rham complex but also of the Witten deformed de Rham complex for any $\varepsilon>0$, and thus for the Witten instanton complex. Then equation \eqref{Lefschetz_inequalities} follows from applying Theorem \ref{Lefschetz_supertrace}. When $h$ is perfect at degrees $k,k+1$, the cohomology of the de Rham complex is isomorphic to the Witten instanton complex as a graded vector space in degree $k$ and $Q_k=0$.
\end{proof}

We provide the following simple example in the smooth setting for intuition.

\begin{example}
\label{Example_Lefschetz_Morse_1}

Consider the two torus $\mathbb{T}_{\alpha_1\alpha_2}$, that can be identified with the square $[0,2\pi]\times [0,2\pi]$ modulo the relations $\alpha_1=0 \sim \alpha_1=2\pi$ and $\alpha_2=0 \sim \alpha_2=2\pi$. There is a Morse function $h=\cos(\alpha_1)+\cos(\alpha_2)$ which has four critical points $(0,0), (\pi,0), (0,\pi), (\pi,\pi)$, which have Morse indices $+2, +1, +1, 0$ respectively. We consider the torus with the flat metric, given by the product of circles of length $2\pi$. Consider the fundamental domain for the torus on $\mathbb{R}^2$ centered at the origin. This is a square of length $2\pi$. If we rotate this square around it's center by $\pi/2$ radians, we get the generator $g$ of a $\mathbb{Z}_4$ action, which is a self map on the torus. Explicitly we can write $g(\alpha_1,\alpha_2) \mapsto (\alpha_2,-\alpha_1)$.
 
\begin{equation}
    g(\alpha_1,\alpha_2) \mapsto (\alpha_2,-\alpha_1).
\end{equation}

It is easy to see that $g$ preserves the Morse function and fixes $(0,0)$ and $(-\pi,-\pi)$ (up to identifications of the fundamental domain), which are also critical points of the Morse function. The same holds for $g^2:=g\cdot g$, which fixes all the critical points.

The map $g$ gives rise to the natural geometric endomorphism $T_g$ on the small eigenvalue complex. Applying this to equation \eqref{interesting_step_small_eig_complex} and taking the limit as $\varepsilon$ goes to $\infty$, we get $b^2+b^0=b^2+b^1(0)+b^0$, 
 
\begin{equation}
    b^2+b^0=b^2+b^1(0)+b^0
\end{equation}

where for the computation of the global polynomial Lefschetz supertrace we have used that $1, d\alpha_1, d\alpha_2$ and $d\alpha_1 \wedge d\alpha_2$ generate the cohomology. Similarly for $g^2$, we get $b^2-2b^1+b^0=b^2-2b^1+b^0$.

\begin{equation}
    b^2-2b^1+b^0=b^2-2b^1+b^0.
\end{equation}

If we consider the map sending $(\alpha_1,\alpha_2) \mapsto (-\alpha_1,\alpha_2)$, we see that the global Lefschetz polynomial is $b^2-1b^0$, while the Lefschetz number is $0$, which shows that we must have at least 2 fixed points, even though the Lefschetz number (setting $b=-1)$ is $0$. We can compute these at the fixed points as well (if orientation reversing on the $u_2$ factor, the contribution is $-1$, and if not it is $+1$), and computing the local Lefschetz formulas tells us the sum of the self intersection numbers in each degree.
It is clear that this yields more information than the global Lefschetz number. This and similar examples show the potential of such Lefschetz polynomials being more useful than the Lefschetz fixed point theorem, even when the maps are not homotopic to the identity.
\end{example}

The example above shows that there are geometric endomorphisms on the Witten instanton complex corresponding to self-maps $f$ which are not homotopic to the identity. In the smooth setting, Witten deformation is used to study Hamiltonian group actions on symplectic manifolds (see for instance \cite{ZhanganalyticQR98}), and while the operator studied there is the spin$^{\mathbb{C}}$ Dirac operator, such group actions also induce geometric endomorphisms on the Witten deformed de Rham instanton complex constructed above. %In fact we can study group actions that are not Hamiltonian, and are not homotopic to the identity and still extract useful information.

We study geometric endomorphisms on the de Rham Witten instanton complex corresponding to K\"ahler Hamiltonian group actions in \cite{jayasinghe2024holomorphicwitteninstantoncomplexes}.
When there are geometric endomorphisms on the de Rham Witten instanton complex which preserves K\"ahler structures, we derive formulas for other invariants including the Poincar\"e Hodge polynomials, Signature, self dual and anti-self dual complexes. This is related to the question of rigidity which we also address in \cite{jayasinghe2024holomorphicwitteninstantoncomplexes}.

We will study some applications in Subsection \ref{subsection_Morse_Hirzebruch_cohomological}, including relations to Hirzebruch $\chi_y$ genera and generalizations of the Arnold-Floer theorem on spaces with cohomological conditions.
We also compute de Rham Morse polynomials for some examples in Subsection \ref{subsection_Computations_for_singular_examples}.

\section{Dolbeault and spin$^{\mathbb{C}}$ Dirac complexes}
\label{Dolbeault_section}

In this section we study local Lefschetz numbers corresponding to the Dolbeault complex on stratified pseudomanifolds with a complex structure. Since the groundbreaking work of Atiyah and Bott, various holomorphic Lefschetz fixed point formulas have been studied using analytical techniques (see \cite{patodi1973holomorphic,toledo1975holomorphic,donnelly1986fixed,kytmanov2004holomorphic}) as well as algebraic techniques \cite{baum1979lefschetz,Baumformula81,equivariant_intersection_edidin,localization_algebraic_Graham_Edidin,MaximSaitoJorgHodgemodules2011,MaximJorgCharacteristicsingulartoric2015}.

Let us consider the simplest example here, which is the case of the disc of real dimension 2 with the standard complex structure inherited from $\mathbb{C}$. Indeed the work in this section began from the observation that Atiyah and Bott's local formula for the local Holomorphic Lefschetz number of a holomorphic map $f$ with an attracting fixed point $a$ can be written as
\begin{equation}
\label{equation_simple_example}
    \frac{1}{1-df'|_a}=\sum_{k=0}^{\infty} df'|_a^k = Tr\Big( f^* : \mathcal{O}(\mathbb{D}^2) \rightarrow \mathcal{O}(\mathbb{D}^2) \Big)
\end{equation}
where $df'$ is the holomorphic differential on the holomorphic tangent bundle, and where the last expression is the trace of the geometric endomorphism induced by the map $f$ on the Hilbert space of holomorphic functions on the disc with respect to the $L^2$ norm, which is the Hardy space on the unit disc. Here since the map is attracting, $|df'|<1$ and we do not need to renormalize the trace, but in general we can use the framework developed in Subsection \ref{subsubsection_renormalized_Lefschetz}. The holomorphic functions $\{ \frac{2\pi}{2k+1} z^k\}_{k \in \mathbb{Z}_{\geq 0}}$ form an orthonormal Schauder basis for the local Dolbeault cohomology of the unit disc with the flat metric as a manifold with boundary. This suggests that we can use Proposition \ref{Local_Lefschetz_numbers_definition} with a proper choice of boundary conditions to find the holomorphic Lefschetz numbers. It is easy to check that the generalized Neumann conditions in the case of the Dolbeault complex are the well known $\overline{\partial}$-Neumann boundary conditions which will result in a non-Fredholm Hilbert complex.

Our perspective is that these are (possibly renormalized) traces on local cohomology groups which are Hilbert spaces. Holomorphic Lefschetz numbers on coherent sheaves on projective varieties have been interpreted as renormalized traces in \cite{Baumformula81} and we will see that in certain cases our Lefschetz numbers will match theirs, while they are different in others.
Witten also used this perspective to interpret Lefschetz numbers as traces over holomorphic sections in \cite{witten1984holomorphic}.

We will begin by studying the geometry of cones with complex structures, with a focus on understanding the null space of the Dolbeault Dirac operator in fundamental neighbourhoods. In the singular setting there are non-trivial elements in local cohomology in higher degrees at fixed points. We illustrate how local cohomology changes as the domains change in an explicit example, for which we compute Lefschetz numbers later in the section. We also discuss dualities for local cohomology groups, which enable us to make powerful statements about Lefschetz numbers for various complexes later on in the section.

In Subsection \ref{subsection_Local_Lefschetz_Dolbeault_main}
we will discuss the Lefschetz-Riemann-Roch numbers of Baum-Fulton-Quart, before introducing $L^2$-Lefschetz-Riemann-Roch numbers and explaning their similarities and differences on spaces with various classes of singularities.
After briefly explaining how one can recover the Atiyah-Bott formulae
we introduce techniques to understand Lefschetz numbers for spin$^\mathbb{C}$ Dirac and spin Dirac operators on stratified pseudomanifolds. 
The Lefschetz Hirzebruch $\chi_y$ invariant encodes information about many Lefschetz numbers including those associated to the signature, self-dual and anti-self-dual complexes, and indeed for K-theoretic reasons those of any elliptic complex (see Section 5 of \cite{bott1967vector}). We study some of these invariants for singular spaces, discussing relationships to instanton counting problems in mathematical physics.

In Subsection \ref{subsection_Computations_for_singular_examples} we will compute these invariants on certain singular spaces, highlighting how the invariants see different aspects of the spaces, and contrasting against the algebraic invariants of Baum-Fulton-MacPherson-Quart.

\subsection{Complex cones and $L^2$ Dolbeault cohomology}
\label{subsection_L2_Dolbeault_cohomology}

Complex structures on cones and their CR structures as well as Sasaki structures of K\"ahler cones have been widely studied, and we draw from the sources \cite{boyer&galicki,dragomir2007differential,blair2010riemannian}.
It is well known that a complex structure on a cone $(C(M), dx^2 +x^2 g_M)$ induces a CR structure on the link $(M,g_M)$, which in particular yields an almost contact metric structure on the link if it is smooth (see for instance Section 1.1.4 of \cite{dragomir2007differential}). In the case where the CR structure is pseudoconvex, this is an honest contact structure. 
A Sasaki structure on a link $M$ is denoted as $(M^{2n+1}, \mathcal{S})$ (see chapter 7 of \cite{boyer&galicki}), and this is equivalent to a K\"ahler structure on the metric cone.
In this case, the CR structure is pseudoconvex.
In particular it has a contact structure and taking the quotient by the action of the Reeb vector field $\xi$, one gets (in general) an orbifold $\Sigma = M / \xi$ which has a K\"ahler structure, usually known as the transversal K\"ahler structure on the Sasaki manifold. The Reeb foliation $\mathcal{F}_\xi$ on $M$ happens to be a taut Riemannian foliation, and the contact distribution has a splitting into holomorphic and anti holomorphic components. This is studied broadly in complex and CR geometry. 

We now revert to the more general case of a truncated cone with $(C(M), dx^2 +x^2 g_M)$  a wedge metric with just a complex structure as opposed to a K\"ahler structure. The boundary is the link at $\{x=1\}$. We will pick the domains that we studied in Subsection \ref{subsubsubsection_Neumann_boundary_condition} for this space. 
For instance, the $\overline{\partial}$-Neumann boundary conditions for the Laplacian associated to the complex $\mathcal{P}_N(C(M))=(L^2\Omega^{0,q}(C(M);E), \overline{\partial}_E)$, using the boundary defining function $r=1-x$, 
require that a section $u$ in the domain of the Laplacian satisfy
\begin{align}
\label{del_bar_Neumann_bc_1}
    \sigma(\overline{\partial}_E^*, dr)u|_{x=1} =0, \\ 
    \label{del_bar_Neumann_bc_2}
    \sigma(\overline{\partial}_E^*, dr) \overline{\partial}u|_{x=1} =0
\end{align}
where $\sigma$ denotes the principal symbol of the operator.

\subsubsection{The local Dolbeault cohomology}
\label{subsubsection_higher_degree_local_cohomology}

The cohomology of the complex in degree $0$ is precisely the null space of the operator $P$ in degree $0$.  
For instance, for the complex $\mathcal{P}^*_N(\mathbb{D}^{2n})=(L^2 \Omega^{0,n-q}(\mathbb{D}^{2n};E), \overline{\partial_E}^*)$ the cohomology in degree $0$ is given by the antiholomorphic $n$ forms. The Lefschetz numbers for this complex on a pseudoconvex fundamental neighbourhood are related to the Lefschetz numbers in \cite{donnelly1986fixed}.

In the case where the boundary CR structure is pseudoconvex, it is known that the cohomology of the complex on a smooth manifold with boundary is finite dimensional in all degrees greater than $0$ (see Theorem 5.3.8 \cite{chen2001partial}). While local cohomology vanishes for positive degrees for fundamental neighbourhoods of the tangent cone in the smooth case, we will see that this is not so in the singular space. This is similar to the de Rham case where the cohomology is isomorphic to intersection cohomology.
We will show that the local Dolbeault cohomology at singular points can be non-trivial in higher degrees. We begin with the following proposition.

\begin{proposition}
Let $C(M)^{2n+2}$ be a truncated cone with a K\"ahler wedge metric of product type (in particular $M$ is Sasaki). If $\eta$ is the harmonic representative of a cohomology class of the de Rham complex $\mathcal{R}_N(C(M))=(L^2\Omega^k(C(M)),d)$ (that is $\eta \in \mathcal{H}^k(\mathcal{R}_N(C(M)))$, then $\eta \in \oplus_{p+q=k} \mathcal{H}^q(\mathcal{P}^p_N(C(M)))$ where $\mathcal{P}^p_N(C(M))$ is the Dolbeault complex $(L^2\Omega^{0,q}(C(M);\Lambda^{p,0}), \overline{\partial}_{\Lambda^{p,0}})$.
\end{proposition}

\begin{proof}
Since the Dolbeault Laplacian only differs from the Hodge Laplacian by a factor of $1/2$ for K\"ahler metrics, they have the same null space before imposing boundary conditions. We need only show that the harmonic forms in $\mathcal{H}^k(\mathcal{R}_N(C(M)))$ satisfy the boundary conditions for the Dolbeault complex.

We begin by summarizing a few facts on K\"ahler cones from Chapter 7 of \cite{boyer&galicki}.
The basic cohomology $H^*_B(\mathcal{F})$ of the Sasaki manifold with the Reeb foliation $(M^{2n+1}, \mathcal{F}_\xi)$, has a transverse Hodge decomposition and a transverse complex structure on the contact distribution, induced by the one on the cone. 
The unique harmonic representative $\omega$ of a class $[\omega] \in H^{p,q}_B{(\mathcal{F})}$ lifts to a harmonic form (which we denote by $\omega$ with some abuse of notation) on $M$ for the Laplace type operator of the de Rham complex on $M$, and thus on the cone $C(M)$ as well if they have degree $0 \leq k \leq n$ (see Lemma \ref{conic_cohomology_1}). 
Moreover, for $0 \leq k \leq n$, a $k$-form on a Sasaki manifold is harmonic only if it is basic harmonic (see Proposition 7.4.13 of \cite{boyer&galicki}), so all the harmonic forms in $\mathcal{H}^k(\mathcal{R}_N(C(M)))$ are basic harmonic forms.

On the truncated cone we have the decomposition of the forms given in  \eqref{Dolbeault_form_decomposition_1},
\begin{equation}
\label{Dolbeault_form_decomposition_2}
    \Omega^{p,q}(C(M))=\Omega^{p,q}(M) \bigoplus (dx+ixJ(dx)) \wedge \Omega^{p,q-1} (M)
\end{equation}
and the harmonic forms $\omega$ on $M$ lift to the first summand. Note that since there is no $dx+ixJ(dx)$ parts in the form in these sections and they are harmonic, they satisfy the $\overline{\partial}$ boundary conditions. But these are precisely the basic harmonic forms since $J(dx)$ is a contact form corresponding to the Reeb vector field of the Sasaki structure on $M$ and we have shown that the harmonic forms in $\mathcal{H}^k(\mathcal{R}_N(C(M)))$ are basic harmonic forms, which proves the result.

\end{proof}

There are many examples of spaces which have such higher cohomology groups, and some broadly studied examples are conifolds, which are essentially spaces with K\"ahler structures and conic singularities. Conifolds are widely studied in the context of Mirror Symmetry and its applications in both mathematics and physics. The most famous of these is probably the Fermat quintic (see \cite{candelas1991pair,klebanov2000supergravity,davies2010quotients}), which has isolated conic singularities and can be equipped with a natural Calabi-Yau metric with an isolated conic (wedge) singularity. This implies that the tangent cone is a cone over a link with a Sasaki-Einstein metric. This is known as the $T^{1,1}$ Sasaki structure and the link is homeomorphic to $S^2 \times S^3$. Sasaki structures on this space are also studied in \cite{boyer2001new}. 

The $L^2$ de Rham cohomology on $C(T^{1,1})$ (with the generalized Neumann boundary conditions) has an element in degree 2 arising from the volume form of ${S}^2$ (see Lemma \ref{conic_cohomology_1}). Since complex conjugation on the K\"ahler cone generates a symmetry of the Hodge diamond and there is only one two form in the local cohomology, the corresponding harmonic two form is in $\mathcal{H}^{1,1}$ (for the de Rham complex with the natural Hodge decomposition) of $C(S^2 \times S^3)$, and by the proposition above, it must show up in the local Dolbeault cohomology as well. We will use this in computations in Subsection \ref{example_conifold}.

In order to better motivate some of the ideas on symmetries that we introduce next, we will first study an example of a singular space with different choices of domains for the Dolbeault complex. 

\subsubsection{Domains and cohomology: an illustrative example}
\label{example_cusp_singularity_preamble}

Consider the singular algebraic variety $\widehat{V}$ given by $p=ZY^2-X^3=0$ in $\mathbb{CP}^2$. 
It has an isolated singularity at $[X:Y:Z]=[0:0:1]$.
On the affine chart $Z=1$, we have the variety given by the equation $y^2=x^3$, where $x=X/Z,y=Y/Z$. Observe that this space is a topologically normal pseudomanifold as in Definition \ref{definition_normal_pseudomanifold} (see page 214 of \cite{Yaubookfoundersindex} for an exposition by Brieskorn describing why the link at the singularity is a trefoil knot which is topologically simply a circle). However it is well known that the singularity is not normal in the sense of algebraic geometry, and we will study this aspect of the space in Subsection \ref{subsection_nonnormal_examples}.

Since the Fubini-Study metric on the projective algebraic variety is conformal to the pullback of the affine metric on the chart $Z=1$ of $\mathbb{CP}^2$, let us consider the affine metric which is a wedge metric on the resolved manifold with boundary (where the pre-image of the singular point corresponding to the real blow-up is a circle).
We consider the pullback of the affine metric on this chart with the singularity using the map  
\begin{equation}
    t \rightarrow (t^2,t^3)=(x,y)
\end{equation}
where $t=re^{i\alpha}$ and $\alpha \in [0, 2\pi]$. Let us compactify the regular part of this space with the choice of boundary defining function $\rho=|x|=|t|^2$ for the boundary corresponding to the resolved manifold with boundary, where the preimage of the boundary under the blowup-map is a circle at $\rho=0$. Let us denote $x=t^2=\rho e^{i\theta}$ where $\theta=2\alpha \in [0, 4\pi]$.
The pulled back metric is 
\begin{equation}
    (dx \otimes d\overline{x})(1+(9/4)|x|)=(d\rho^2 +\rho^2 d\theta^2) (1+(9/4)|\rho|)=(d\rho^2+\rho^2 4d\alpha^2)(1+(9/4)|\rho|)
\end{equation}
which can also be written as 
\begin{equation}
    4|t|^2 (dt \otimes d\overline{t}) (1+(9/4)|t|^2)=4|r|^2 (dr^2+r^2 4d\alpha^2)(1+(9/4)|r|^2).
\end{equation}
 
\begin{remark}[Different Thom-Mather structures]
\label{Remark_different_compactifications_for_cusp_curve}

We observe that since the metric on the pseudomanifold is determined by the metric on $\widehat{V}^{reg}$, the $L^2$ cohomology of complexes are determined by the full measure set $\widehat{V}^{reg}$. However, how we choose to compactify the regular set as a stratified space determines whether a given Riemannian metric is a wedge metric or some other form of metric. These different compactifications correspond to different Thom-Mather structures. 

The defining function $\rho=|t|^2$ corresponds to a wedge structure, while if we express the metric in terms of $|t|$, we obtain 
an example of a degenerate horn metric.
It is well known that any horn metric is conformal to a wedge metric (see page 656 of \cite{LeschPeyerimhoffHornindex1998} for the conformal change that shows this), albeit with a singular conformal factor.

\end{remark}

We will continue to use the singular holomorphic coordinate $t$ in the discussion below. 
It is easy to see that $t^{-1}=\rho^{-1/2}e^{-i\alpha}$ is $L^2$ bounded with respect to the volume form of this metric on a fundamental neighbourhood $\widehat{U}_a$ of the isolated singularity $a$. Thus it is in the domain of the $(L^2\Omega^{0,q}_{\max}(U_a), \overline{\partial}_{\max})$ complex (which corresponds to choosing the maximal domain for the differential of the complex) in degree $0$, since $(\overline{\partial}+\overline{\partial}^*)t=0$. The cohomology of the complex on the truncated tangent cone with the induced product type wedge metric $d\rho^2 +\rho^2 d\theta^2$ is a Hilbert space with a Schauder basis given by $t^{-1},1,t,t^2,...$ in degree $0$ (it is easy to check that these are orthogonal on the tangent cone), and vanishes (is $\{0\}$) in higher degrees.
However the meromorphic function $t^{-1}$ is not in the VAPS domain since $t^{-1}$ is not in $\rho^{1/2}L^2$ (which is equivalent to the statement that $|t|^{-2}$ is not in $L^2$).
If we choose the VAPS domain, the cohomology of the complex $(L^2\Omega^{0,q}_N(U_a), \overline{\partial}_{VAPS})$ is the Hilbert space generated by the Schauder basis $1,t,t^2,...$ in degree $0$ (and vanishes in higher degrees).

The adjoint complex of $(L^2\Omega^{0,q}_{\max}(U_a), \overline{\partial}_{\max})$ is $(L^2\Omega^{0,n-q}_{\min}(U_a), \overline{\partial}^*_{\min})$. We observe that $\overline{dx}=2\overline{t}\overline{dt}$, while $\overline{dt}$ cannot be realized as the restriction of a regular one form on $\mathbb{C}^2$ to the variety, but can be realized as the restriction of a meromorphic one form that is $L^2$ bounded on the singular space (for instance realized as $\overline{dt}=\overline{dy}/3\overline{x}$).

In fact since the wedge cotangent bundle is spanned near the singularity by $ \{ d\rho, \rho d\theta \}$, the anti-holomorphic part of which is spanned by $\overline{dx}=2\overline{t}\overline{dt}$ (since $x=\rho e^{i\theta}$).
This shows that the local cohomology in degree $0$ for $(L^2\Omega^{0,n-q}_{\min}(U_a), \overline{\partial}^*_{\min})$ has a Schauder basis $\{\overline{t}^k\overline{dt}\}_{k \geq 1}$ for integer $k$ while it vanishes in higher degrees.
We also see that the adjoint complex of $(L^2\Omega^{0,n-q}_{N}(U_a), \overline{\partial}^*_{VAPS})$ has local cohomology with a Schauder basis $\{\overline{t}^k\overline{dt}\}_{k \geq 0}$ in degree $0$.

Consider the Hodge star operator for the metric $4|t|^2 (dt \otimes \overline{dt})$ in the neighbourhood $U_a$ (since the higher order terms in $|t|$ do not effect the integrability and the choices of domain that we discuss here). We see that $\star 2t dt= 2\overline{t} \overline{dt}$, and that $\overline{\star} {f(t)} 2{t} {dt}= \overline{f(t)} 2 \overline{t} \overline{dt}$ where by $\overline{\star}$ we denote complex conjugation of forms, composed with the Hodge star operator.

Let us now consider the complex $(L^2\Omega^{1,q}_{N}(U_a), \overline{\partial}_{VAPS})$. The discussion in the next subsection will show that the cohomology of this complex in degree $0$ has a Schauder basis $\{t^k dt\}_{k \geq 0}$ for integer $k$, while the cohomology of the complex $(L^2\Omega^{1,q}_{\min}(U_a), \overline{\partial}_{\min})$ in degree $0$ is has a Schauder basis $\{t^k dt\}_{k \geq 1}$.

\subsubsection{Serre duality for the Dolbeault complex}
\label{subsubsection_Serre_duality}

Consider the complex $\mathcal{P}(X):=(L^2\Omega^{0,q}(X;E), \overline{\partial}_{E})$ for a Hermitian bundle $E$, which includes the case of $\Omega^{0,q}(X;\Lambda^{p,0} \otimes F)=\Omega^{p,q}(X;F)$ for a fixed value of $p$ using the standard Hodge decomposition on $X$. We can identify the dual complex $\mathcal{P}^*(X)$ with $\mathcal{Q}(X):=(L^2\Omega^{n,n-q}(X;E^*), \overline{\partial}^*_{{K}_X \otimes E^*})$ using the Hodge star operator, where $X$ has real dimension $2n$. This is an instance of Serre duality and is well known on smooth complex manifolds (see Remark 5.11 of \cite{voisin2002hodge}), and we will see how it extends to the singular setting we study. We follow the exposition in Section 5.1.3 of \cite{voisin2002hodge}.

Let $E$ be a holomorphic vector bundle over a resolved complex pseudomanifold $X^{2n}$, equipped with a  Hermitian metric $h$. 
The Hermitian metric gives a conjugate-linear isomorphism $E \cong E^{\ast}$ between $E$ and its dual bundle. This induces a duality of forms valued in $E$ and $E^*$ and when $E=\Lambda^{p,0}X \otimes F$ (where $\Lambda^{p,q}X:= \Lambda^p (\prescript{w}{}{T^*X)^{1,0}} \otimes \Lambda^q (\prescript{w}{}{T^*X)^{0,1}}$)
\begin{equation}
    {\star}_E: \Omega^{p,q}(X;F) \rightarrow \Omega^{n-p,n-q}(X;F^*)
\end{equation}
which we can use to write the Hermitian $L^2$ inner product on $E$ valued forms as
\begin{equation}
    \langle \alpha, \beta \rangle_{L^2(X;E)} =\int_X \langle \alpha \wedge \overline{\star_E \beta} \rangle_{F}.
\end{equation}

The canonical bundle on the regular part $\widehat{X}^{reg}$ of the pseudomanifold $\widehat{X}$ with a complex structure is $K_X=\Lambda^{n,0}(\widehat{X}^{reg})$, which is a holomorphic line bundle. This is a very important object in complex algebraic geometry and on smooth projective varieties is called the dualizing sheaf, since $\Lambda^{n,n-q}X \otimes E$ is isomorphic to $\Lambda^{0,n-q}X \otimes K_X \otimes E$. For general singular spaces the dualizing sheaf becomes more complicated (see Part III, Section 7 of \cite{Hartshornebook}).

For sections supported on $\widehat{X}^{reg}$, we can write the adjoint operator of $\overline{\partial}_E$ as
\begin{equation}
\label{equation_dual_of_del_bar_serre}
    \overline{\partial}_E^*= (-1)^q \star_E^{-1} \circ \overline{\partial}_{K_X \otimes E^{\star}} \circ \star_E
\end{equation}
and the Laplace type operator is $\Delta_E=\overline{\partial}_E \overline{\partial}_E^*+\overline{\partial}_E^* \overline{\partial}_E$. 

For a fundamental neighbourhood $\widehat{U_a}$, we have the complex $\mathcal{P}_N(U_a):=(L^2\Omega^{0,q}(U_a;E), \overline{\partial}_{E})$ 
given by the maximal domain of $\overline{\partial}_E$ after imposing VAPS conditions at singularities (see Remark \ref{remark_boundary_conditions_for_singular_links}). The induced domain of the Laplace type operator has the boundary conditions
\begin{align*}
    \sigma(\overline{\partial}_E^*, dr)u|_{\partial {U_a}} =0 \\ 
    \sigma(\overline{\partial}_E^*, dr) \overline{\partial}_Eu|_{\partial {U_a}} =0.
\end{align*}
For the dual complex, the roles of the operator $\overline{\partial}_E$ and its dual are switched in the boundary conditions. The boundary conditions of these two complexes are intertwined by the Hodge star operator $\star_E$ which is easy to see using equation \eqref{equation_dual_of_del_bar_serre}. This shows that we have an isomorphism between the complexes $\mathcal{P}^*_N(U_a)$ and
\begin{equation}
\label{identify_dual_complex}
    \mathcal{Q}_N(U_a):=(L^2\Omega^{n,n-q}(U_a;E^*), \overline{\partial}^*_{K \otimes E^*}).
\end{equation}
induced by the Hodge star operator. In particular we have the identification of cohomology groups 
\begin{equation}
\label{identify_dual_complex_2}
    \mathcal{H}^{q}(\mathcal{P}_N(U_a)) \cong \mathcal{H}^{n-q}(\mathcal{Q}_N(U_a)).
\end{equation}
We have a similar statement for the cohomology for the complex on $\mathcal{P}(X)$ where the choices of domain are simpler (just the VAPS conditions),  
\begin{equation}
\label{equation_cohomological_Serre_duality}
    \mathcal{H}^{p,q}(X;E) \cong \mathcal{H}^{n-p,n-q}(X;E^*)^*,
\end{equation}
which complements the duality given by Proposition \ref{Kernel_equals_cohomology}.
The example in Subsection \ref{example_cusp_singularity_preamble} shows how these dualities are more complicated on singular spaces with different choices of domains. For instance, the dual of the complex 
$(L^2\Omega^{0,n-q}_{N}(U_a), \overline{\partial}^*_{VAPS})$ is the complex $(L^2\Omega^{n,q}_{N}(U_a), \overline{\partial}_{VAPS})$ and the duality in $L^2$ cohomology together with the remarks on the Hodge star operator in the subsection above show why the cohomology of the latter complex in degree $0$ is given by $\{t^k dt\}_{k \geq 0}$ for integer $k$.
In light of Proposition \ref{proposition_local_product_Lefschetz_heat}, we can anticipate how these dualities play an important role in local Lefschetz numbers, as we shall demonstrate later.

\subsection{Local Lefschetz formulas for the Dolbeault complex}
\label{subsection_Local_Lefschetz_Dolbeault_main}

We begin by studying the Lefschetz-Riemann-Roch formulas of Baum-Fulton-Quart for automorphisms of coherent sheaves on projective varieties, which are probably the closest formulas in the literature to what we develop in $L^2$ Dolbeault cohomology for stratified spaces with wedge metrics and complex structures. We then introduce our $L^2$ versions. We discuss how local cohomology groups capture different properties of various classes of singularities, which are reflected in local Lefschetz numbers. We then show how to compute Lefschetz numbers for various other complexes as discussed in the introduction of this section, extending many known results in the smooth setting to singular spaces, studying new phenomena in the process. We emphasize that the dualities we discussed in the previous subsection play a key role in understanding many of these.

\subsubsection{Lefschetz-Riemann-Roch numbers of Baum-Fulton-MacPherson}
\label{subsubsection_Baum_Fulton_Quart_method_compute}

In \cite[\S 3.3]{baum1979lefschetz}, a formula for computing Lefschetz numbers for group actions on coherent sheaves over singular varieties embedded in smooth spaces was introduced by Baum-Fulton-Quart. These are Lefschetz versions of the Riemann-Roch formulas developed by Baum-Fulton-MacPherson in \cite{baumfultonmacKtheoryRiemannRoch,baumfultonmacpheresonRiemannRoch}. These Lefchetz formulas have since been generalized for Chow groups of stacks in \cite{equivariant_intersection_edidin,localization_algebraic_Graham_Edidin}.
We will see that for certain algebraic varieties the Lefschetz numbers for the Dolbeault complex will match those of Baum-Fulton-MacPherson, but that they are different on others.
In section 3 of the expository article \cite{Baumformula81}, Baum explains how these Lefschetz numbers for the structure sheaf can be computed as the evaluation of what are essentially renormalized traces in the local ring at isolated fixed points.
We briefly present Baum's methods.

Consider $\widehat{X}$ to be a quasi-projective (reduced) variety over $\mathbb{C}$ that admits a complex-analytic embedding into a smooth projective variety $Y$. Its structure sheaf is denoted by $\mathcal{O}_{\widehat{X}}$. 
Let $T_f$ be a geometric endomorphism corresponding to a holomorphic automorphism $f$ of $\widehat{X}$, that extends to one on $Y$. We demand that $f$ is of finite order and has only a finite number of isolated fixed points. The \textit{\textbf{local ring of $\widehat{X}$ at $a$}}, $\mathcal{O}_a$, is the ring of germs of holomorphic functions at $a$. 
Given an isolated fixed point $a$, there is an induced automorphism 
\begin{equation}
\label{induced_auto_on_local_ring}
    T_f=f^*: \mathcal{O}_a \rightarrow \mathcal{O}_a
\end{equation} 
of $\mathcal{O}_a$ where Baum uses the assumption that it is a finite order automorphism on the structure sheaf to simplify the action of the geometric endomorphism. In $\mathcal{O}_a$, let $I_a$ be the ideal generated by all elements of the form $u-f^*u$. Then for $r$ a non-negative integer $I_a^r / I_a^{r+1}$ is a finite dimensional vector space over $\mathbb{C}$. The automorphism \eqref{induced_auto_on_local_ring} maps $I_a^r$ to itself, and so induces a linear transformation 
\begin{equation}
\label{induced_auto_on_local_ring_each_filtration}
    T_{f,r}: I_a^r / I_a^{r+1} \rightarrow I_a^r / I_a^{r+1}.
\end{equation} 
With $t$ an indeterminate, Baum points out that the power series
\begin{equation}
\label{Baum_power_series}
    Q_a(t)=\sum_{r=0}^{\infty} \text{Trace}(T_{f,r})t^r,
\end{equation}
converges to a rational function in $t$ with no pole at $t=1$ and the local Lefschetz number is defined to be $Q_a(1)$.
Consider the simple case where at a fixed point, the variety is locally described by the vanishing of a reduced homogeneous polynomial $g(x_1,...,x_n)$ on $\mathbb{C}^n$ with coordinates $x_1,...,x_n$. 
Following Baum we take $R$, the local ring of the fixed point on $Y$, that is the ring of germs of holomorphic functions.
The local ring of the singular space is the quotient ring $R/(g)$ where $(g)$ is the ideal generated by $g$. Baum observes (see case 2 in \cite[\S 2]{Baumformula81}) that this corresponds to the ring of germs of holomorphic functions at the fixed point on $X$.
If there is an automorphism $f$, such that $f^*g=bg$, and $f^*x_i=a_i$ then the regularized trace of the induced action of $f$ over $R/(g)$ is the difference of $\Pi_{i=1}^n (1-a_i)^{-1}$, and $(b) (\Pi_{i=1}^n (1-a_i))^{-1}$.
This gives the formula for the local Lefschetz number in \cite[\S 3.3]{baum1979lefschetz} for the case discussed. If an affine neighbourhood of the fixed point of a reduced singular variety is defined by the vanishing of several homogeneous polynomials (a local complete intersection), a similar argument gives the general formula.

The results in \cite{baum1979lefschetz} are for coherent sheaves of $\mathcal{O}_{\widehat{X}}$ modules on quasi-projective schemes. Given a locally free coherent sheaf $\mathcal{E}$ on $X$, the corollary on page 196 of that article shows that the local Lefschetz number at an isolated fixed point $a$ for an automorphism $f:\widehat{X} \rightarrow \widehat{X}$ such that $f^n=Id$ for some integer $n$ (inducing a geometric endomorphism $T_f$), factors as 
\begin{equation}
\label{equation_locally_free_sheaf_factorization}
    [Tr f^*(\mathcal{E}_a)] L_a \widehat{X}
\end{equation}
where $Tr f^*(\mathcal{E}_a)$ is the trace over the fiber of the locally free sheaf at $a$, and $L_a \widehat{X}$ is the local Lefschetz number of the structure sheaf $\mathcal{O}_{\widehat{X}}$ at $a$.

\subsubsection{Lefschetz numbers in $L^2$ cohomology}
\label{subsection_Lefschetz_L2_cohomology_development}

Let us consider the local Lefschetz numbers for the twisted complex $\mathcal{P}(X)=(L^2\Omega^{p,q}(X;F), \overline{\partial}_F)$ for fixed values of $p$, which is the same as $(L^2\Omega^{0,q}(X;E=\Omega^{p,0} \otimes F), \overline{\partial}_E)$. We do this restricted to fundamental neighbourhoods of the fixed points but, in the case of simple fixed points $a$, this can be taken to be the tangent cone as we have discussed earlier. The formula for local Lefschetz numbers given in \eqref{Local_Lefschetz_numbers_definition}, applied to the Dolbeault complex on a fundamental neighbourhood with $\overline{\partial}_E$-Neumann boundary conditions reduces to the following.
\begin{theorem}
\label{theorem_renormalized_McKean_Singer_Dolbeault}
Let $\widehat{X}^{2n}$ be a stratified pseudomanifold and $\widehat{f}$ a self map of this space. Let $a$ be an isolated simple fixed point of $\widehat{f}$ with a fundamental neighbourhood $U_a \cong \mathbb{D}^k \times C(Z)$ with a wedge metric and a compatible complex structure. Given a Hermitian bundle $E$ on $U_a$, we require that the map $f$ has a lift to a bundle map so that there is an associated geometric endomorphism $T_f$ for the complex $\mathcal{P}_B(U_a)$.
Then the local $L^2$ Dolbeault Lefschetz number can be expressed as 
\begin{equation}
L(\mathcal{P}_B(U_a), f, a) = L(\mathcal{P}_N(U_1), T_{f_1},p) (-1)^m L(\mathcal{P}^*_N(U_2), T^{P^*}_{f_2^{-1}},p)
\end{equation}
as in Proposition \ref{proposition_local_product_Lefschetz_heat}. When the fixed points are strictly attracting on $U_1$ and strictly expanding restricted to $U_2$, the renormalization is not needed.
\end{theorem}

\begin{proof}
This follows from Theorem \ref{Corollary_Polynomial_Lefschetz_fixed_point_theorem}. Theorem \ref{theorem_renormalized_McKean_Singer} shows that the renormalization is not necessary for the strictly attracting case on $U_1$, and the duality used in Proposition \ref{proposition_local_product_Lefschetz_heat} shows that it is not needed for strictly expanding maps on $U_2$.
\end{proof}

\begin{remark}[Non-simple fixed points]
\label{remark_non_simple_generalize_Dolbeault}
We make the observation that we do not need that the fixed point is simple if there is a neighbourhood of the fixed point where there are no other fixed points, where the self map is either strictly attracting or strictly expanding, at least under the assumption that there is a neighbourhood which is isometric to a metric cone. This is because we can then use the localization result in Proposition \ref{Gluing_heat_kernels} which does not need the simplicity assumption, in place of Theorem \ref{theorem_localization_model_metric_cone}.
In \cite{toledo1975holomorphic}, the holomorphic Lefschetz fixed point formula on smooth manifolds was extended for non-simple isolated fixed points in the smooth case. Since our computations are now in cohomology, we can simply take a fundamental neighbourhood and compute the Lefschetz number, similar to de Rham cohomology.

\end{remark}

In \cite{kytmanov2004holomorphic}, the $\overline{\partial}$ complex is studied on smooth complex manifolds with boundary and a renormalized Lefschetz fixed point theorem is proven.
They introduce a principal value regularization/ renormalization for the heat trace which gives the Atiyah-Bott formula at interior fixed points and they also present a boundary formula for fixed points on the boundary. In the work of \cite{donnelly1986fixed}, the authors present what is in their words a \textit{holomorphic Lefschetz formula for the Bergman metric}. It is easy to see that their formula in Theorem 1.1 of that article is precisely the sum of the local holomorphic Lefschetz numbers at isolated fixed points for the complex $(L^2\Omega^{0,n-q}, \overline{\partial}^*)$. Proposition 2.9 of that article can be modified to see this, using the methods we have introduced in this article.

With our methods, when the self maps are strictly attracting or strictly expanding, we do not require renormalization. For example, for self maps arising from algebraic torus actions, this is the case for any self map corresponding to an element $(\lambda_1,...,\lambda_N) \in (\mathbb{C}^*)^N$ such that $|\lambda_i| \neq 1$ and the fixed points are isolated. This yields a self map where the fixed points are strictly attracting/expanding yielding trace class geometric endomorphisms at fundamental neighbourhoods of fixed points.

We prove the following duality result for local Lefschetz numbers.
\begin{proposition}
\label{Proposition_duality_complex_conjugation_for_local_Lefschetz_numbers}
Let $\widehat{X}$ be a stratified pseudomanifold of dimension $2n$ with a wedge metric and complex structure and let $E$ be a Hermitian bundle. Let $f$ be a holomorphic isometry with isolated fixed points, including one at $a \in X$ with a fundamental neighbourhood $U_a$. Let $\mathcal{P}(X)=(L^2\Omega^{0,q}_{VAPS}(X;E),\overline{\partial}_E)$, so that $\mathcal{P}^*(X)=(L^2\Omega^{0,n-q}_{VAPS}(X;E),\overline{\partial}^*_E)$ and let $\mathcal{Q}(X)=(L^2\Omega^{n,q}_{VAPS}(X;E^*), \overline{\partial}_{E^*})$. 
We assume that there are geometric endomorphisms $T_f$ associated to $f$ on $\mathcal{P}_B(U_a)$, and $T^Q_{f^{-1}}$ associated to $f^{-1}$ on $\mathcal{Q}_B(U_a)$. Then
\begin{equation}
    L(\mathcal{P}_B(U_a),T_f)=(-1)^n L((\mathcal{P}_B(U_a))^*,T^*_f) = (-1)^n L(\mathcal{Q}_B(U_a),T^Q_{f^{-1}}).
\end{equation}
\end{proposition}

\begin{proof}
The first equality follows from Proposition \ref{proposition_Lefschetz_on_adjoint}. 
Recall from equation \eqref{equation_dual_complex_adjoint_geometric_endo_by_inverse} that the adjoint endomorphism $T^*_f$ on $(\mathcal{P}_B(U_a))^*$ gives the same Lefschetz number as that associated to $f^{-1}$ on $\mathcal{P}^*_{B'}(U_a)$ where $B'$ are the boundary conditions specified by 
Proposition \ref{proposition_local_product_Lefschetz_heat} for $f^{-1}$. Moreover since we consider $f$ to be a holomorphic isometry, the pullback of sections $f^*$ commutes with the Hodge star operator. 
The local cohomology groups of $\mathcal{P}^*_{B'}(U_a)$ and $\mathcal{Q}_B(U_a)$ are identified by the dualities described in subsection \ref{subsubsection_Serre_duality} by the Hodge star operator and the duality of $E$ and $E^*$. Thus the second equality follows since the trace of $T^Q_{f^{-1}}$ on the local cohomology group of $\mathcal{Q}_B(U_a)$ is the same as the trace of $T^*_f$ on $(\mathcal{P}_B(U_a))^*$.
\end{proof}

Consider the twisted $L^2$ Dolbeault complex $\mathcal{P}^p_B(U_a;E)=(L^2\Omega^{p,q}(E), \overline{\partial}_E)$. The local $L^2$ cohomology is a module over the local holomorphic functions by the Leibniz rule. 
A majority of the examples we study in this article will be Hilbert complexes over projective varities and we will be able to express the local $L^2$ cohomology as a module over an infinite dimensional Hilbert space obtained as the completion of a ring of polynomials with respect to an inner product. We illustrate with the examples in Subsection \ref{example_cusp_singularity_preamble}.
We saw that the $L^2$ cohomology of the complex $(L^2\Omega^{0,q}_N(U_a), \overline{\partial}_{VAPS})$ in that subsection has a Schauder basis $1,t,t^2,...$ in degree $0$ (and vanishes in higher degrees). The corresponding Hilbert space is the Hilbert completion of the ring of polynomials $\mathbb{C}[t]$, which we shall denote as $\overline{\mathbb{C}[t]}$. We also saw that the adjoint complex of $(L^2\Omega^{0,q}_N(U_a), \overline{\partial}_{VAPS})$ has local cohomology corresponding to the Schauder basis $\{\overline{t}^k\overline{dt}\}_{k \geq 0}$ in degree $0$. We shall denote the corresponding Hilbert space as $\overline{R[\overline{dt}]}$, which is the Hilbert space completion of the module generated by $\overline{dt}$ over the ring $R=\mathbb{C}[\overline{t}]$.

\subsubsection{Classes of singularities and local cohomology}
\label{subsection_classes_of_singularities}

A dominant theme in modern geometry is to study spaces by studying the functions on them. On stratified pseudomanifolds with wedge metrics, there are different choices of domains for the complexes that we study, which coalesce in the case of smooth manifolds and more generally on spaces where the complexes satisfy the geometric Witt condition. Canonical definitions such as the maximal, minimal or VAPS domains for operators can be used to study cohomological invariants and local cohomology of these complexes are geometric invariants which detect topological as well as geometric differences in singularities.
In algebraic geometry, the fact that there is an embedding into $\mathbb{CP}^N$ for some $N$ is used to define sheafs on varieties by restriction, and the Lefschetz formulas of \cite{baum1979lefschetz} for such sheafs allows one to compute such invariants easily.

In the smooth setting it is well known that the $L^2$ analytic and algebraic versions of these invariants coalesce, and it is natural to compare them in different singular settings. Since localization has been developed more broadly in the algebraic setting, one can use those techniques to understand analytic versions.
There are many classifications of singularities considered in algebraic geometry. In this section we discuss some relations between the analytic and algebraic local cohomology groups in relation to some classes of singularities. We begin with the following definitions from \cite[\S 5]{KollarBull87} to which we refer the reader to more details about topics relevant to our discussion here.

\begin{definition}
\label{definition_normal_algebraic_variety}
Let $V \subset \mathbb{C}^n$ be an algebraic variety and $v \in V$. 
A rational function $f$ is called \textit{regular at $v \in V$} if there are polynomials $g$ and $h$ such that $f=g/h$ and $h(v) \neq 0$. A rational function is called \textit{regular} on $V$ if it is regular at each point.
The variety $V$ is said to be \textit{normal at $v \in V$} if every rational function on $\mathbb{C}^n$ bounded in some neighborhood of $v$ is regular at $V$, and $V$ is called \textit{normal} if it is normal at every point. In
particular if $V$ is normal at $v$, then a rational function is regular at $v$ iff it is continuous at $v$.
\end{definition}

It is observed in \cite[\S 4]{goresky1980intersection} that normal complex algebraic varieties are normal pseudomanifolds (see Definition \ref{definition_normal_pseudomanifold}), though the converse is not true in general.

An equivalent definition of normal that is more widely used in algebraic geometry (and for fields other than $\mathbb{C}$) is that the local ring at the point is an integrally closed domain. We refer the reader to Exercise 3.17 in Chapter 1 of \cite{Hartshornebook} for a definition. We will explore Lefschetz numbers for group actions on the non-normal examples in parts $b$ ($z^2=xy$) and $c$ ($y^2=x^3$) of exercise 3.17 in Subsection \ref{subsection_Computations_for_singular_examples}. Part $e$ of the said exercise shows that for a given affine variety $V$, there is an \textit{\textbf{algebraic normalization}} $W$, which is an affine variety that is birational to $V$. We refer the reader to Section 4 of part I of \cite{Hartshornebook} for more details on the following definitions.

\begin{definition}
\label{definition_birational_et_al}
A \textit{rational map} $\phi: V_1 \dashrightarrow V_2$ between algebraic varieties is a morphism from a nonempty Zariski open subset $U \subset V_1$ (in particular dense in $V_1$) to $V_2$. A \textit{birational map} is a rational map $\phi$ which has an inverse which is a rational map. We call $V$ a \textit{rational variety} if it is birational to $\mathbb{C}^{N}$ (or equivalently to $\mathbb{CP}^{N}$) for some $N$.
\end{definition}

Another notion that is relevant in comparing the analytic and algebraic invariants is that of a rational singularity. We refer to \cite[\S 6]{AnalyticToddBeiPiazza} and references therein for the following. We say that a complex space $\widehat{X}$ has \textit{rational singularities} if $\widehat{X}$ is normal, and there exists a resolution (in the sense of Hironaka) $\pi:M \rightarrow \widehat{X}$ such that $R^k\pi_*\mathcal{O}_M =0$ for all $k>0$, i.e., the pushforward of the structure sheaf of $M$ to $\widehat{X}$ under $\pi$ does not have higher (global) cohomology. We note that the normal condition implies that $\mathcal{O}_{\widehat{X}}$ can be identified with $\pi_*\mathcal{O}_M$. 

In the case of complex projective varieties, normal toric singularities, log-terminal, canonical singularities are rational. For surfaces, Du Val singularities are rational. The quadric surface that we study in Subsection \ref{subsection_example_nodal_surface} is a normal toric surface with a Du Val singularity. 

There are many results proven in \cite{AnalyticToddBeiPiazza} at the level of Todd classes in K theory that show similarities and differences of the Baum-Fulton-MacPherson Todd classes \cite{baumfultonmacKtheoryRiemannRoch} and analytic Todd classes corresponding to various choices of domains and under various assumptions on singularities and their resolutions. Naturally these imply similarities and differences for the Baum-Fulton-Quart Lefschetz numbers and our $L^2$ holomorphic Lefschetz numbers.

Theorem 5.4 of \cite{AnalyticToddBeiPiazza} shows that a complex projective surface with isolated singularities which does not have rational singularities will not have a closed extension of the Dolbeault complex such that the Euler characteristic of the complex is equal to the arithmetic genus of Baum-Fulton-MacPherson. If in the same setting there is a finite group acting by automorphisms, then the Baum-Fulton-Quart Lefschetz numbers must be different from our $L^2$ versions.

In Proposition 6.1 of \cite{AnalyticToddBeiPiazza}, Piazza and Bei show that the complex of fine sheaves corresponding to the sheafification of the Dolbeault complex with maximal domain on $\widehat{X}^{reg}$ is a resolution of the structure sheaf of a compact irreducible complex variety $\widehat{X}$ iff $\widehat{X}$ has rational singularities. They also noted that this is equivalent to the existence of a resolution of the structure sheaf by a complex built from the minimal $L^2$ Dolbeault complex \cite{RuppenthalSerre2018}. They go on to show identifications of analytic Todd classes with those of Baum-Fulton-MacPherson in \cite{baumfultonmacKtheoryRiemannRoch} under the additional assumption of isolated singularities.

We prove the following proposition to demonstrate that our results can be used to prove equivariant versions of such results, in a restricted setting for simplicity. Note that we do not assume that our singularities are isolated.

\begin{proposition}
\label{Proposition_BFM_equals_L2}
Let $\widehat{X}$ be a projective variety where the Fubini-Study metric can be realized as a wedge metric on $X$ (see Remark \ref{Remark_different_compactifications_for_cusp_curve}), equipped with an algebraic torus action with generically isolated fixed points, with an element of the torus inducing a self map $f$ with isolated fixed points and such that $f^n=Id$. Let there be an embedding of $\widehat{X}$ into a smooth variety $Y$ such that $f$ extends to an automorphism of $Y$ with isolated fixed points. We denote the Dolbeault complex with VAPS domain by $\mathcal{P}(X)=(L^2\Omega^{0,\cdot}(X), \overline{\partial}_{VAPS})$, and the geometric endomorphism associated to $f$ as $T_f$.
If $\widehat{X}$ has normal singularities, and the local cohomology of $\mathcal{P}_N(U_a)$ for any fixed point $a$ of $f$ is equal to the holomorphic functions on $U_a$, with no higher degree cohomology, then 
the local Lefschetz numbers $L(\mathcal{P}(U_a), T_f)$ are equal to the local Lefschetz numbers of Baum-Fulton-Quart (see Subsection \ref{subsubsection_Baum_Fulton_Quart_method_compute}).    
\end{proposition}

\begin{proof}
We showed in the discussion below equation \eqref{Baum_power_series} that the Baum-Fulton-Quart local holomorphic Lefschetz numbers can be derived by taking renormalized traces over the local ring in this setting. 

Since we have an algebraic torus action on $X$, we can pick elements $(\lambda_1,...,\lambda_N) \in (\mathbb{C}^*)^N$ such that $|\lambda_i| \neq 1$ acting on $X$ so that we do not have to renormalize the local Lefschetz numbers (as per the discussion above Proposition \ref{Proposition_duality_complex_conjugation_for_local_Lefschetz_numbers}). For an attracting fixed point, since we assume that the local cohomology is exactly the holomorphic functions, we can pick a basis as indicated in the discussion below equation \eqref{Baum_power_series} and compute the trace without renormalization. That discussion shows that the local Lefschetz number at each fixed point is a rational function in the variables $\lambda_i$ of the torus. The sum of these local Lefschetz numbers can be written as a finite sum of products of powers of $\lambda_i$ by the Lefschetz fixed point theorem since the global cohomology is finite. Thus the global Lefschetz number is a polynomial in the $\lambda_i$ variables that can be evaluated on any element of the torus.

For local complete intersections, it is easy to see that this recovers the local Lefschetz-Riemann-Roch number in \cite[\S 3.3]{baum1979lefschetz}.

\end{proof}

We observe that since $\widehat{X}$ is a normal toric variety, its singularities are rational and our result is compatible with Propositions 5.4 and 6.1 of \cite{AnalyticToddBeiPiazza}. Results such as the above proposition can provide a useful tool to probe similarities and differences in algebraic and analytic invariants in different settings (see Remark \ref{remark_Bott_residue_quadric}).

It follows from Definition \ref{definition_normal_algebraic_variety} that the regular functions on an affine chart are the local holomorphic functions for normal algebraic varieties. The set of regular functions on an affine chart is equal to the \textit{local ring} at the origin of the chart. 
Key differences in the formulas of Baum-Fulton-Quart in singular cohomology as opposed to ours stem from the fact that the local $L^2$ cohomology at fixed points of non-normal algebraic varieties include holomorphic functions in the integral closure of the local ring, that are not in the local ring.

The notion of a \textit{good categorical quotient} (see for instance Definition 5.05 of \cite{cox2011toric}) also relates holomorphic sections and regular sections on singular spaces. For a good categorical quotient corresponding to the morphism $\pi: X \rightarrow Y$, for any $U \subset Y$ that is open (in the Zariski topology), the natural map $\mathcal{O}_Y(U) \rightarrow \mathcal{O}_X(\pi^{-1}(U))$ induces an isomorphism $\mathcal{O}_Y(U) \simeq \mathcal{O}_X(\pi^{-1}(U))^G$. The non-normal algebraic varieties that we study can be realized as symplectic quotients which do not satisfy this condition, hence the discrepancies of the holomorphic sections on the embedded space and those on the singular space of interest. In most moduli space problems, the singularities are nice enough that there are no such discrepancies. For instance in section 3.2.1 of \cite{NekrasovABCDinsta}, resolutions of spaces are used to compute certain Lefschetz numbers which appear in the guise of character formulas, and the independence of the resolution of the K\"ahler moduli is used to see that the characters are well defined on the singular space (also see Remark \ref{remark_yau_nekrasov_conifold}).

In the case of non-normal algebraic varieties, there are instances where for all choices of domains for the Dolbeault complex the analytic Todd classes of \cite{AnalyticToddBeiPiazza} are different from those of Baum-Fulton-MacPherson, and we will study such examples in Subsection \ref{subsection_nonnormal_examples}.

In \cite{LottHilbertconplex}, a different Hilbert complex is constructed to match the Todd classes of Baum-Fulton-MacPherson, which accounts for elements in the higher global cohomology groups of the complex which are missing in the global $L^2$ cohomology. Indeed, in equation (1.1) of that paper, the algebraic Riemann-Roch theorem for complex curves is reviewed. It is pointed out that the minimal domain for the Dolbeault complex with the Fubini Study metric on curves gives the minimal index (by \cite{Bruningalgebraiccurves90}) while the index of Baum-Fulton-MacPherson-Quart increases as the domain of the $\overline{\partial}$ operator becomes larger. 
That is, if there are domains $D_1 \subset D_2$ for the Dolbeault complex on an algebraic curve, then the Euler characteristic of the complex with the domain $D_2$ is greater than or equal to that with the domain $D_1$.
We will see this in the case of the cusp curve in Subsection \ref{subsection_nonnormal_examples} as we study Lefschetz numbers for different domains. 

\subsubsection{Smooth holomorphic Lefschetz formulae}

At an isolated fixed point $p$ of a map $f$ in the regular part of $\widehat{X}$, we know that the local Lefschetz number for the Dolbeault complex $\mathcal{P}(X)$, $L(\mathcal{P}(X),T_f,p)$, is given by the Atiyah-Bott (Woods Hole) formula (see Section 4 of \cite{AtiyahBott2}). Now we verify that this is equal to $L(\mathcal{P}(U_p),T_f)$ for a fundamental neighbourhood $U_p$ of the fixed point. In the smooth case, the tangent cone is just the tangent space $\mathbb{C}^{n}$, and the truncated tangent cone is a ball for which the local $L^2$ cohomology is trivial for $q>0$.

We will first study the case where $n=1$ for simplicity which is the case of the disc in the complex plane, where the fixed point is the origin. For $p=0$, the $\overline{\partial}$ operator has an infinite dimensional null space which is precisely the space of holomorphic functions. This has a Schauder basis $\{z^k\}_{k \in \mathbb{N} \cup \{0\}}$. We denote the complex for holomorphic $p$ forms as $\mathcal{P}^p=(L^2\Omega^{p,\cdot},\overline{\partial})$, obtained by twisting $\Omega^{0,q}$ by the bundle $\Lambda^pT^{(1,0)}_{\mathbb{D}}$.
For $p=1$, the cohomology is generated by $dz$ (as a module over the algebra of holomorphic functions).
For a holomorphic map $f$ which is expanding, one obtains the meromorphic weighted trace function (see  Definition \ref{definition_renormalized_trace})
\begin{equation}
    L(\mathcal{P}^{p=0}_N(\mathbb{D}), T_f, 0)(s)= \sum_{k=0}^{\infty} e^{-sk}  \frac{\langle f^*({z^k}), z^k \rangle}{\langle z^k, z^k \rangle}  =\sum_{k=0}^{\infty} e^{-sk} {df_0'}^k
\end{equation}
where $df_0'$ is the differential of the map $f$ restricted to the holomorphic tangent space at the origin. If $s$ is large enough so that $e^{-s}df_0'$ has norm less than $1$, then this series is summable and results in the function
\begin{equation*}
     L(\mathcal{P}^{p=0}_N(\mathbb{D}), T_f, 0)(s)= \frac{1}{1-e^{-s}df_0'}
\end{equation*}
which admits a meromorphic continuation to $\mathbb{C}$. It is clear that the poles of the meromorphic function are given by $Log(df_0')+ i2\pi m$ for $m \in \mathbb{Z}$ and they are simple poles (where by $Log$ we mean any branch of the logarithm). The presence of a pole at $s=0$ is a degenerate condition corresponding to a non-simple fixed point and it occurs precisely when the linearized map induced on the tangent cone has a non-isolated fixed point. In particular, for attracting fixed points, for which $Log(|df_0'|)$ is negative, the function is holomorphic on a right half plane including the origin.

In the case where $p=1$, a Schauder basis for the null space is obtained by multiplying $dz$ by a basis for the holomorphic functions. We get
\begin{equation}
    L(\mathcal{P}^{p=1}_N(\mathbb{D}), T_f, 0)(s)= \sum_{k=0}^{\infty} e^{-sk} \frac{\langle f^*({z^k}dz), z^k dz \rangle}{\langle z^k dz, z^k dz \rangle} =\sum_{k=0}^{\infty} (e^{-sk} {df_0'}^k) df_0'
\end{equation}
where the additional factor of $df_0'$ is a result of $f^*(dz)=d(f^*z)=\partial(f^*z)$. The meromorphic function is now given by 
\begin{equation}
\label{equation_Sumith_1}
     L(\mathcal{P}^{1}_N(\mathbb{D}), T_f, 0)(s)= \frac{df_0'}{1-e^{-s}df_0'}
\end{equation}
which satisfies the same properties as discussed above. Such formulas actually hold for expanding fixed points as well, for which one could take the dual complex and get formulas without renormalization. Observe that this formula has the same form as that in equation \eqref{equation_locally_free_sheaf_factorization}. 
The case for general $n$ and general $p$ is similar and can be recovered by considering polydisc neighbourhoods of the fixed point and that the fiber over the fixed point for the sheaf of $p$ forms is generated by the linearly independent $p$-fold wedge products of $\{dz_1,..., dz_n\}$.

This recovers the theorem of Atiyah and Bott for the local Lefschetz numbers for the Dolbeault complex in the smooth case, given by
\begin{equation}
\label{woods_hole_formula}
L(\mathcal{P}^{p}_N(U_a), T_f, a) = \frac{tr(\lambda^{p}df_a')}{det(Id-df_a')} 
\end{equation}
where $\lambda^{p}df_a'$ is the map induced on the vector space $\Lambda^pT^{(1,0)}_a{U_a}$ by the geometric endomorphism, the trace of which yields the $p$-th term in the characteristic polynomial for $df_a'$.
This is also known as the Woods Hole formula (see \cite{tu2015genesis} for a brief history).

\subsubsection{Spin$^{\mathbb{C}}$ Dirac complexes}
\label{subsection_almost_complex_spin_c_Dirac}

Spin$^{\mathbb{C}}$ structures play an important role in many aspects of geometry and topology, and the spin$^{\mathbb{C}}$ Dirac operator gives a link to study some of these aspects. We refer the reader to \cite{duistermaat2013heat} for a comprehensive introduction to the operator on smooth manifolds and orbifolds.

It is well known that for an almost complex manifold of dimension $2n$ there is a corresponding spin$^{\mathbb{C}}$ structure and spin$^{\mathbb{C}}$ Dirac operator, and the complex spinor bundle $\mathcal{S}$ is canonically identified with $\oplus_q \Lambda^{0,q}X$, where $\oplus_{p,q} \Lambda^{p,q}X$ is the splitting of the exterior algebra of $T^*X$ in $(p,q)$ types (see, e.g., \cite{epstein2006subelliptic}).
Unless the almost complex structure is integrable, the square of the spin$^{\mathbb{C}}$ Dirac operator does not preserve the form degree. 
Using the identification of $\mathcal{S}$ with $\oplus_q \Lambda^{0,q}X$ (or more generally with $\oplus_q \Lambda^{p,q}X$ after twisting with $E=\Lambda^{p,0}$), we can try and generalize the Lefschetz fixed point theorem when there is some integrability of the almost complex structure near fixed points.

Suppose $X$ is a resolution of a pseudomanifold with a spin$^{\mathbb{C}}$ structure.
Given a compatible almost complex structure in a neighbourhood of a fixed point, there is an induced almost complex structure $J_p$ at the tangent cone at the fixed point (realized by freezing coefficients at $p$ and extending homogeneously) which is homogeneous with respect to dilations on the infinite cone.
If $J_p$ is integrable then we are in the setting of the holomorphic Lefschetz fixed point theorem locally.

Let $\mathcal{R}(X)=(L^2(X;\mathcal{S} \otimes E), D)$ be the Dirac complex (see Definition \ref{equation_two_term_complex}) for a twisted spin$^{\mathbb{C}}$ Dirac operator which satisfies the Witt condition. Let $f$ be a self map preserving the spin$^{\mathbb{C}}$ structure, with isolated fixed points, and inducing a geometric endomorphism $T_f$ on the complex.

Let $U=U_a$ be the truncated tangent cone of a non-expanding fixed point $a$, where the spin$^{\mathbb{C}}$ structure is defined by an almost complex structure which is integrable. Then restricted to $U_a$, we can identify the complex with the Dolbeault complex with the $\mathbb{Z}_2$ grading corresponding to the even and odd degree forms, that is $\mathcal{R}_N(U_a)=(L^2\Omega^{0,\cdot}(U;E), D_U)$.

For more general fixed points with a product decomposition as in Proposition \ref{proposition_local_product_Lefschetz_heat}, we can use the product complex $\mathcal{R}_B(U_a)$. However most geometric endomorphisms of spin$^{\mathbb{C}}$ Dirac complexes that are studied widely arise from isometric actions of Lie groups, in which case 
$\mathcal{R}_B(U_a)=\mathcal{R}_N(U_a)$.
Since we have suitable domains for the local Hilbert complex, we are able to define local cohomology groups, and thus define local Lefschetz numbers.
\begin{theorem}
In the setting described above, we have that
\begin{equation}
    L(\mathcal{R}(X),T_f)=\sum_{a \in Fix(f)} L(\mathcal{R}_B(U_a),T_f).
\end{equation}
\end{theorem}

\begin{proof}
This follows from the fact that the local complex can be identified with a twisted Dolbeault complex, whence the result follows from Theorem \ref{Corollary_Polynomial_Lefschetz_fixed_point_theorem}.
\end{proof}

\begin{remark}
\label{Remark_can_generalize_to_almost_complex}
On 4 manifolds, Proposition 6.3.1 of \cite{SurgerycontactOzbagciStipsicz04} shows that a spin$^{\mathbb{C}}$ structure determines an almost complex structure away from finitely many isolated points (c.f. \cite{LeBrunTwistorsSDSPIN2021}).
Some of the other invariants that we compute in this section can be generalized to the setting where there are spin$^{\mathbb{C}}$ structures. In particular, formulas for the signature %(see Corollary A.6 of \cite{devito2021examples}) 
for almost complex manifolds are well known. In this article we will stick to the setting of complex structures for simplicity.
\end{remark}

\subsubsection{Spin Dirac complex with complex structures.}
\label{subsubsection_spin_dirac_kahler}

In the case where we have a pseudomanifold with a spin structure, and when fundamental neighbourhoods carry a complex structure, we can use the following identification of Hilbert complexes. It is well known that a spin structure on $\widehat{X}^{reg}$ with a complex structure corresponds to a square root of the canonical bundle $K$, if it exists. More precisely we have a line bundle $L$ on $\widehat{X}^{reg}$ such that $L^{\otimes 2}=K$, and a spin Dirac operator with coefficients twisted by a Hermitian bundle $F$,
\begin{equation}
\label{identified_complex_spin}
    D=\sqrt{2}(\overline{\partial}_E+\overline{\partial}_E^*) : \bigoplus_{q \text{  even}} (\Omega^{0,q}(X) \otimes F \otimes L) \rightarrow \bigoplus_{q\text{  odd}} (\Omega^{0,q}(X) \otimes F \otimes L).
\end{equation}
where $E=F \otimes L$ (see for instance Theorem 2.2 of \cite{hitchin1974harmonic}, Section 3.4 of \cite{friedrich2000dirac}).
We denote the associated elliptic complexes as $\mathcal{P'}=(L^2\Omega^{0,q}(X)\otimes F, \overline{\partial}_F)$, $\mathcal{P}=(L^2\Omega^{0,q}(E)(X), \overline{\partial}_E)$, and the associated Dirac complexes as $\mathcal{R'}$ and $\mathcal{R}$.

\begin{remark}[Self duality of spin complex]
\label{remark_self_dual_spin_serre}
In particular, for $E$ a self dual bundle ($E=E^*$), we see that the complex $\mathcal{P}$ is a \textit{\textbf{self dual complex under Serre duality}} since $L^{-1} \otimes L^{\otimes 2}= L$. In particular, this holds for the spin Dirac operator when $E$ is trivial.
\end{remark}

Let $a$ be an isolated fixed point of a map $f: U_a \rightarrow U_a$ of a fundamental neighbourhood $U_a$ of a stratified pseudomanifold $X$, such that the resolved space has a spin structure and a wedge metric. Let us assume that $U_a$ has a K\"ahler structure and a Hermitian bundle $F$ such that we can write the associated Dirac complex as $\mathcal{R}(U_a)$ on $U_a$, in the notation of the discussion above. 
We require that the map $f$ has a lift to a bundle map on the line bundle $L(=\sqrt{K})$ on $X^{reg}$ preserving the Hermitian structure, and denote the associated geometric endomorphism by $T_f$.
Then the local spin Lefschetz number is given by the holomorphic Lefschetz number
$L(\mathcal{P}_B(U_a), f, a)$. 

\begin{example}[Spin number on a disk]
\label{Example_Spin_number}
Atiyah and Bott proved (in Theorem 8.35 of \cite{AtiyahBott2}) that the local Lefschetz number (which they called the spin number) for the spin Dirac operator at the smooth fixed point at the origin for a rotation by an angle $\alpha$
of the disc $\mathbb{D}^2=U_0$, with the trivial spin structure, is
\begin{equation}
\label{Baseema}
    L(\mathcal{P}_N(U_0), T_{\widehat{f}}, 0)= \pm \frac{i}{2}cosec(\alpha /2).
\end{equation}
The sign of this number can be determined using the structure of the spin bundle and the choice of lift $\widehat{f}$ of the self map $f$ to an automorphism of the spin bundle.
\end{example}

The derivation of this formula in \cite{AtiyahBott2} is involved, and we provide an easier derivation for the smooth case now.
First we note that the spin number above (up to the sign) can be written as the evaluation at $s=0$ of the weighted trace functions
\begin{equation}
\label{equation_spin_trace}
    \sum_{k=0}^\infty e^{(2k+1)(-s+i\alpha/2)} = \frac{e^{-s+i\alpha/2}}{(1-e^{-s+i\alpha})}
\end{equation}
as in Definition \ref{definition_renormalized_trace}. This is simply the holomorphic Lefschetz number up to the factor of $e^{i\alpha/2}$ in the numerator.
Since the tangent space of a fixed point on any even dimensional manifold of dimension $2n$ can be identified with $\mathbb{C}^n$, we can use the identification of the Spin Dirac complex with the twisted (by a square-root of the canonical bundle) Dolbeault complex from Subsection \ref{subsubsection_spin_dirac_kahler}. A square root of the canonical bundle is simply a choice of square root for $dz_1 \wedge dz_2 \wedge... \wedge dz_n$. 
For the case of the disk, taking $\sqrt{dz}$, it is easy to see that the trace of the geometric endomorphism in Example \ref{Example_Spin_number} acting on the vector space generated by the section $\sqrt{dz}$ at the origin is given by $e^{i\alpha/2}$.
The local cohomology has a Schauder basis $\{z^k\sqrt{dz}\}$ where $k$ runs over the non-negative integers. Now the formula follows from equation \eqref{equation_locally_free_sheaf_factorization}. The general case for $\mathbb{C}^n$ follows by taking a polydisk decomposition and using the product of Lefschetz numbers using Proposition \ref{Lefschetz_product_formula}.

Observe that this number satisfies the \textit{\textbf{duality identity}}
\begin{equation}
\label{Namalge_redda}
    \frac{e^{i\alpha/2}}{(1-e^{i\alpha})}=-\frac{e^{-i\alpha/2}}{(1-e^{-i\alpha})}.
\end{equation}
This is an example of a more general duality identity for the spin numbers where there is an underlying complex structure, locally. Note that this is an instance of the duality in Proposition \ref{proposition_Lefschetz_on_adjoint}, and is a consequence of the \textit{self duality} that we explained in Remark \ref{remark_self_dual_spin_serre}.

In this case of the disc, the local Lefschetz number was simply $e^{i\alpha/2}$ times the holomorphic Lefschetz number. This is an instance of the $\widehat{A}$-genus of a spin manifold with a complex structure being equal to the Todd class of the manifold up to the exponential of half the first Chern class, 
now at the local level at the fixed point as in \cite{bott1967vector}.
In the singular setting this is trickier, as we shall see in Example \ref{example_spin_number_for_nodal_surface}.

\subsubsection{Hirzebruch's $\chi_y$-genera and the Signature invariant}
\label{subsection_Hirzebruch_invariants}

Given a stratified pseudomanifold $\widehat{X}$ with a wedge complex structure and a Hermitian bundle $E$, we can define $h^{p,q}(X,E)$ to be the dimension of the cohomology group of degree $q$ of $\mathcal{P}(X)=(L^2\Omega^{p,\cdot}(X;E),\overline{\partial}_E)$. We define the $\chi_y$ genus for this complex as
\begin{equation}
    \chi_y(X;E)=\sum_{p=0}^n y^p \chi^p(X;E) \quad \text{ where } \chi^p(X;E)=\sum_{q=0}^n (-1)^q h^{p,q}(X;E).
\end{equation}
In the case of the trivial bundle, we will drop $E$ from the notation.
Serre duality, which in particular implies $h^{p,q}(X;E)=h^{n-p,n-q}(X;E^*)$, shows that we have a duality
\begin{equation}
\label{equation_Hirzebruch_invariant_duality_rigid}
    \chi_y(X;E)=\sum_{p=0}^n y^p \sum_{q=0}^n (-1)^q h^{p,q}(X;E)= (-y)^n \sum_{p=0}^n (y^{-1})^{p} \sum_{q=0}^n (-1)^q h^{p,q}(X;E^*) = (-y)^n\chi_{y^{-1}}(X;E^*).
\end{equation}
In \cite[\S 4]{hirzebruch1966topological}, Hirzebruch introduced his $\chi_y$ genera for smooth complex manifolds, and this is a simple extension. 
While we will work with the Dolbeault complex with VAPS domain, one can define pick other extensions and define corresponding $\chi_y$ invariants as well.
For instance in the case of the cusp curve that we study in Subsection \ref{example_cusp_singularity_preamble}, we see that the Hodge numbers are different for the local cohomology corresponding to the two choices of domains studied there. We will see in Subsection \ref{subsection_nonnormal_examples} that the global Hodge numbers are different as well for these choices.

\begin{remark}   
While the $\chi_y$ genus may prove to be interesting on other domains, for the rest of our discussion of this invariant we will restrict to the case where the global cohomology of the $L^2$ Dolbeault complexes (for all $p$) of a space $X$ with VAPS conditions matches the global cohomology of the $L^2$ de Rham complex.

\end{remark}

Given a resolution of a pseudomanifold $X$ with a wedge complex structure and a Hermitian bundle $E$, let $f$ be a holomorphic self map with isolated fixed points that lifts to a linear self map on $E$.
We define the \textbf{\textit{local $L^2$ equivariant $\chi_y$-genus}} for a fixed point $a$ with fundamental neighbourhood $U_a$ with a K\"ahler structure as
\begin{equation}
    \chi_y(U_a,E,f):=\sum_{p=0}^n y^p L(\mathcal{P}_B^p(U_a), T_{f})
\end{equation}
where $T_f$ is the geometric endomorphism associated to $f$ on the Dolbeault complex $\mathcal{P}_B^p(U_a)$ for each $p$. We will drop $E$ from the notation when we consider the trivial bundle.
The \textbf{\textit{global $L^2$ equivariant $\chi_y$-genus}} is defined to be 
\begin{equation}
    \chi_y(X,E,f):=\sum_{p=0}^n y^p L(\mathcal{P}^p(X), T_{f})
\end{equation}
and by the Lefschetz fixed point theorem it follows easily that this is the sum of the local genera at the isolated fixed points.

\begin{proposition}
\label{Proposition_duality_Hirzebruch_genera}
In the setting of the discussion above, we have the duality of local equivariant $\chi_y$ genus
\begin{equation}
\label{equation_duality_Hirzebruch_chi}
    \chi_y(U_a,E,f)=\chi_{y^{-1}}(U_a,E^*,f^{-1})(-y)^n.
\end{equation}
\end{proposition}

\begin{proof}
This follows from applying Proposition \ref{Proposition_duality_complex_conjugation_for_local_Lefschetz_numbers} for the duality of Lefschetz numbers and using Serre duality, similar to equation \eqref{equation_Hirzebruch_invariant_duality_rigid}.
\end{proof}

\begin{remark}
\label{Remark_our_dualities_are_better}
The above proposition is the analog in $L^2$ cohomology of Proposition 3 of \cite{donten2018equivariant} for the Hirzebruch $\chi_y$ genus corresponding to the Baum-Fulton-Quart Lefschetz-Riemann-Roch theorem on quotient singularities. The proof there is using an explicit functional equation for the invariant for orbifolds, as opposed to ours which is more conceptual since it uses the dualities of local cohomology groups. In the remarks following the proof of Proposition 3 of that article, it is observed that for affine cones over curves of degree $4$ in $\mathbb{P}^2$, the duality does not hold in the setting of that article when $E$ is the trivial bundle.
We will see an explicit computation of our $L^2$ version in Example \ref{Singular_quadric_equivariant_Hirzebruch}. It is easy to verify that the duality in Proposition \ref{Proposition_duality_Hirzebruch_genera} holds in that example.
\end{remark}

The Hodge index theorem on a smooth K\"ahler manifold $\widehat{X}$ implies that the signature $\tau(X)=\chi_1(X)$. This is Theorem 15.8.2 of \cite{hirzebruch1966topological}, and the proof uses properties of the \textit{K\"ahler package} (Hirzebruch also indicates that it holds more generally for smooth \textit{complex} manifolds with other methods). It is easy to see that the same proof goes through for the singular setting, since it was shown that the K\"ahler package extends to the $L^2$ cohomology of K\"ahler wedge metrics in \cite{cheeger1982l2}. 
Thus the local Lefschetz numbers for the signature complex, also known as Lefschetz signature numbers, are given by evaluating the Lefschetz $\chi_y$ genera at $y=1$.
We formalize this as follows, generalizing Theorem 6.27 of \cite{AtiyahBott2}, in the wedge K\"ahler setting.

\begin{theorem}[Lefschetz Hodge index theorem]
\label{Theorem_L2_Lefschetz_Hodge_index}
In the same setting as that of Proposition \ref{Proposition_duality_Hirzebruch_genera}, the local Lefschetz Signature number at the fixed point $a$ is given by $\chi_{1}(U_a,f)$.
\end{theorem}

This result is in contrast to equivariant $\chi_y$ genera in singular cohomology for singular spaces, where it is well known that the K\"ahler package fails to hold for singular cohomology on stratified pseudomanifolds since Poincar\'e duality fails. Indeed, standard proofs of the result in the smooth setting uses explicit functional equations while ours now follow from arguments at the level of cohomology (c.f. Remark \ref{Remark_our_dualities_are_better}).

It is easy to observe that $\chi_{0}(X)$ is the Riemann Roch number for the trivial bundle on $X$, and that $\chi_{-1}(X)$ is the Euler characteristic of the de Rham complex on $X$.
Just as we computed the Spin number of Atiyah and Bott for a group action at a smooth point by using the holomorphic Lefschetz numbers, we can derive Atiyah and Bott's formula in equation (7.3) of \cite{AtiyahBott2} for the signature number of an isolated smooth fixed point of an isometry using the holomorphic Lefschetz numbers.

While our setting is more general than the algebraic one, there are different extensions of Hirzebruch's $\chi_y$ genus to singular algebraic varieties. For instance, in \cite[\S 3.6]{MaximSaitoJorgHodgemodules2011}, a Hodge index theorem is formulated for intersection cohomology with middle perversity with the pure Hodge structure corresponding to the K\"ahler structure. Ours is an equivariant version of this, as is clear by the equivalence between intersection cohomology and $L^2$ cohomology for wedge metrics. 

Our Lefschetz signature invariant is the Lefschetz version of the $L^2$ Signature (see \cite{Albin_signature}) for wedge metrics which is therefore related to the Goresky MacPherson $L$ classes (see \cite[\S 6.3]{Banaglbook07}).
For normal toric algebraic varieties (where the types of singularities that occur are even more restrictive, in particular being rational as we discussed in Subsection \ref{subsection_classes_of_singularities}), 
a $\chi_y$ invariant is defined in \cite{MaximJorgCharacteristicsingulartoric2015}, which matches the Baum-Fulton-MacPherson Todd classes for $y=0$, MacPherson Chern classes for $y=-1$, and Thom-Milnor $L$-classes for $y=1$ (in particular see equation 1.12 of \cite{MaximJorgCharacteristicsingulartoric2015}). There exists a lot more work studying the different relationships between these characteristic classes in the algebraic setting, and we refer the reader to \cite{cappell2023equivariant,GeneralatticeCappellShaneson94,Brasselet2010}.

Hirzebruch's $\chi_y$ invariant and its equivariant version plays a key role in the study of the elliptic genus (see Sections 5 and 6 of \cite{hirzebruch1992manifolds} and Section 6 of \cite{donten2018equivariant}) and the signature invariant. The duality in \eqref{equation_duality_Hirzebruch_chi} plays a key role in modular invariance (see Theorem 4.3 of \cite{borisov2003elliptic}) and other important properties of such invariants. 

In \cite{bott1989rigidity}, Bott and Taubes show that there are \textit{``twisting relations"} for the spin Dirac operator, the signature operator and their local Lefschetz numbers (see equations (2.9), (3.4)-(3.6) of that article) in the case of smooth manifolds which play a key role in their study of elliptic genera.
We can now use our methods to study $L^2$ versions of elliptic genera on singular spaces, at least formally. There is precedent for studying this on singular spaces, as in \cite{donten2018equivariant,borisov2003elliptic}.
We will see how the equivariant $\chi_y$ invariants can be used to compute anti-self-dual and self-dual indices of four dimensional spaces in Subsection \ref{subsection_asd_sd_complexes_localization}.

\begin{remark} 
Relationships between Hirzebruch $\chi_y$ genera and Molien series have been studied in \cite{donten2018equivariant} on singular spaces. Molien series on certain orbifolds have been computed in \cite{Degeratuthesis}, relating them to generalized eta invariants.
The work of \cite{donnelly1978eta} seems to be where Lefschetz numbers were first interpreted as generalized eta invariants and that perspective has since been used in the computation of Lefschetz numbers as can be seen for instance in \cite{weiping1990note,goette2012computations,goodman2022moduli,dessai2021moduli}. 
\end{remark}

\begin{remark}
As noted in Remark \ref{Remark_can_generalize_to_almost_complex}, much of the content in this subsection can be generalized to the almost complex setting where the Hirzebruch $\chi_y$ genus is used in many applications 
(see, e.g., \cite{jang2022complex}, 
\cite[\S 2]{borisov2003elliptic}), with some constraints near fixed points.     
\end{remark}

\subsubsection{Self-Dual and Anti-Self-Dual complexes}
\label{subsection_asd_sd_complexes_localization}

The anti self-dual (ASD) and self-dual (SD) complexes of spaces are of great interest, especially in the study of 4 dimensional spaces. The indices of these complexes appear in formulas for the dimension of moduli spaces of ASD and SD instantons and Seiberg-Witten moduli spaces. In \cite{WittenTopologicalQFT88}, Witten constructed a topologically twisted supersymmetric gauge theory which localizes on instantons and in the presence of torus actions equivariant versions of this Donaldson-Witten theory have been defined, and generalized in various directions following the work of Nekrasov (see \cite{NekrasovABCDinsta}).

\begin{remark}[Convention for self dual/ anti self dual complexes vs instantons]
\label{remark_anti_convention}
There is a rift in the notion of what is a self-dual/ anti-self dual instanton. In \cite{festuccia2020transversally,festuccia2020twisting}, the indices of the self dual/ anti-self dual complexes are used to compute instanton partition functions of  anti-self dual/self dual instantons respectively, which can be very confusing. This probably goes back to \cite{WittenTopologicalQFT88} where Witten says in the paragraph following equation 3.12 of that article that ``it would be tiresome to call them anti-instantons; solutions of the opposite equation will play no role in this section".
While that convention is justified in that article, we have found that it causes overall confusion in the literature, especially to the uninitiated.

We will thus call the instantons corresponding to the self-dual complex as the self-dual instantons, or just instantons. Those corresponding to the anti-self-dual complex will be refered to as anti-self dual instantons or anti-instantons.
\end{remark}

We build on the articles
\cite{festuccia2020transversally,festuccia2020twisting} written in expository style, where one main idea is to boil down the equivariant computations to those for local equivariant indices of the SD or ASD complexes.
The work there includes methods to compute partition functions of theories corresponding to \textit{``flipped instantons"}, which can roughly be described as \textit{placing self-dual instantons at some zeros of the Killing vector field and anti-self-dual instantons at the others}. The name \textit{Pestunization} is used in these articles since these ideas were introduced by Pestun in \cite[\S 4]{pestun2012localization}.

We focus on the work in article \cite{mauch2022index}, which gives rigorous justifications for computations done in \cite{pestun2012localization,festuccia2020twisting,festuccia2020transversally}, and refer the reader to the introduction of \cite{festuccia2020twisting} for the broader context.
The SD and ASD complexes have been labelled differently in certain places in \cite{festuccia2020transversally,festuccia2020twisting} as we noted in Remark \ref{remark_anti_convention} and we refer to Propositions 4.1 and 4.2 in Appendix A of \cite{mauch2022index} for the formulas.

We will restrict our study to stratified Witt spaces with complex structures and Hermitian wedge metrics. We denote the $L^2$ Betti-numbers of degree $k$ for a connected stratified pseudomanifold with a wedge metric by $\beta_k$. These numbers are independent of the choice of domain for the exterior derivative. We denote the corresponding Hodge numbers of Hodge bi-degree $(p,q)$ as $h^{p,q}$, the number of SD 2 forms as $\beta_2^+$, and the number of ASD 2 forms as $\beta_2^-$.

The $L^2$ Euler characteristic (of the de Rham complex) is $\chi=\beta_0-\beta_1+\beta_2-\beta_3+\beta_4$, and since Poincar\'e duality implies that $\beta_0=\beta_4$ and $\beta_1=\beta_3$, we have $\chi=2\beta_0-2\beta_1+\beta_2$. The signature of the Witt space is $\tau=\beta_2^+ - \beta_2^-$.
The $L^2$ SD complex is 
\begin{equation}
    0 \rightarrow L^2\Omega^0 \xrightharpoonup{d} L^2\Omega^1 \xrightharpoonup{d^+} L^2\Omega^2_+ \rightarrow 0
\end{equation}
and this has index $\text{ind}_{SD}=\beta_0-\beta_1+\beta_2^+=\frac{1}{2}(\chi+\tau)$. 
The $L^2$ ASD complex is isomorphic to
\begin{equation}
    0 \rightarrow L^2\Omega^0 \xrightharpoonup{d} L^2\Omega^1 \xrightharpoonup{d^-} L^2\Omega^2_- \rightarrow 0
\end{equation}
and this has index $\text{ind}_{ASD}=\beta_0-\beta_1+\beta_2^-=\frac{1}{2}(\chi-\tau)$. We will drop the $L^2$ in the notation for brevity in the rest of this subsection.
In page 4 of \cite{donaldson1985anti}, Donaldson indicates that 
\begin{equation}
    \Omega^2_+ = \Omega^{0,2}\oplus \overline{\Omega^{0,2}} \oplus \Omega^{1,1}_{||}
\end{equation}
where $\Omega^{1,1}_{||}$ denotes the span of the \textit{metric} (or rather the antisymmetrized form $\omega(\cdot, \cdot) =g(J \cdot, \cdot) $ where $J$ is the complex structure) for a Hermitian surface. He also indicates that the ASD 2 forms are the primitive two forms with respect to the Lefschetz map, in bi-degree $(1,1)$, which we denote as $\Omega^{1,1}_{\perp}$ following \cite{mauch2022index}. That is we have
\begin{equation}
\label{asd_useful_equation}
    \Omega^{1,1} = \Omega^{1,1}_{||} \oplus \Omega^{1,1}_{\perp}.
\end{equation}
This shows that $\beta_2^+=1+2h^{0,2}$ and $\beta_2^-=h^{1,1}-1$, and $\text{ind}_{SD}=\beta_0-2h^{0,1}+1+h^{0,2}$ where we have used the fact that $\beta_1=h^{1,0}+h^{0,1}=2h^{0,1}$. Since $\beta_0=1$ for a connected space, we have that $\text{ind}_{SD}=2\beta_0-2h^{0,1}+h^{0,2}=(h^{0,0}-h^{0,1}+h^{0,2})+(h^{0,0}-h^{1,0}+h^{2,0})$. This equality of Hodge numbers ascends to the level of $\mathbb{Z}_2$ graded complexes as outlined in Proposition 4.1 of \cite{mauch2022index}.

Similarly, $\text{ind}_{ASD}=\beta_0-2h^{0,1}+(h^{1,1}-1)$, and for a connected space since $b_0=1$ we have $\text{ind}_{ASD}=-h^{1,0}+h^{1,1}-h^{1,2}$ where we have also used Serre duality. It is easy to see from this that $\text{ind}_{ASD}$ is equal to minus of the Euler characteristic of the complex $(L^2\Omega^{1,\cdot},\overline{\partial})$.
We formalize this as follows.

\begin{proposition}
\label{Proposition_ASD_index_new_first_method}
Let $\widehat{X}^4$ be a connected Witt space with a wedge Hermitian structure and an isometric action of a Killing vector field. The index of the ASD complex on $X$ is equal to minus of that of $(L^2\Omega^{1,\cdot}(X),\overline{\partial})$.
\end{proposition}

Similar to Proposition 4.1 of \cite{mauch2022index}, we can consider lifts of this equality to an equality of complexes, but this is beyond the scope of this article.
Proposition \ref{Proposition_ASD_index_new_first_method} suggests a different method to compute the equivariant index of the ASD complex than that given in Proposition 4.2 of \cite{mauch2022index}. The method of that article is to use the isomorphism of SD and ASD complexes on $\mathbb{C}^2$ given by the diffeomorphism $(z_1,z_2) \mapsto (\overline{z_1},z_2)$; complex conjugating the first coordinate function.
The algorithm outlined in \cite{festuccia2020twisting,mauch2022index} to compute the local equivariant index for the ASD complex does not work for singularities such as those we will study in Subsection \ref{subsection_non_isolated_singularity}, and our methods give the necessary modifications.

In fact we observe that since $\chi=\chi_{-1}$ and $\tau=\chi_{1}$, we can write the indices of the SD and ASD complexes as
\begin{equation}
    \text{ind}_{SD}=\frac{1}{2}(\chi_{-1}+\chi_1), \quad \text{ind}_{ASD}=\frac{1}{2}(\chi_{-1}-\chi_1)
\end{equation}
and the local equivariant indices can be written in terms of the local equivariant $\chi_y$ invariants, which by our previous work can be computed in terms of the local holomorphic Lefschetz numbers.

The methods in \cite{festuccia2020twisting,mauch2022index} are mainly used to compute the local equivariant indices in order to compute partition functions of flipped instantons. In certain cases, this corresponds to theories on five dimensional Sasaki manifolds reduced along circles studied in \cite{qiu20155d,baraglia2016moduli,hekmati2023equivariant}. 
In the discussion in Subsection \ref{subsection_intro_to_compute} we will see that taking affine cones over either of the projective varieties which we will study in subsections \ref{subsection_example_nodal_surface} and \ref{subsection_non_isolated_singularity} gives a singular \textit{K\"ahler sandwich}, in particular containing a compact singular 5 dimensional Sasaki space where the reduction along the Reeb vector field yields the projective variety over which the affine cone was taken.

Given a 4 dimensional Witt space with a K\"ahler structure and a K\"ahler action of a complex torus $(\mathbb{C}^*)^2$ with variables $(\lambda,\mu)$, we denote the equivariant indices $ind_{SD}(U_a,\lambda,\mu), ind_{ASD}(U_a,\lambda,\mu)$.
On $U_a$ smooth, one can easily verify the relation 
\begin{equation}
\label{equation_duality_SD_ASD}
    ind_{ASD}(U_a,\lambda,\mu)=ind_{SD}(U_a,\lambda,\overline{\mu})=ind_{SD}(U_a,\overline{\lambda},\mu),
\end{equation}
and it has been observed that this is the source of the relation 
\begin{equation}
\label{equation_duality_partition_function_SD_ASD}
    Z^{\text{anti-inst}}_{\epsilon_1,\epsilon_2}(a,\overline{q})=Z^{\text{inst}}_{\epsilon_1,-\epsilon_2}(a,\overline{q})=Z^{\text{inst}}_{-\epsilon_1,\epsilon_2}(a,\overline{q})
\end{equation}
given in equation (109) in \cite{festuccia2020twisting} between the instanton and anti-instanton partition functions of certain supersymmetric Yang Mills theories. This can be seen from how certain superdeterminants are obtained by a change of variables of the form $\lambda=e^{-i\epsilon_1},\mu=e^{-i\epsilon_2}$ in the equivariant indices, expressing them in terms of the Lie algebra of the action. We refer to \cite{festuccia2020twisting} for more details and definitions.
This is a generalization of the relation
\begin{equation}
\label{equation_duality_partition_function_SD_ASD_simpler}
    Z^{\text{anti-inst}}_{\epsilon_1,\epsilon_2}(a,\overline{q})=Z^{\text{inst}}_{\epsilon_1,\epsilon_2}(a,q)
\end{equation}
used in Pestun's original work (see the beginning of page 72 of \cite{Pestun:2008ora})
that only holds for real values of $(\epsilon_1,\epsilon_2)$ (see equation (110) of \cite{festuccia2020twisting}). For such real valued $(\epsilon_1,\epsilon_2)$, Pestun notes that the full partition function has the factor $|Z^{\text{inst}}_{1/r,1/r}(ia,q)|^2$ in the integrand in equation (1.4) of \cite{pestun2012localization} (c.f., equation (112) of \cite{festuccia2020twisting}) and observes that 
this appearance is similar to the Ooguri-Strominger-Vafa relation between the black hole entropy and the topological string partition function 
(see equation (1.6) \cite{pestun2012localization}.
The third research direction suggested at the end of the introduction of \cite{pestun2012localization} is the following. 
\textit{Is there a more precise relation between the gauge theory formula of this
paper and Ooguri-Strominger-Vafa conjecture, or is there a four dimensional analogue of the $tt^*$-fusion}.
By analogy with \cite[\S 6]{OoguriStromVafa04} one should interpret $Z^{\text{inst}}_{1/r,1/r}(ia,q)$ as a quantum state, and the square of its magnitude as being proportional to the amplitude function.
%Suspension example where it fails, as a space with a torus action and equivariant 2-torus action since two independent Reeb type vector fields. Take singular Sasaki 3 spaces and take suspensions, and you won't get the OSV in general.

In this light it is important to understand the relation in \eqref{equation_duality_SD_ASD}. For orbifolds, using explicit orbifold versions of Lefschetz-Riemann-Roch formulas (see, e.g. \cite[\S 3.3]{donten2018equivariant}) one can show that the relation \ref{equation_duality_SD_ASD} continues to hold.
Indeed, one can verify this for the example of the singular quadric we study in Subsection \ref{subsection_example_nodal_surface} where the singularity is an orbifold singularity, using the formulas for the equivariant $\chi_y$ invariant in Example \ref{Singular_quadric_equivariant_Hirzebruch}.
However it fails in the case of the surface with the non-normal depth 2 singularity in Subsection \ref{subsection_non_isolated_singularity}. This can be readily checked by our explicit formulae for the Lefschetz $\chi_y$ invariants at singularities given in those subsections.
That computation shows that the analog of the OSV conjecture described by Pestun fails for the space given by the suspension of the link at the depth 2 singularity in Subsection \ref{subsection_non_isolated_singularity} with the metric $d\phi^2+sin^2(\phi)g_Z$ where $g_Z$ is the Sasaki metric on the link. It is easy to see that this space has a torus action that preserves an almost complex structure.

It would be interesting to know if there are any \textit{physical} insights to be drawn from anomalous behaviour such as the failure of such symmetries in the partition functions in the singular setting.

\subsubsection{Morse inequalities, Hirzebruch's $\chi_y$-genera and cohomology.}
\label{subsection_Morse_Hirzebruch_cohomological}

The Hirzebruch's $\chi_y$ genera are related to Poincar\'e polynomials (see Definition \ref{definition_product_decomposition_critical_point}) on spaces with cohomological conditions, as has been studied in the smooth case (see for instance \cite{PingLigaptheorembettiKahler2013}). Some early applications of the holomorphic Lefschetz fixed point theorem include proving positivity results for certain invariants associated with the $\chi_y$ genera (see \cite{KosniowskiLefschetzApplication1970,CarellKosniowski73}).
We generalize these in a restricted setting to briefly demonstrate considerations of local cohomology in the singular setting.

The following notions are natural generalizations of those in the smooth setting (see for instance Definition 1.1 of \cite{PingLigaptheorembettiKahler2013}).

\begin{definition}
A Witt space $\widehat{X}$ ($X$) with a wedge metric is called \textit{\textbf{odd vanishing}} if the $L^2$ Betti numbers satisfy $\beta_{2k+1}=0$ for all $k$. If the local $L^2$ de Rham cohomology vanishes in odd degree for a small enough fundamental neighbourhood of a point $a$, we say the point $a$ is \textit{\textbf{odd vanishing}}.
If the wedge metric is K\"ahler, $\widehat{X}$ ($X$) is called \textit{\textbf{pure type}} if $h^{p,q}=0$ whenever $p \neq q$.
\begin{comment}    
{\color{red}
Local version?? Considering $Z^4-X^3Y=0$, the Witten arguments won't work algebraically. See if this is due to the problems with integral completeness of the local ring missing, which gives higher order relations somehow? This is harder to prove. Worth thinking about later.}
\end{comment}
\end{definition}
There are many classes of smooth spaces for which these conditions hold (see Remark 1.2 of \cite{PingLigaptheorembettiKahler2013}), including K\"ahler manifolds that admit K\"ahler Hamiltonian Morse functions. This generalizes as follows.

\begin{proposition}
Let $\widehat{X}$ be a Witt space with a K\"ahler wedge metric. If there is a K\"ahler Hamiltonian Morse function $h$ associated to a K\"ahler circle action, and if the critical points of $h$ are odd vanishing, then the de Rham Morse polynomial $M(X,h)(b)$ has only terms with even powers of $b$, $M(X,h)=N(X)$ (see Definition \ref{definition_product_decomposition_critical_point}) and $X$ is odd vanishing.
\end{proposition}

\begin{proof}
Any isolated fixed point of a K\"ahler Hamiltonian Morse function has fundamental neighbourhood with a product decomposition as in Definition \ref{definition_product_decomposition_critical_point} where the attracting and expanding directions are each even dimensional. Together with the condition that the local cohomology vanishes in odd degree, this implies that the local Morse polynomial only has terms with even powers of $b$. Then the Morse inequalities (Theorem \ref{Theorem_strong_Morse_de_Rham}) and the Morse lacunary principle (Theorem 3.39 of \cite{banyaga2004lectures}) imply that $M(X,h)=N(X)$, and therefore that the $L^2$-Betti numbers vanish in odd degree.
\end{proof}

\begin{remark}
In the case where $\widehat{X}$ is smooth in the setting of the above proposition, the local cohomology of the critical points have local Hodge numbers $h^{p,q}=0$ for all $p,q$ except for $h^{0,0}$ which is $1$. In this case, not only is $X$ odd vanishing, but it is also pure. The pure condition follows from a corollary of Witten's equivariant holomorphic Morse inequalities, proved in \cite{witten1984holomorphic} (it is shown to hold in greater generality by more complicated methods in \cite{carrell1973holomorphic}).
\end{remark}

When there is a Hamiltonian stratified Morse function $h$ on a stratified pseudomanifold $\widehat{X}$ 
such that the gradient flow is stratum preserving, the de Rham Morse inequalities show that $M(X,h)(-1)=N(X)(-1)$. This is equivalent to the information obtained by applying the Lefschetz fixed point theorem to the map induced by a short time gradient flow of the Morse function. If $X$ is odd vanishing, then $N(X)(-1)=N(X)(1)$ and the Morse inequalities further show that $M(X,h)(1) \geq N(X)(1)$ (since $M(X,h)(1) \geq M(X,h)(-1)$), giving an upper bound for the number of critical points for the Morse function.
The Arnold-Floer theorem is a generalization of this for \textit{non-degenerate} Hamiltonian flows (see \cite[\S 1.1]{salamon1999lectures}) on smooth symplectic manifolds. The non-degeneracy of the Hamiltonian Morse function roughly corresponds to the simpleness of isolated fixed points of map induced by the Hamiltonian flow. 

It is well known that the Lefschetz fixed point theorem gives a lower bound for the number of fixed points for \textit{any} (even without the Hamiltonian condition) smooth self-map on a smooth manifold, which is homotopic to the identity map and has only isolated simple fixed points. This gives a trivial lower bound for the number of critical points for the Arnold conjecture (see the discussion below conjecture 1.1 of \cite[\S 1.1]{salamon1999lectures}). This lower bound corresponds to the bound for the Arnold conjecture (the sum of global Betti numbers) when it is an odd vanishing space. This admits an obvious generalization for any singular space with a cohomology theory admitting a suitable generalization of the Lefschetz fixed point theorem.

For smooth manifolds, in $L^2$ cohomology, the lower bound for the number of fixed points is $N(1)$ because local Lefschetz numbers must be $\pm 1$, and if there are $C_1$ many $-1$ simple fixed points with local Lefschetz number $-1$, there should be $N(1)+C_1$ many fixed points with local Lefschetz number $+1$. 

\begin{remark}[Towards an $L^2$-Arnold-Floer theorem]
On a Witt space $\widehat{X}$, consider a self map $f$ associated to a geometric endomorphism on the $L^2$ de Rham complex. When the fixed points of $f$ are isolated, simple, and odd vanishing, we define $D_a$ to be the (alternating) sum of local $L^2$ Betti numbers of each fixed point $a$.
Then an argument similar to that in the smooth case shows that $$N(X)-\sum_{a \in fix(f)} (D_a-1)$$ gives a lower bound for the number of fixed points of the flow.
Another way to package this information is to say, \textit{\textbf{the number of critical points counted with multiplicity $D_a$ is bounded below by the sum of $L^2$ Betti numbers on $X$}}.
\end{remark}

In Subsection \ref{example_conifold}, we work out the de Rham Morse inequalities for a stratified wedge Morse function on the conifold where there is a fixed point with $D_a=2$, and our count with multiplicity is sharp.
There is interest in extending the Arnold-Floer theorem to singular spaces (see \cite{brugués2023arnold}).
Since our Morse inequalities give the result for autonomous flows, it is interesting that we can prove it for Morse functions where the gradient flow need not necessarily preserve the strata.

\begin{proposition}
\label{Proposition_generalize_two_minor_positivity_etc}
Let $X^{2n}$ be a resolved Witt space with a K\"ahler wedge metric.
\begin{enumerate}
    \item If $X$ is odd vanishing then $(-1)^p \chi^p(X) \geq 0$ for all $p$.
    \item If $X$ is pure type then the de Rham Poincar\'e polynomial satisfies $N(X)(b)=\chi_{-b^2}(X)$. In particular $N(X)(i)=\tau(X)$.
\end{enumerate}
\end{proposition}

\begin{proof}
The proof of the first part follows from the fact that odd vanishing implies the vanishing of $h^{p,q}$ if $p+q$ is odd, which implies that 
\begin{equation}
    (-1)^p \sum_{q=0}^n (-1)^q h^{p,q}=\sum_{q=0}^n (-1)^{p+q} h^{p,q}=\sum_{q=0}^n h^{p,q} \geq 0.
\end{equation}
The second part follows from the fact that pure types implies $\beta_{2p}=h^{p,p}$ and odd vanishing, using which we can see that
\begin{equation}
    \chi_{-b^2}(X)=\sum_{p=0}^n (-b^2)^{p} \sum_{q=0}^n (-1)^q h^{p,q}=\sum_{p=0}^n b^{2p} (-1)^{2p} h^{p,p}=\sum_{k=0}^{2n} b^{k} \beta^{k}
\end{equation}
where we have used that $X$ being pure type implies it is odd vanishing in the last equality. Now since $\chi_1(X)=\tau(X)$ by Theorem \ref{Theorem_L2_Lefschetz_Hodge_index}, and using the odd vanishing, we see that $N(X)(i)=\tau(X)$.
\end{proof}

\begin{remark}
We remark that the method outlined in Remark 4 of page 304 of \cite{carrell1973holomorphic} is precisely the method used in \cite{KosniowskiLefschetzApplication1970} to prove the first part of Proposition \ref{Proposition_generalize_two_minor_positivity_etc} in the smooth setting. It is easy to observe that the said method can be replaced by the de Rham Morse inequalities.
\end{remark}

\subsection{Example computations}
\label{subsection_Computations_for_singular_examples}

We will compute Lefschetz numbers in several important examples where our results are applicable and where there is a sufficiently good enough understanding of the cohomology of local complexes. We develop a set of tools which we believe will provide a framework for computations on a range of spaces.

One way to compute global Lefschetz numbers is to compute all the local Lefschetz numbers, which we call the \textbf{\textit{direct method}}. Alternately, one could compute the global Lefschetz numbers by other means, as well as the local Lefschetz numbers at all fixed points other than the one where we need to compute it, and use the Lefschetz fixed point theorem to get the formula for the fixed point where we want to compute the local Lefschetz number. We call this the \textbf{\textit{indirect method}}. This second method was used heavily to gather insight, as well as to verify computations which we believe will be useful to the reader as well. In most cases, we will do computations of the global Lefschetz numbers using several means which can be used to cross-check computations.

We now give an outline of these examples, and describe certain aspects to keep an eye out for which we hope improves the readers insight on these spaces, group actions and the information given by the invariants.

\subsubsection{Outline of computations}
\label{subsection_intro_to_compute}

We will begin with the case of an orbifold singularity, in example \ref{example_teardrop} where we will compute the Lefschetz numbers in an example worked out by Meinrenken using a localization formula for orbifolds by Vergne. While the method is different, we obtain the same Lefschetz number.

The other examples that we study here are toric varieties, the first of which is a quadric surface with an orbifold singularity arising as a complex algebraic variety in Subsection \ref{subsection_example_nodal_surface}. Here we shall introduce computational techniques which we will then use in the rest of the section to study other spaces. This includes the use of the Poincar\'e residue to understand harmonic sections of the canonical bundle and the spin bundle, which we use to compute spin Lefschetz numbers. We also introduce moment maps corresponding to algebraic $\mathbb{C}^*$ actions, and compute the de Rham Morse Polynomials that we introduced in Subsection \ref{subsection_witten_deformation_morse_de_Rham}. In particular we show how this quadric surface can be realized as the reduced space of a canonical moment map on $\mathbb{CP}^3$, and we will see that this method of construction generalizes to the other toric varieties that we study in this section as well. We also compute equivariant $\chi_y$ invariants.

We compute local Lefschetz numbers and the de Rham Morse Polynomial for a conifold in Subsection \ref{example_conifold}. This is an example where the holomorphic Lefschetz numbers have been investigated in both mathematics and physics applications using orbifold resolutions of the space and we will explore the appearance of this example in the literature. 

In Subsection \ref{subsection_Lefschetz_L2_cohomology_development} we discussed how for certain algebraically normal varieties the local Lefschetz-Riemann-Roch numbers of \cite{baum1979lefschetz} match our $L^2$ local Lefschetz numbers. In particular, all normal toric varieties are rational, and we will not see examples of spaces with normal singularities that are not rational.
For non-normal varieties, the integral completion of the local ring contains holomorphic functions which are not regular and these appear in the local cohomology group of the Dolbeault complex for Hodge degree $p=0$. The computations in other Hodge degrees too are different from those of Baum-Fulton-MacPherson and we will see this when we study non-normal algebraic varieties.

We begin with the case of the cusp curve in Example \ref{example_cusp_singularity}. This is a non-normal algebraic variety that is a normal pseudomanifold.
We will also study a space which is not a normal pseudomanifold (see Definition \ref{definition_normal_pseudomanifold} and the discussion that follows it) in Example \ref{example_two_spheres}.
The toric algebraic varieties that we study all arise as symplectic reductions of $\mathbb{CP}^N$ for some $N>0$ corresponding to the action of some real torus.
In particular, when the toric varieties $Y$ we study are not normal, the quotient corresponding to the reduction where $X=\mathbb{CP}^N$ is not a good categorical quotient (see the discussion in Subsection \ref{subsection_classes_of_singularities}). 
This plays an important role in questions of geometric quantization.

Finally we will present a complex projective surface with a non-isolated singularity in Subsection \ref{subsection_non_isolated_singularity}. This space is similar to the cusp curve in that it is an algebraically non-normal topological pseudomanifold and, in both of these non-normal spaces, the Lefschetz-Riemann-Roch number of Baum-Fulton-Quart can be used to see that there is non-trivial global cohomology for the coherent sheafs that they study, in degrees $(0,q)$ for $q>0$. It is well known that in the smooth setting, the appearance of such cohomology classes are an obstruction to having K\"ahler $S^1$ actions with fixed points, and such actions can only exist on Fano varieties. An easy way of proving this is via index theoretic methods for smooth manifolds. For instance, Witten gave a short outline of a proof of a Theorem of Carell and Liebermann \cite{carrell1973holomorphic} using the holomorphic Morse inequalities in the K\"ahler setting (see page 330 of \cite{witten1984holomorphic}), from which this follows.
But both of the spaces actually have algebraic $\mathbb{C}^*$ actions with fixed points. However we see that the $L^2$ cohomology does not admit higher cohomology in $(0,q)$ for these examples, indicating that it might be a better candidate for generalizing such vanishing theorems in higher degree cohomology given group actions with fixed points.

We make some observations on the applicability of the techniques we introduce here to projective varieties with the Fubini-Study metric. It is well known that the Fubini-Study metric on arbitrary algebraic varieties need not be a wedge metric (see for instance Section 6 of \cite{cruz2020examples}). All one needs for our results to hold is some wedge metric with a complex structure.

Let us consider the case of complex dimension $1$. Here, a pseudomanifold must have isolated singularities and the links must be disjoint unions of circles (since circles are the only closed manifolds of real dimension $1$). In this case, the $\overline{\partial}$ boundary conditions for the complex $(\Omega^{0,q}_N(U_a), \overline{\partial})$, with $U_a$ a fundamental neighbourhood of a singular point $a$, preclude the possibility of there being non-trivial harmonic forms in the local complexes in degree $1$, and the harmonic forms are the $L^2$ bounded meromorphic forms, which are in the VAPS domain (or other choices of domain if specified for the complex). It is well known for instance that horn metrics are conformal to conic metrics as has been studied in \cite{LeschPeyerimhoffHornindex1998} (see page 656 of that article), along with other issues of change of domain as metrics are rescaled for various operators. As we indicated in Remark \ref{Remark_different_compactifications_for_cusp_curve}, we can choose different compactifications of the regular part of the singular space with different choices of boundary defining functions to obtain wedge metrics for which our results are applicable. This is how we apply our theory to the cusp curve. The other singular algebraic curve that we study here is that of two spheres connected at a point (Example \ref{example_two_spheres}) and the Fubini-Study metric is a wedge metric, so our theorem will apply.

In the case of complex dimension $2$ and higher, there are a large class of examples of Witt spaces with wedge metrics and complex structures arising from the compactification of affine cones over smooth projective algebraic varieties. The affine cone over a projective space is a smooth cone. Indeed, the projective space $\mathbb{CP}^N$ can be realized as the quotient of $\mathbb{C}^{N+1}$ by a $\mathbb{C}^*$ action (say this algebraic torus is parametrized by $\lambda$). The action of the unit circle of $\mathbb{C}^*$ describes a circle fibration over the base $\mathbb{CP}^N$, the entire space being $\mathbb{S}^{2N-1}$. This is called a K\"ahler sandwich; a Sasaki manifold arises as a circle fibration over a K\"ahler manifold, and the cone over the Sasaki manifold is K\"ahler (see the introduction of \cite{boyer&galicki}). 
Given a projective variety $Y \subset \mathbb{CP}^N$, the affine cone over it is the cone in $\mathbb{C}^{N+1}$ which when modded out by the $\mathbb{C}^*$ on $\mathbb{C}^{N+1}$ recovers $Y$, and the pullback of the affine metric on $\mathbb{C}^{N+1}$ to the affine cone is a wedge metric.

We will study such examples in subsections \ref{subsection_example_nodal_surface} and \ref{example_conifold} where there are isolated singularities with affine charts that can be realized as affine cones.
In Subsection \ref{subsection_non_isolated_singularity} we will study an example where there is an non-isolated singular set that can be realized by compactifying an affine cone over a singular curve. However the regular set can be compactified to get a wedge metric and a Thom-Mather structure, even over the non-isolated singularity, by methods similar to those used in Subsection \ref{example_cusp_singularity_preamble}.

As discussed in Section \ref{section_de_Rham}, the choices of domain do not affect the cohomology groups of the de Rham complex with the Witt condition. However, as we saw in Example \ref{example_cusp_singularity}, the choices of domain change the local (and hence global) cohomology of the Dolbeault complex. Indeed, it is the complex with the maximal domain that is commonly referred to as the $L^2$ Dolbeault cohomology, and clearly the complex and its cohomology depends on the metric.
In \cite{Bei_2012_L2atiyahbottlefschetz}, the Lefschetz fixed point theorem has been proven for arbitrary elliptic complexes with the maximal and minimal domains in the case of isolated fixed points. In Proposition \ref{Proposition_spectral_properties}, we studied the spectrum on truncated model cones with the VAPS conditions, but it is easy to observe that the result can be generalized for the maximal and minimal domains for spaces with isolated singularities. 
Moreover as we observed in Remark \ref{remark_non_simple_generalize_Dolbeault}, we can prove the Lefschetz fixed point theorem for metric cones using the localization results of Proposition \ref{Gluing_heat_kernels}, which can be proven for spaces with isolated conic singularities with the methods in \cite{Bei_2012_L2atiyahbottlefschetz} for the maximal and minimal domains. In Subsection \ref{example_cusp_singularity_preamble} we saw how for the cusp curve the minimal domain (which has the same cohomology as the VAPS domain for that space) and the maximal domain have different local cohomology, and we will see how the Lefschetz fixed point theorem can be verified for the maximal domain in that example explicitly in the computations following Example \ref{example_cusp_singularity}.

In all the computations in $L^2$ Dolbeault cohomology below for projective algebraic varieties, unless otherwise specified, we shall use the Hilbert space structure induced by the pull back of the Fubini-Study metric to \textit{\textbf{a}} Thom-Mather stratified space where it is a wedge metric (see Remark \ref{Remark_different_compactifications_for_cusp_curve}). For the local complexes at fundamental neighbourhoods of fixed points, we shall use the Hilbert space structure on the $L^2$ forms on the tangent cone induced by the pullback of the affine metric which is the induced metric on the tangent cone, similar to our computations in Subsection \ref{example_cusp_singularity_preamble}. 
We will use the notation explained in Subsection \ref{subsection_Lefschetz_L2_cohomology_development} to indicate the Hilbert spaces corresponding to local cohomology groups, where the notation implicitly gives a Schauder basis for computations. With these considerations, we begin computing.

\subsubsection{A computation for a teardrop orbifold}

We rework the computation of the equivariant index of a rotation of a teardrop given in the example following Theorem 3.5 in \cite{meinrenken1998symplectic}. Said theorem, which was proven by Michele Vergne in \cite{vergne1994quantification}, is used to compute the local contributions of a \textit{tear drop orbifold} in that example in order to compute the global Riemann-Roch number. We shall now compute the $L^2$ Dolbeault index also known as the Riemann-Roch number in $L^2$ cohomology using our formula.

\begin{example}[Teardrop as a stratified space]
\label{example_teardrop}
For simplicity we shall consider the teardrop orbifold $M$ of obtained by gluing a copy of $\mathbb{C}/ \mathbb{Z}_2$, with coordinate $w^2$, with a copy of $\mathbb{C}$ with coordinate $z$, via the mapping $z \mapsto w^{-2}$ for $z \neq 0$. We consider the action on the trivial bundle. The group $G=S^1$ acts by rotation $z \mapsto e^{i\phi}\cdot z$, and $w \mapsto e^{\frac{-i}{2}\phi}\cdot w$. There are two fixed points, one at the singular point at $w=0$ and the other at $z=0$. It is clear that the Lefschetz number on the disc neighbourhood of the smooth fixed point is $(1-e^{i\phi})^{-1}$ by the formula of Atiyah and Bott for the disc that we discussed earlier. 

In the neighbourhood of the singular point which is fixed, the holomorphic coordinate is $w^2$, which generates a Schauder basis $\{1, w^2, w^4,...\}$ for the local cohomology in degree $0$.
Since the map induced by the rotation is an isometry, the radial distance is preserved by the pullback meaning that the geometric endomorphism at the tangent cone is given by the action of $w^2 \mapsto e^{-i \phi}\cdot w^2$ on the forms of the complex $(L^2\Omega^{0,q},\overline{\partial})$, and we obtain the local Lefschetz number, $(1-e^{-i\phi})^{-1}$, by taking the trace over the local cohomology in degree $0$ giving the same value as the computation using the equivariant localization formula in \cite{meinrenken1998symplectic}.

\end{example}

In this example both the orbifold Riemann-Roch numbers computed in \cite{meinrenken1998symplectic} and the $L^2$ Riemann-Roch numbers match, essentially because the global Lefschetz number must be $1$ since the global cohomology of the space is generated by the constants for both the orbifold cohomology as well as the $L^2$ cohomology.
The instances where the stratified pseudomanifolds we study are orbifolds and stacks merit further specialized study since they appear in many geometric applications.

\subsubsection{Example of a singular quadric with an $A_1$ singularity.} 
\label{subsection_example_nodal_surface}

We shall do an extensive study of the singular complex surface $\widehat{M}$ cut out by $G(W,X,Y,Z)=Z^2-X Y=0$ in $\mathbb{C P}^3$ with coordinates $[W:X:Y:Z]$, including performing equivariant localization computations.
It is well known that this space has a conic singularity, and is an orbifold. 
The algebraic tangent cone of this space is described on page 157 of \cite{mumford1999red} while the tangent cone in our sense is a cone over a quotient of a three dimensional sphere by a $\mathbb{Z}_2$ action. Indeed, this space is the projective compactification of the affine cone over the quadric cone $Z^2-XY=0$ in $\mathbb{CP}^2$.

We begin by computing the local Lefschetz numbers using direct and indirect methods. In what follows, we study the complexes $\mathcal{P}^p=(L^2\Omega^{p, \cdot},\overline{\partial})$.

\begin{example}[Toric action on a complex quadric surface]
\label{example_nodal_singularity}

Consider $\widehat{M}$, the zero set of the polynomial $G(W,X,Y,Z)=Z^2-XY$ in $\mathbb{CP}^3$, which has an isolated singularity at $[W:X:Y:Z]=[1:0:0:0]$. 
This has an action of the complex torus $(\mathbb{C}^*)^2$ given by 
\begin{equation}
\label{equation_group_action_for_quadric}
    (\lambda,\mu) \cdot [W:X:Y:Z]=[W:\lambda^2 X:\mu^2 Y:\lambda \mu Z].
\end{equation}
A simple computation shows that this action, for generic values of $\lambda$ and $\mu$, will fix three points. The singular point is fixed, as are $a_1=[0:1:0:0]$ and $a_2=[0:0:1:0]$.
For the complex $\mathcal{P}^{p=0}$, the contribution of the smooth fixed point at $a_1=[0:1:0:0]$ corresponds to $\frac{1}{(1-\mu/\lambda)(1-1/\lambda^2)}$. This can be computed by considering the affine chart at $a_1$ where $X=1$, where the action can be written as $(\lambda,\mu) \cdot (w, y, z)=(w/\lambda^{2}, \mu^2y/\lambda^2, \mu z /\lambda )$ where $w=W/X, y=Y/X, z=Z/X$ are coordinates on the affine chart, and the local cohomology is $\overline{\mathbb{C}[w, z]}$ where we use the notation in Subsection \ref{subsection_Lefschetz_L2_cohomology_development}.
Similarly the contribution of $a_2=[0:0:1:0]$ is $\frac{1}{(1-\lambda/\mu)(1-1/\mu^2)}$. The contributions for $p>0$ can be computed for these fixed points using the formulas in the previous section for these smooth fixed points.

We can add the local Lefschetz numbers for these fixed points to obtain
\begin{equation}
\label{equation_Wimalee}
    1-\frac{1+\lambda\mu}{(1-\lambda^2)(1-\mu^2)}, -\frac{(1+\lambda\mu)^2}{(1-\lambda^2)(1-\mu^2)}, 1-\frac{(1+\lambda\mu)\lambda\mu}{(1-\lambda^2)(1-\mu^2)}
\end{equation}
respectively for the complexes with Hodge degree $p=0,1,2$. 
If we know the global cohomology, we can compute the local Lefschetz numbers at the singular fixed points using the indirect method.
\end{example}

The third fixed point is at the singularity $a_3=[1:0:0:0]$, and on the chart $W=1$, we denote the holomorphic functions $X/W,Y/W,Z/W$ by $x,y,z$ respectively, for the rest of the discussion for this example.
Since $z^2=xy$, the local cohomology corresponds to the Hilbert space $Q=\overline{R[1, z]}$ where $R=\mathbb{C}[x,y]$. 
The trace of the geometric endomorphism over the completion of $R$ can be computed similarly to the smooth case, and yields $(1-\lambda^2)(1-\mu^2)$, and the trace of the geometric endomorphism over the holomorphic functions is then given by $(1+\lambda\mu)/(1-\lambda^2)(1-\mu^2)$ since the trace of the map acting on the one dimensional vector space generated by $z$ is $\lambda \mu$, which matches up with 1 minus the first number in \eqref{equation_Wimalee}. 
Summarizing, we have
\begin{equation}
    \frac{(1+\lambda\mu)}{(1-\lambda^2)(1-\mu^2)}+\frac{1}{(1-\mu/\lambda)(1-1/\lambda^2)}+\frac{1}{(1-\lambda/\mu)(1-1/\mu^2)} =1
\end{equation}
where the three terms on the left hand side are the Lefschetz-Riemann-Roch numbers at the fixed points $a_1, a_2, a_3$ in that order and the right hand side is the global Lefschetz-Riemann-Roch number.
It is easy to see that this is equal to the Lefschetz-Riemann-Roch number of Baum-Fulton-MacPherson for this action on the singular surface by Proposition \ref{Proposition_BFM_equals_L2}.

The singularity of the quadric is an $A_1$ Du Val singularity, and the adjunction formula (see Section 2.3 of \cite{reid2012val}) holds for these spaces since it is a canonical singularity. Moreover there is a crepant resolution which in particular implies that the canonical bundle is trivial in a neighbourhood of the singularity, thus corresponding to a locally free sheaf. Since this is a quadric in $\mathbb{CP}^3$, the adjunction formula suffices to show that the first Chern class is even ($\mathcal{O}(n+1-d)$ is the canonical bundle by the adjunction formula and $n+1-d=2$ for this complete intersection, where $n=3$ and $d=2$ is the degree of the curve) and that there is a square root of the canonical bundle. 

In fact, a representative for the local holomorphic two form generating the dualizing section corresponding to the canonical class on the chart $W=1$ is given by the Poincar\'e residue
\begin{equation}
\label{equation_poincare_residue}
    \omega=Res_{\mathbb{C}^3|M} \frac{dx \wedge dy \wedge dz}{G} = \frac{dx \wedge dy}{\partial G/\partial z}= \frac{dx \wedge dy}{2z}
\end{equation}
(see Section 2.3 of \cite{reid2012val} for more details).
It is easy to see check that this local two form is $L^2$ bounded in a fundamental neighbourhood of the fixed point (heuristically  $\omega \sim dx\wedge dy / 2 \sqrt{xy}$), and that the trace of the geometric endomorphism of the Dolbeault complex with $p=2$ over the one dimensional vector space generated by this form yields $\lambda\mu$. In fact, the local cohomology for the complex with $p=2$ is given by the Hilbert space $\overline{Q[\omega]}$ where $Q$ is the Hilbert space for the local cohomology for $p=0$, introduced above.
This explains how the local Lefschetz number for $p=2$ is that for $p=0$ times $\lambda\mu$, which is $(1+\lambda\mu)\lambda \mu /[(1-\lambda^2)(1-\mu^2)]$. That is, we have
\begin{equation}
\label{equation_nodal_computation_p=2}
    \frac{(1+\lambda\mu)(\lambda \mu)}{(1-\lambda^2)(1-\mu^2)}+\frac{(\mu/\lambda)(1/\lambda^2)}{(1-\mu/\lambda)(1-1/\lambda^2)}+\frac{(\lambda/\mu) (1/\mu^2)}{(1-\lambda/\mu)(1-1/\mu^2)} =1
\end{equation}
where the three terms on the left hand side are the Lefschetz numbers for the Dolbeault complex twisted by the canonical bundle at the fixed points $a_1, a_2, a_3$ in that order and the right hand side is the global Lefschetz number. For the contributions at the smooth fixed points, we have used the Woods Hole formula \eqref{woods_hole_formula} that we derived earlier.

For $p=1$, the local $L^2$ cohomology at the singularity in the chart where $W=1$ is $\overline{R_1}$ where $R_1=R[dx,dy, xdy/z, ydx/z]$ is the module over the ring $R=\mathbb{C}[x,y]$ as in the case for $p=0$. We note that $2zdz=xdy+ydx$ so that $dz$ is included in this space.
Taking the trace of the geometric endomorphism over this Hilbert space, we get the local Lefschetz number 
\begin{equation}
\label{quadric_Lefschetz_p1}
    \frac{\lambda^2+\mu^2+\lambda\mu+\lambda\mu}{(1-\lambda^2)(1-\mu^2)}=-1+\frac{(1+\lambda\mu)^2}{(1-\lambda^2)(1-\mu^2)}
\end{equation}
which summed with the second expression in \eqref{equation_Wimalee} yields $-1$, since the only element in the global cohomology group for this complex is given by the K\"ahler form in bi-degree $(1,1)$. We observe that the local Lefschetz number for this complex at the singular fixed point does not factorize as in equation \eqref{equation_locally_free_sheaf_factorization}. This is because the module $R_1$ is not a free module over $Q=R[1,z]$.

\begin{example}[$L^2$ equivariant $\chi_y$ and signature invariants]
\label{Singular_quadric_equivariant_Hirzebruch}
We follow the notation introduced in Subsection \ref{subsection_Hirzebruch_invariants}.
For the group action (which for $|\lambda|=|\mu|=1$ is a family of isometries) on the space we study, the $L^2$ equivariant $\chi_y$ invariant at the singular point is
\begin{equation}
    \chi_y(U_{a_1},f)=\sum_{p=0}^2 y^p L(\mathcal{P}_B^p(U_a), T_{f})=\frac{(1+\lambda\mu)+y^1(\lambda^2+\mu^2+\lambda\mu+\lambda\mu)+y^2(1+\lambda\mu)(\lambda\mu)}{(1-\lambda^2)(1-\mu^2)}.
\end{equation}    
We compare this with Example 7.4 of \cite{weber2016equivariant} where the equivariant $\chi_y$ invariants are computed in singular cohomology for the same space as we study here. In particular, the numerator of the right hand side expression above coincides with the expression in equation (17) of \cite{weber2016equivariant} (up to differences in notation).
In Section 5 of \cite{donten2018equivariant} the $\chi_y$ genus is used to compute Molien series and the elliptic genus.
The equivariant $L^2$ signature Lefschetz number is given by $\chi_1(U_{a_1},f)$
and the equivariant invariants can be computed at the smooth points using the Atiyah-Bott formula given in Subsection \ref{subsection_Hirzebruch_invariants}. In particular, the signature of this space is $1$.   

\end{example}

\begin{example}[Spin number for the complex surface]
\label{example_spin_number_for_nodal_surface}
As explained in Subsection \ref{subsubsection_spin_dirac_kahler}, the group action lifts to an action on the canonical bundle and we know that there is a square root of that bundle, $L(=\sqrt{K})$, corresponding to a spin structure. The group action we consider on the space lifts to one on the square root bundle. We can compute the local Lefschetz numbers for the twisted Dolbeault complex at smooth points using the Atiyah-Bott formula. Since the canonical bundle is locally trivial, so is the square root bundle and we can use equation \ref{equation_locally_free_sheaf_factorization} to compute the local Lefschetz number at the singular point, which is
\begin{equation}
    \frac{(1+\lambda\mu)(\lambda\mu)^{1/2}}{(1-\lambda^2)(1-\mu^2)}=\frac{(1+\lambda^{-1}\mu^{-1})(\lambda\mu)^{-1/2}}{(1-\lambda^{-2})(1-\mu^{-2})}
\end{equation}
where the two ways of writing it are an instance of the duality identity that we discussed in Subsection \ref{subsubsection_spin_dirac_kahler}.
We can therefore write 
\begin{equation}
    \frac{(1+\lambda\mu)(\sqrt{\lambda \mu})}{(1-\lambda^2)(1-\mu^2)}+\frac{\sqrt{(\mu/\lambda)(1/\lambda^2)}}{(1-\mu/\lambda)(1-1/\lambda^2)}+\frac{\sqrt{(\lambda/\mu) (1/\mu^2)}}{(1-\lambda/\mu)(1-1/\mu^2)} = 0
\end{equation}
where the three terms on the left hand side are the local Lefschetz numbers for the Dolbeault complex twisted by \textit{the} square root of the canonical bundle at the fixed points $a_1, a_2, a_3$ in that order and the right hand side is the global Lefschetz number. This is equal to the global Lefschetz number of the spin Dirac complex. In general there can be many spin structures on a space but in this case there is only one up to isomorphism since the abelianization of the fundamental group is trivial for the space $\widehat{M}$.
\end{example}

It is well known that given an effective action of a compact Lie group on a spin manifold, either it has a lift to the spin bundle, or the action admits a covering linear action by a Lie group $G_1$, such that there exists a homomorphism of Lie groups $h:G_1 \rightarrow G$ (see for instance Proposition 2.1 of \cite{atiyah1970spin}).
Group actions which do not lift to the spin bundle are known as odd actions and their existence is related to interesting questions in topology (see \cite{landweber1988circle}).
More generally, there exist linear covering actions for any rational power $k/N$ of the canonical bundle when such a bundle exists (see for instance \cite[\S 6.4]{DessaiToric16}, \cite{hirzebruch1988elliptic}).

In general, it is not necessarily true that group actions lift to linear actions on integer powers of line bundles and there are important questions in geometric invariant theory related to this (see \cite{MundetiRieralifts01}). In this case, the Lefschetz fixed point theorem can sometimes identify the failure of lifting. For instance, the Lefschetz numbers can be computed for the line bundles $K^2$, and $K^{-1}$, similar to our computations above, and one can verify by a formal computation that if the action lifts, then the corresponding global Lefschetz numbers are both $1$. However, in either of the cases of $K^4$, and $K^{-3}$, the sum of the local Lefschetz numbers gives a rational function in $\lambda, \mu$ which has poles at $\lambda=\mu=1$, which shows that the action does not lift as a linear action to these bundles. This is because the global Lefschetz number of such actions evaluated at the identity corresponds to the index of the corresponding twisted Dolbeault-Dirac operator, which is a finite integer by the index theorem of \cite{Albin_2017_index}. This is an easy computation to do following what we have done above and we leave it to the interested reader.

We briefly explain how the de Rham Morse inequalities can be worked out for some choices of Hamiltonian Morse functions corresponding to the $(\mathbb{C}^*)^2$ action on $\widehat{M}$.
Consider the $T^3$ toric action on $\mathbb{CP}^3$ given by $(\theta_1,\theta_2,\theta_3)\cdot [w:x:y:z] \rightarrow [w:e^{i\theta_1}x:e^{i\theta_2}y:e^{i\theta_3}z]$. This corresponds to the symplectic moment map 
\begin{equation}
\label{Moment_map_for_quadric}
    [w:x:y:z] \rightarrow [H_1,H_2,H_3]=\Bigg[\frac{|x|^2}{|\rho|^2},\frac{|y|^2}{|\rho|^2},\frac{|z|^2}{|\rho|^2} \Bigg]
\end{equation}
where $|\rho|^2=|w|^2+|x|^2+|y|^2+|z|^2$. Consider the Hamiltonian function $F_1=H_3^2-H_2H_1$, and let us write $\lambda^2=e^{i\theta_1}$, $\mu^2=e^{i\theta_2}$ and $\alpha=e^{i\theta_3}/(\lambda\mu)=e^{i(\theta_3-(\theta_1+\theta_2)/2)}=e^{i\theta_4}$. We see that $F_1^{-1}(0)=\{|z|^2-|xy|=0\}$, which is the space given by the points satisfying $(z^2 \alpha^2 -xy)=0$ as $\theta_4$ varies, for all $w,x,y,z$ satisfying $z^2-xy=0$. Quotienting out by this circle action, we get the symplectic reduction of $F_1^{-1}(0)$, which is precisely the space $M$ and the K\"ahler action we studied is the restriction of the residual effective action on the reduced space. 

A K\"ahler action with at least one fixed point is Hamiltonian (see \cite[\S 3.2]{Mazzeo_2015}) and the Morse inequalities corresponding to such Hamiltonian Morse functions arising from K\"ahler actions were already studied by Goresky and MacPherson in intersection homology. The de Rham Morse inequalities we worked out in Subsection \ref{subsection_witten_deformation_morse_de_Rham} correspond to the Morse inequalities in intersection homology given in \cite[\S 6.12]{goresky1988stratified}. 

If at a smooth fixed point of the K\"ahler action, it acts on the local holomorphic coordinate function $z_i$ of an affine chart by $z_i \rightarrow e^{i \ell \theta}z_i$, then the local normal form of the Hamiltonian function on the chart is given by $\sum_{i=1}^n \ell_i |z_i|^2$, and the de Rham Morse index corresponds to twice the number, $k$, of negative $\ell_i$ (see for instance page 322 of \cite{witten1984holomorphic}). This is because when these constant \textit{weights} $l_i$ are negative, the corresponding gradient flow on a small polydisc neighbourhood of the critical point has a decomposition as in Definition \ref{definition_product_decomposition_critical_point} where $U_1 \cong \mathbb{D}^{2n-2k}$ and $U_2 \cong \mathbb{D}^{2k}$. This can be seen by the correspondence of the symplectic gradient and the metric gradient for a Hamiltonian vector field $V$, given by a compatible complex structure $J$. If the corresponding Hamiltonian function is $H$, then we have
\begin{equation}
    dH =\omega(V,\cdot)=g(JV, \cdot)=g(J \nabla H, \cdot), \quad dH^{\#} = J(\nabla H)
\end{equation}
Then we can use Theorem \ref{Theorem_strong_Morse_de_Rham} to see that the local Morse polynomial at a smooth fixed point of Morse index $2k$ is $b^{2k}$.

If we pick $\theta_1>\theta_2>0$, then the above argument 
shows that $a_1$ has Morse index $0$ and $a_2$ has Morse index $2$. 
Moreover the singular fixed point is a critical point of the Morse function where the gradient flow is attracting. This follows again from the fact that the weights on the chart $\mathbb{C}^3$ given by $W=1$ in $\mathbb{CP}^3$ are positive.
The link at this point is a quotient of $S^3$ by a $\mathbb{Z}^2$ action, and is a lens space with trivial cohomology in degree $1$. By Lemma \ref{conic_cohomology_1}, the local Morse polynomial is $1$ at this point, and the global Morse polynomial is $1+b^2+b^4$, and by the Morse Lacunary principle (see Theorem 3.39 of \cite{banyaga2004lectures}), it is equal to the global Poincar\'e polynomial.

\begin{remark}
\label{remark_reduced_spaces_moment_map_later_use}
We consider another reduced space that will be relevant to Example \ref{example_two_spheres}.
The symplectic reduction corresponding to the Hamiltonian function $F_2=H_2-H_1$ corresponds to the copy of $\mathbb{CP}^2$ in $\mathbb{CP}^3$ given by the vanishing set of $x-y$.

If we do both reductions (in either order), we see that the resulting space is $\{z^2-xy=0\} \cap \{x-y=0\}$. It is easy to see that $z^2-x^2=0$ corresponds to the union of the two copies of $\mathbb{CP}^1$ ($z=x$ and $z=-x$) in the variety $z^2-xy=0$. These two spheres meet at a point and together form a non-normal variety arising from a quotient that is not a good categorical quotient as we discussed in Subsection \ref{subsection_intro_to_compute}.
This variety has the $\mathbb{C}^*$ action given by the subgroup of the $(\mathbb{C}^*)^2$ action in Example \ref{example_nodal_singularity}, where $\lambda=\mu$.
\end{remark}

\begin{remark}[Bott residue formula and more general localization]
\label{remark_Bott_residue_quadric}
In \cite{localization_algebraic_Graham_Edidin}, Edidin and Graham prove more general localization results on certain singular algebraic schemes including complete intersections, and in section 5.3 of that article study the example of the quadric with the $\mathbb{C}^*$ action given by fixing $\lambda=\mu^{-1}$ for the action given in equation \eqref{equation_group_action_for_quadric}. They show how to extend the Bott residue formula to the algebraic setting when the singular variety is regularly embedded in some smooth algebraic variety. 
\end{remark}

\subsubsection{Example of a conifold}
\label{example_conifold}

Conifolds are objects of great interest in both physics and mathematics, showing up in the study of mirror symmetry and moduli spaces. These have non-orbifold, isolated conic singularities with non-spherical links, similar to the example of the Fermat quintic we introduced in Subsection \ref{subsubsection_higher_degree_local_cohomology}. As we discussed there, these are singular spaces with conic metrics.

Consider the quadric $Y_1 Y_4 - Y_2 Y_3 =0$ in $\mathbb{CP}^4$ with coordinates $[W:Y_1:Y_2:Y_3:Y_4]$. The zero set of this polynomial is a conifold which we shall call $X$. Restricted to the affine chart $W=1$, this is the local normal form of any quintic conifold in $\mathbb{C}^4$ (see footnote 2 on page 969 of \cite{davies2010quotients}). This is also sometimes called the three dimensional $A_1$ singularity.

This space has a $(\mathbb{C}^*)^3$ algebraic toric action, $(\lambda, \mu, \gamma) \cdot [W: Y_1: Y_2: Y_3: Y_4] = [W: \lambda Y_1:\mu\gamma Y_2: \mu^{-1}\lambda Y_3: \gamma Y_4]$.
It is easy to verify that generically there are 5 fixed points on $X$, given by $[1:0:0:0:0]$, $[0:1:0:0:0]$, $[0:0:1:0:0]$, $[0:0:0:1:0]$, $[0:0:0:0:1]$. The last four fixed points are smooth and the first is singular. We compute the holomorphic Lefschetz numbers for the associated family of geometric endomorphisms of the Dolbeault complex for $p=0$.

Consider the point $[0:1:0:0:0]$ on the affine chart where $Y_1=1$. In the standard coordinates, the action is given by $(\lambda, \mu, \gamma) \cdot (W, Y_2, Y_3, Y_4) = (1/\lambda W, \mu\gamma/\lambda Y_2, 1/\mu Y_3, \gamma/\lambda Y_4)$. On $Y_1=1$, since the space we are looking at is the zero set of $g=Y_4-Y_2 Y_3$, $dg=dY_4-Y_2dY_3-Y_3dY_2$, and at the origin, $dg=dY_4$. This shows that $dY_4$ is the conormal direction to the zero set of $g$ at the origin and that the cotangent space is spanned by $dW, dY_2, dY_3$, and we can use the Atiyah Bott formula (see Section 4 of \cite{AtiyahBott2}).
Alternately, we see that $\overline{\mathbb{C}[W, Y_2, Y_3]}$ is the Hilbert space corresponding to the local cohomology for a fundamental neighbourhood of the origin. Then the Lefschetz number for the fixed point is $1/((1-1/\lambda)(1-\mu\gamma/\lambda)(1-1/\mu))$ by the methods developed in Subsection \ref{subsection_Lefschetz_L2_cohomology_development}.
We can similarly compute the local Lefschetz numbers at the other smooth fixed points.

For the singular fixed point at $[1:0:0:0:0]$, we consider the affine chart $W=1$ with coordinates $y_j=Y_j/W$ for $j=1,2,3,4$. The action $(\lambda, \mu, \gamma) \cdot (y_1, y_2, y_3, y_4)$ is given by $(\lambda y_1, \mu\gamma y_2, \lambda y_3/\mu , \gamma y_4)$. Since the variety is locally defined by the equation $y_1 y_4-y_2 y_3=0$, we can compute the formula of Baum-Fulton-Quart given in \cite[\S 3.3]{baum1979lefschetz}, which yields the local Lefschetz number
\begin{equation}
    \frac{(1-\lambda \gamma)}{(1-\lambda)(1-\mu\gamma)(1-\lambda/\mu)(1-\gamma)}.
\end{equation}
We see that the local $L^2$ cohomology corresponds to $\overline{Q}$ where $Q=R/R_1$ is the quotient ring where $R=\mathbb{C}[y_1,y_2,y_3,y_4]$ and $R_1$ is the ideal $(y_1, y_4)$. Using this we can compute the local Lefschetz number as before and it is easy to see that this is equal to the Baum-Fulton-Quart Lefschetz number in this example.
It is an easy exercise to check that the sum of the local Lefschetz numbers at the fixed points is $1$.

\begin{remark}[Appearance in the literature]
\label{remark_yau_nekrasov_conifold}
The Lefschetz number for a toric action at the singular fixed point on the chart $W=1$ has been worked out in \cite{Martellisparksyau08}, specifically in Subsection 7.4.1 of that article. In Section 6 of that article, the equivariant index of the Dolbeault complex on this non-compact conic space is expressed as a character.
The authors justify computations using equivariant resolutions (with respect to torus actions) with at worst orbifold singularities, and using the orbifold equivariant index theorem.

The authors point out that such resolution based methods are also used in the computation of partition functions such as the Nekrasov partition function in \cite{NekrasovABCDinsta}, and \cite{GrassiPureSpinors,grassi2006curved} (see equation (3.9) of the last paper). Indeed in equation (22) of \cite{NekrasovABCDinsta}, we can see the holomorphic Lefschetz number for the conifold computed using resolutions, with the understanding that it is independent of the K\"ahler moduli of the resolution. We also observe that the computed character formulas match that of \cite{baum1979lefschetz} on this normal variety.
%Another technical tool used to understand this is transversally elliptic operators. We will discuss more on this in subsection \ref{subsubsection_Dolbeault_related}.
The higher cohomology of conifolds become important in related questions, as studied in sections 5 and 6 of \cite{shiftsprepotentialnekrasovzabzine}.
\end{remark}

In this example, we can find the structure of the Hodge diamond using the de Rham Morse inequalities and the symmetries of the Hodge diamond. Again the K\"ahler action has a moment map that can be written explicitly, similarly to that in the earlier example.
Let $C(Z)$ be a fundamental neighbourhood of the singular fixed point, where the link is $Z\cong S^2 \times S^3$. For the de Rham complex, if we impose generalized Neumann conditions at the link at the boundary $Z=\{x=1\}\cong S^2 \times S^3$, the harmonic representatives of the cohomology of the local complex consist of the constants in degree $0$ and the constant multiples of the volume form of the $S^2$ factor in degree $2$ (see Lemma \ref{conic_cohomology_1}).

We follow the methods we explained in Subsection \ref{subsection_example_nodal_surface}.
If we pick $\gamma=e^{i\theta}$, $\mu=\gamma^2$ and $\lambda=\gamma^4$, where $\theta>0$, the action on the chart $W=1$ corresponds to a Hamiltonian function where the singular fixed point is attracting, and the local Morse polynomial is $1+b^2$, where we use that the link at this point is $S^2\times S^3$, and Lemma \ref{conic_cohomology_1}. 

Using the argument for smooth fixed points of K\"ahler actions, we have the local Morse polynomial $b^6$ at $[0:1:0:0:0]$, $b^4$ at $[0:0:1:0:0]$, $b^4$ at $[0:0:0:1:0]$ and $b^2$ at $[0:0:0:0:1]$.
Theorem \ref{conic_cohomology_1} shows that the Morse polynomial is $1+2b^2+2b^4+b^6$, and by the Morse Lacunary principle (see Theorem 3.39 of \cite{banyaga2004lectures}), it is equal to the global Poincar\'e polynomial.

The K\"ahler form generates four harmonic forms in bi-degree $(k,k)$ for $k=0,1,2,3$, and it is a straightforward argument using Poincar\'e duality, the symmetries of the Hodge diamond and the Poincar\'e polynomial to see that the other two cohomology classes are in bi-degrees $(1,1)$ and $(2,2)$.

\subsubsection{Non-normal examples}
\label{subsection_nonnormal_examples}

The two complex algebraic varieties that we studied earlier in subsections \ref{subsection_example_nodal_surface} and \ref{example_conifold} are examples of normal toric algebraic varieties, where the local Lefschetz formulas corresponded to those of Baum-Fulton-MacPherson-Quart. In the non-normal case, we get different formulas as we anticipated in Subsection \ref{subsection_intro_to_compute}.
We begin by studying the example from Subsection \ref{example_cusp_singularity_preamble}. 

\begin{example}[Cusp singularity in $\mathbb{CP}^2$]
\label{example_cusp_singularity}
Consider the cusp curve given by $ZY^2-X^3=0$ in $\mathbb{CP}^2$, where we have the $\mathbb{C}^*$ action $(\lambda)\cdot [X:Y:Z]=[\lambda^2X:\lambda^3Y:Z]$. We consider the associated family of geometric endomorphisms on the Dolbeault complex for the trivial bundle. The action has one smooth fixed point at $[0:1:0]$ with holomorphic Lefschetz number $1/(1-\lambda^{-1})$. The other fixed point is at the singularity $[0:0:1]$.
\end{example}

If we compute the local Lefschetz number using the Lefschetz-Riemann-Roch formula of Baum-Fulton-Quart \cite[\S 3.3]{baum1979lefschetz}, we get the local Lefschetz number $(1-\lambda^6)/((1-\lambda^2)(1-\lambda^3))$. The corresponding global Lefschetz number is $1-\lambda$. Here $1$ corresponds to the trace over the constants in degree $0$ of the global cohomology while $\lambda$ is the trace over a global section in degree $1$. Restricted to the chart with the singularity, the Poincar\'e residue gives the section $dx/2y=dy/3x^2$. Here we use the notation $x=X/Z,y=Y/Z$. It is easy to check that there are similar sections on the other charts ($dX$ on $Y=1$ and $dY/Y^2$ on $X=1$) which glue to a global section.
The trace of the geometric endomorphism over the complex conjugate of this global section section is $\lambda$. Having a trivializing global section of the canonical sheaf is characteristic of Calabi-Yau spaces in the smooth setting. However it is also known that smooth Calabi-Yau manifolds do not admit Hamiltonian K\"ahler actions, which in the smooth case is obstructed by cohomology classes not in bi-degrees $(k,k)$.

We observe that the section $dx/2y=dt/t^2$ is not in $L^2$ with respect to the volume form of the wedge metric we worked out in Subsection \ref{example_cusp_singularity_preamble}, in a neighbourhood of the singular point. In that subsection we computed the cohomology of several local complexes at the singular point and we now consider the Lefschetz numbers for those complexes for the action in Example \ref{example_cusp_singularity}.

Let us first consider the case where $|\lambda|<1$ and the singular fixed point is attracting.
The cohomology of the complex $(L^2\Omega^{0,q}_N(U_a), \overline{\partial}_{VAPS})$ is given by the Hilbert space generated by the Schauder basis $1,t,t^2,...$ in degree $0$ (and vanishes in higher degrees) and since the action maps $t \mapsto \lambda t$, we get the local Lefschetz number $1/(1-\lambda)$. The corresponding global Lefschetz number is $1$. The cohomology of the complex with the maximal domain $(L^2\Omega^{0,q}_{\max}(U_a), \overline{\partial}_{\max})$ has a Schauder basis $t^{-1},1,t,t^2,...$ in degree $0$ and the local Lefschetz number is
\begin{comment}   
\begin{equation}
    - \sum_{k=-1}^{\infty} \lambda^{k} e^{-ks} = (1-\lambda e^{-ks})^{-1} +(\lambda e^{-ks})^{-1}
\end{equation}
\end{comment}
\begin{equation}
\label{equation_expansion_character_cusp_1}
    - \sum_{k=-1}^{\infty} \lambda^{k} = (1-\lambda)^{-1} +(\lambda)^{-1}.
\end{equation}
The corresponding global Lefschetz number is $1+\lambda^{-1}$, where the last term reflects that the local meromorphic function $t^{-1}$ near the fixed point glues to a global meromorphic function that is $L^2$ bounded.

If instead we consider the case where $|\lambda|>1$, then the singular fixed point is expanding, and Proposition \ref{proposition_local_product_Lefschetz_heat} tells us that we can use adjoint complexes for the computations. The adjoint complex of $(L^2\Omega^{0,q}_{\max}(U_a), \overline{\partial}_{\max})$ is $(L^2\Omega^{0,n-q}_{\min}(U_a), \overline{\partial}^*_{\min})$, the local cohomology of which in degree $0$ has a basis $\{\overline{t}^k\overline{dt}\}$ where $k$ are positive integers, and vanishes in higher degrees. Since $\overline{t}^k\overline{dt} \mapsto \lambda^{-(k+1)} \overline{t}^k\overline{dt}$ via the adjoint geometric endomorphism, we have that the local Lefschetz number is 
\begin{comment}
\begin{equation}
    - \sum_{k=2}^{\infty} \lambda^{-k}e^{-ks} = -\lambda^{-2}e^{-2ks}/(1-(\lambda e^{-ks})^{-1})= (1-\lambda e^{-ks})^{-1} +(\lambda e^{-ks})^{-1}
\end{equation}    
\end{comment}
\begin{equation}
\label{equation_expansion_character_cusp_2}
    - \sum_{k=2}^{\infty} \lambda^{-k} = -\lambda^{-2}/(1-(\lambda)^{-1})= (1-\lambda)^{-1} +(\lambda)^{-1}
\end{equation}
which matches the local Lefschetz number in the attracting case. The local cohomology of the adjoint complex of $(L^2\Omega^{0,q}_N(U_a), \overline{\partial}_{VAPS})$ has a Schauder basis given by $\{\overline{t}^k\overline{dt}\}_{k \geq 0}$ in degree $0$ and a computation similar to the one above shows that the local Lefschetz number is $1/(1-\lambda)$, which matches that in the attracting case.
Similarly, the Lefschetz numbers for the complexes $(L^2\Omega^{1,q}_{\max}(U_a), \overline{\partial}_{\max})$ and $(L^2\Omega^{1,q}_N(U_a), \overline{\partial}_{VAPS})$ can be computed directly.

\begin{remark}
    The space in this example generalizes to the family of spaces $zy^m=x^{m+1}$ in $\mathbb{CP}^2$ where the angle at the isolated singularity in each space for the corresponding wedge metric is $2\pi m$,
    by an analysis similar to that in Subsection \ref{example_cusp_singularity_preamble} for the case where $m=2$. One can check that $m$ is the Euler characteristic of the Dolbeault complex with the maximal domain on these spaces.
    
    It is easy to check that $x/y$ on the chart where $z=1$, will be in the local cohomology of the complex $(L^2\Omega^{0,q}_N(U_a), \overline{\partial}_{VAPS})$ for $m>3$, with higher powers of $x/y$ in the VAPS domain for higher values of $m$. Computing the index for the complex with the VAPS domain is as easy as checking the integrability of positive integer powers of $x/y$.
\end{remark}

\begin{remark}[Dual complexes and Laurent expansions]
\label{remark_dual_complexes_and_laurent_expansions}
    In the case of the Dolbeault complex in the smooth setting, the idea of using the dual complex is already in Witten's article where he introduced the equivariant holomorphic Morse inequalities (see equations 26 and 27 of \cite{witten1984holomorphic}) with an interpretation of local Lefschetz numbers as traces over the null space of a model harmonic oscillator, that is closely related to holomorphic functions and anti-holomorphic top forms on neighbourhoods of fixed points. 
    In the smooth setting, isometric K\"ahler actions are considered and the correspondence of the attracting/expanding to choices of positive and negative \textit{weights} of the action is as we discussed in the discussion above Remark \ref{remark_reduced_spaces_moment_map_later_use} (very roughly the difference between the symplectic and metric gradients of the Hamiltonian Morse function). Our work gives this a more rigorous footing that generalizes to singular spaces. 

    In many applications the holomorphic Lefschetz number is interpreted as a character formula by taking Laurent series expansions. The choice of Laurent expansion is very important and shows up in character formulas such as the Kirillov and Weyl character formulas in representation theory (see \cite[\S 8]{berline2003heat}) and in supersymmetric localization computations of quantum field theories of interest to both mathematicians and physicists (see Section 1.2 of \cite{pestun2017localization}). In the latter, the equivariant characters are notated by $[ ]_{\pm}$ following the notation in \cite[\S 5]{Atiyahellipticopsandcompactgroups74}.
    For the case of the complex $(L^2\Omega^{0,q}_{\max}(U_a), \overline{\partial}_{\max})$ with the action on the space above, the character for $[(1-\lambda)^{-1} +(\lambda)^{-1}]_+$ is precisely the Laurent expansion in equation \eqref{equation_expansion_character_cusp_1}, while that for $[(1-\lambda)^{-1} +(\lambda)^{-1}]_-$ is the Laurent expansion in equation \eqref{equation_expansion_character_cusp_2}. In general the characters corresponding to the plus and minus expansions correspond to the traces over the complex and the adjoint complex.
\end{remark}

In this example, the function $t=y/x$ is in the integral closure of the local ring at the singular fixed point, but not in the local ring itself. This follows from the observation that $y/x$ is a rational function which is $L^{\infty}$ bounded in a deleted neighbourhood of the fixed point (since $y \sim x^{3/2}$), and the discussion in Subsection \ref{subsection_intro_to_compute}. What this shows is that when we consider the local $L^2$ Dolbeault cohomology of the resolution, we get rational functions which are $L^2$ bounded that are not in the ring of regular functions at the singularity. If we consider a domain with sufficient decay at the singularity (for instance the VAPS domain in the case of $y^2=x^3$), then the local cohomology contains only local holomorphic functions which are $L^2$ bounded, the set of which is isomorphic to the local ring of the algebraic normalization.

The regular functions on the cusp are defined via the inclusion of the space in $\mathbb{CP}^2$, and carry the information that the cusp curve is a deformation of an elliptic curve (a torus). This is reflected in the Riemann-Roch number of Baum-Fulton-MacPherson for the cusp being equal to that of the torus.
In Remark \ref{remark_normal_boundary_hypersurfaces}, we introduced the example of a pinched torus and the cusp curve above arises as a degeneration of the family of curves $zy^2=x^3+ax^2z+bz^3$, by taking $a,b$ to $0$. 

We next consider the example of two spheres joined at a point (see page 55 of \cite{Kirwan&woolf_2006_book}). This features another difference between the Riemann-Roch numbers of \cite{baumfultonmacpheresonRiemannRoch,baum1979lefschetz} and the $L^2$ Riemann-Roch numbers, in the case of non-reduced, non-normal topological pseudomanifolds with K\"ahler structures (hence algebraically non-normal as well). Namely, a connected pseudomanifold that is only connected along a set of codimension $\geq 2$ will have $h^{0,0} \geq 1$.

\begin{example}[Two spheres connected at a point]
\label{example_two_spheres}
Consider the complex variety given by $x^2+y^2=0$ in $\mathbb{CP}^2$, with homogeneous coordinates $[x:y:z]$. This is topologically a union of two spheres $x-iy=0$ and $x+iy=0$, connected at the point $[0:0:1]$, where the link is the union of two circles. We have the algebraic $\mathbb{C}^*$ action $(\lambda)[x:y:z]=[\lambda x: \lambda y : z]$ This action has three fixed points, of which $[i:1:0]$ and $[-i:1:0]$ are smooth while $[0:0:1]$ is singular. The holomorphic Lefschetz numbers at each of the smooth fixed points is $1/(1-\lambda^{-1})$.
\end{example}

Computing the local Lefschetz numbers of \cite{baum1979lefschetz} as in the previous examples, we get the local Lefschetz number $(1-\lambda^2)/(1-\lambda)^2$ which is equal to $(1+\lambda)/(1-\lambda)$ at the singular fixed point, and the global holomorphic Lefschetz number is $1$.

However, let us consider the $L^2$ holomorphic sections at the fixed point with the VAPS condition at the singularity. 
Here, the local cohomology in a fundamental neighbourhood of the singular fixed point is the Hilbert space $\overline{R'}$ where $R'=R[1,x/y]$ with $R=\mathbb{C}[x/z]$. One can check that this has all of the rational functions on $\mathbb{C}^2$ that are $L^2$ bounded in some neighbourhood of the singular point, as in the case of the example of the cusp curve.
The functions $x/y$ and $1$ on the variety are global $L^2$ functions in the null space of the Dolbeault operator with the VAPS conditions. Since the space is $x^2+y^2=0$, the function $x/y$ takes the constant values $i$ and $-i$ restricted to the branches corresponding to the two spheres $x-iy=0$ and $x+iy=0$. This shows that the function $x/y$ is is linearly independent from $1$ in the fundamental neighbourhood.
Now it is easy to compute that the local $L^2$ holomorphic Lefschetz number at this fixed point is $2/(1-\lambda)$ by taking renormalized traces over a Schauder basis for $R'$, and therefore the global holomorphic Lefschetz number for the trivial bundle is $2$. 

Let us now consider the computation on the topological normalization. This corresponds to disconnecting the two spheres, whence there are two new fixed points replacing the single singular fixed point. On each of these, we can take the holomorphic functions to be generated by $R$, described above. Notice that restricted to each sphere, $1$ and $x/y$ are not linearly independent since they differ by an element in $\mathbb{C}$. It is easy to compute that the local Lefschetz numbers at both of these are simply $1/(1-\lambda)$, whence the global Lefschetz number is again $2$.

Similarly to the moment map construction given by \ref{Moment_map_for_quadric} and the computation of the de Rham Morse polynomial in Subsection \ref{subsection_example_nodal_surface}, we can compute the de Rham Morse Polynomial for the Hamiltonian Morse function corresponding to the K\"ahler action in this example (both in the topologically normal and non-normal cases) to see that the Morse polynomial is $2b^0+2b^2$, which corresponds to the Poincar\'e polynomial (see page 56 of \cite{Kirwan&woolf_2006_book} for the intersection homology groups of this space).   
Again this example shows a difference between algebraically normal and non-normal spaces and how this is seen by the $L^2$ cohomology groups. 
In particular, while the Lefschetz-Riemann-Roch formulas of \cite{baum1979lefschetz} differ between singular spaces and their normalizations, the $L^2$ holomorphic Lefschetz numbers are the same for the Dolbeault complex with the VAPS domain.

In Remark \ref{remark_reduced_spaces_moment_map_later_use}, we saw how we can obtain moment maps on an example of a union of two spheres, and the variety (the two spheres) in that example is biholomorphic to the variety here.
There, we studied a $\mathbb{C}^*$ action on $\widehat{M}$
such that $z^2=x^2$ is the corresponding symplectic reduction, and the index of the Dolbeault Dirac operators on $\widehat{M}$ and the reduced space are different. Quantization does not commute with reduction (see \cite{meinrenken1999singular}) since it is not a good categorical quotient as we discussed in Subsection \ref{subsection_intro_to_compute}.

\subsubsection{A depth two singularity}
\label{subsection_non_isolated_singularity}

We study an algebraic variety with a non-isolated singular $\mathbb{CP}^1$.

\begin{example}
\label{example_singular_calabi_yau}
Consider the example of $Z^4-X^3Y=0$ in $\mathbb{CP}^3$ with coordinates $[W:X:Y:Z]$, which admits the algebraic torus action $(\lambda,\mu)\cdot [W:X:Y:Z]=[W:\lambda^4X:\mu^4Y:\lambda^3\mu Z]$. We compute the equivariant index for the untwisted Dolbeault complex $\mathcal{P}=(\Omega^{0,q}(X), \overline{\partial})$.
There are three fixed points at $a_1=[1:0:0:0]$, $a_2=[0:1:0:0]$ and $a_3=[0:0:1:0]$, where $a_2$ is a smooth fixed point while $a_1$ and $a_3$ are singular. The singular set is $[W:0:Y:0]$.
\end{example}

We observe that this space can be obtained as the compactification of the affine cone over the singular algebraic curve $Z^4-X^3Y=0$ in $\mathbb{CP}^2$. Thus the point $a_1$ is a stratum of depth $2$ within the singular set.

If we compute the Baum-Fulton-Quart Lefschetz Riemann-Roch number following \cite[\S 3.3]{baum1979lefschetz} summing up the local Lefschetz numbers for $a_1, a_2$ and $a_3$ in order, we get
\begin{equation}
\label{equation_Singular_Calabi_Yau_p=0}
    \frac{1+(\lambda^3\mu)+(\lambda^3\mu)^2+(\lambda^3\mu)^3}{(1-\lambda^4)(1-\mu^4)}+ \frac{1}{(1-1/\lambda^4)(1-\mu/\lambda)}
    +\frac{1+(\lambda^3/\mu^3)
    +(\lambda^3/\mu^3)^2+(\lambda^3/\mu^3)^3}{(1-1/\mu^4)(1-\lambda^4/\mu^4)}= 1+\frac{\lambda^5}{\mu}.
\end{equation}
Let us understand the terms appearing on the right hand side of this equation. Computing the Poincar\'e residue as in equation \eqref{equation_poincare_residue}, we see for instance that on the chart where $W=1$, a section of the canonical bundle is given by 
\begin{equation}
    \frac{dz \wedge dx}{x^3}
\end{equation}
and it is easy to check that this glues to a global section seen by the Baum-Fulton-Quart formula, just as in the case of the cusp curve in example \ref{example_cusp_singularity}.
The adjunction formula for a smooth degree $4$ surface in $\mathbb{CP}^3$ shows that the canonical bundle must be trivial and the space must be a Calabi-Yau, and the Lefschetz-Riemann-Roch number of Baum-Fulton-Quart gives a computation which matches the adjunction formula. This is similar to the case of the cusp curve we discussed above.

Let us now compute the equivariant Riemann-Roch number in $L^2$ cohomology for $p=0$.
The smooth point gives the contribution $1/[(1-1/\lambda^4)(1-\mu/\lambda)]$. The normalized local ring at the singular fixed point $a_3$ is $Q=\mathbb{C}[X/Z,W/Y]$.
The local cohomology in degree $0$ at this point is $\overline{Q}$ and the local Lefschetz number is $1/[(1-\lambda/\mu)(1-1/\mu^4)]$.
On the chart where $W=1$, we consider coordinates $x=X/W$, $y=Y/W$ and $z=Z/W$. The local cohomology group at the singular fixed point $a_1$ is $\overline{R_1}$, where $R_1=R[1,z,xy/z,x^2y/z^2]$, where $R=\mathbb{C}[x,y]$, which can be used to compute the local Lefschetz number $[1+(\lambda^3\mu)+(\lambda^2\mu^2)+(\lambda\mu^3)]/[(1-\lambda^4)(1-\mu^4)]$ as in previous examples. Summing up, the global $L^2$ Lefschetz Riemann-Roch number is
\begin{equation}
    \frac{1+(\lambda^3\mu)+(\lambda^2\mu^2)+(\lambda\mu^3)}{(1-\lambda^4)(1-\mu^4)}+\frac{1}{(1-1/\lambda^4)(1-\mu/\lambda)}+\frac{1}{(1-\lambda/\mu)(1-1/\mu^4)}=1.
\end{equation}

Let us consider the $L^2$ Dolbeault complex with Hodge degree $p=1$. For the space with the group action in Example \ref{example_singular_calabi_yau}, we observe that the Lefschetz number is given by
\begin{equation}
\label{equation_Singular_Calabi_Yau_p=1_L^2}
    \frac{(\lambda+\mu)^2 (\lambda^2+\mu^2)}{(1-\lambda^4)(1-\mu^4)}+ \frac{(1/\lambda^4)+(\mu/\lambda)}{(1-1/\lambda^4)(1-\mu/\lambda)}
    +\frac{(\lambda/\mu)
    +(1/\mu^4)}{(1-\lambda/\mu)(1-1/\mu^4)}= -1
\end{equation}
where the local cohomology at $a_1$ is given by $\overline{Q_1}$ where
\begin{equation}
    Q_1=R\Big[ dx, \frac{x^3dy}{z^3}, \frac{yx^2dy}{z^3}, \frac{ydx}{z}, \frac{xdy}{z}, \frac{ydx}{z}, \frac{xydx}{z^2}, \frac{x^2dy}{z^2}, dy \Big]
\end{equation}
where $R=\mathbb{C}[x,y]$ as for the case of $p=0$ and the local cohomology at the other fixed points are easier to find.
Similarly, for $p=2$, the Lefschetz number is given by
\begin{equation}
\label{equation_Singular_Calabi_Yau_p=2_L^2}
    \frac{(\lambda^4\mu^4+ \lambda\mu^3 + \lambda^2\mu^2 + \lambda^3\mu )}{(1-\lambda^4)(1-\mu^4)}+ \frac{(1/\lambda^4)(\mu/\lambda)}{(1-1/\lambda^4)(1-\mu/\lambda)}
    +\frac{(\lambda/\mu)
    (1/\mu^4)}{(1-\lambda/\mu)(1-1/\mu^4)}= 1
\end{equation}
where the local Lefschetz number at $a_1$ corresponds to the trace of the geometric endomorphism over the Hilbert space $\overline{Q_2}$ where 
\begin{equation}
    Q_2=R\Big[ dx \wedge dy, \frac{dx \wedge dy}{z}, \frac{xdx \wedge dy}{z^2}, \frac{x^2dx \wedge dy}{z^3} \Big].
\end{equation}
With these results, we can easily compute the 
$L^2$ equivariant $\chi_y$ and signature invariants for the space with this action, following the definitions in Subsection \ref{subsection_Hirzebruch_invariants}.

\begin{example}[$L^2$ equivariant $\chi_y$ and signature invariants]
\label{Singular_non_isolated_equivariant_Hirzebruch}
For the group action (which for $|\lambda|=|\mu|=1$ is a family of isometries) on this space, the $L^2$ equivariant $\chi_y$ invariant at $a_1$ is
\begin{equation}
    \chi_y(U_{a_1},f)=\frac{1+(\lambda^3\mu)+(\lambda^2\mu^2)+(\lambda\mu^3)+y^1(\lambda+\mu)^2 (\lambda^2+\mu^2)+y^2(\lambda^4\mu^4+ \lambda\mu^3 + \lambda^2\mu^2 + \lambda^3\mu )}{(1-\lambda^4)(1-\mu^4)}.
\end{equation}    
It is easy to verify that this expression satisfies the duality of equation \eqref{equation_duality_Hirzebruch_chi}. This is in contrast to the version of Baum-Fulton-Quart as is discussed in \cite{donten2018equivariant} below Proposition 3 of that article.
\end{example}

%Sets the bibliography style to UNSRT and imports the 
%bibliography file "reference.bib".

\bibliographystyle{alpha}
\bibliography{reference}

\begin{thebibliography}{CdlOGP91}

\bibitem[AB66]{AtiyahBottBull66}
M.~F. Atiyah and R.~Bott.
\newblock A {L}efschetz fixed point formula for elliptic differential operators.
\newblock {\em Bull. Amer. Math. Soc.}, 72:245--250, 1966.

\bibitem[AB67]{AtiyahBott1}
M.~F. Atiyah and R.~Bott.
\newblock A {L}efschetz fixed point formula for elliptic complexes. {I}.
\newblock {\em Ann. of Math. (2)}, 86:374--407, 1967.

\bibitem[AB68]{AtiyahBott2}
M.~F. Atiyah and R.~Bott.
\newblock A {L}efschetz fixed point formula for elliptic complexes. {II}. {A}pplications.
\newblock {\em Ann. of Math. (2)}, 88:451--491, 1968.

\bibitem[AG08]{al2008sturm}
M.~A. Al-Gwaiz.
\newblock {\em Sturm-{L}iouville theory and its applications}.
\newblock Springer Undergraduate Mathematics Series. Springer-Verlag London, Ltd., London, 2008.

\bibitem[AGR17]{Albin_2017_index}
Pierre Albin and Jesse Gell-Redman.
\newblock {The index formula for families of Dirac type operators on pseudomanifolds}.
\newblock \url{https://arxiv.org/abs/1712.08513}, 2017.

\bibitem[AH70]{atiyah1970spin}
Michael Atiyah and Friedrich Hirzebruch.
\newblock Spin-manifolds and group actions.
\newblock In {\em Essays on {T}opology and {R}elated {T}opics ({M}\'{e}moires d\'{e}di\'{e}s \`a {G}eorges de {R}ham)}, pages 18--28. Springer, New York, 1970.

\bibitem[AHT18]{karamataAmbrosio2018}
Luigi Ambrosio, Shouhei Honda, and David Tewodrose.
\newblock Short-time behavior of the heat kernel and {W}eyl's law on {${\rm RCD}^*(K,N)$} spaces.
\newblock {\em Ann. Global Anal. Geom.}, 53(1):97--119, 2018.

\bibitem[Alb17]{Albin_hodge_theory_on_stratified_spaces}
Pierre Albin.
\newblock On the {H}odge theory of stratified spaces.
\newblock In {\em Hodge theory and {$L^2$}-analysis}, volume~39 of {\em Adv. Lect. Math. (ALM)}, pages 1--78. Int. Press, Somerville, MA, 2017.

\bibitem[ALC17]{Jesus2018Wittens}
Jes\'{u}s~A. \'{A}lvarez L\'{o}pez and Manuel Calaza.
\newblock Witten's perturbation on strata.
\newblock {\em Asian J. Math.}, 21(1):47--125, 2017.

\bibitem[ALCF18]{Jesus2018Wittensgeneral}
Jes\'{u}s~A. \'{A}lvarez L\'{o}pez, Manuel Calaza, and Carlos Franco.
\newblock Witten's perturbation on strata with general adapted metrics.
\newblock {\em Ann. Global Anal. Geom.}, 54(1):25--69, 2018.

\bibitem[Ald13]{aldana2013asymptotics}
Clara~L. Aldana.
\newblock Asymptotics of relative heat traces and determinants on open surfaces of finite area.
\newblock {\em Ann. Global Anal. Geom.}, 44(2):169--216, 2013.

\bibitem[ALMP12]{Albin_signature}
Pierre Albin, \'{E}ric Leichtnam, Rafe Mazzeo, and Paolo Piazza.
\newblock The signature package on {W}itt spaces.
\newblock {\em Ann. Sci. \'{E}c. Norm. Sup\'{e}r. (4)}, 45(2):241--310, 2012.

\bibitem[ALMP17]{albin2013novikov}
Pierre Albin, Eric Leichtnam, Rafe Mazzeo, and Paolo Piazza.
\newblock The {N}ovikov conjecture on {C}heeger spaces.
\newblock {\em J. Noncommut. Geom.}, 11(2):451--506, 2017.

\bibitem[ALMP18]{Albin_hodge_theory_cheeger_spaces}
Pierre Albin, Eric Leichtnam, Rafe Mazzeo, and Paolo Piazza.
\newblock Hodge theory on {C}heeger spaces.
\newblock {\em J. Reine Angew. Math.}, 744:29--102, 2018.

\bibitem[ALN07]{ammann2007pseudodifferential}
Bernd Ammann, Robert Lauter, and Victor Nistor.
\newblock Pseudodifferential operators on manifolds with a {L}ie structure at infinity.
\newblock {\em Ann. of Math. (2)}, 165(3):717--747, 2007.

\bibitem[AM11]{albin2010resolution}
Pierre Albin and Richard Melrose.
\newblock Resolution of smooth group actions.
\newblock In {\em Spectral theory and geometric analysis}, volume 535 of {\em Contemp. Math.}, pages 1--26. Amer. Math. Soc., Providence, RI, 2011.

\bibitem[APS75]{atiyah1975spectral}
M.~F. Atiyah, V.~K. Patodi, and I.~M. Singer.
\newblock Spectral asymmetry and {R}iemannian geometry. {I}.
\newblock {\em Math. Proc. Cambridge Philos. Soc.}, 77:43--69, 1975.

\bibitem[Ati74]{Atiyahellipticopsandcompactgroups74}
Michael~Francis Atiyah.
\newblock {\em Elliptic operators and compact groups}.
\newblock Lecture Notes in Mathematics, Vol. 401. Springer-Verlag, Berlin-New York, 1974.

\bibitem[Ban07]{Banaglbook07}
M.~Banagl.
\newblock {\em Topological invariants of stratified spaces}.
\newblock Springer Monographs in Mathematics. Springer, Berlin, 2007.

\bibitem[Bau82]{Baumformula81}
Paul Baum.
\newblock Fixed point formula for singular varieties.
\newblock In {\em Current trends in algebraic topology, {P}art 2 ({L}ondon, {O}nt., 1981)}, volume~2 of {\em CMS Conf. Proc.}, pages 3--22. Amer. Math. Soc., Providence, R.I., 1982.

\bibitem[Bei13]{Bei_2012_L2atiyahbottlefschetz}
Francesco Bei.
\newblock The {$L^2$}-{A}tiyah-{B}ott-{L}efschetz theorem on manifolds with conical singularities: a heat kernel approach.
\newblock {\em Ann. Global Anal. Geom.}, 44(4):565--605, 2013.

\bibitem[Bei14]{Bei_2011_Perversities}
Francesco Bei.
\newblock General perversities and {$L^2$} de {R}ham and {H}odge theorems for stratified pseudomanifolds.
\newblock {\em Bull. Sci. Math.}, 138(1):2--40, 2014.

\bibitem[Bei20]{bei2020p}
Francesco Bei.
\newblock {$L^p$}-cohomology, heat semigroup and stratified spaces.
\newblock {\em arXiv preprint arXiv:2012.07684}, 2020.

\bibitem[BFM75]{baumfultonmacpheresonRiemannRoch}
Paul Baum, William Fulton, and Robert MacPherson.
\newblock Riemann-{R}och for singular varieties.
\newblock {\em Inst. Hautes \'{E}tudes Sci. Publ. Math.}, (45):101--145, 1975.

\bibitem[BFM79]{baumfultonmacKtheoryRiemannRoch}
Paul Baum, William Fulton, and Robert MacPherson.
\newblock Riemann-{R}och and topological {$K$} theory for singular varieties.
\newblock {\em Acta Math.}, 143(3-4):155--192, 1979.

\bibitem[BFQ79]{baum1979lefschetz}
Paul Baum, William Fulton, and George Quart.
\newblock Lefschetz-{R}iemann-{R}och for singular varieties.
\newblock {\em Acta Math.}, 143(3-4):193--211, 1979.

\bibitem[BG01]{boyer2001new}
Charles~P. Boyer and Krzysztof Galicki.
\newblock New {E}instein metrics in dimension five.
\newblock {\em J. Differential Geom.}, 57(3):443--463, 2001.

\bibitem[BG08]{boyer&galicki}
Charles~P. Boyer and Krzysztof Galicki.
\newblock {\em Sasakian geometry}.
\newblock Oxford Mathematical Monographs. Oxford University Press, Oxford, 2008.

\bibitem[BGV04]{berline2003heat}
Nicole Berline, Ezra Getzler, and Mich\`ele Vergne.
\newblock {\em Heat kernels and {D}irac operators}.
\newblock Grundlehren Text Editions. Springer-Verlag, Berlin, 2004.
\newblock Corrected reprint of the 1992 original.

\bibitem[BH04]{banyaga2004lectures}
Augustin Banyaga and David Hurtubise.
\newblock {\em Lectures on {M}orse homology}, volume~29 of {\em Kluwer Texts in the Mathematical Sciences}.
\newblock Kluwer Academic Publishers Group, Dordrecht, 2004.

\bibitem[BH16]{baraglia2016moduli}
David Baraglia and Pedram Hekmati.
\newblock Moduli spaces of contact instantons.
\newblock {\em Adv. Math.}, 294:562--595, 2016.

\bibitem[Bis86]{bismut1986witten}
Jean-Michel Bismut.
\newblock The {W}itten complex and the degenerate {M}orse inequalities.
\newblock {\em J. Differential Geom.}, 23(3):207--240, 1986.

\bibitem[Bis87]{bismut1987demailly}
Jean-Michel Bismut.
\newblock Demailly's asymptotic {M}orse inequalities: a heat equation proof.
\newblock {\em J. Funct. Anal.}, 72(2):263--278, 1987.

\bibitem[BL92]{bru1992hilbert}
J.~Br\"{u}ning and M.~Lesch.
\newblock Hilbert complexes.
\newblock {\em J. Funct. Anal.}, 108(1):88--132, 1992.

\bibitem[BL03]{borisov2003elliptic}
Lev Borisov and Anatoly Libgober.
\newblock Elliptic genera of singular varieties.
\newblock {\em Duke Math. J.}, 116(2):319--351, 2003.

\bibitem[Bla10]{blair2010riemannian}
David~E. Blair.
\newblock {\em Riemannian geometry of contact and symplectic manifolds}, volume 203 of {\em Progress in Mathematics}.
\newblock Birkh\"{a}user Boston, Ltd., Boston, MA, second edition, 2010.

\bibitem[BMO23]{brugués2023arnold}
Joaquim Brugués, Eva Miranda, and Cédric Oms.
\newblock The arnold conjecture for singular symplectic manifolds, 2023.

\bibitem[Bot67]{bott1967vector}
Raoul Bott.
\newblock Vector fields and characteristic numbers.
\newblock {\em Michigan Math. J.}, 14:231--244, 1967.

\bibitem[Bot88]{bott1988fixed}
Raoul Bott.
\newblock On the fixed point formula and the rigidity theorems of {W}itten lectures at {C}arg\`ese 1987.
\newblock In {\em Nonperturbative quantum field theory ({C}arg\`ese, 1987)}, volume 185 of {\em NATO Adv. Sci. Inst. Ser. B: Phys.}, pages 13--32. Plenum, New York, 1988.

\bibitem[BP21]{AnalyticToddBeiPiazza}
Francesco Bei and Paolo Piazza.
\newblock On analytic {T}odd classes of singular varieties.
\newblock {\em Int. Math. Res. Not. IMRN}, (19):14840--14881, 2021.

\bibitem[BPS90]{Bruningalgebraiccurves90}
Jochen Br\"{u}ning, Norbert Peyerimhoff, and Herbert Schr\"{o}der.
\newblock The {$\overline\partial$}-operator on algebraic curves.
\newblock {\em Comm. Math. Phys.}, 129(3):525--534, 1990.

\bibitem[BS81]{brenner1981atiyah}
A.~V. Brenner and M.~A. Shubin.
\newblock The {A}tiyah-{B}ott-{L}efschetz theorem for manifolds with boundary.
\newblock {\em Funktsional. Anal. i Prilozhen.}, 15(4):67--68, 1981.

\bibitem[BS90]{brenner1990atiyah}
A.~V. Brenner and M.~A. Shubin.
\newblock The {A}tiyah-{B}ott-{L}efschetz formula for elliptic complexes on a manifold with boundary.
\newblock In {\em Current problems in mathematics. {N}ewest results, {V}ol. 38 ({R}ussian)}, Itogi Nauki i Tekhniki, pages 119--183, 186. Akad. Nauk SSSR, Vsesoyuz. Inst. Nauchn. i Tekhn. Inform., Moscow, 1990.
\newblock Translated in J. Soviet Math. {{\bf{6}}4} (1993), no. 4, 1069--1111.

\bibitem[BSY10]{Brasselet2010}
Jean-Paul Brasselet, J\"{o}rg Sch\"{u}rmann, and Shoji Yokura.
\newblock Hirzebruch classes and motivic {C}hern classes for singular spaces.
\newblock {\em J. Topol. Anal.}, 2(1):1--55, 2010.

\bibitem[BT89]{bott1989rigidity}
Raoul Bott and Clifford Taubes.
\newblock On the rigidity theorems of {W}itten.
\newblock {\em J. Amer. Math. Soc.}, 2(1):137--186, 1989.

\bibitem[CdlOGP91]{candelas1991pair}
Philip Candelas, Xenia~C. de~la Ossa, Paul~S. Green, and Linda Parkes.
\newblock A pair of {C}alabi-{Y}au manifolds as an exactly soluble superconformal theory.
\newblock {\em Nuclear Phys. B}, 359(1):21--74, 1991.

\bibitem[CGM82]{cheeger1982l2}
Jeff Cheeger, Mark Goresky, and Robert MacPherson.
\newblock {$L^{2}$}-cohomology and intersection homology of singular algebraic varieties.
\newblock In {\em Seminar on {D}ifferential {G}eometry}, volume 102 of {\em Ann. of Math. Stud.}, pages 303--340. Princeton Univ. Press, Princeton, N.J., 1982.

\bibitem[Che80]{cheeger1980hodge}
Jeff Cheeger.
\newblock On the {H}odge theory of {R}iemannian pseudomanifolds.
\newblock In {\em Geometry of the {L}aplace operator ({P}roc. {S}ympos. {P}ure {M}ath., {U}niv. {H}awaii, {H}onolulu, {H}awaii, 1979)}, Proc. Sympos. Pure Math., XXXVI, pages 91--146. Amer. Math. Soc., Providence, R.I., 1980.

\bibitem[Che83]{cheeger1983spectral}
Jeff Cheeger.
\newblock Spectral geometry of singular {R}iemannian spaces.
\newblock {\em J. Differential Geom.}, 18(4):575--657 (1984), 1983.

\bibitem[CHK73]{CarellKosniowski73}
James Carrell, Alan Howard, and Czes Kosniowski.
\newblock Holomorphic vector fields on complex surfaces.
\newblock {\em Math. Ann.}, 204:73--81, 1973.

\bibitem[CL73]{carrell1973holomorphic}
James~B. Carrell and David~I. Lieberman.
\newblock Holomorphic vector fields and {K}aehler manifolds.
\newblock {\em Invent. Math.}, 21:303--309, 1973.

\bibitem[CLS11]{cox2011toric}
David~A. Cox, John~B. Little, and Henry~K. Schenck.
\newblock {\em Toric varieties}, volume 124 of {\em Graduate Studies in Mathematics}.
\newblock American Mathematical Society, Providence, RI, 2011.

\bibitem[CMSS23]{cappell2023equivariant}
Sylvain~E Cappell, Lauren{\c{t}}iu Maxim, J{\"o}rg Sch{\"u}rmann, and Julius~L Shaneson.
\newblock Equivariant toric geometry and euler-maclaurin formulae.
\newblock {\em arXiv preprint arXiv:2303.16785}, 2023.

\bibitem[Cru20]{cruz2020examples}
Joshua Cruz.
\newblock {\em Examples of the {L}ocal {L}2-{C}ohomology of {A}lgebraic {V}arieties}.
\newblock ProQuest LLC, Ann Arbor, MI, 2020.
\newblock Thesis (Ph.D.)--Duke University.

\bibitem[CS94]{GeneralatticeCappellShaneson94}
Sylvain~E. Cappell and Julius~L. Shaneson.
\newblock Genera of algebraic varieties and counting of lattice points.
\newblock {\em Bull. Amer. Math. Soc. (N.S.)}, 30(1):62--69, 1994.

\bibitem[CS01]{chen2001partial}
So-Chin Chen and Mei-Chi Shaw.
\newblock {\em Partial differential equations in several complex variables}, volume~19 of {\em AMS/IP Studies in Advanced Mathematics}.
\newblock American Mathematical Society, Providence, RI; International Press, Boston, MA, 2001.

\bibitem[Dav95]{davies1996spectral}
E.~B. Davies.
\newblock {\em Spectral theory and differential operators}, volume~42 of {\em Cambridge Studies in Advanced Mathematics}.
\newblock Cambridge University Press, Cambridge, 1995.

\bibitem[Dav10]{davies2010quotients}
Rhys Davies.
\newblock Quotients of the conifold in compact {C}alabi-{Y}au threefolds and new topological transitions.
\newblock {\em Adv. Theor. Math. Phys.}, 14(3):965--989, 2010.

\bibitem[DBW18]{donten2018equivariant}
Maria Donten-Bury and Andrzej Weber.
\newblock Equivariant {H}irzebruch classes and {M}olien series of quotient singularities.
\newblock {\em Transform. Groups}, 23(3):671--705, 2018.

\bibitem[Deg01]{Degeratuthesis}
Anda Degeratu.
\newblock {\em Eta-invariants and {M}olien series for unimodular group}.
\newblock ProQuest LLC, Ann Arbor, MI, 2001.
\newblock Thesis (Ph.D.)--Massachusetts Institute of Technology.

\bibitem[Dem91]{demailly1991holomorphic}
Jean-Pierre Demailly.
\newblock Holomorphic {M}orse inequalities.
\newblock In {\em Several complex variables and complex geometry, {P}art 2 ({S}anta {C}ruz, {CA}, 1989)}, volume~52 of {\em Proc. Sympos. Pure Math.}, pages 93--114. Amer. Math. Soc., Providence, RI, 1991.

\bibitem[Des16]{DessaiToric16}
Anand Dessai.
\newblock Torus actions, fixed-point formulas, elliptic genera and positive curvature.
\newblock {\em Front. Math. China}, 11(5):1151--1187, 2016.

\bibitem[DF86]{donnelly1986fixed}
Harold Donnelly and Charles Fefferman.
\newblock Fixed point formula for the {B}ergman kernel.
\newblock {\em Amer. J. Math.}, 108(5):1241--1258, 1986.

\bibitem[DGA21]{dessai2021moduli}
Anand Dessai and David Gonz\'{a}lez-\'{A}lvaro.
\newblock Moduli space of metrics of nonnegative sectional or positive {R}icci curvature on homotopy real projective spaces.
\newblock {\em Trans. Amer. Math. Soc.}, 374(1):1--33, 2021.

\bibitem[Don78]{donnelly1978eta}
Harold Donnelly.
\newblock Eta invariants for {$G$}-spaces.
\newblock {\em Indiana Univ. Math. J.}, 27(6):889--918, 1978.

\bibitem[Don85]{donaldson1985anti}
S.~K. Donaldson.
\newblock Anti self-dual {Y}ang-{M}ills connections over complex algebraic surfaces and stable vector bundles.
\newblock {\em Proc. London Math. Soc. (3)}, 50(1):1--26, 1985.

\bibitem[DT06]{dragomir2007differential}
Sorin Dragomir and Giuseppe Tomassini.
\newblock {\em Differential geometry and analysis on {CR} manifolds}, volume 246 of {\em Progress in Mathematics}.
\newblock Birkh\"{a}user Boston, Inc., Boston, MA, 2006.

\bibitem[Dui11]{duistermaat2013heat}
J.~J. Duistermaat.
\newblock {\em The heat kernel {L}efschetz fixed point formula for the spin-{$c$} {D}irac operator}.
\newblock Modern Birkh\"{a}user Classics. Birkh\"{a}user/Springer, New York, 2011.
\newblock Reprint of the 1996 edition.

\bibitem[EG98a]{equivariant_intersection_edidin}
Dan Edidin and William Graham.
\newblock Equivariant intersection theory.
\newblock {\em Invent. Math.}, 131(3):595--634, 1998.

\bibitem[EG98b]{localization_algebraic_Graham_Edidin}
Dan Edidin and William Graham.
\newblock Localization in equivariant intersection theory and the {B}ott residue formula.
\newblock {\em Amer. J. Math.}, 120(3):619--636, 1998.

\bibitem[EMM91]{EpsteinMelroseMendozaResolvent1991}
C.~L. Epstein, R.~B. Melrose, and G.~A. Mendoza.
\newblock Resolvent of the {L}aplacian on strictly pseudoconvex domains.
\newblock {\em Acta Math.}, 167(1-2):1--106, 1991.

\bibitem[Eps06]{epstein2006subelliptic}
Charles~L. Epstein.
\newblock Subelliptic boundary conditions for {$\rm Spin_{\Bbb C}$}-{D}irac operators, gluing, relative indices, and tame {F}redholm pairs.
\newblock {\em Proc. Natl. Acad. Sci. USA}, 103(42):15364--15369, 2006.

\bibitem[Eps07]{EpsteinSubellipticSpinc2_2007}
Charles~L. Epstein.
\newblock Subelliptic {${\rm Spin}_{\Bbb C}$} {D}irac operators. {II}. {B}asic estimates.
\newblock {\em Ann. of Math. (2)}, 166(3):723--777, 2007.

\bibitem[Eps08]{EpsteinSubellipticSpinc3_2007}
Charles~L. Epstein.
\newblock Subelliptic {${\rm Spin}_{\Bbb C}$} {D}irac operators. {III}. {T}he {A}tiyah-{W}einstein conjecture.
\newblock {\em Ann. of Math. (2)}, 168(1):299--365, 2008.

\bibitem[Fol72]{folland1972tangential}
G.~B. Folland.
\newblock The tangential {C}auchy-{R}iemann complex on spheres.
\newblock {\em Trans. Amer. Math. Soc.}, 171:83--133, 1972.

\bibitem[FQWZ20a]{festuccia2020transversally}
Guido Festuccia, Jian Qiu, Jacob Winding, and Maxim Zabzine.
\newblock Transversally elliptic complex and cohomological field theory.
\newblock {\em J. Geom. Phys.}, 156:103786, 47, 2020.

\bibitem[FQWZ20b]{festuccia2020twisting}
Guido Festuccia, Jian Qiu, Jacob Winding, and Maxim Zabzine.
\newblock Twisting with a flip (the art of {P}estunization).
\newblock {\em Comm. Math. Phys.}, 377(1):341--385, 2020.

\bibitem[Fri00]{friedrich2000dirac}
Thomas Friedrich.
\newblock {\em Dirac operators in {R}iemannian geometry}, volume~25 of {\em Graduate Studies in Mathematics}.
\newblock American Mathematical Society, Providence, RI, 2000.
\newblock Translated from the 1997 German original by Andreas Nestke.

\bibitem[Fri20]{friedman2020singular}
Greg Friedman.
\newblock {\em Singular intersection homology}, volume~33 of {\em New Mathematical Monographs}.
\newblock Cambridge University Press, Cambridge, 2020.

\bibitem[Fu07]{fu2007spectrum}
Siqi Fu.
\newblock Spectrum of the {$\overline\partial$}-{N}eumann {L}aplacian on polydiscs.
\newblock {\em Proc. Amer. Math. Soc.}, 135(3):725--730, 2007.

\bibitem[GM80]{goresky1980intersection}
Mark Goresky and Robert MacPherson.
\newblock Intersection homology theory.
\newblock {\em Topology}, 19(2):135--162, 1980.

\bibitem[GM83]{goresky1988stratified}
Mark Goresky and Robert~and MacPherson.
\newblock Stratified {M}orse theory.
\newblock In {\em Singularities, {P}art 1 ({A}rcata, {C}alif., 1981)}, volume~40 of {\em Proc. Sympos. Pure Math.}, pages 517--533. Amer. Math. Soc., Providence, R.I., 1983.

\bibitem[GM85]{Goresky_1985_Lefschetz}
Mark Goresky and Robert MacPherson.
\newblock Lefschetz fixed point theorem for intersection homology.
\newblock {\em Comment. Math. Helv.}, 60(3):366--391, 1985.

\bibitem[GM93a]{Goresky_1993_local_Lefschetz}
Mark Goresky and Robert MacPherson.
\newblock Local contribution to the {L}efschetz fixed point formula.
\newblock {\em Invent. Math.}, 111(1):1--33, 1993.

\bibitem[GM93b]{goresky1993local}
Mark Goresky and Robert MacPherson.
\newblock Local contribution to the {L}efschetz fixed point formula.
\newblock {\em Invent. Math.}, 111(1):1--33, 1993.

\bibitem[GMM06]{GrassiPureSpinors}
P.~A. Grassi and J.~F. Morales~Morera.
\newblock Partition functions of pure spinors.
\newblock {\em Nuclear Phys. B}, 751(1-2):53--74, 2006.

\bibitem[Goe12]{goette2012computations}
Sebastian Goette.
\newblock Computations and applications of {$\eta$} invariants.
\newblock In {\em Global differential geometry}, volume~17 of {\em Springer Proc. Math.}, pages 401--433. Springer, Heidelberg, 2012.

\bibitem[Gor92]{GoreskyL2IntersectionexpositZucker92}
Mark Goresky.
\newblock {$L^2$}-cohomology is intersection cohomology.
\newblock In {\em The zeta functions of {P}icard modular surfaces}, pages 47--63. Univ. Montr\'{e}al, Montreal, QC, 1992.

\bibitem[GP06]{grassi2006curved}
P.~A. Grassi and G.~Policastro.
\newblock Curved beta-gamma systems and quantum koszul resolution, 2006.

\bibitem[GW22]{goodman2022moduli}
McFeely~Jackson Goodman and Jonathan Wermelinger.
\newblock The moduli space of nonnegatively curved metrics on quotients of {$S^2\times S^3$} by involutions.
\newblock {\em Differential Geom. Appl.}, 85:Paper No. 101938, 13, 2022.

\bibitem[Har77]{Hartshornebook}
Robin Hartshorne.
\newblock {\em Algebraic geometry}.
\newblock Graduate Texts in Mathematics, No. 52. Springer-Verlag, New York-Heidelberg, 1977.

\bibitem[HBJ92]{hirzebruch1992manifolds}
Friedrich Hirzebruch, Thomas Berger, and Rainer Jung.
\newblock {\em Manifolds and modular forms}.
\newblock Aspects of Mathematics, E20. Friedr. Vieweg \& Sohn, Braunschweig, 1992.
\newblock With appendices by Nils-Peter Skoruppa and by Paul Baum.

\bibitem[Hir66]{hirzebruch1966topological}
F.~Hirzebruch.
\newblock {\em Topological methods in algebraic geometry}.
\newblock Die Grundlehren der mathematischen Wissenschaften, Band 131. Springer-Verlag New York, Inc., New York, enlarged edition, 1966.
\newblock New appendix and translation from the second German edition by R. L. E. Schwarzenberger, with an additional section by A. Borel.

\bibitem[Hir88]{hirzebruch1988elliptic}
Friedrich Hirzebruch.
\newblock Elliptic genera of level {$N$} for complex manifolds.
\newblock In {\em Differential geometrical methods in theoretical physics ({C}omo, 1987)}, volume 250 of {\em NATO Adv. Sci. Inst. Ser. C: Math. Phys. Sci.}, pages 37--63. Kluwer Acad. Publ., Dordrecht, 1988.

\bibitem[Hit74]{hitchin1974harmonic}
Nigel Hitchin.
\newblock Harmonic spinors.
\newblock {\em Advances in Math.}, 14:1--55, 1974.

\bibitem[HO23]{hekmati2023equivariant}
Pedram Hekmati and Marcos Orseli.
\newblock Equivariant index on toric contact manifolds, 2023.

\bibitem[HS92]{Hilsum_1992_invariance}
Michel Hilsum and Georges Skandalis.
\newblock {Invariance par homotopie de la signaturea coefficients dans un fibr{\'e} presque plat}.
\newblock {\em J. reine angew. Math}, 423(73-99):9--9, 1992.

\bibitem[Jan22]{jang2022complex}
Donghoon Jang.
\newblock Almost complex torus manifolds -- graphs, hirzebruch genera, and problem of petrie type, 2022.

\bibitem[Jay24]{jayasinghe2024holomorphicwitteninstantoncomplexes}
Gayana Jayasinghe.
\newblock Holomorphic witten instanton complexes on stratified pseudomanifolds with k\"ahler wedge metrics, 2024.

\bibitem[KK14]{Kollarlinkssingularities}
Michael Kapovich and J\'{a}nos Koll\'{a}r.
\newblock Fundamental groups of links of isolated singularities.
\newblock {\em J. Amer. Math. Soc.}, 27(4):929--952, 2014.

\bibitem[KMT04]{kytmanov2004holomorphic}
A.~Kytmanov, S.~Myslivets, and N.~Tarkhanov.
\newblock Holomorphic {L}efschetz formula for manifolds with boundary.
\newblock {\em Math. Z.}, 246(4):769--794, 2004.

\bibitem[Kol87]{KollarBull87}
J\'{a}nos Koll\'{a}r.
\newblock The structure of algebraic threefolds: an introduction to {M}ori's program.
\newblock {\em Bull. Amer. Math. Soc. (N.S.)}, 17(2):211--273, 1987.

\bibitem[Kos70]{KosniowskiLefschetzApplication1970}
C.~Kosniowski.
\newblock Applications of the holomorphic {L}efschetz formula.
\newblock {\em Bull. London Math. Soc.}, 2:43--48, 1970.

\bibitem[KPS09]{ProkhorenkovShubin2009Morseboundary}
Nilufer Koldan, Igor Prokhorenkov, and Mikhail Shubin.
\newblock Semiclassical asymptotics on manifolds with boundary.
\newblock In {\em Spectral analysis in geometry and number theory}, volume 484 of {\em Contemp. Math.}, pages 239--266. Amer. Math. Soc., Providence, RI, 2009.

\bibitem[KR22]{kottke2022products}
Chris Kottke and Fr{\'e}d{\'e}ric Rochon.
\newblock Products of manifolds with fibered corners.
\newblock {\em arXiv preprint arXiv:2206.07262}, 2022.

\bibitem[KS00]{klebanov2000supergravity}
Igor~R. Klebanov and Matthew~J. Strassler.
\newblock Supergravity and a confining gauge theory: duality cascades and {$\chi\,SB$}-resolution of naked singularities.
\newblock {\em J. High Energy Phys.}, (8):Paper 52, 35, 2000.

\bibitem[KW06]{Kirwan&woolf_2006_book}
Frances Kirwan and Jonathan Woolf.
\newblock {\em An introduction to intersection homology theory}.
\newblock Chapman \& Hall/CRC, Boca Raton, FL, second edition, 2006.

\bibitem[LeB21]{LeBrunTwistorsSDSPIN2021}
Claude LeBrun.
\newblock Twistors, self-duality, and spin{$^c$} structures.
\newblock {\em SIGMA Symmetry Integrability Geom. Methods Appl.}, 17:Paper No. 102, 11, 2021.

\bibitem[Lef26]{lefschetz1926intersections}
Solomon Lefschetz.
\newblock Intersections and transformations of complexes and manifolds.
\newblock {\em Trans. Amer. Math. Soc.}, 28(1):1--49, 1926.

\bibitem[Lef37]{lefschetz1937fixed}
S.~Lefschetz.
\newblock On the fixed point formula.
\newblock {\em Ann. of Math. (2)}, 38(4):819--822, 1937.

\bibitem[Li13]{PingLigaptheorembettiKahler2013}
Ping Li.
\newblock A gap theorem of {K}\"{a}hler manifolds with vanishing odd {B}etti numbers.
\newblock {\em Differential Geom. Appl.}, 31(3):331--336, 2013.

\bibitem[Lot19]{LottHilbertconplex}
John Lott.
\newblock A {D}olbeault-{H}ilbert complex for a variety with isolated singular points.
\newblock {\em Ann. K-Theory}, 4(4):707--720, 2019.

\bibitem[LP98]{LeschPeyerimhoffHornindex1998}
Matthias Lesch and Norbert Peyerimhoff.
\newblock On index formulas for manifolds with metric horns.
\newblock {\em Comm. Partial Differential Equations}, 23(3-4):649--684, 1998.

\bibitem[LS88]{landweber1988circle}
Peter~S. Landweber and Robert~E. Stong.
\newblock Circle actions on {S}pin manifolds and characteristic numbers.
\newblock {\em Topology}, 27(2):145--161, 1988.

\bibitem[LS16]{li2016heat}
Liangpan Li and Alexander Strohmaier.
\newblock Heat kernel estimates for general boundary problems.
\newblock {\em J. Spectr. Theory}, 6(4):903--919, 2016.

\bibitem[Lud13]{ludwig2013analytic}
Ursula Ludwig.
\newblock An analytic approach to the stratified {M}orse inequalities for complex cones.
\newblock {\em Internat. J. Math.}, 24(12):1350100, 12, 2013.

\bibitem[Lud17a]{ludwig2017comparison}
Ursula Ludwig.
\newblock Comparison between two complexes on a singular space.
\newblock {\em J. Reine Angew. Math.}, 724:1--52, 2017.

\bibitem[Lud17b]{ludwig2017index}
Ursula Ludwig.
\newblock An index formula for the intersection {E}uler characteristic of an infinite cone.
\newblock {\em C. R. Math. Acad. Sci. Paris}, 355(1):94--98, 2017.

\bibitem[Lud20]{ludwig2020extension}
Ursula Ludwig.
\newblock An extension of a theorem by {C}heeger and {M}\"{u}ller to spaces with isolated conical singularities.
\newblock {\em Duke Math. J.}, 169(13):2501--2570, 2020.

\bibitem[Maz91]{Mazzeo_Edge_Elliptic_1}
Rafe Mazzeo.
\newblock Elliptic theory of differential edge operators. {I}.
\newblock {\em Comm. Partial Differential Equations}, 16(10):1615--1664, 1991.

\bibitem[Mei98]{meinrenken1998symplectic}
Eckhard Meinrenken.
\newblock Symplectic surgery and the {${\rm Spin}^c$}-{D}irac operator.
\newblock {\em Adv. Math.}, 134(2):240--277, 1998.

\bibitem[Mel]{melrose1996differential}
Richard~B. Melrose.
\newblock {\em Differential analysis on manifolds with corners}.
\newblock In preparation. Available online at \url{http://www-math.mit.edu/~rbm/book.html}.

\bibitem[Mel92]{melrose1992calculus}
Richard~B. Melrose.
\newblock Calculus of conormal distributions on manifolds with corners.
\newblock {\em Internat. Math. Res. Notices}, (3):51--61, 1992.

\bibitem[Mel93]{melroseAPS}
Richard~B. Melrose.
\newblock {\em The {A}tiyah-{P}atodi-{S}inger index theorem}, volume~4 of {\em Research Notes in Mathematics}.
\newblock A K Peters, Ltd., Wellesley, MA, 1993.

\bibitem[MiR01]{MundetiRieralifts01}
Ignasi Mundet~i Riera.
\newblock Lifts of smooth group actions to line bundles.
\newblock {\em Bull. London Math. Soc.}, 33(3):351--361, 2001.

\bibitem[MM95]{mazzeomelroseEta}
R.~R. Mazzeo and R.~B. Melrose.
\newblock Analytic surgery and the eta invariant.
\newblock {\em Geom. Funct. Anal.}, 5(1):14--75, 1995.

\bibitem[MM07]{MaMarinescu2007HolMorseBook}
Xiaonan Ma and George Marinescu.
\newblock {\em Holomorphic {M}orse inequalities and {B}ergman kernels}, volume 254 of {\em Progress in Mathematics}.
\newblock Birkh\"{a}user Verlag, Basel, 2007.

\bibitem[MPR15]{Mazzeo_2015}
Rafe Mazzeo, \'{A}lvaro Pelayo, and Tudor~S. Ratiu.
\newblock {${\rm L}^2$}-cohomology and complete {H}amiltonian manifolds.
\newblock {\em J. Geom. Phys.}, 87:305--313, 2015.

\bibitem[MR22]{mauch2022index}
Roman Mauch and Lorenzo Ruggeri.
\newblock Index of the transversally elliptic complex in {P}estunization.
\newblock {\em J. Phys. A}, 55(29):Paper No. 295401, 30, 2022.

\bibitem[MS67]{mckean1967curvature}
H.~P. McKean, Jr. and I.~M. Singer.
\newblock Curvature and the eigenvalues of the {L}aplacian.
\newblock {\em J. Differential Geometry}, 1(1):43--69, 1967.

\bibitem[MS99]{meinrenken1999singular}
Eckhard Meinrenken and Reyer Sjamaar.
\newblock Singular reduction and quantization.
\newblock {\em Topology}, 38(4):699--762, 1999.

\bibitem[MS15]{MaximJorgCharacteristicsingulartoric2015}
Lauren\c{t}iu~G. Maxim and J\"{o}rg Sch\"{u}rmann.
\newblock Characteristic classes of singular toric varieties.
\newblock {\em Comm. Pure Appl. Math.}, 68(12):2177--2236, 2015.

\bibitem[MSS11]{MaximSaitoJorgHodgemodules2011}
Laurentiu Maxim, Morihiko Saito, and J\"{o}rg Sch\"{u}rmann.
\newblock Symmetric products of mixed {H}odge modules.
\newblock {\em J. Math. Pures Appl. (9)}, 96(5):462--483, 2011.

\bibitem[MSY08]{Martellisparksyau08}
Dario Martelli, James Sparks, and Shing-Tung Yau.
\newblock Sasaki-{E}instein manifolds and volume minimisation.
\newblock {\em Comm. Math. Phys.}, 280(3):611--673, 2008.

\bibitem[Mum99]{mumford1999red}
David Mumford.
\newblock {\em The red book of varieties and schemes}, volume 1358 of {\em Lecture Notes in Mathematics}.
\newblock Springer-Verlag, Berlin, expanded edition, 1999.
\newblock Includes the Michigan lectures (1974) on curves and their Jacobians, With contributions by Enrico Arbarello.

\bibitem[MW97]{mathai1997equivariant}
Varghese Mathai and Siye Wu.
\newblock Equivariant holomorphic {M}orse inequalities. {I}. {H}eat kernel proof.
\newblock {\em J. Differential Geom.}, 46(1):78--98, 1997.

\bibitem[Nek03]{nekrasov2003seiberg}
Nikita~A. Nekrasov.
\newblock Seiberg-{W}itten prepotential from instanton counting.
\newblock {\em Adv. Theor. Math. Phys.}, 7(5):831--864, 2003.

\bibitem[NO06]{NekrasovOkounkovSW2006}
Nikita~A. Nekrasov and Andrei Okounkov.
\newblock Seiberg-{Witten} theory and random partitions.
\newblock In {\em The unity of mathematics. In honor of the ninetieth birthday of I. M. Gelfand. Papers from the conference held in Cambridge, MA, USA, August 31--September 4, 2003.}, pages 525--596. Boston, MA: Birkh{\"a}user, 2006.

\bibitem[NPZ22]{shiftsprepotentialnekrasovzabzine}
Nikita Nekrasov, Nicolo Piazzalunga, and Maxim Zabzine.
\newblock Shifts of prepotentials.
\newblock {\em SciPost Phys.}, 12(5):Paper No. 177, 42, 2022.
\newblock With an appendix by Mich\`ele Vergne.

\bibitem[NS04]{NekrasovABCDinsta}
Nikita Nekrasov and Sergey Shadchin.
\newblock A{BCD} of instantons.
\newblock {\em Comm. Math. Phys.}, 252(1-3):359--391, 2004.

\bibitem[NSSS06]{nazaikinskii2005elliptic}
Vladimir~E. Nazaikinskii, Anton~Yu. Savin, Bert-Wolfgang Schulze, and Boris~Yu. Sternin.
\newblock {\em Elliptic theory on singular manifolds}, volume~7 of {\em Differential and Integral Equations and Their Applications}.
\newblock Chapman \& Hall/CRC, Boca Raton, FL, 2006.

\bibitem[OS04]{SurgerycontactOzbagciStipsicz04}
Burak Ozbagci and Andr\'{a}s~I. Stipsicz.
\newblock {\em Surgery on contact 3-manifolds and {S}tein surfaces}, volume~13 of {\em Bolyai Society Mathematical Studies}.
\newblock Springer-Verlag, Berlin; J\'{a}nos Bolyai Mathematical Society, Budapest, 2004.

\bibitem[OSV04]{OoguriStromVafa04}
Hirosi Ooguri, Andrew Strominger, and Cumrun Vafa.
\newblock Black hole attractors and the topological string.
\newblock {\em Phys. Rev. D (3)}, 70(10):106007, 13, 2004.

\bibitem[Pat73]{patodi1973holomorphic}
V.~K. Patodi.
\newblock Holomorphic {L}efschetz fixed point formula.
\newblock {\em Bull. Amer. Math. Soc.}, 79:825--828, 1973.

\bibitem[Pes08]{Pestun:2008ora}
Vasily Pestun.
\newblock {\em {Wilson loops in Supersymmetric Gauge Theories}}.
\newblock PhD thesis, Princeton U., 2008.

\bibitem[Pes12]{pestun2012localization}
Vasily Pestun.
\newblock Localization of gauge theory on a four-sphere and supersymmetric {W}ilson loops.
\newblock {\em Comm. Math. Phys.}, 313(1):71--129, 2012.

\bibitem[PS12]{perez2012heat}
Joe~J. Perez and Peter Stollmann.
\newblock Heat kernel estimates for the {$\overline\partial$}-{N}eumann problem on {$G$}-manifolds.
\newblock {\em Manuscripta Math.}, 138(3-4):371--394, 2012.

\bibitem[PZBea17]{pestun2017localization}
Vasily Pestun, Maxim Zabzine, Francesco Benini, and et~al.
\newblock Localization techniques in quantum field theories.
\newblock {\em J. Phys. A}, 50(44):440301, 7, 2017.

\bibitem[QZ15]{qiu20155d}
Jian Qiu and Maxim Zabzine.
\newblock 5{D} super {Y}ang-{M}ills on {$Y^{p,q}$} {S}asaki-{E}instein manifolds.
\newblock {\em Comm. Math. Phys.}, 333(2):861--904, 2015.

\bibitem[Rei]{reid2012val}
Miles Reid.
\newblock {\em The Du Val singularities An, Dn, E6, E7, E8}.
\newblock Lecture Notes, Available online at \url{https://homepages.warwick.ac.uk/~masda/surf/more/DuVal.pdf}.

\bibitem[Roe98]{roe1999elliptic}
John Roe.
\newblock {\em Elliptic operators, topology and asymptotic methods}, volume 395 of {\em Pitman Research Notes in Mathematics Series}.
\newblock Longman, Harlow, second edition, 1998.

\bibitem[Rup18]{RuppenthalSerre2018}
J.~Ruppenthal.
\newblock {$L^2$}-{S}erre duality on singular complex spaces and rational singularities.
\newblock {\em Int. Math. Res. Not. IMRN}, (23):7198--7240, 2018.

\bibitem[Rup19]{RuppenthalL2dbarsingular2019}
Jean Ruppenthal.
\newblock {$L^2$}-theory for the {$\overline{\partial}$}-operator on complex spaces with isolated singularities.
\newblock {\em Ann. Fac. Sci. Toulouse Math. (6)}, 28(2):225--258, 2019.

\bibitem[Sal99]{salamon1999lectures}
Dietmar Salamon.
\newblock Lectures on {F}loer homology.
\newblock In {\em Symplectic geometry and topology ({P}ark {C}ity, {UT}, 1997)}, volume~7 of {\em IAS/Park City Math. Ser.}, pages 143--229. Amer. Math. Soc., Providence, RI, 1999.

\bibitem[Sch88]{schulze1986elliptic}
B.-W. Schulze.
\newblock Elliptic complexes on manifolds with conical singularities.
\newblock In {\em Seminar {A}nalysis of the {K}arl-{W}eierstrass-{I}nstitute of {M}athematics, 1986/87 ({B}erlin, 1986/87)}, volume 106 of {\em Teubner-Texte Math.}, pages 170--223. Teubner, Leipzig, 1988.

\bibitem[Shu96]{ShubinNovikov}
Mikhail Shubin.
\newblock Novikov inequalities for vector fields.
\newblock In {\em The {G}elfand {M}athematical {S}eminars, 1993--1995}, Gelfand Math. Sem., pages 243--274. Birkh\"{a}user Boston, Boston, MA, 1996.

\bibitem[SS91]{seyfarth1991lefschetz}
S.~Seyfarth and M.~A. Shubin.
\newblock Lefschetz fixed point formula for manifolds with cylindrical ends.
\newblock {\em Ann. Global Anal. Geom.}, 9(2):99--108, 1991.

\bibitem[Tay11a]{taylor1996partial}
Michael~E. Taylor.
\newblock {\em Partial differential equations {I}. {B}asic theory}, volume 115 of {\em Applied Mathematical Sciences}.
\newblock Springer, New York, second edition, 2011.

\bibitem[Tay11b]{taylor2013partial}
Michael~E. Taylor.
\newblock {\em Partial differential equations {II}. {Q}ualitative studies of linear equations}, volume 116 of {\em Applied Mathematical Sciences}.
\newblock Springer, New York, second edition, 2011.

\bibitem[TT75]{toledo1975holomorphic}
Domingo Toledo and Yue Lin~L. Tong.
\newblock The holomorphic {L}efschetz formula.
\newblock {\em Bull. Amer. Math. Soc.}, 81(6):1133--1135, 1975.

\bibitem[Tu15]{tu2015genesis}
Loring~W. Tu.
\newblock On the genesis of the woods hole fixed point theorem.
\newblock {\em Notices Amer. Math. Soc.}, 62(10):1200--1206, 2015.

\bibitem[TZ98]{ZhanganalyticQR98}
Youliang Tian and Weiping Zhang.
\newblock An analytic proof of the geometric quantization conjecture of {Guillemin}-{Sternberg}.
\newblock {\em Invent. Math.}, 132(2):229--259, 1998.

\bibitem[Ver67]{verdier1967lefschetz}
JL~Verdier.
\newblock The lefschetz fixed point theorem in etale cohomology.
\newblock {\em Proc. Conf.Local Fields (Driebergen, 1966)}, pages 199, 214, 1967.

\bibitem[Ver94]{vergne1994quantification}
Mich\`ele Vergne.
\newblock Quantification g\'{e}om\'{e}trique et multiplicit\'{e}s.
\newblock {\em C. R. Acad. Sci. Paris S\'{e}r. I Math.}, 319(4):327--332, 1994.

\bibitem[Voi07]{voisin2002hodge}
Claire Voisin.
\newblock {\em Hodge theory and complex algebraic geometry. {I}}, volume~76 of {\em Cambridge Studies in Advanced Mathematics}.
\newblock Cambridge University Press, Cambridge, english edition, 2007.
\newblock Translated from the French by Leila Schneps.

\bibitem[Web16]{weber2016equivariant}
Andrzej Weber.
\newblock Equivariant {H}irzebruch class for singular varieties.
\newblock {\em Selecta Math. (N.S.)}, 22(3):1413--1454, 2016.

\bibitem[Wit82a]{witten1982constraints}
Edward Witten.
\newblock Constraints on supersymmetry breaking.
\newblock {\em Nuclear Physics B}, 202(2):253--316, 1982.

\bibitem[Wit82b]{witten1982supersymmetry}
Edward Witten.
\newblock Supersymmetry and {M}orse theory.
\newblock {\em J. Differential Geometry}, 17(4):661--692 (1983), 1982.

\bibitem[Wit83]{witten1983fermion}
Edward Witten.
\newblock Fermion quantum numbers in kaluza-klein theory.
\newblock {\em et al: Modern Kaluza-Klein Theories}, pages 438--511, 1983.

\bibitem[Wit84]{witten1984holomorphic}
Edward Witten.
\newblock Holomorphic {M}orse inequalities.
\newblock In {\em Algebraic and differential topology---global differential geometry}, volume~70 of {\em Teubner-Texte Math.}, pages 318--333. Teubner, Leipzig, 1984.

\bibitem[Wit88]{WittenTopologicalQFT88}
Edward Witten.
\newblock Topological quantum field theory.
\newblock {\em Comm. Math. Phys.}, 117(3):353--386, 1988.

\bibitem[Wu99]{wu1996equivariant}
Siye Wu.
\newblock Equivariant holomorphic {M}orse inequalities. {II}. {T}orus and non-abelian group actions.
\newblock {\em J. Differential Geom.}, 51(3):401--429, 1999.

\bibitem[Wu03]{wu2003instanton}
Siye Wu.
\newblock On the instanton complex of holomorphic {M}orse theory.
\newblock {\em Comm. Anal. Geom.}, 11(4):775--807, 2003.

\bibitem[WZ98]{wu1998equivariant}
Siye Wu and Weiping Zhang.
\newblock Equivariant holomorphic {M}orse inequalities. {III}. {N}on-isolated fixed points.
\newblock {\em Geom. Funct. Anal.}, 8(1):149--178, 1998.

\bibitem[Yau09]{Yaubookfoundersindex}
S.-T. Yau, editor.
\newblock {\em The founders of index theory: reminiscences of and about {S}ir {M}ichael {A}tiyah, {R}aoul {B}ott, {F}riedrich {H}irzebruch, and {I}. {M}. {S}inger}.
\newblock International Press, Somerville, MA, second edition, 2009.

\bibitem[Zha90]{weiping1990note}
Weiping Zhang.
\newblock A note on equivariant eta invariants.
\newblock {\em Proc. Amer. Math. Soc.}, 108(4):1121--1129, 1990.

\bibitem[Zha01]{Zhanglectures}
Weiping Zhang.
\newblock {\em Lectures on {C}hern-{W}eil theory and {W}itten deformations}, volume~4 of {\em Nankai Tracts in Mathematics}.
\newblock World Scientific Publishing Co., Inc., River Edge, NJ, 2001.

\bibitem[Zhe15]{Zheng2015VerdierLefschetzDeligneMumfordSixfunctor}
Weizhe Zheng.
\newblock Six operations and {L}efschetz-{V}erdier formula for {D}eligne-{M}umford stacks.
\newblock {\em Sci. China Math.}, 58(3):565--632, 2015.

\end{thebibliography}

\end{document}